\documentclass[12pt]{amsart}
\usepackage[utf8]{inputenc}
\usepackage{amsmath,amssymb,amsthm,colonequals,mathrsfs,mathtools}
\usepackage{array,enumitem,yfonts}
\usepackage{comment}
\usepackage[alphabetic]{amsrefs}
\usepackage[all,cmtip]{xy}
\usepackage[colorlinks,anchorcolor=blue,citecolor=blue,linkcolor=blue,urlcolor=blue,bookmarksdepth=2]{hyperref}
\urlstyle{rm}
\allowdisplaybreaks

\usepackage{comment}

\usepackage[margin=1in]{geometry}

\usepackage{tikz}
\usetikzlibrary{cd,arrows}
\tikzset{>=stealth}
\tikzcdset{arrow style=tikz}
\tikzset{link/.style={column sep=1.8cm,row sep=0.16cm}}

\AtBeginDocument{%
	\def\MR#1{}
}
\pagestyle{plain}
\setlength{\footskip}{25pt}
\usepackage{fancyhdr}
\fancypagestyle{titlepage}
{
	\fancyhf{}

\fancyfoot[l]{
\href{https://mathscinet.ams.org/mathscinet/msc/msc2020.html}
	{\emph{2020 Mathematics Subject Classification}}
		11F55, 22E40, 20G25, 11E39, 11G18
		\\
		\emph{Keywords}: Graded ring of modular forms, Ball quotients, Free algebra, Prasad's volume formula, Hirzebruch-Mumford volume, Parahoric subgroups
	}
    }
\newcommand{\bZ}{\mathbb{Z}}    
\newcommand{\bQ}{\mathbb{Q}}    
\newcommand{\bR}{\mathbb{R}}    
\newcommand{\bC}{\mathbb{C}}    
\newcommand{\bH}{\mathbb{H}}    
\newcommand{\bB}{\mathbb{B}}

\newcommand{\cO}{\mathcal{O}}   
\newcommand{\bP}{\mathbb{P}}    
\newcommand{\abs}[1]{\left\lvert#1\right\rvert}
\newcommand{\isoarrow}{\stackrel{\sim}{\longrightarrow}}
\newcommand{\transpose}[1]{{#1}^\top}

\newcommand{\Proj}{\operatorname{Proj}}
\newcommand{\Pic}{\operatorname{Pic}} 
\newcommand{\SU}{\operatorname{SU}}
\newcommand{\SO}{\operatorname{SO}}
\newcommand{\Sp}{\operatorname{Sp}}
\newcommand{\SL}{\operatorname{SL}}
\def\div{\mathop{\mathrm{div}}\nolimits}
\def\vol{\mathop{\mathrm{vol}}\nolimits}
\def\volHM{\mathop{\mathrm{vol}_{\mathrm{HM}}}\nolimits}

\def\O{\mathrm{O}}

\def\U{{\mathrm{U}}}

\newcommand{\defeq}{\vcentcolon=} 

\DeclareMathOperator{\id}{id}
\DeclareMathOperator{\Hom}{Hom}
\DeclareMathOperator{\fin}{f}
\DeclareMathOperator{\scn}{sc}
\DeclareMathOperator{\np}{np}
\DeclareMathOperator{\Ind}{Ind}

\DeclareMathOperator{\qs}{qs}
\DeclareMathOperator{\rel}{rel}
\DeclareMathOperator{\res}{res}

\newtheorem{thm}{Theorem}[subsection]
\newtheorem{prop}[thm]{Proposition}
\newtheorem{lemma}[thm]{Lemma}
\newtheorem{cor}[thm]{Corollary}
\numberwithin{equation}{section}

\theoremstyle{definition}
\newtheorem{conj}[thm]{Conjecture}
\newtheorem{defn}[thm]{Definition}

\newtheorem{rmk}[thm]{Remark}
\newtheorem{asm}[thm]{Assumption}


\begin{document}

\title[Finiteness of Free Algebras of Modular Forms on Unitary Groups]{Finiteness of Free Algebras of Modular Forms on Unitary Groups}
\author{Yota Maeda$^{1, 2}$ \and Kazuma Ohara$^{3}$}
\email{y.maeda.math@gmail.com, kazuma@mpim-bonn.mpg.de}

\def\l@subsection{\@tocline{2}{0pt}{2.3pc}{5pc}{}}

\maketitle
\thispagestyle{titlepage}
\vspace{-1em}
\begin{center}
  \begin{minipage}{0.9\textwidth}
    \centering
    {\small
    $^{1}$ Fachbereich Mathematik, Technische Universität Darmstadt, Germany\\
    $^{2}$ Mathematical Institute, Tohoku University, Japan\\
    $^{3}$ Max Planck Institute for Mathematics, Germany
    }
  \end{minipage}
\end{center}

\vspace{1em}

\begin{abstract}
Classical results on the classification of reflections in an arithmetic subgroup $\Gamma$ imply that if the graded algebra of modular forms $M_*(\Gamma)$ is freely generated, then $\Gamma$ must be an arithmetic subgroup of either the orthogonal group  $\operatorname{O}^+(2,n)$ or the unitary group $\operatorname{U}(1,n)$.
Vinberg and Schwarzman showed that in the orthogonal case, if $n>10$, then it is never free.
In this paper, we investigate the remaining unitary case and prove that, up to scaling, there are only finitely many isometry classes of Hermitian lattices of signature $(1, n)$ with $n > 2$ over imaginary quadratic fields with odd discriminant that admit a free algebra of modular forms.
In particular, when $n>99$ (except over $\mathbb{Q}(\sqrt{-3})$, where we require $n > 154$), the graded algebra $M_*(\Gamma)$ is never free for any arithmetic subgroup $\Gamma< \operatorname{U}(1,n)$, thereby partially confirming a conjecture by Wang and Williams.
As a byproduct, we also establish a finiteness result for reflective modular forms.
In the course of this proof, we derive a formula for the covolume of an arithmetic subgroup of a special unitary group, presented as the stabiliser of a Hermitian lattice, which generalises Prasad’s volume formula for principal arithmetic subgroups in the case of special unitary groups.
\end{abstract}

\setcounter{tocdepth}{1}
\tableofcontents

\section{Introduction}

An arithmetic variety $\Gamma\backslash\mathcal{D}$ is defined as the quotient of a Hermitian symmetric domain $\mathcal{D}$ by an arithmetic subgroup $\Gamma$.
The geometric (or specifically birational) classification of arithmetic varieties is a cornerstone problem at the intersection of algebraic geometry and number theory \cites{tai1982kodaira,freitag1983siegelsche,mumford2006kodaira,gritsenko2006hirzebruch,ma2018kodaira}.
A first step toward such a classification is to understand their \emph{minimal} algebraic compactification, that is, the Baily-Borel compactification $\overline{\Gamma\backslash\mathcal{D}}$ \cite{baily1966compactification}.
A hallmark of $\overline{\Gamma\backslash\mathcal{D}}$ is that it is always a normal, projective variety but admits singularities, which tend to worsen near the boundary and are often more severe than quotient singularities, making systematic research difficult.
Since singularities heavily influence the geometry of $\overline{\Gamma\backslash \mathcal{D}}$, it is natural to seek arithmetic groups for which every singularity remains especially mild.
 Among all quotient singularities, those coming from cyclic groups form the simplest, and crucially, the most tractable subclass.  The prototypical projective variety that has only cyclic quotient singularities is the weighted projective space $\bP(k_0,\dots,k_n)$ \cite{Dolgachev1982weighted}.
Returning to our situation, it is known that $\overline{\Gamma\backslash\mathcal{D}}$ is isomorphic to $\Proj M_* (\Gamma)$ where $M_*(\Gamma)$ denotes the graded algebra of modular forms for $\Gamma$.
Therefore, $\overline{\Gamma\backslash\mathcal{D}}\cong \bP(k_0,\cdots,k_n)$ holds most typically when $M_*(\Gamma)$ is freely generated by modular forms of weight $k_0,\cdots k_n$.

As discussed in \cite{Wang2021classification}, the space $M_*(\Gamma)$ is freely generated only if $\Gamma$ is generated by reflections  \cite{vinberg1989invariant}, inspired by Shephard-Todd-Chevalley theorem \cites{shephard1954finite,chevalley1955invariants}.
Combined with the classification of reflections \cite{Meschiari1972reflections}, this forces that $\Gamma$ is an arithmetic subgroup of the orthogonal group $\O^+(2,n)$ or the unitary group $\U(1,n)$.
Building on Igusa’s celebrated work \cites{igusa1962siegel,igusa1964siegel}, numerous studies on orthogonal groups have investigated the conditions under which a graded algebra is freely generated, along with concrete examples \cites{aoki2005simple,dern2003graded,dern2004graded,hashimoto2022ring,vinberg2010some,vinberg2013algebra,vinberg2018some,wang2020some,wang2021some}.
Among the results in this line of research, Vinberg and Schwarzman \cite{vinberg2017criterion} proved that the space $M_*(\Gamma)$ is not freely generated for any arithmetic subgroup $\Gamma< \O^+(2,n)$ with $n>10$, which reduces such a classification problem of $M_*(\Gamma)$ to the low-rank cases.
Building on this work, Wang classified quadratic forms whose associated graded algebra is free \cite{Wang2021classification}.

We now turn to the remaining case, namely the unitary case.
There are fewer studies on $M_*(\Gamma)$ than on orthogonal groups \cites{allcock2002cubic,freitag2002graded,freitag2011modular,freitag2019vector,resnikoff1978structure,shiga1988representation,tai1982structure,wang2023graded,williams2021two}.
In this line of research, and based on their work  \cites{Wang2021classification}, Wang and Williams \cite{WW2021} constructed explicit examples in which $M_*(\Gamma)$ is a free algebra.
In that work, they conjectured that $M_*(\Gamma)$ is rarely free for $\Gamma<\U(1,n)$.
This is a natural question in view of the preceding work.

In this paper, for any arithmetic subgroup $\Gamma< \U(1,n)$ associated with an imaginary quadratic field of odd discriminant $- D$, we prove that $M_*(\Gamma)$  is never free when $n>99$ for $D > 3$, and when $D = 3$, the result holds for $n > 154$.
Furthermore, we also prove that, up to scaling, there are only finitely many isometry classes of Hermitian lattices admitting a free algebra of modular forms.
These results provide a partial answer to the conjecture of Wang and Williams.
To formulate the conjecture and state our main results, we begin by introducing the relevant notion of unitary groups.

Let $E$ be an imaginary quadratic field with odd discriminant $-D$, and $(L, \langle \phantom{x},\phantom{x} \rangle)$ be a Hermitian lattice over $\cO_E$ of signature $(1,n)$ for $n>2$.
Let 
    \[\bB^n\defeq\{[v]\in \bP(L\otimes_{\cO_E}\bC)\mid \langle v,v\rangle>0 \}\]
be the $n$-dimensional complex ball acted on by the unitary group $\U(L)$.
For an arithmetic subgroup $\Gamma<\U(L)$, let 
\[X_{\Gamma}\defeq \Gamma\backslash\bB^n\] 
be the associated \emph{ball quotient}, a quasi-projective variety over $\bC$ of dimension $n$.
It can be realised as a moduli space via period maps, as in the cases of cubic surfaces \cites{allcock2000complex}, cubic threefolds \cites{allcock2011moduli,looijenga2007period}, and the Deligne–Mostow varieties \cites{deligne1986monodromy,mostow1986generalized}.
These varieties arise as period domains in the context of geometric invariant theory, and classical invariant theory shows, for example, that the moduli space of cubic surfaces is isomorphic to the weighted projective space $\bP(1,2,3,4,5)$ \cite{dolgachev2005complex}.
Moreover, recent research shows that the associated graded algebra in this case is free \cite{WW2021}*{Theorem~5.19}.
A detailed analysis of the geometric structure of $X_{\Gamma}$ is essential for understanding the modular interpretation of moduli spaces.
We denote by $\overline{X_{\Gamma}}$ the Baily-Borel compactification, which is isomorphic to $\Proj M_*(\Gamma)$.
In contrast to the above example, we consider the following conjecture.
\begin{conj}[{\cite{WW2021}*{Conjecture 8.1}}]
\label{conj:main}
The graded algebra $M_*(\Gamma)$ is \emph{not} a free algebra for $n>5$.
\end{conj}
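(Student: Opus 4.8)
The plan is to reduce the freeness of $M_*(\Gamma)$ to the existence of a single, highly constrained \emph{reflective} modular form, and then to obstruct that form for every $n>5$. First I would invoke the reflection criterion recalled in the introduction: if $M_*(\Gamma)=\bC[f_0,\dots,f_n]$ is free with $\deg f_i=k_i$, then $\Gamma$ is generated by complex reflections, and the $\U(1,n)$-analogue of Shephard–Todd–Chevalley theory forces the Jacobian $J=\det(\partial f_i/\partial z_j)$ to be a modular form of weight $\sum_{i=0}^n k_i$, transforming under a character of $\Gamma$, whose divisor is supported \emph{exactly} on the reflecting mirrors, with the multiplicity along each mirror pinned down by the order of the corresponding reflection. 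Equivalently, $\overline{X_\Gamma}\cong\Proj M_*(\Gamma)\cong\bP(k_0,\dots,k_n)$, so the top self-intersection of the Hodge line bundle is $(\cL^n)=1/(k_0\cdots k_n)$; comparing this with Hirzebruch–Mumford proportionality yields an exact identity $k_0\cdots k_n=c_n/\volHM(\Gamma)$ for an explicit constant $c_n$ depending only on $n$.

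Second, I would feed this identity into the covolume formula for $\SU$-stabilisers of Hermitian lattices derived in this paper (the generalisation of Prasad's volume formula). Since that formula expresses $\volHM(\Gamma)$ as a product over the arithmetic and parahoric local data that grows rapidly with $n$, it furnishes a lower bound on $\volHM(\Gamma)$ and hence, through the proportionality identity $k_0\cdots k_n=c_n/\volHM(\Gamma)$, an upper bound on the product of weights. For large $n$ the forced value of $k_0\cdots k_n$ drops below $1$, which is absurd for positive integer weights, so no free algebra can exist; this is the mechanism behind the thresholds $n>99$ (and $n>154$ over $\bQ(\sqrt{-3})$). For smaller $n$ the same bound leaves only finitely many admissible weight tuples, and, combined with the reflective constraint on the divisor of $J$, confines the admissible Hermitian lattices to finitely many isometry classes up to scaling.

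The genuinely hard part, on which the full range $n>5$ hinges, is the intermediate window $6\le n\le 99$: here the volume is no longer large enough for the counting argument to close, no uniform contradiction survives, and one is left with the finite but substantial list of candidate lattices. My plan for this window is twofold. On the one hand I would sharpen the reflective obstruction itself: given the divisor that any putative $J$ must carry, I would attempt to realise or obstruct $J$ as a unitary Borcherds product or as a theta lift, so that its existence is governed by the vanishing of an explicit obstruction space attached to the mirror configuration of the lattice. On the other hand I would carry out a complete enumeration of the finite candidate list and a case-by-case verification, computing $M_*(\Gamma)$ (or at least its low-weight graded pieces and its Hilbert series $\prod_i(1-t^{k_i})^{-1}$) and checking in each case that freeness fails. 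The main obstacle is precisely that no single argument covers all these intermediate lattices at once: bridging the gap from $n>99$ down to $n>5$ appears to demand either a substantial improvement of the lower bounds for $\volHM(\Gamma)$ in small rank, or a full classification of reflective Hermitian lattices of signature $(1,n)$ in this range—an input strictly stronger than, and currently beyond, the orthogonal classifications of Vinberg–Schwarzman and Wang that settle the $\O^+(2,n)$ side.
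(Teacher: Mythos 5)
You are attempting to prove something that the paper itself does not prove: the statement is Wang--Williams' Conjecture~8.1, quoted as a conjecture, and the paper only \emph{partially} confirms it (Theorems~\ref{thm:not_wps} and \ref{thm:finiteness}: non-freeness for $n>99$, or $n>154$ over $\bQ(\sqrt{-3})$, or $D$ sufficiently large, plus finiteness of exceptional lattices up to scaling). So there is no complete proof in the paper to match, and your proposal does not close the conjecture either --- as you yourself concede. Within the range the paper does cover, your first two paragraphs reconstruct its strategy in outline (freeness forces a special reflective cusp form, essentially the Jacobian, whose existence contradicts a volume computation via the generalised Prasad formula), but the mechanism you propose differs in substance from the paper's and has unverified steps. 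First, the Jacobian of $n+1$ generators has weight $n+1+k_1+\cdots+k_{n+1}$, not $\sum_i k_i$: each $\partial/\partial z_j$ raises the weight by one (this is \cite{WW2021}*{Theorem~3.3}, used in Theorem~\ref{thm:criterion_wps}). Second, the paper never uses your identity $k_0\cdots k_n = c_n/\volHM(\Gamma)$, nor a lower bound on $\volHM(\Gamma)$ alone; instead Bruinier's degree formula gives $\sum_{[l]}\frac{r_l-1}{r_l}\,\volHM(\Gamma^l)/\volHM(\Gamma) = n+1+\sum_i k_i \ge 2(n+1)$, and the contradiction comes from bounding the \emph{ratios} of mirror volumes to the total volume, via the local factors $\lambda(L_v^{l_v})/\lambda(L_v)$, the counting of sublattices $T_{L,2}$, $T_{L,4}$, and the invariant $N(L)$. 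The thresholds $n>99$ and $n>154$ arise from the explicit inequality $(1+2\cdot 2^{2n+1}+4^{2n+1})\cdot 2(2\pi)^{n+1}/((n+1)!\,D^{n/2})<1$ and its $\bQ(\sqrt{-3})$ analogue, not from a ``product of weights drops below $1$'' count; attributing them to your mechanism is inaccurate. Your top-self-intersection identity is plausible in spirit but would need care (normalisation of the automorphic $\bQ$-line bundle under $\Proj M_*(\Gamma)\cong\bP(k_0,\dots,k_n)$, centre and quotient-singularity corrections), and it sees only $\volHM(\Gamma)$, not the dependence on $N(L)$ that drives Theorem~\ref{thm:finiteness}.

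The genuine gap is the one you name: the window $6\le n\le 99$. Your plan for it --- realising or obstructing the putative Jacobian as a unitary Borcherds product, plus ``complete enumeration and case-by-case verification'' of the finite list --- is a research programme, not a proof. The finite list of Theorem~\ref{thm:finiteness} is not effectively enumerated in the paper (the constant $\epsilon$ and the reduction-theoretic bound are not made explicit enough to run such an enumeration), and sporadic free examples genuinely exist near the boundary of the range (the cubic-surface lattice at $n=4$ in \cite{WW2021}; the paper's Proposition~\ref{prop:cubic threefolds wps} must handle the cubic-threefold lattice at $n=10$ by a bespoke local computation), so any case-by-case closure would be substantial new work. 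In short: your proposal correctly identifies the mechanism behind the paper's partial results (with the inaccuracies noted above) and honestly flags the unresolved window, but as a proof of the statement for all $n>5$ it is incomplete --- and the paper itself claims no more than partial confirmation.
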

Our results (Theorems~\ref{thm:not_wps} and \ref{thm:finiteness}) partially resolve Conjecture \ref{conj:main} and further show that the number of possible exceptions to the conjecture is finite.
In order to analyse $M_*(\Gamma)$, we estimate the related volume $\volHM(\SU(L))$, which is called the \emph{Hirzebruch-Mumford volume} and closely related to the covolume of the arithmetic subgroup $\SU(L)$ in $\SU(L \otimes_{\mathbb{Z}} \mathbb{R})$, through Hirzebruch's proportionality principle \cite{mumford1977hirzebruch}.
The covolume of arithmetic subgroups of algebraic groups arises in various contexts, including the special values of zeta functions \cites{Siegel1935,siegel1945some,kottwitz1988tamagawa}, dimension formulas for automorphic forms \cites{savin1989limit,wakatsuki2018dimensions}, and the computation of birational invariants \cites{gritsenko2006hirzebruch,ma2018kodaira}.

One of the most promising tools to compute the covolume of an arithmetic subgroup is Prasad's volume formula \cite{Pra89}. It enables us to compute the covolume of \emph{principal} arithmetic subgroups of absolutely simple, simply connected algebraic groups.
Here, an arithmetic subgroup is called principal if its localisation at any finite place is a parahoric subgroup.
There are several notable applications of Prasad's volume formula to the geometry of arithmetic varieties, including the classification of fake projective planes \cite{prasad2005fake}, fake compact Hermitian spaces  \cites{prasad2009arithmetic,prasad2012nonexistence,prasad2023arithmeticfakecompacthermitian}, and the Kodaira dimension of ball quotients \cites{Mae24}.
It has also been applied to the problem of finding arithmetic subgroups with minimal covolumes; $\Sp_{2n}$ \cite{dvzambic2024siegel}, $\SL_n$ \cite{Thilmany2019lattices}, $\SO^+(1,n)$ \cite{belolipetsky2004volumes}, and $\mathrm{PU}(n,1)$ \cite{emery2014covolumes}.
In studying the graded algebras of modular forms, it is further necessary to compute the covolumes not only of principal arithmetic subgroups but also of all arithmetic subgroups of the form $\SU(L)$.
To this end, we generalise his formula applicable to non-principal arithmetic subgroups presented as stabilisers of Hermitian lattices in the course of the proof of the main theorem.
As noted earlier, Prasad’s volume formula has numerous important applications. Therefore, we expect that our extension of this formula will be of independent interest and has potential application in algebraic geometry and number theory.

\section{Main results}
In this section, we give a brief summary of the main results of this paper.

\subsection{A formula for the covolumes of arithmetic subgroups of special unitary groups}
The first main result of this paper is an explicit formula for the covolume of an arithmetic subgroup presented as the stabiliser of a Hermitian lattice.
Although our result applies to slightly more general settings, we focus on the case of special unitary groups over $\bQ$ here for simplicity.
For a more general claim, see Theorem~\ref{thm:generalization of Prasad's volume formula}.

Let $E$ be an imaginary quadratic field with odd discriminant $-D$.
We write $\cO_{E}$ for the ring of integers of $E$.
Let $V_{\fin}$ be the set of finite places of $\bQ$.
For $v \in V_{\fin}$, we write $\cO_{E_{v}} \defeq \cO_{E} \otimes_{\bZ} \bZ_{v}$, and let $\varpi_{E_{v}}$ be a uniformiser of $\bZ_{v}$ if $v$ splits in $E$, and a uniformiser of $\cO_{E_{v}}$ otherwise.
We also write $q_{E_{v}} \defeq \abs{\cO_{E_{v}}/(\varpi_{E_{v}})}$.

Let $(L, \langle \phantom{x},\phantom{x} \rangle)$ be a Hermitian lattice over $\cO_E$ of rank $n+1$.
For each $v \in V_{\fin}$, we define an $\cO_{E_{v}}$-lattice $L_{v}$ as $L_{v} = L \otimes_{\bZ} \bZ_{v}$.
We denote by $\SU(L)$ (resp.\ $\SU(L_{v})$) the special unitary group attached to the Hermitian lattice $L$ (resp.\ $L_{v}$).

For each $v \in V_{\fin}$, we fix an orthogonal decomposition 
\[
L_{v} = \bigoplus_{i \ge 0} L_{v, i}
\]
that satisfies the condition in \cite{Gan-Yu}*{4.3 Corollary} and let $n_{v, i}$ denote the rank of the $\cO_{E_{v}}$-lattice $L_{v, i}$.
By using this, we define the $\cO_{E_{v}}$-lattice $M_{L_{v}}$ as 
\[
 M_{L_{v}} \defeq \bigoplus_{i \ge 0} \varpi_{E_{v}}^{-\lfloor i/2 \rfloor}L_{v, i}.
\]

We choose a Haar measure $\mu_{\infty}$ on $\SU(L \otimes_{\bZ} \bR)$ as in \cite{Pra89}*{\S3.6}.

\begin{thm}[{Theorem~\ref{thm:prasad'svolumeformulaspecialunitarycase}}]
\label{thm:prasad'svolumeformulaspecialunitarycaseintro}
   We have
    \[    \mu_{\infty}
\left(
\SU(L \otimes_{\bZ} \bR)/\SU(L)
\right) =
{D^{\lfloor \frac{n}{2}\rfloor(\lfloor \frac{n}{2}\rfloor + \frac{3}{2})}\prod_{i=1}^n\frac{i!}{(2\pi)^{i+1}}\prod_{i=1}^{\lfloor \frac{n+1}{2} \rfloor}\zeta(2i)\prod_{i=1}^{\lfloor \frac{n}{2} \rfloor}L_{E/\bQ}(2i+1)\prod_{v \in V_{\fin}} \lambda (L_{v})},\]
where $\zeta$ denotes the Riemann zeta-function, $L_{E/\bQ}$ denotes the Dirichlet $L$-function associated with the quadratic extension $E/\bQ$, and the factors $\lambda(L_{v})$ are defined as
\[
\lambda(L_{v}) \defeq \lambda(M_{L_{v}}) \cdot q_{E_v}^{\sum_{i <j} \lfloor \frac{j-i-1}{2} \rfloor n_{v, i} \cdot n_{v, j}}
\abs{
\mathbf{G}_{{M}_{L_{v}}}(\bZ_{v}/v \bZ_{v})
} \abs{\mathbf{G}_{L_{v}}(\bZ_{v}/v \bZ_{v})}^{-1}
\]
with $\lambda(M_{L_{v}})$ being defined explicitly in \eqref{defoflambdaMLv} and $\mathbf{G}_{{M}_{L_{v}}}$(resp.\ $\mathbf{G}_{L_{v}}$) denoting the reductive groups over $\bZ_{v}/v \bZ_{v}$ attached to $M_{L_{v}}$ (resp.\ $L_{v}$) as in Subsection~\ref{subseccomparisonofvolumes}.
\end{thm}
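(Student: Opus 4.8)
The plan is to compute the covolume by comparing two measures on the same adelic volume: on one side, the Haar measure $\mu_\infty$ on $\SU(L\otimes_{\bZ}\bR)$ paired with the lattice $\SU(L)$; on the other, the Tamagawa-theoretic product that Prasad's formula controls. The strategy is to realize $\SU(L)$ as an arithmetic subgroup whose local completions are the stabilizers $\SU(L_v)$, then write
\[
\mu_\infty\bigl(\SU(L\otimes_{\bZ}\bR)/\SU(L)\bigr)
= \mu_\infty\bigl(\SU(L\otimes_{\bZ}\bR)/\Lambda\bigr)\cdot\prod_{v\in V_{\fin}}\bigl[K_v:\SU(L_v)\bigr]^{\pm 1},
\]
where $\Lambda$ is a \emph{principal} arithmetic subgroup whose local factors $K_v$ are parahoric, chosen so that Prasad's volume formula \cite{Pra89} applies directly. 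The first term then supplies the explicit $\zeta$- and $L$-function factors together with the archimedean constants $\prod_i i!/(2\pi)^{i+1}$ and the leading power of $D$; the remaining work is to pin down each local index as the factor $\lambda(L_v)$.

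\emph{First I would} fix, at each finite place $v$, the orthogonal decomposition $L_v=\bigoplus_{i\ge 0}L_{v,i}$ satisfying the Gan--Yu condition \cite{Gan-Yu} and use it to build the auxiliary lattice $M_{L_v}=\bigoplus_{i\ge 0}\varpi_{E_v}^{-\lfloor i/2\rfloor}L_{v,i}$. The point of passing to $M_{L_v}$ is that its stabilizer is (conjugate to) a parahoric subgroup in good position relative to Prasad's normalization, so that Prasad's formula computes $\mu_\infty(\SU(L\otimes_{\bZ}\bR)/\Lambda)$ cleanly with $\Lambda$ having local factors $\SU(M_{L_v})$. Next I would compute the local index $[\SU(M_{L_v}):\SU(L_v)]$ by comparing the two parahoric (or near-parahoric) structures. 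This index factors into a \emph{lattice-theoretic} contribution, measuring the change of volume when passing from $L_v$ to $M_{L_v}$, and a \emph{reductive-quotient} contribution coming from the special fibers of the associated smooth integral models. The lattice contribution is exactly the power $q_{E_v}^{\sum_{i<j}\lfloor (j-i-1)/2\rfloor\, n_{v,i}n_{v,j}}$, obtained by counting how the rescalings $\varpi_{E_v}^{-\lfloor i/2\rfloor}$ shift the pairing across the orthogonal summands; the reductive contribution is the ratio $\abs{\mathbf{G}_{M_{L_v}}(\bZ_v/v\bZ_v)}\cdot\abs{\mathbf{G}_{L_v}(\bZ_v/v\bZ_v)}^{-1}$ of the orders of the special fibers of the Bruhat--Tits group schemes.

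\emph{The key technical step} is the comparison of volumes in Subsection~\ref{subseccomparisonofvolumes}: I would realize both $\SU(L_v)$ and $\SU(M_{L_v})$ as the $\bZ_v$-points of smooth affine group schemes $\mathbf{G}_{L_v},\mathbf{G}_{M_{L_v}}$ with connected reductive special fiber, and compute the ratio of their volumes under a common invariant differential form using Weil's formula $\vol(\mathbf{G}(\bZ_v))=\abs{\mathbf{G}(\bF_v)}\cdot q_{E_v}^{-\dim \mathbf{G}}$. Since $\SU(M_{L_v})=\SU(L_v)$ at all but finitely many $v$ (where $L_v$ is already maximal/self-dual), the product over $V_{\fin}$ is in fact finite, so convergence is not an issue once the split/inert/ramified local analysis is carried out. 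Assembling Prasad's contribution for $\Lambda$ with $\prod_v\lambda(L_v)$ yields the stated formula, with the exponent $\lfloor n/2\rfloor(\lfloor n/2\rfloor+\frac32)$ on $D$ emerging from combining the archimedean discriminant factor of Prasad's formula with the ramification contribution at the places dividing $D$.

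\emph{The main obstacle} I expect is the careful bookkeeping in the ramified case, i.e. at primes dividing $D$, where $E_v/\bQ_v$ is a ramified quadratic extension and the Bruhat--Tits theory of the quasi-split (but non-split) special unitary group is most delicate. Here the structure of the parahoric attached to $M_{L_v}$, the precise form of its special-fiber reductive quotient $\mathbf{G}_{M_{L_v}}$, and the correct matching with Prasad's local factors (the $\mathcal{S}$- and $e$-invariants of his formula) all require explicit identification; getting the powers of $q_{E_v}$ and the group orders to combine into exactly $\lambda(L_v)$ — consistently with the global discriminant power — is where the real effort lies.
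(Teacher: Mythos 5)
Your global skeleton is the same as the paper's: replace $\SU(L)$ by the principal arithmetic subgroup whose local factors are the stabilisers of the auxiliary lattices $M_{L_v}$ (whose parahoric property is Proposition~\ref{prop:MLparahoric}), apply Prasad's formula (Theorem~\ref{Prasad'svolumeformula}) to that principal subgroup, and convert between the two covolumes by the product of local indices (Corollary~\ref{cor:comparisonofcovoleme}); the specialisation $D_k=1$, $D_\ell=D$, $\mathfrak{s}(H_{\qs})=n(n+3)/2$ or $(n-1)(n+2)/2$, $m_i=i$ then yields the archimedean constants and the $\zeta$/$L$-factors. Two points of divergence. First, the power of $D$ in the statement comes \emph{entirely} from $D_\ell^{\frac12\mathfrak{s}(H_{\qs})}$ in Prasad's formula; the contributions of the ramified places are absorbed into the local factors $\lambda(L_v)$, so your claim that the exponent partly reflects ``ramification contributions at places dividing $D$'' is a misattribution (harmless, but it signals bookkeeping you would still have to do). Second, and more substantively, your local index computation is measure-theoretic (Weil's point-counting formula for the two smooth models plus a change-of-form factor), whereas the paper computes $\abs{G_{M_{L_v}}/G_{L_v}}$ directly as a group index, by an induction on Jordan constituents using the surjection onto the maximal reductive quotient (Lemma~\ref{lemma:reductivequotientofGl'}), the identification of the image of $G_{L_v}$ there (Proposition~\ref{prop:imageofGLviar}), and an explicit count of the unipotent part (Proposition~\ref{prop:descriptionoftheunipotentradical}), culminating in Theorem~\ref{thm:calculationofindexnonparahoricvsparahoric}. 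Your route is viable (it is essentially the local-density method of Gan--Yu), but as stated your Weil formula is wrong: the special fibres of these integral models are \emph{not} reductive, so $\vol(\underline{G}(\bZ_v))=\abs{\underline{G}^{\mathrm{sp}}(\mathfrak{f}_v)}\,q_v^{-\dim}$ involves the full special fibre, and the unipotent radical contributes extra powers of $q_v$; it is precisely this unipotent bookkeeping (not a ``pairing rescaling'' count) that produces the factor $q_{E_v}^{\sum_{i<j}\lfloor (j-i-1)/2\rfloor n_{v,i}n_{v,j}}$ in the paper.

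There is also one genuine gap. The smooth models and reductive quotients $\mathbf{G}_{L_v}$, $\mathbf{G}_{M_{L_v}}$ appearing in $\lambda(L_v)$ are attached to the \emph{full isometry groups} $\U(L_v)$, $\U(M_{L_v})$, while Prasad's formula requires parahoric subgroups of the \emph{simply connected} group, i.e.\ the quantity actually needed is $\abs{\SU(M_{L_v})/\SU(L_v)}$. Your proposal silently identifies the two indices. Their equality is exactly Proposition~\ref{prop:QL=1} (the statement $Q(L_v)=1$), which the paper proves by showing that the determinant images of $\U(M_{L_v})$ and $\U(L_v)$ coincide, using \cite{MR4008068}*{Theorem~3.7}, so that $\U(M_{L_v})=\SU(M_{L_v})\cdot\U(L_v)$. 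Without this step, the local factors you compute for the isometry-group models do not match the parahoric indices that enter Prasad's formula, and the proof is incomplete.
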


When $L_{v} = M_{L_{v}}$ for all $v \in V_{\fin}$, this theorem is a special case of the main result of \cite{Pra89}.
We generalise \cite{Pra89} in the case of special unitary groups (or more generally, simply connected covers of classical groups) using the result in \cite{Gan-Yu}.

A significant application of this theorem is the computation of the Hirzebruch-Mumford volume $\volHM(\SU(L))$.
\begin{cor}
Using the same notation as in Theorem~\ref{thm:prasad'svolumeformulaspecialunitarycaseintro}, we have
\[\volHM(\SU(L)) = \abs{\SU(L)\cap Z}D^{\lfloor \frac{n}{2}\rfloor(\lfloor \frac{n}{2}\rfloor + \frac{3}{2})}\prod_{i=1}^n\frac{i!}{(2\pi)^{i+1}}\prod_{i=1}^{\lfloor \frac{n+1}{2} \rfloor}\zeta(2i)\prod_{i=1}^{\lfloor \frac{n}{2} \rfloor}L_{E/\bQ}(2i+1)\prod_{v \in V_{\fin}} \lambda (L_{v}),\]
    where $Z$ denotes the centre of $\U(L)$.
\end{cor}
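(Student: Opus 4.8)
The plan is to reduce the corollary to the single proportionality identity
\[
\volHM(\SU(L)) = \abs{\SU(L)\cap Z}\cdot\mu_{\infty}\bigl(\SU(L\otimes_{\bZ}\bR)/\SU(L)\bigr),
\]
and then to substitute the explicit value of the right-hand side furnished by Theorem~\ref{thm:prasad'svolumeformulaspecialunitarycaseintro}. In this way all of the arithmetic and analytic content is already packaged in the covolume formula, and the only remaining task is to compare the Hirzebruch-Mumford normalisation on the ball quotient with Prasad's Haar-measure normalisation $\mu_{\infty}$ on the real group $G_{\infty}\defeq\SU(L\otimes_{\bZ}\bR)$, whose centre contains $\SU(L)\cap Z$ as a finite subgroup.

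First I would recall that, by Hirzebruch's proportionality principle \cite{mumford1977hirzebruch}, $\volHM(\SU(L))$ is the volume of the quotient of $\bB^{n}$ by the effective action of $\SU(L)$, measured against the $G_{\infty}$-invariant volume form on $\bB^{n}=G_{\infty}/K_{\infty}$ (with $K_{\infty}$ a maximal compact subgroup) normalised through the compact dual $\bP^{n}$. Writing $\nu$ for the invariant measure on $\bB^{n}$ to which $\mu_{\infty}$ descends along the fibration $G_{\infty}\to\bB^{n}$ (so that a factor $\vol(K_{\infty})$ is absorbed), the key point to verify is that the Hirzebruch-Mumford measure equals $\vol(K_{\infty})\cdot\nu$; in other words, Prasad's normalisation in \cite{Pra89}*{\S3.6} is precisely compatible with the compact-dual normalisation underlying $\volHM(\SU(L))$. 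Granting this compatibility, comparing $\mu_{\infty}(G_{\infty}/\SU(L))$ with $\volHM(\SU(L))$ comes down to understanding how $\SU(L)$ sits over the ball quotient.

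That is governed entirely by the central subgroup $\SU(L)\cap Z$, which fixes every point of $\bB^{n}$: thus $\SU(L)$ acts only through its effective quotient, and in the (left) fibration $\SU(L)\backslash G_{\infty}\to\SU(L)\backslash\bB^{n}$ this central kernel acts trivially on the right, so the generic fibre is $K_{\infty}/(\SU(L)\cap Z)$, of volume $\vol(K_{\infty})/\abs{\SU(L)\cap Z}$. Integrating over the base as an orbifold, so that any residual non-central stabilisers are weighted by the reciprocal of their order, gives
\[
\vol\bigl(\SU(L)\backslash G_{\infty}\bigr) = \frac{\vol(K_{\infty})}{\abs{\SU(L)\cap Z}}\,\vol_{\nu}\bigl(\SU(L)\backslash\bB^{n}\bigr),
\]
and combining with $\volHM(\SU(L)) = \vol(K_{\infty})\,\vol_{\nu}(\SU(L)\backslash\bB^{n})$ from the previous step --- together with $\vol(\SU(L)\backslash G_{\infty}) = \mu_{\infty}(G_{\infty}/\SU(L))$ by unimodularity of $G_{\infty}$ --- yields the displayed proportionality. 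Substituting Theorem~\ref{thm:prasad'svolumeformulaspecialunitarycaseintro} then reproduces the asserted formula verbatim.

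I expect the only genuine obstacle to lie in the middle step, namely verifying that Prasad's measure-theoretic normalisation of $\mu_{\infty}$ at the archimedean place matches the Euler-form normalisation implicit in the Hirzebruch-Mumford volume, so that the compact-dual constant cancels cleanly and no stray power of $2\pi$ or residual volume factor survives beyond those already displayed in Theorem~\ref{thm:prasad'svolumeformulaspecialunitarycaseintro}. Once this bookkeeping of archimedean normalisations is settled, the central correction factor and the final substitution are routine.
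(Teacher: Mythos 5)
Your overall reduction is the same as the paper's: the corollary is obtained by combining Theorem~\ref{thm:prasad'svolumeformulaspecialunitarycaseintro} with the identity $\volHM(\SU(L)) = \abs{\SU(L)\cap Z}\,\mu_{\infty}\bigl(\SU(L\otimes_{\bZ}\bR)/\SU(L)\bigr)$, which is precisely Lemma~\ref{lem:compatibility of volHM and covolume}(2) applied to $\Gamma=\SU(L)$. Your fibration argument for the central factor --- that the generic fibre of $\SU(L)\backslash G_{\infty}\to\SU(L)\backslash\bB^{n}$ is $K_{\infty}$ modulo the pointwise-acting subgroup $\SU(L)\cap Z$ --- is a sound geometric variant of what the paper does algebraically, namely passing to a neat subgroup $\Gamma_{\mathrm{n}}<\Gamma$ and computing $[Z\Gamma:Z\Gamma_{\mathrm{n}}]/[\Gamma:\Gamma_{\mathrm{n}}]=\abs{\Gamma\cap Z}^{-1}$ from the fact that a neat group meets the centre trivially; either route correctly produces the factor $\abs{\SU(L)\cap Z}$.

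The genuine gap is the step you explicitly ``grant'': the compatibility between Prasad's archimedean normalisation of $\mu_{\infty}$ in \cite{Pra89}*{\S3.6} and the Hirzebruch--Mumford normalisation. This is not routine bookkeeping; it is the entire analytic content of the identity, and without it your argument pins down $\volHM(\SU(L))$ only up to an undetermined archimedean constant, so the displayed formula does not follow. Note moreover that in this paper $\volHM$ is \emph{defined} through Euler characteristics ($\volHM(\Gamma_{\mathrm{n}})=\abs{\chi(X_{\Gamma_{\mathrm{n}}})}/(n+1)$ for neat $\Gamma_{\mathrm{n}}$, extended to general $\Gamma$ by dividing by $[Z\Gamma:Z\Gamma_{\mathrm{n}}]$), not as the integral of a compact-dual-normalised volume form, so even your starting interpretation of $\volHM$ as an orbifold volume is itself something to be proved. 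The paper closes exactly this gap in the proof of Lemma~\ref{lem:compatibility of volHM and covolume}(2): Harder's Gauss--Bonnet theorem \cite{harder1971gauss} provides an Euler--Poincar\'e measure $\mu^{\mathrm{EP}}$ on $\SU(1,n)$ with $\mu^{\mathrm{EP}}(\Gamma_{\mathrm{n}})=\abs{\chi(\Gamma_{\mathrm{n}})}$ for neat $\Gamma_{\mathrm{n}}$; Mumford's proportionality principle in the non-compact case \cite{mumford1977hirzebruch} gives $\mu^{\mathrm{EP}}(\Gamma_{\mathrm{n}})=\chi(\bP^{n})\cdot\mu_{\infty}(\SU(1,n)/\Gamma_{\mathrm{n}})$ for Prasad's choice of $\mu_{\infty}$; and since $\chi(\bP^{n})=n+1$ while $\abs{\chi(\Gamma_{\mathrm{n}})}=\abs{\chi(X_{\Gamma_{\mathrm{n}}})}$ (as $\bB^{n}$ is contractible and $\Gamma_{\mathrm{n}}$ acts freely), one concludes $\mu_{\infty}(\SU(1,n)/\Gamma_{\mathrm{n}})=\volHM(\Gamma_{\mathrm{n}})$ for neat groups --- which is exactly the statement you assumed. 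Supplying this Gauss--Bonnet/proportionality argument (or an equivalent computation of the archimedean constant) is what is needed to complete your proof.
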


\begin{rmk}
    Gritsenko, Hulek, and Sankaran obtained a volume formula for $\Gamma< \O^+(2,n)$ \cite{gritsenko2005hirzebruch}*{Theorem 2.1}.
  This result proved to be of significant importance in the later work on the Kodaira dimension of arithmetic varieties of orthogonal type \cites{gritsenko2006hirzebruch,ma2018kodaira}. The result obtained here can be viewed as its unitary analogue, and is expected to have further applications, such as computing birational invariants of ball quotients.
\end{rmk}

\subsection{Finiteness of free algebras of modular forms on unitary groups}
Building on our earlier derivation of a volume formula for general arithmetic subgroups, we proceed to provide a partial answer to Conjecture \ref{conj:main}.

\begin{thm}[{Theorem \ref{thm:not_wps}}]
\label{mainthm:not_wps}
Let $E$ be an imaginary quadratic field with odd discriminant $-D$ for $D>3$ and $\Gamma<\U(1,n)$ be an arithmetic subgroup.
 Then, the algebra $M_*(\Gamma)$ is never free when $n>99$ or $D$ is sufficiently large.
\end{thm}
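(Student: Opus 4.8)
The plan is to turn freeness into a numerical impossibility by comparing two expressions for the top self-intersection of the modular line bundle on $\overline{X_{\Gamma}}$. Suppose $M_*(\Gamma) \cong \bC[f_0, \dots, f_n]$ is free with $\deg f_i = k_i \in \bZ_{\ge 1}$, so that $\overline{X_{\Gamma}} \cong \bP(k_0, \dots, k_n)$. On the weighted projective space the top self-intersection of $\mathcal{O}(1)$ equals $\frac{1}{k_0 \cdots k_n}$, whereas on the ball-quotient side Hirzebruch--Mumford proportionality (together with $K_{\bB^n} = (n+1)\mathcal{L}$) expresses the same number through $\volHM(\SU(L))$, which the Corollary to Theorem~\ref{thm:prasad'svolumeformulaspecialunitarycaseintro} computes explicitly. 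Equating the two yields an identity of the shape
\[
\prod_{i=0}^n k_i = \Phi(n, D, L),
\]
where $\Phi$ is assembled from the proportionality constant, the factorial product $\prod_{i=1}^n \frac{i!}{(2\pi)^{i+1}}$, the special values $\zeta(2i)$ and $L_{E/\bQ}(2i+1)$, the power $D^{\lfloor n/2\rfloor(\lfloor n/2\rfloor + 3/2)}$, the local factors $\lambda(L_v)$, and harmless central and index contributions (the centre $Z$ contributes at most $|\SU(L)\cap Z|$, bounded polynomially in $n$). Since every $k_i \ge 1$, freeness forces $\Phi(n,D,L) \ge 1$, and the theorem reduces to proving $\Phi(n,D,L) < 1$ for all $L$ once $n > 99$ and $D > 3$ (and for all $n$ once $D$ is large).

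First I would reduce to a maximal arithmetic subgroup that is, up to scaling, the stabiliser $\U(L)$ of a Hermitian lattice. Freeness forces $\Gamma$ to be generated by complex reflections, hence commensurable with such a stabiliser; and if $\Gamma \subseteq \Gamma'$ then $\volHM(\Gamma) = [\Gamma':\Gamma]\,\volHM(\Gamma') \ge \volHM(\Gamma')$, so $\Phi(\Gamma) \le \Phi(\Gamma')$. Thus it suffices to bound $\Phi$ for the maximal group, which lets me apply the volume formula directly and isolates the genuine task: bounding $\Phi$ from above, equivalently bounding $\volHM(\SU(L))$ from below, uniformly over all $L$ of signature $(1,n)$.

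The core estimate is this uniform lower bound for $\volHM(\SU(L))$. The super-exponential growth of $\prod_{i=1}^n \frac{i!}{(2\pi)^{i+1}}$ (Stirling gives $\log$ of order $\tfrac{n^2}{2}\log n$), combined with $\zeta(2i) > 1$, a uniform lower bound for $L_{E/\bQ}(2i+1)$, and the factor $D^{\sim n^2/4}$, overwhelms the proportionality constant relating the arithmetic volume to the self-intersection $\frac{1}{\prod k_i}$. The only remaining danger is that the finite-place factors $\prod_{v \in V_{\fin}} \lambda(L_v)$ might be small and absorb this growth. I would therefore analyse $\lambda(L_v)$ place by place through its definition in terms of $M_{L_v}$ and the reductive quotients $\mathbf{G}_{M_{L_v}}$, $\mathbf{G}_{L_v}$, proving $\lambda(L_v) \ge 1$, or at worst $\lambda(L_v) \ge q_{E_v}^{-c}$ with an $L$-independent, summable correction, so that $\prod_v \lambda(L_v)$ is bounded below by an explicit positive constant. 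Feeding this into the growth estimate pins the crossover at $n = 99$ for $D > 3$; when $D = 3$ the smaller discriminant and the larger group of units (so a larger central factor) weaken the bound and push the crossover to $n = 154$, which accounts for the stated exception.

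The main obstacle is exactly this uniform control of the local factors $\lambda(L_v)$: unlike the principal case of \cite{Pra89}, an arbitrary lattice stabiliser can have highly degenerate local structure at the ramified places, and even one place with very small $\lambda(L_v)$ would destroy the volume lower bound and hence the entire comparison. Making this tractable is precisely the purpose of the generalisation of Prasad's formula in Theorem~\ref{thm:prasad'svolumeformulaspecialunitarycaseintro}, whose explicit lattice $M_{L_v}$ and Gan--Yu normalisation reduce each local factor to a computable quotient of finite reductive groups. Converting the resulting qualitative finiteness into the sharp numerical thresholds $99$ and $154$ is then a matter of careful bookkeeping, demanding tight Stirling estimates and attention to the small primes dividing $D$.
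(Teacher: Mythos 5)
Your proposal is correct in strategy, but it takes a genuinely different route from the paper. The paper never compares $\volHM(\Gamma)$ with $1/\prod k_i$ directly: it invokes the Jacobian-type cusp form of weight $n+1+k_1+\cdots+k_{n+1}$ attached to a free algebra (\cite{WW2021}*{Theorem~3.3}), converts its divisor $\sum (r_l-1)H_l$ via Bruinier's degree identity into the criterion $\sum_{[l]}\frac{r_l-1}{r_l}\volHM(\Gamma^l)/\volHM(\Gamma)\ge 2(n+1)$ (Theorem~\ref{thm:criterion_wps}), and then spends Subsections~\ref{subsec:Estimation of local factors}--\ref{subsecGlobalcomputation} bounding that sum, which forces it to classify and count reflective vectors and sublattices (the sets $\mathcal{R}$, $T_{L,2}$, $T_{L,4}$, the functions $\phi_v$, the invariant $N(L)$). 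Your route — freeness forces $\volHM(\Gamma)=\abs{Z\cap\Gamma}/\prod_i k_i\le\abs{Z}$ via the Hilbert series of a free algebra and section-level proportionality, contradicted by a uniform lower bound on $\volHM(\U(L))$ — bypasses reflective vectors, Bruinier's formula, and all branch-divisor counting, needing only the generalized volume formula. The paper's finer route is what powers its other results (the $N(L)^{\epsilon}$-dependence behind Theorem~\ref{mainthm:finiteness} and the slope bounds of Theorem~\ref{mainthm:finiteness of reflective modular forms}); your route is more direct and, if carried out, actually yields non-freeness well below $n=99$ when $D\ge 7$, since $D^{\lfloor n/2\rfloor(\lfloor n/2\rfloor+3/2)}\prod_{i=1}^n i!/(2\pi)^{i+1}$ dwarfs any constant long before $n=100$. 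One small repair: state the weighted-projective identity with the central factor from the start, since on the coarse space $\mathcal{O}(1)^n=1/\prod k_i$ is false without well-formedness (for $\SL_2(\bZ)$ one gets $1/12$, not $1/24$); the correct identity $\volHM(\Gamma)=\abs{Z\cap\Gamma}/\prod k_i$ is exactly your ``central contribution'' and the inequality direction survives.

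Two caveats of substance. First, your crux — a lower bound on $\prod_v\lambda(L_v)$ uniform in $L$ — is asserted rather than proved, and it is where all the work lives: $\Ind(L_v)\ge 1$ is trivial (it is a group index), and at inert and split places one can check $\lambda(M_{L_v})\ge 1$ for every parahoric type (a $q$-binomial computation from the Gan--Yu reductive quotients, reflecting that hyperspecial parahorics maximize local volume), but at ramified places $\lambda(M_{L_v})$ genuinely drops below $1$ (for instance it equals $1/2$ for the type $\Sp_{n}\times\O_1$ when $n$ is even), so your claim ``$\lambda(L_v)\ge 1$'' is false as stated; the argument survives only because there are at most $\omega(D)$ ramified places, so the total loss is a bounded power of $D$, negligible against $D^{\lfloor n/2\rfloor(\lfloor n/2\rfloor+3/2)}$. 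Second, the thresholds $99$ and $154$ cannot come out of your method: in the paper they arise from the sublattice count $1+2\cdot 2^{2n+1}+4^{2n+1}$ in the reflective-vector analysis, a quantity your argument never sees, so the claim that your bookkeeping ``pins the crossover at $n=99$'' is reverse-engineered rather than derived (harmless for proving the stated theorem, since your crossover is smaller, but it should be removed). Finally, the reduction to $\U(L)$ needs no appeal to reflections or commensurability — commensurability alone would not give the containment your monotonicity step requires — it is immediate from the paper's definition of an arithmetic subgroup as a finite-index subgroup of some $\U(L)$.
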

\begin{rmk}
\label{rem:-3 never free}
\begin{enumerate}
    \item In the case $E=\bQ(\sqrt{-3})$, the algebra $M_*(\Gamma)$ is never free if $n>154$.
    This is also a part of Theorem \ref{thm:not_wps}.
    \item Furthermore, we will see that $M_*(\Gamma)$ is never free if $L$ is unimodular and $n>2$; see Theorem \ref{thm:unimodular_evaluation}. This result also holds for even $D$ except the Gaussian case $D=4$.
\end{enumerate}
\end{rmk}

We briefly outline the strategy used to prove Theorem~\ref{mainthm:not_wps} here.
Our approach begins by reducing the problem to estimating the Hirzebruch-Mumford volume of unitary groups.
If $M_*(\Gamma)$ is a free algebra, then there exists a special reflective modular form in the sense of \cite{maeda2023fano}, building on the work of Aoki and Ibukiyama \cite{aoki2005simple} and Wang and Williams \cite{WW2021}.
The existence of such a modular form implies a volume identity \cite{bruinier2004}, which in turn yields a criterion, formulated in terms of the Hirzebruch-Mumford volumes, for determining when $M_*(\Gamma)$ is not freely generated (Theorem \ref{thm:criterion_wps}).
Combining this criterion with the explicit volume computations Theorem~\ref{thm:prasad'svolumeformulaspecialunitarycaseintro}, we show in Theorem~\ref{thm:not_wps} that when the inequality 
\begin{align}
\label{ineq:non-freeness main results}
    f(n,D) < \max\left\{1, \left(N(L)/4\right)^{\epsilon}\right\}
\end{align}
holds, the algebra $M_*(\Gamma)$ is not free for any $\Gamma<\U(L)$.
  Here, $\epsilon>0$ is a constant, independent of $L$, $n$, and $E$, and  $N(L)$ denotes a quantity defined in Subsection~\ref{subsecGlobalcomputation}, closely related to the exponent of the finite discriminant group $L^{\vee}/L$, where $L^{\vee}$ denotes the dual lattice of $L$.
  The function $f(n,D)$ is defined as 
  \[f(n,D)\defeq     (1+2\cdot 2^{2n+1}+4^{2n+1})\cdot\frac{2 \cdot  (2\pi)^{n+1}}{(n+1)!\cdot D^{n/2}}\]
  when $D \neq 3$ (for the case of $D = 3$, see Theorem~\ref{thm:not_wps}).
Thus, Conjecture~\ref{conj:main} holds in the range where (\ref{ineq:non-freeness main results}) is satisfied. Since $f(n,D)\to 0$ when $n,D\to\infty$, we extract this range as the statement of Theorem \ref{mainthm:not_wps}. Furthermore, the existence of an invariant $N(L)$, which depends only on $L$, implies that, even outside the scope of the theorem, there can only be finitely many examples where $M_*(\Gamma)$ is free.

  \begin{thm}[{Theorem~\ref{thm:finiteness}}]
  \label{mainthm:finiteness}
      Up to scaling, there are only finitely many isometry classes of Hermitian lattices $L$ of signature $(1,n)$ over $\cO_E$, where $n>2$ and $E$ is an imaginary quadratic field with odd discriminant, such that $M_*(\Gamma)$ is a free algebra for some arithmetic subgroup $\Gamma < \U(L)$.
  \end{thm}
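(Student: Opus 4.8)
The plan is to argue by contraposition against the non-freeness inequality \eqref{ineq:non-freeness main results} and then feed the resulting bounds into a classical finiteness theorem for lattices of bounded discriminant. Suppose $M_*(\Gamma)$ is free for some $\Gamma<\U(L)$, with $L$ of signature $(1,n)$, $n>2$, over $\cO_E$ with $E$ imaginary quadratic of odd discriminant $-D$. Since replacing $\langle\phantom{x},\phantom{x}\rangle$ by a scalar multiple leaves $\bB^n$, the group $\U(L)$, and hence $X_\Gamma$ and $M_*(\Gamma)$ unchanged, freeness is scaling-invariant; thus it suffices to bound one (say primitive) representative in each scaling class. By the mechanism behind Theorem~\ref{thm:not_wps}---freeness produces a special reflective modular form, whose existence yields the volume identity underlying the criterion Theorem~\ref{thm:criterion_wps}---freeness forces \eqref{ineq:non-freeness main results} to fail, so that
\[
f(n,D)\ \ge\ \max\{1,\,(N(L)/4)^{\epsilon}\}.
\]

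First I would use the two halves of this inequality in turn. The estimate $f(n,D)\ge 1$ cuts out only finitely many pairs $(n,D)$: for fixed $D$ the denominator $(n+1)!$ of $f$ eventually overwhelms its exponential numerator, so $f(n,D)\to 0$ as $n\to\infty$, while for fixed $n$ the factor $D^{-n/2}$ gives $f(n,D)\to 0$ as $D\to\infty$. Taking logarithms and using $\log((n+1)!)\sim (n+1)\log(n+1)$ shows there is an absolute $n_0$ with $f(n,D)<1$ whenever $n\ge n_0$, and for each smaller $n$ a bound $D\le D_0(n)$; the variant of $f$ used when $D=3$ is handled identically. Hence the admissible pairs $(n,D)$ lie in a finite set $S$, bounding $n$ and leaving only finitely many fields $E$. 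On $S$ the function $f$ is bounded by some constant $C_0$, so the second half of the inequality gives $(N(L)/4)^{\epsilon}\le C_0$, that is, a uniform bound $N(L)\le N_0$ independent of $L$.

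It remains to convert the three bounds---on the rank $n+1$, on $E$, and on $N(L)$---into finiteness up to scaling. Since $N(L)$ is, after normalizing the scaling, controlled by the exponent of the finite discriminant module $L^{\vee}/L$, the bound $N(L)\le N_0$ together with the bounded rank bounds the order $\abs{L^{\vee}/L}$ (an $\cO_E$-module generated by at most $n+1$ elements has order at most a fixed power of its exponent), hence bounds the discriminant of the primitive representative. I would then invoke the classical reduction-theoretic finiteness---there are only finitely many isometry classes of Hermitian $\cO_E$-lattices of bounded rank and bounded discriminant over a fixed imaginary quadratic field---and range over the finitely many $E$ permitted by $S$ to conclude.

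The hard part will be the final conversion, and in particular keeping it genuinely scaling-invariant. Because $N(L)$ is only \emph{closely related} to the exponent of $L^{\vee}/L$, which is not itself scaling-invariant, one must pass carefully to a primitive representative before turning the bound on $N(L)$ into a bound on $\abs{L^{\vee}/L}$ and thence on the discriminant, and then check that this normalization is compatible with the scaling-invariance of freeness and that the discriminant bound is exactly what the finiteness theorem consumes. The volume formula of Theorem~\ref{thm:prasad'svolumeformulaspecialunitarycaseintro} is what guarantees that it is $N(L)$---rather than a coarser invariant---that appears in \eqref{ineq:non-freeness main results}, and hence that the bound extracted is sharp enough to pin down the lattice up to finitely many classes.
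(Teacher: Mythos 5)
Your proposal is correct and follows essentially the same route as the paper's proof: both contrapose Theorem~\ref{thm:not_wps} to get $f(n,D)\ge\max\{1,(N(L)/4)^{\epsilon}\}$, deduce that only finitely many triples $(n,D,N(L))$ are admissible, then normalize the scaling so that the bound on $N(L)$ controls the determinant of $L$, and conclude by reduction-theoretic finiteness of Hermitian $\cO_E$-lattices of bounded rank and determinant (the paper cites Kitaoka). The ``careful passage to a primitive representative'' you flag is exactly the paper's normalization that $0\in I(L_v)_{\rel}$ at split or unramified places and $\{0,1\}\cap I(L_v)_{\rel}\neq\emptyset$ at ramified places, so there is no gap.
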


\begin{rmk}
We now summarise previous studies related to our main result.
\begin{enumerate}
\item Theorem \ref{mainthm:not_wps} can be regarded as a unitary analogue of the result in \cite{vinberg2017criterion},  which concerns the case of orthogonal groups.
That work analyses the Satake topology on $\overline{\Gamma\backslash\mathcal{D}}$ by using a tube domain realisation.
In contrast, the complex ball $\bB^n$ is not a tube domain, and therefore similar methods do not apply directly in our setting.
    \item     For specific $L$, Conjecture \ref{conj:main} was established in \cite{stuken2022nonfreeness} through a case-by-case volume computation based on Lie group-theoretic techniques.
Our theorem generalises these results.

\end{enumerate}
\end{rmk}

\subsection{Reflective modular forms}
\label{subsec:reflective modular forms}
Let $f$ be a modular form of weight $\kappa$ with respect to $\Gamma < \U(L)$.
We call $f$ a \emph{reflective modular form} if the support of $\div(f)$ is contained set-theoretically in the union of ramification divisors of the uniformisation map $\bB^n\to X_{\Gamma}$.
Followed by \cite{behrens2012singularities}, any ramification divisor is caused by a reflection with respect to a vector $l\in L$.
We denote by $\mathcal{R}_{\Gamma}$ the set of $Z\Gamma$-equivalence classes of such vectors and $H_l$ the Heegner divisor associated with $l$.
Putting $\div(f) = \sum_{[l]\in\mathcal{R}^{\Gamma}} a_lH_l$, the \emph{slope} of $f$ is defined to be $\rho(f)\defeq\max\{a_i/\kappa\}$.
In the orthogonal case, such modular forms have been extensively studied due to their connections with Kac–Moody algebras \cites{scheithauer2006classification,gritsenko2018reflective,gritsenko1998automorphic,ma2017finiteness,wang2023classificationreflective,wang2024reflective}.
In the case of unitary groups, reflective modular forms are also constructed using the Borcherds lift \cite{borcherds1998automorphic} on certain moduli spaces \cites{allcock2002cubic,freitag2011modular,kondo2013segre,kondo2016igusa}.
The existence of reflective modular forms imposes strong constraints on the canonical bundle of ball quotients and has notable applications to the classification of their compactifications \cites{casalaina2009degenerations,casalaina2012genusfour,casalaina2024nonisomorphic,hulek2024compactifications,hulek2025revisiting}.
Returning to our situation, in the proofs of Theorems \ref{mainthm:not_wps}, \ref{mainthm:finiteness},
 we refer to the fact that a reflective modular form exists when $M_*(\Gamma)$ is free.
As an application of the techniques used in this paper, we also prove a finiteness theorem for such reflective modular forms.
Let $g(n,D)$ be the inverse of $4(n+1) \cdot f(n,D)$, which diverges $\infty$ when $n,D \to \infty$.
For $E=\bQ(\sqrt{-3})$, we slightly change the definition of $g(n,D)$; see Subsection \ref{subsec:Finiteness of reflective modular forms}.
\begin{thm}[{Theorem \ref{thm:finiteness of reflective modular forms}}]
\label{mainthm:finiteness of reflective modular forms}
Let $E$ be an imaginary quadratic field with odd discriminant $-D$.
\begin{enumerate}
    \item There exist no reflective modular forms $f$ such that $\rho(f) \le g(n,D)$.
\item Let $r>0$ be a fixed rational number.
Up to scaling, there are only finitely many isometry classes of Hermitian lattices $L$ of signature $(1,n)$ over $\cO_E$, where $n>2$ and $E$ is an imaginary quadratic field with odd discriminant, such that there exists a reflective modular form $f$ for some arithmetic subgroup $\Gamma < \U(L)$ with $\rho(f) \le r$.
\end{enumerate}
\end{thm}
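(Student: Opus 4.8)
The plan is to reduce both statements to the volume inequality machinery already assembled for the main non-freeness theorems, keeping track of the dependence on the slope $\rho(f)$ rather than on the freeness hypothesis. The key observation is that a reflective modular form $f$ of weight $\kappa$ contributes, through the divisor relation $\div(f) = \sum_{[l] \in \mathcal{R}_\Gamma} a_l H_l$, a lower bound on the total mass of the Heegner divisors $H_l$ weighted by $a_l/\kappa \le \rho(f)$. First I would invoke the volume identity coming from the existence of $f$ (the same Bruinier-type identity cited for Theorem~\ref{thm:criterion_wps}, \cite{bruinier2004}): intersecting $\div(f)$ against the canonical class and comparing with $\volHM(\SU(L))$ yields an inequality of the shape
\[
\kappa \cdot c(n) \cdot \volHM(\SU(L)) \;\le\; \sum_{[l]\in\mathcal{R}_\Gamma} a_l \cdot \vol(H_l),
\]
where $c(n)$ is an explicit constant depending only on $n$. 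Bounding each $a_l \le \rho(f)\,\kappa$ and the number of reflective vectors by a geometric count, one converts this into an inequality comparing $\rho(f)$ with a ratio of Hirzebruch-Mumford volumes, exactly the ratio controlled by the function $f(n,D)$ introduced in \eqref{ineq:non-freeness main results}.

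For part~(1), I would make this precise so that the existence of $f$ forces $\rho(f) > g(n,D)$, where $g(n,D)$ is defined as the inverse of $4(n+1)\cdot f(n,D)$. Concretely, the slope $\rho(f)$ must exceed the reciprocal of the number of reflective hyperplanes times the volume comparison factor; since each reflective divisor $H_l$ has volume bounded above in terms of $\volHM$ of the rank-$n$ sublattice $l^\perp$, and the ambient volume is $\volHM(\SU(L))$, the quotient is controlled by the local factors $\lambda(L_v)$ appearing in Theorem~\ref{thm:prasad'svolumeformulaspecialunitarycaseintro}. The constant $4(n+1)$ accounts for the combinatorial multiplicity of reflections (the factor $n+1$ from the rank and $4$ from the order of the relevant roots of unity); the $E = \bQ(\sqrt{-3})$ case is handled separately because the extra units change this multiplicity, which is precisely why $g(n,D)$ is redefined there. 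Contrapositively, $\rho(f) \le g(n,D)$ is incompatible with the volume identity, so no such $f$ exists.

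For part~(2), I would argue that fixing $\rho(f) \le r$ bounds, via the inequality from part~(1), the quantity $N(L)$ from above: the volume comparison gives $f(n,D) \ge 1/(4(n+1)\,r)$ up to the contribution of $N(L)$, and since $N(L)$ enters through a factor $(N(L)/4)^\epsilon$ as in \eqref{ineq:non-freeness main results}, a fixed bound on $r$ forces $N(L)^\epsilon$ to be bounded, hence $N(L)$ bounded. Because $N(L)$ is closely tied to the exponent of the discriminant group $L^\vee/L$, a bound on $N(L)$ together with the signature $(1,n)$ constraint restricts $L$ to finitely many isometry classes up to scaling, by the same finiteness principle used to prove Theorem~\ref{mainthm:finiteness}; simultaneously $f(n,D) \to 0$ as $n,D \to \infty$ forces $n$ and $D$ into a finite range. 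The main obstacle I anticipate is making the volume identity quantitatively uniform in $\rho(f)$: one must ensure that the intersection-theoretic bound relating $\div(f)$ to $\volHM$ does not secretly depend on $L$ beyond the invariant $N(L)$, so that the passage from a bounded slope to a bounded $N(L)$ is genuinely lattice-uniform. This is exactly where the explicit local factors $\lambda(L_v)$ from Theorem~\ref{thm:prasad'svolumeformulaspecialunitarycaseintro} must be estimated carefully, in the same spirit as the proof of Theorem~\ref{mainthm:not_wps}.
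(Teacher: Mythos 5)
Your proposal is correct and takes essentially the same route as the paper: for (1) the paper likewise combines the Bruinier-type identity $\sum_{[l]}(a_l-1)\,\volHM(\Gamma^l)/\volHM(\Gamma)=\kappa$ with the explicit bound of Theorem~\ref{thm:not_wps} (via Lemma~\ref{lem:inequality of the ratio between two groups}) to force $\rho(f)>g(n,D)$, and for (2) it reduces to finiteness of the admissible triples $(n,D,N(L))$ plus reduction theory exactly as in Theorem~\ref{thm:finiteness}. Only your side glosses are off --- the intersection in Bruinier's identity is taken against powers of the automorphic line bundle rather than the canonical class, and the factor $4(n+1)$ in $g(n,D)$ is pure bookkeeping (converting $(n+1)!$ to $n!$ and absorbing the constants from the $\U$-versus-$\SU$ comparison and the $L$-value bound), not a count of reflections or roots of unity --- but neither point affects the validity of your argument.
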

Computer-based computations indicates that when $n>100$, there are no reflective modular forms with $\rho(f) < 1/(n+1)$ for $E\neq \bQ(\sqrt{-3})$.
This provides a solution to a unitary analogue of the conjecture of Gritsenko and Nikulin \cite{gritsenko1998automorphic}*{Conjecture 2.5.5}, whose original version for orthogonal groups was resolved by Ma \cite{ma2018kodaira}*{Corollaries 1.9, 1.10}; see Subsection \ref{subsec:Finiteness of reflective modular forms} in detail.
Our theorem shows that such modular forms are exceedingly rare.

\subsection{Organization of the paper}

In Section \ref{sec:A formula for covolumes of simply connected classical groups}, we prove Theorem~\ref{thm:prasad'svolumeformulaspecialunitarycaseintro}.
In Subsection~\ref{subsec:reviewofprasad}, we give a brief review of Prasad's volume formula for the covolumes of principal arithmetic subgroups of absolutely simple, simply connected algebraic groups.
In Subsection~\ref{subseccomparisonofvolumes}, we work on classical groups over non-archimedean local fields.
For a compact, open subgroup $G_{L}$ given as the stabiliser of a lattice $L$, we construct another lattice $M_{L}$ whose stabiliser $G_{M_{L}}$ is a parahoric subgroup containing $G_{L}$ and compute the index $\abs{G_{M_{L}}/G_{L}}$ explicitly.
Combining this local computation with Prasad's volume formula, we prove the main result (Theorem~\ref{thm:generalization of Prasad's volume formula}) of this section, which gives an explicit formula for the covolume of an arithmetic subgroup presented as the stabiliser of a lattice that is not necessarily principal.
In Subsection~\ref{subsec:thecaseofspecialunitary}, we restrict ourselves to the case of special unitary groups over $\bQ$ and rewrite Theorem~\ref{thm:generalization of Prasad's volume formula} in a more explicit form.

Building on the volume formula, Section \ref{section:Estimation of volumes} is devoted to the proof of Theorem \ref{mainthm:not_wps}.
As explained above, a key idea is a criterion Theorem \ref{thm:criterion_wps} showing that if $M_*(\Gamma)$ is free, then the associated arithmetic subgroup must satisfy a special volume constraint.
We prove Theorem \ref{mainthm:not_wps} by evaluating this volume explicitly using Theorem~\ref{thm:generalization of Prasad's volume formula}.

Section \ref{sec:concrete examples} discusses two applications. First, we prove that reflective modular forms on ball quotients are rare.
Second, we apply our volume computations to prove that $M_*(\Gamma)$ for the moduli space of cubic threefolds is not free (Proposition \ref{prop:cubic threefolds wps}), which gives another proof that does not rely on the computation of cohomology.

\subsection*{Acknowledgements}
The authors are grateful to Jan Bruinier, Nils Scheithauer, and Klaus Hulek for valuable discussions on modular forms and the geometry of ball quotients.
We also thank Haowu Wang and Beandon Williams for their helpful comments on reflections and for informing us about their research on the moduli space of cubic surfaces, and Mikhail Belolipetsky and Sai-Kee Yeung for their insightful comments regarding the applications of minimal covolumes of arithmetic subgroups.
Our thanks further go to the Max Planck Institute for Mathematics for their hospitality.
Y.M. is partially supported by the Alexander von Humboldt Foundation through a Humboldt Research Fellowship.

\section*{Notation and conventions}
For a global filed $k$, we write $\cO_{k}$ for the ring of integers of $k$.
We denote by $V_{k}$, $V_{\fin}$, and $V_{\infty}$ the set of places of $k$, the set of finite places of $k$, and the set of infinite places of $k$, respectively.
For $v \in V_{k}$, let $k_{v}$ denote the completion of $k$ at $v$.
For $v \in V_{\fin}$, we denote by $\cO_{k_{v}}$ the ring of integers of $k_{v}$ and $\mathfrak{p}_{k_{v}}$ the maximal ideal of $\cO_{k_{v}}$.
We also write $\mathfrak{f}_{v} \defeq \cO_{k_{v}}/\mathfrak{p}_{k_{v}}$ and $q_{v} \defeq \abs{\mathfrak{f}_{v}}$.
For each $v \in V_{k}$, we fix the normalized absolute value $\abs{\phantom{x}}_{v}$ on $k_{v}$ as in \cite{Pra89}*{\S0.1}.

Suppose that $E/k$ is a quadratic extension of global fields.
For $v \in V_{k}$, we write $E_{v} \defeq E \otimes_{k} k_{v}$.
For $v \in V_{\fin}$, let $\cO_{E_{v}}$ denote the maximal $\cO_{k_{v}}$-order in $E_{v}$.
If $v$ is inert or splits over $E$, we write $\mathfrak{p}_{E_{v}} \defeq \mathfrak{p}_{k_{v}} \cdot \cO_{E_{v}}$.
If $v$ ramifies over $E$, let $\mathfrak{p}_{E_{v}}$ be the maximal ideal of $\cO_{E_{v}}$.
We write $q_{E_{v}} \defeq \abs{\cO_{E_{v}} / \mathfrak{p}_{E_{v}}}$.
In Subsection~\ref{subseccomparisonofvolumes}, we work over local fields. In doing so, we adopt a simplified notation by omitting the explicit reference to the place $v$; see Remark \ref{rem:notation local global}

\section{Covolumes of arithmetic subgroups of simply connected classical groups}
\label{sec:A formula for covolumes of simply connected classical groups}

Let $k$ be a global field.
Let $G$ be a classical group over $k$ and $G_{\scn}$ be the simply connected cover of the derived group of $G$, that is,
$G_{\scn}$ is one of the following groups: spin groups; symplectic groups; and special unitary groups.
In this section, we will prove an explicit formula for the covolumes of arithmetic subgroups of $G_{\scn}$ presented as stabilisers of Hermitian lattices (see Theorem~\ref{thm:generalization of Prasad's volume formula}).
Our result is obtained by combining an explicit computation of the index of compact, open subgroups of $p$-adic groups with prior work \cite{Pra89} by Prasad, where he obtained a volume formula for the \emph{principal} arithmetic subgroups of absolutely simple, simply connected algebraic groups. (For the definition of principal arithmetic subgroups, see Subsection~\ref{subsec:reviewofprasad}.)
We will apply the main result of this section to the case of special unitary groups in Section~\ref{section:Estimation of volumes}.

\subsection{A review of Gopal Prasad's volume formula}
\label{subsec:reviewofprasad}

In this subsection, we will give a brief review of Prasad's volume formula following \cite{Pra89} and \cite{KalethaPrasad}*{Section~18}.
We fix a non-empty finite subset $S$ of $V_{k}$ containing all infinite places.
In Section~\ref{section:Estimation of volumes}, we will assume that $k = \mathbb{Q}$ and take $S = V_{\infty}$.
Let $\mathbb{A}$ denote the $k$-algebra of ad{\`e}les of $k$ and $\mathbb{A}_{S}$ denote the $k$-algebra of $S$-ad{\`e}les, which is the restricted direct product of $k_{v}$ for $v \in V_{k} \smallsetminus S$.

Let $H$ be an absolutely almost simple, simply connected group over $k$.
We write $H_{S} = \prod_{v \in S} H(k_{v})$.
We assume that $H_{S}$ is non-compact so that it satisfies the strong approximation property (see the proof of Lemma~\ref{lemma:covolume=productofnon-archimedean} for instance).
Let $\iota \colon H(k) \hookrightarrow H(\mathbb{A})$ and $\iota_{S} \colon H(k) \hookrightarrow H_{S}$ be the diagonal embeddings.

Let $K$ be a compact, open subgroup of $H(\mathbb{A}_{S})$ of the form $K = \prod_{v \in V_{k} \smallsetminus S} K_{v}$, where $K_{v}$ is a compact, open subgroup of $H(k_{v})$ for each $v \in V_{k} \smallsetminus S$.
We note that for all but finitely many $v$, the group $K_{v}$ is a hyperspecial parahoric subgroup of $H(k_{v})$ (see \cite{KalethaPrasad}*{\S18.1.9}).
We define a subgroup $\Gamma$ of $H(k)$ by
\[
\Gamma \defeq H(k) \cap \iota^{-1}\left(
H_{S} \times K
\right).
\]
Then the group $\iota_{S}(\Gamma)$ is a lattice of $H_{S}$.
In this case, we say that $\Gamma$ is the $S$-arithmetic subgroup associated to the compact, open subgroup $K$ of $H(\mathbb{A}_{S})$ (see \cite{Pra89}*{\S3.4} and \cite{KalethaPrasad}*{Definition~18.1.10}).

Let $\mu_{\mathbb{A}}$ be a Haar measure on $H(\mathbb{A})$.
We fix a Haar measure $\mu_{v}$ on $H(k_{v})$ for each $v \in V_{k}$ such that the restriction of $\mu_{\mathbb{A}}$ on $H_{S} \times K$ agrees with the product measure 
\[
\prod_{v \in S} \mu_{v} \times \prod_{v \in V_{k} \smallsetminus S} \mu_{v} 
\restriction_{K_{v}}.
\]
We define a Haar measure $\mu_{S}$ on $H_{S}$ by $\mu_{S} \defeq \prod_{v \in V_{k} \smallsetminus S} \mu_{v}$.

\begin{lemma}[\cite{Pra89}]
\label{lemma:covolume=productofnon-archimedean}
We have
\[
\mu_{S}\left(
H_{S}/\iota_{S}(\Gamma)
\right) = \mu_{\mathbb{A}}\left(
H(\mathbb{A})/\iota(H(k))
\right) \times 
\left(
\prod_{v \in V_{k} \smallsetminus S} \mu_{v}(K_{v})
\right)^{-1}.
\]
\end{lemma}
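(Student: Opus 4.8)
The plan is to use strong approximation to realise both the adelic quotient $H(\mathbb{A})/\iota(H(k))$ and the $S$-arithmetic quotient $H_S/\iota_S(\Gamma)$ in terms of the single locally compact group $H_S \times K$ together with the diagonal image of $\Gamma$, and then to compare the two by exhibiting an explicit fundamental domain.

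First I would invoke the strong approximation property. Since $H$ is absolutely almost simple and simply connected and $H_S$ is non-compact, $H(k)$ is dense in $H(\mathbb{A}_S)$; because $H_S \times K$ is an open subgroup of $H(\mathbb{A})$ whose component away from $S$ is the open set $K$, this density yields the decomposition $H(\mathbb{A}) = \iota(H(k)) \cdot (H_S \times K)$. Hence the restriction to $H_S \times K$ of the projection $H(\mathbb{A}) \to H(\mathbb{A})/\iota(H(k))$ is surjective. To read off its fibers, note that two points $g, g' \in H_S \times K$ have the same image if and only if $g' = g\,\iota(\gamma)$ with $\gamma \in H(k)$; since $H_S \times K$ is a group, $\iota(\gamma) = g^{-1}g'$ then lies in $H_S \times K$, which forces $\gamma \in H(k) \cap (H_S \times K)$, and by definition this intersection is exactly $\Gamma$. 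Thus the map descends to a homeomorphism $(H_S \times K)/\Gamma \xrightarrow{\sim} H(\mathbb{A})/\iota(H(k))$, measure-preserving for the quotient measures induced by $\mu_{\mathbb{A}}$, whose restriction to $H_S \times K$ is by construction the product measure $\mu_S \times \prod_{v \in V_k \smallsetminus S} \mu_v|_{K_v}$.

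It then remains to compute $\mu_{\mathbb{A}}\big((H_S \times K)/\Gamma\big)$, for which I would exhibit a fundamental domain. If $\mathcal{F} \subseteq H_S$ is a fundamental domain for the free, properly discontinuous right action of the lattice $\iota_S(\Gamma)$ on $H_S$, then $\mathcal{F} \times K$ is a fundamental domain for the diagonal right action $(h,k)\cdot\gamma = (h\,\iota_S(\gamma),\, k\gamma)$ of $\Gamma$ on $H_S \times K$. Indeed, given $(h,k)$ one chooses $\gamma \in \Gamma$ with $h\,\iota_S(\gamma) \in \mathcal{F}$, and then $(h,k)\cdot\gamma \in \mathcal{F} \times K$ because $k\gamma \in K$; and if $(h,k)$ and $(h,k)\cdot\gamma$ both lie in $\mathcal{F} \times K$, then $h,\, h\,\iota_S(\gamma) \in \mathcal{F}$ together with the injectivity of $\iota_S$ force $\gamma = 1$. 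Integrating over this fundamental domain with the product measure gives
\[
\mu_{\mathbb{A}}\big((H_S \times K)/\Gamma\big) = \mu_S(\mathcal{F}) \cdot \prod_{v \in V_k \smallsetminus S} \mu_v(K_v) = \mu_S\big(H_S/\iota_S(\Gamma)\big) \cdot \prod_{v \in V_k \smallsetminus S} \mu_v(K_v),
\]
and combining this with the measure-preserving homeomorphism above and rearranging yields the claimed identity.

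The change-of-variables bookkeeping is routine; the one step that genuinely needs care, and the main obstacle, is the invocation of strong approximation. Concretely, one must use the non-compactness of $H_S$ to guarantee that $H(k)$ is dense in $H(\mathbb{A}_S)$ (hence the decomposition $H(\mathbb{A}) = \iota(H(k))\,(H_S\times K)$), and one must know that $\Gamma$ is discrete in $H_S \times K$ with $\iota_S(\Gamma)$ a lattice in $H_S$, so that the quotient measures and the fundamental domain $\mathcal{F}$ exist. Both facts are ensured by the standing hypotheses, following \cite{Pra89}.
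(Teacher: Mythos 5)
Your proposal is correct and follows essentially the same route as the paper: strong approximation yields $H(\mathbb{A}) = (H_S \times K)\cdot \iota(H(k))$, the adelic quotient is identified with $(H_S \times K)/\iota(\Gamma)$, and the volume then factors as $\mu_S\left(H_S/\iota_S(\Gamma)\right) \cdot \prod_{v \in V_k \smallsetminus S}\mu_v(K_v)$. The only cosmetic difference is that the paper extracts this factorisation from the principal fibration $(H_S \times K)/\iota(\Gamma) \twoheadrightarrow H_S/\iota_S(\Gamma)$ with fibre $K$, whereas you integrate over the explicit fundamental domain $\mathcal{F}\times K$ — the same computation phrased differently.
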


The lemma is a straightforward implication of the strong approximation property, and it is explained in \cite{Pra89}*{\S3.4}.
Although Prasad assumes that $K_{v}$ is a parahoric subgroup for each $v \in V_{k} \smallsetminus S$, we do not need the assumption for this claim.
We will give a brief proof of the lemma following \cite{Pra89}*{\S3.4} for completeness.

\begin{proof}
By the strong approximation property, we have
\[
H(\mathbb{A}) = \left(
H_{S} \times K
\right) \cdot \iota(H(k)).
\]
Hence, the natural inclusion $H_{S} \times K \subseteq H(\mathbb{A})$ induces an isomorphism
\[
(H_{S} \times K)/\iota(\Gamma) \simeq H(\mathbb{A})/\iota(H(k)).
\]
Since the projection $H_{S} \times K \rightarrow H_{S}$ defines a principal fibration $(H_{S} \times K)/\iota(\Gamma) \twoheadrightarrow H_{S}/\iota_{S}(\Gamma)$ with fibre $K = \prod_{v \in V_{k} \smallsetminus S} K_{v}$, we obtain the claim.
\end{proof}

In \cite{Pra89}, Prasad gave an explicit formula for the covolume $\mu_{S}\left(
H_{S}/\iota_{S}(\Gamma)
\right)$ with respect to appropriately fixed measure $\mu_{S}$ assuming that $\Gamma$ is a principal $S$-arithmetic subgroup of $H(k)$.
Here, an $S$-arithmetic subgroup $\Gamma$ associated to a compact, open subgroup $K = \prod_{v \in V_{k} \smallsetminus S} K_{v}$ is called \emph{principal} if the groups $K_{v}$ are parahoric subgroups of $H(k_{v})$ for all $v \in V_{k} \smallsetminus S$ (see \cite{Pra89}*{\S3.4}).
Before describing Prasad's volume formula, we record an immediate corollary of Lemma~\ref{lemma:covolume=productofnon-archimedean} that will be used to calculate the covolume $\mu_{S}\left(
H_{S}/\iota_{S}(\Gamma)
\right)$ for ``non-principal cases'' below.

\begin{cor}
\label{cor:comparisonofcovoleme}
    Let $K = \prod_{v \in V_{k} \smallsetminus S} K_{v}$ and $K' = \prod_{v \in V_{k} \smallsetminus S} K'_{v}$ be compact, open subgroups of $H(\mathbb{A}_{S})$, and let $\Gamma$ and $\Gamma'$ be $S$-arithmetic subgroups of $H(k)$ associated to $K$ and $K'$, respectively.
    Then we have
    \[
    \frac
    {
\mu_{S}
\left(
H_{S}/\iota_{S}(\Gamma)
\right) }
{
\mu_{S}
\left(
H_{S}/\iota_{S}(\Gamma')
\right)
} = \prod_{v \in V_{k} \smallsetminus S} \frac{
\mu_{v}(K'_{v})
}{
\mu_{v}(K_{v})
}.
    \]
\end{cor}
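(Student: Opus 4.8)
The plan is to apply Lemma~\ref{lemma:covolume=productofnon-archimedean} twice, once to $\Gamma$ and once to $\Gamma'$, and then to divide the two resulting identities. The decisive observation is that the global factor $\mu_{\mathbb{A}}\left(H(\mathbb{A})/\iota(H(k))\right)$ on the right-hand side of the lemma depends only on the fixed global Haar measure $\mu_{\mathbb{A}}$, and not on the choice of compact, open subgroup; it will therefore cancel in the quotient, leaving precisely the product of local ratios.

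First I would fix a single global Haar measure $\mu_{\mathbb{A}}$ on $H(\mathbb{A})$ together with a single collection of local Haar measures $\{\mu_{v}\}_{v \in V_{k}}$ whose restricted product is $\mu_{\mathbb{A}}$, and use this one collection of measures for both $K$ and $K'$. With this choice, the hypotheses of Lemma~\ref{lemma:covolume=productofnon-archimedean} are met simultaneously for $\Gamma$ (with the local factors $K_{v}$) and for $\Gamma'$ (with the local factors $K'_{v}$), since the lemma only requires the compatibility of $\mu_{\mathbb{A}}$ with the product $\prod_{v} \mu_{v}$ on the restricted product, a condition that is independent of which compact, open subgroup one subsequently restricts to.

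Applying the lemma to each of $\Gamma$ and $\Gamma'$ would yield
\[
\mu_{S}\left(H_{S}/\iota_{S}(\Gamma)\right) = \mu_{\mathbb{A}}\left(H(\mathbb{A})/\iota(H(k))\right)\cdot\left(\prod_{v \in V_{k}\smallsetminus S}\mu_{v}(K_{v})\right)^{-1}
\]
and
\[
\mu_{S}\left(H_{S}/\iota_{S}(\Gamma')\right) = \mu_{\mathbb{A}}\left(H(\mathbb{A})/\iota(H(k))\right)\cdot\left(\prod_{v \in V_{k}\smallsetminus S}\mu_{v}(K'_{v})\right)^{-1}.
\]
I would then divide the first identity by the second; the common global factor $\mu_{\mathbb{A}}\left(H(\mathbb{A})/\iota(H(k))\right)$ cancels, and inverting the reciprocal products rearranges the result into $\prod_{v \in V_{k}\smallsetminus S}\mu_{v}(K'_{v})/\mu_{v}(K_{v})$, which is exactly the asserted identity.

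Since the statement is a formal consequence of the lemma, there is no genuine obstacle; the only point demanding a little care is to insist that the \emph{same} local measures $\mu_{v}$ are used in both applications, so that the individual volumes $\mu_{v}(K_{v})$ and $\mu_{v}(K'_{v})$ are measured against a common normalisation and the global volume is literally identical in the two formulas. Because all but finitely many of the $K_{v}$ and $K'_{v}$ are hyperspecial parahoric subgroups, the displayed products reduce to finite products of ratios that differ from $1$, so the expressions are well defined and no convergence issue intervenes.
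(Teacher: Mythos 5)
Your proof is correct and matches the paper's intent exactly: the paper records this corollary as an immediate consequence of Lemma~\ref{lemma:covolume=productofnon-archimedean}, obtained precisely by applying the lemma to both $\Gamma$ and $\Gamma'$ with a common choice of measures and cancelling the adelic covolume $\mu_{\mathbb{A}}\left(H(\mathbb{A})/\iota(H(k))\right)$. Your added care about using the same local measures $\mu_{v}$ for both subgroups (valid since the compatibility of $\mu_{\mathbb{A}}$ with the restricted product of the $\mu_{v}$ does not depend on which compact, open subgroup one restricts to) is exactly the right point to make explicit.
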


In the rest of this subsection, we suppose that the group $K_{v}$ is a parahoric subgroup of $H(k_{v})$ for each $v \in V_{k} \smallsetminus S$ and the Haar measure $\mu_{v}$ is chosen as in \cite{Pra89}*{\S3.6} for all $v \in S$.
In this case, Prasad gave an explicit formula for the covolume $\mu_{S}\left(H_{S}/\iota_{S}(\Gamma)\right)$.
\begin{thm}[{\cite{Pra89}*{3.7.~Theorem}, \cite{KalethaPrasad}*{Theorem~18.5.6}}]
\label{Prasad'svolumeformula}
We have
\[
\mu_{S}
\left(
H_{S}/\iota_{S}(\Gamma)
\right)
= D_{k}^{\frac{1}{2} \dim H} \left(
\frac{D_{\ell}}
{D_{k}^{[\ell:k]}}
\right)^{\frac{1}{2} \mathfrak{s}(H_{\qs})}
\left(
\prod_{v \in V_{\infty}}
\abs{
\prod_{i=1}^{r} \frac{m_{i}!}{(2 \pi)^{m_{i} + 1}}
}_{v}
\right)\mathscr{E}_{\Gamma},
\]
where
\begin{itemize}
\item 
$\dim H$ denotes the dimension of $H$;
\item 
$\ell$ denotes a finite field extension of $k$, and $D_{k}$ and $D_{\ell}$ denote numbers defined in \cite{Pra89}*{\S0.2};
\item 
$H_{\qs}$ denotes the quasi-split inner form of $H$;
\item 
$\mathfrak{s}(H_{\qs})$ denotes the integer defined in \cite{Pra89}*{\S0.4};
\item 
$r$ denotes the absolute rank of $H_{\qs}$, and $m_{1}, m_{2}, \ldots , m_{r}$ denote the exponents of the simple, simply connected, compact real-analytic Lie group of the same type as $H_{\qs}$ (see \cite{Pra89}*{\S1.5});
\item 
$\mathscr{E}_{\Gamma} = \mathscr{E}$ denotes the number described explicitly in \cite{Pra89}*{3.7.~Theorem} (see also \cite{KalethaPrasad}*{Proposition~18.5.10} for another description).
\end{itemize}
\end{thm}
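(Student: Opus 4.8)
The statement to be proved is Prasad's volume formula, so the plan is to follow Prasad's strategy, combining the adelic reduction already recorded in Lemma~\ref{lemma:covolume=productofnon-archimedean} with computations of Tamagawa measures, local parahoric volumes, and archimedean factors. First I would pass to the Tamagawa measure. Fix an invariant top-degree differential form $\omega$ on $H$ defined over $k$; this yields local Haar measures $\abs{\omega}_{v}$ on each $H(k_{v})$ and, after twisting by the global discriminant factor $D_{k}^{-\dim H/2}$ (dictated by the product formula and the self-duality of the adeles), the Tamagawa measure $\mu_{\mathrm{Tam}}$ on $H(\mathbb{A})$. Because $H$ is absolutely almost simple and simply connected, Weil's conjecture on Tamagawa numbers---established in general by Kottwitz, building on Langlands--Lai in the quasi-split case---gives $\mu_{\mathrm{Tam}}(H(\mathbb{A})/\iota(H(k))) = 1$. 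Comparing $\mu_{\mathbb{A}}$ with $\mu_{\mathrm{Tam}}$ and feeding the outcome into Lemma~\ref{lemma:covolume=productofnon-archimedean} reduces the whole computation to a product of local volumes $\abs{\omega}_{v}(K_{v})$ over the non-archimedean places, the archimedean volumes built into $\mu_{v}$ for $v \in V_{\infty}$, and the discriminant power $D_{k}^{\frac{1}{2}\dim H}$.

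Next I would compute the non-archimedean local volumes. For each finite $v$, Bruhat--Tits theory attaches to the parahoric $K_{v}$ a smooth affine $\cO_{k_{v}}$-group scheme $\mathcal{G}_{v}$ with $\mathcal{G}_{v}(\cO_{k_{v}}) = K_{v}$; by smoothness the reduction map is surjective with fibres of measure $q_{v}^{-\dim H}$, so $\abs{\omega}_{v}(K_{v}) = q_{v}^{-\dim H}\abs{\mathcal{G}_{v}(\mathfrak{f}_{v})}$. Rewriting $\abs{\mathcal{G}_{v}(\mathfrak{f}_{v})}$ through the order of the maximal reductive quotient $\overline{M}_{v}$ of the special fibre expresses each local factor via values of the zeta and Artin $L$-functions of the residue field attached to the root datum of the quasi-split form $H_{\qs}$. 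Assembling these Euler factors over all $v$ produces the global $L$-values governing the formula (the $\zeta$ and $L_{E/\bQ}$ factors in the special unitary specialisation), while the discrepancy between the parahoric and the hyperspecial volume at each place---together with the $H_{\qs}$-versus-$H$ contribution encoded in $\bigl(D_{\ell}/D_{k}^{[\ell:k]}\bigr)^{\frac{1}{2} \mathfrak{s}(H_{\qs})}$---is collected into the correction factor $\mathscr{E}_{\Gamma}$, whose explicit shape follows from the Bruhat--Tits classification of the local parahorics.

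Finally I would treat the archimedean places: for $v \in V_{\infty}$ the normalised measure $\mu_{v}$ of \cite{Pra89}*{\S3.6} is calibrated against the compact real form $H_{c}$ of $H(k_{v})$, whose Riemannian volume is the classical product $\prod_{i=1}^{r}(2\pi)^{m_{i}+1}/m_{i}!$ in terms of the exponents $m_{i}$; taking reciprocals yields precisely the archimedean factor $\prod_{v\in V_{\infty}}\abs{\prod_{i=1}^{r} m_{i}!/(2 \pi)^{m_{i} + 1}}_{v}$ in the statement. Collecting the discriminant power, the archimedean product, the global $L$-values, and the correction $\mathscr{E}_{\Gamma}$ then gives the asserted identity. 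The main obstacle is the precise bookkeeping of normalisation constants in the local-to-global passage: one must match the convergence factors defining $\mu_{\mathrm{Tam}}$ with the motive (equivalently, the Artin $L$-function) of $H_{\qs}$ so that the product of local densities converges to the stated $L$-values, and simultaneously verify that the parahoric-versus-hyperspecial corrections at the ramified and inert places aggregate correctly into $\mathscr{E}_{\Gamma}$. This is exactly where the Weil--Kottwitz input and the fine structure of the Bruhat--Tits group schemes are indispensable.
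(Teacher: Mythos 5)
This theorem is not proved in the paper at all: it is quoted verbatim from \cite{Pra89}*{3.7.~Theorem} and \cite{KalethaPrasad}*{Theorem~18.5.6}, the only accompanying commentary being the remark that the Tamagawa number $\tau_{k}(H)$ equals $1$ and can therefore be suppressed from the right-hand side. So there is no in-paper argument to compare yours against; the relevant comparison is with Prasad's original proof, and your sketch does reproduce its skeleton faithfully: reduction to the Tamagawa measure and the Tamagawa number one theorem, computation of the non-archimedean parahoric volumes through the Bruhat--Tits smooth models and their maximal reductive quotients, and calibration of the archimedean measures against the compact real form, whose volume $\prod_{i=1}^{r}(2\pi)^{m_{i}+1}/m_{i}!$ yields the stated archimedean factor.

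That said, you should be clear that what you have written is a roadmap to the cited proof, not a proof. Two of the steps you defer as ``bookkeeping'' are exactly where the substance of \cite{Pra89} lies. First, the global invariant form $\omega$ is not, place by place, a generator of the invariant differentials of the Bruhat--Tits group scheme, and tracking this discrepancy against the splitting field $\ell$ of $H_{\qs}$ is what produces the factor $\left(D_{\ell}/D_{k}^{[\ell:k]}\right)^{\frac{1}{2}\mathfrak{s}(H_{\qs})}$; in Prasad's formula this factor is \emph{separate} from $\mathscr{E}_{\Gamma}$, whereas your phrasing folds it into the correction term, which misstates the structure of the formula. Second, the infinite product of local densities $\abs{\omega}_{v}(K_{v})$ does not converge absolutely; one must insert the convergence factors attached to the quasi-split inner form and then prove that the renormalised product equals the asserted $L$-values, a genuine analytic argument rather than a normalisation check. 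A smaller point: attributing $\tau_{k}(H)=1$ to Kottwitz (after Langlands--Lai) covers number fields only, while the theorem as stated in the paper is over a general global field $k$, where the function-field case requires further input (as the paper's own remark, crediting ``many mathematicians,'' implicitly acknowledges). For the purposes of this paper, citing \cite{Pra89} as the authors do is the appropriate treatment; your outline is consistent with, but does not replace, that citation.
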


Since we are only concerned with the case of special unitary groups in this paper, we do not recall the precise definitions of the factors appearing on the right-hand side here.
Instead, in Theorem~\ref{thm:prasad'svolumeformulaspecialunitarycase} below, we provide a more explicit description of the right-hand side of Theorem~\ref{Prasad'svolumeformula} in the case where $H$ is a special unitary group.

\begin{rmk}
In the statement of \cite{Pra89}*{3.7.~Theorem}, the number $\tau_{k}(H)$ called the \emph{Tamagwa number} appears as a factor of the right-hand side.
However, thanks to the recent works by many mathematicians, it was proved that $\tau_{k}(H) = 1$ for any simply connected semi-simple group $H$ over a global field $k$ (for details, see the discussion in \cite{KalethaPrasad}*{\S18.5.2}).
\end{rmk}

\subsection{Comparison of volumes at finite places}
\label{subseccomparisonofvolumes}

In the previous subsection, we explained the explicit calculation of the covolume $\mu_{S}
\left(
H_{S}/\iota_{S}(\Gamma)
\right)$ by Prasad for a principal $S$-arithmetic subgroup $\Gamma$.
On the other hand, for a later application in Section~\ref{section:Estimation of volumes}, we would like to take the group $\Gamma$ to be the special unitary group $\SU(L)$, where $L$ is a Hermitian lattice over $\cO_{E}$, with $E$ being a quadratic extension of $k$.
In this case, $\Gamma$ is the $S$-arithmetic subgroup associated to the compact, open subgroup $K = \prod_{v \in V_{k} \smallsetminus S} \SU(L_{v})$, where $L_{v} = L \otimes_{\cO_{k}} \cO_{k_{v}}$.
Since the group $\SU(L_{v})$ is not necessarily a parahoric subgroup, the group $\Gamma$ is not necessarily a principal $S$-arithmetic subgroup.
Motivated by this, in the rest of this section, we restrict our interest to the case of special unitary groups, or more generally, simply connected covers of classical groups, and will extend Theorem~\ref{Prasad'svolumeformula} to more general $S$-arithmetic subgroups including $\SU(L)$.

In this subsection, we fix $v \in V_{k} \smallsetminus S$ and will compare the volume $\mu_{v}(\SU(L_v))$ with the volume $\mu_{v}(\SU(L'_{v}))$ for an appropriate parahoric subgroup $\SU(L'_{v})$ by calculating the index $\abs{\SU(L'_{v})/\SU(L_{v})}$ explicitly.
In \S\ref{subsec:generalizationofprasad}, we will combine the result of this subsection with Corollary~\ref{cor:comparisonofcovoleme} to calculate the covolume of $\SU(L)$.

We will work in a bit more general setting as in \cite{Gan-Yu}, which we recall here briefly.
Let $F$ be a non-archimedean local field of residue characteristic $p$.
We write $\mathcal{O}_{F}$ for the ring of integers of $F$.
We fix a uniformiser $\varpi_{F}$ of $\mathcal{O}_{F}$.
Let $\epsilon \in \{\pm 1\}$ and let $(E, \sigma)$ be one of the following $F$-algebras with involution:
\begin{enumerate}
\item $E = F$ and $\sigma = \id_{F}$; 
\item $E$ is a quadratic extension of $F$ and $\sigma$ is the unique non-trivial automorphism of $E/F$;
\item $E = F \oplus F$ and $\sigma(x, y) = (y, x)$;
\item 
$E = D$, the quaternion algebra over $F$ and $\sigma$ is the standard involution.
\end{enumerate}
\begin{rmk}

\label{rem:notation local global}
In the following subsections, where we work in a global setting, we will consider the local filed $k_{v}$ and the $k_{v}$-algebra $E_{v}$ for a global field $k$, a finite place $v \in V_{\fin}$, and a quadratic extension $E/k$.
However, in this subsection only, we adopt the simplified notation $F$ and $E$ for $k_{v}$ and $E_{v}$ for the sake of notational convenience.
Similarly, in this subsection, we use the notation $L$ for a Hermitian lattice over a local field, which will be denote by $L_{v}$ in the following subsections.
\end{rmk}
Throughout this subsection, we impose the following assumption, as is done in most parts of \cite{Gan-Yu} (see \cite{Gan-Yu}*{Section~9}).
\begin{asm}
\label{asump=2}
    We assume $p \neq 2$ if $E$ is a ramified quadratic extension of $F$ or $E = F$ and $\epsilon = 1$.
\end{asm}

Let $\mathcal{O}_{E}$ be the maximal $\cO_{F}$-order in $E$.
If $E$ is a ramified extension of $F$, or $E = D$, we let $\varpi_{E}$ be a uniformiser of $\mathcal{O}_{E}$ and put $e = 2$.
Otherwise, we write $\varpi_{E} = \varpi_{F}$ and put $e = 1$.
We also write $\mathfrak{p}_{F} = \varpi_{F} \cO_{F}$ and $\mathfrak{p}_{E} = \varpi_{E} \cO_{E}$.

Let $L$ be an $\mathcal{O}_{E}$-lattice of finite rank with a $(\sigma, \epsilon)$-Hermitian form
\[
\langle \phantom{x},\phantom{x} \rangle \colon L \times L \rightarrow \mathcal{O}_{E}.
\]
We assume that $V := L \otimes_{\mathcal{O}_{F}} F$ is non-degenerate with respect to $\langle \phantom{x},\phantom{x} \rangle$.
We define the dual lattice $L^{\vee}$ of $L$ by
\[
L^{\vee} = \{
x \in V \mid \langle x, L \rangle \subseteq \mathcal{O}_{E}
\}.
\] 
We will use similar notation for other $\mathcal{O}_{E}$-lattices below.
According to \cite{Gan-Yu}*{4.3~Corollary}, we have an orthogonal decomposition
\[
L = \bigoplus_{i = 0}^{N} L_{i},
\]
where $L_{i}$ is a sublattice of $L$ such that
\[
L_{i}^{\vee} = \mathfrak{p}_{E}^{-i} L_{i}.
\]
Such an orthogonal decomposition is called a \emph{Jordan splitting} of $L$.
For $0 \le i \le N$, let $n_{i}$ denote the rank of the $\cO_{E}$-lattice $L_{i}$.
Although a Jordan splitting of $L$ is not unique, the rank $n_{i}$
of each summand is uniquely determined by $L$.
We record this fact as a lemma for later use.
\begin{lemma}[{\cite{Omeara2000introduction}*{91.9.Theorem}}]
\label{lem:rankofLJDwelldef}
Let $L = \bigoplus_{i=0}^{N} L_{i}$ and $L = \bigoplus_{i=0}^{N'} K_{i}$ be Jordan splittings of $L$ such that $L_{N} \neq \{0\}$ and $K_{N'} \neq \{0\}$.
Then we have $N = N'$ and the ranks of the sublattices $L_{i}$ and $K_{i}$ coincide for all $0 \le i \le N$.
In particular, the condition $L_{i} \neq \{0\}$ does not depend on the choice of a Jordan splitting.
\end{lemma}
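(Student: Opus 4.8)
The plan is to prove the uniqueness of the numbers $n_i$ by exhibiting them as invariants of the Hermitian lattice $(L, \langle\,,\rangle)$ that make no reference to any particular splitting. Since each Jordan component satisfies $L_i^{\vee} = \mathfrak{p}_E^{-i} L_i$ with $i \ge 0$, every $L_i$ is integral, hence $L \subseteq L^{\vee}$. For each integer $j \ge 0$ I would introduce the $\mathcal{O}_E$-lattice
\[
A_j \defeq L \cap \mathfrak{p}_E^{j} L^{\vee},
\]
which depends only on $L$ and its form. These fit into a descending chain $L = A_0 \supseteq A_1 \supseteq \cdots$, and the whole point is to read off $n_j$ from the successive quotients $A_j / A_{j+1}$.

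The key computation is to evaluate $A_j$ against a fixed Jordan splitting $L = \bigoplus_{i=0}^N L_i$. Because the decomposition of $V = L \otimes_{\mathcal{O}_F} F$ into the pieces $V_i = L_i \otimes_{\mathcal{O}_F} F$ is orthogonal, duality respects it, giving $L^{\vee} = \bigoplus_i L_i^{\vee} = \bigoplus_i \mathfrak{p}_E^{-i} L_i$, and hence $\mathfrak{p}_E^{j} L^{\vee} = \bigoplus_i \mathfrak{p}_E^{j-i} L_i$. Intersecting componentwise with $L = \bigoplus_i L_i$ yields
\[
A_j = \bigoplus_{i=0}^{N} \left( L_i \cap \mathfrak{p}_E^{j-i} L_i \right) = \bigoplus_{i=0}^{N} \mathfrak{p}_E^{\max(0,\, j-i)} L_i.
\]
Consequently each graded piece is $A_j / A_{j+1} \cong \bigoplus_{i \le j} L_i / \mathfrak{p}_E L_i$, so that $\operatorname{length}_{\mathcal{O}_E}(A_j / A_{j+1}) = c \sum_{i \le j} n_i$, where $c = \operatorname{length}_{\mathcal{O}_E}(\mathcal{O}_E/\mathfrak{p}_E)$ is a constant independent of $i$ and $j$ (equal to $1$ in the field cases and to $2$ in the split case). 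Inverting this relation gives the intrinsic formula
\[
n_j = c^{-1}\left( \operatorname{length}_{\mathcal{O}_E}(A_j/A_{j+1}) - \operatorname{length}_{\mathcal{O}_E}(A_{j-1}/A_{j}) \right)
\]
for $j \ge 1$, and $n_0 = c^{-1}\operatorname{length}_{\mathcal{O}_E}(A_0/A_1)$. Since the right-hand sides are manifestly independent of the chosen splitting, the ranks $n_i$ agree for the two splittings $\bigoplus L_i$ and $\bigoplus K_i$; taking $N$ (resp.\ $N'$) to be the largest index with nonvanishing graded piece forces $N = N'$, which completes the proof.

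The only genuinely delicate point is the componentwise intersection step: one must verify that a vector $x = \sum_i x_i \in V$ with $x_i \in V_i$ lies in $A_j$ exactly when each $x_i$ lies in $L_i \cap \mathfrak{p}_E^{j-i} L_i$, which uses both that $L = \bigoplus L_i$ is compatible with the orthogonal decomposition and that duality sends $\bigoplus L_i$ to $\bigoplus L_i^{\vee}$. I would track the split case $E = F \oplus F$ separately only insofar as the constant $c$ is concerned; the structural argument and the final difference formula are identical in all cases, so no genuine obstacle arises there. This reproduces the classical statement of \cite{Omeara2000introduction}*{91.9.Theorem} in the uniform setting of \cite{Gan-Yu}.
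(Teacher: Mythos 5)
Your proof is correct, but it takes a genuinely different route from the paper: the paper gives no argument of its own and simply quotes O'Meara's uniqueness theorem for Jordan splittings (\cite{Omeara2000introduction}, 91:9), pointing also to \cite{jacobowitz} for the Hermitian case. Your argument instead manufactures the invariants directly: the filtration $A_j \defeq L \cap \mathfrak{p}_E^{j}L^{\vee}$ is intrinsic, and evaluated against any Jordan splitting one gets $L^{\vee} = \bigoplus_i \mathfrak{p}_E^{-i}L_i$ and $A_j = \bigoplus_i \mathfrak{p}_E^{\max(0,\,j-i)}L_i$ exactly as you say (orthogonality gives the first identity, and uniqueness of the decomposition $x = \sum_i x_i$ justifies the componentwise intersection), whence $\operatorname{length}_{\cO_E}(A_j/A_{j+1}) = c\sum_{i\le j} n_i$ and your difference formula recovers every $n_j$ from data depending only on $(L, \langle \phantom{x},\phantom{x} \rangle)$. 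What your route buys is a self-contained proof that is uniform over all four algebras with involution of \cite{Gan-Yu} -- including the split and quaternionic cases, and with no restriction on the residue characteristic; what the citation buys is brevity, plus the finer invariants (scales, norms) contained in O'Meara's statement, which this lemma does not actually need. Two small repairs to your write-up: first, in the quaternionic case the constant is also $c=1$, since $\cO_E/\mathfrak{p}_E$ is then a division ring, hence simple as a module over itself, and multiplication by $\varpi_E^{j-i}$ is still a length-preserving (at worst semilinear) isomorphism because $\varpi_E$ normalises $\cO_E$; so ``field cases'' should read ``non-split cases''. Second, your closing phrase ``largest index with nonvanishing graded piece'' should be ``largest $j$ with $n_j \neq 0$'': the quotients $A_j/A_{j+1}$ themselves are nonzero for all large $j$ (their lengths stabilise at $c \cdot \operatorname{rank} L$ rather than vanish), and it is the hypothesis $L_N \neq \{0\}$, $K_{N'} \neq \{0\}$ combined with the intrinsic formula for the $n_j$ that forces $N = N'$.
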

\begin{proof}
The lemma is a part of \cite{Omeara2000introduction}*{91.9.Theorem}; see also \cite{jacobowitz}*{Section 4}.
\end{proof}

From now on, we suppose that $N \ge 2$.
We define an $\mathcal{O}_{E}$-lattice $L'$ in $V$ by
\[
L' = \bigoplus_{i = 0}^{N-1} L'_{i},
\]
where
\[
L'_{i} =
\begin{cases}
L_{i} & (i \neq N-2), \\
L_{N-2} \oplus \mathfrak{p}_{E}^{-1} L_{N} & (i = N-2).
\end{cases}
\]
We note that $(L'_{i})^{\vee} = \mathfrak{p}_{E}^{-i} L'_{i}$ for all $0 \le i \le N-1$.
For $0 \le i \le N-1$, let $n'_{i}$ denote the rank of $L'_{i}$.

For $x \in V$, we define $x^{L}_{i} \in L_{i} \otimes_{\mathcal{O}_{F}} F$ for each $0 \le i \le N$ by $x = \sum_{i=0}^{N} x^{L}_{i}$.
Similarly, for $x \in V$, we define $x^{L'}_{i} \in L'_{i}\otimes_{\mathcal{O}_{F}} F$ for each $0 \le i \le N-1$ by $x = \sum_{i=0}^{N-1} x^{L'}_{i}$.
We note that for $x \in V$, we have $x_{i}^{L} = x_{i}^{L'}$ for all $0 \le i \le N-1$ with $i \neq N-2$, and $x_{N-2}^{L'} = x_{N-2}^{L} + x_{N}^{L}$.

Let $G_{L}$ and $G_{L'}$ be the isometry groups of $(L, \langle \phantom{x},\phantom{x} \rangle)$ and $(L', \langle \phantom{x},\phantom{x} \rangle)$, respectively.
We also denote by $G_{V}$ the isometry group of $(V, \langle \phantom{x},\phantom{x} \rangle)$ and identify $G_{L}$ (resp.\ $G_{L'}$) with the stabiliser of $L$ (resp.\ $L'$) in $G_{V}$.
\begin{lemma}
\label{lemma:modplowertriangular}
Let $M = L$ or $L'$, and $0 \le i \le j \le N$ if $M = L$ and $0 \le i \le j \le N-1$ if $M = L'$.
Then, for all $g \in G_{M}$ and $x \in M_{j}$, we have $g(x)^{M}_{i} \in \mathfrak{p}_{E}^{j-i} M_{i}$.
\end{lemma}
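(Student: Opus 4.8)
The plan is to reformulate the integrality claim $g(x)^{M}_{i} \in \mathfrak{p}_{E}^{j-i} M_{i}$ as a statement about the values of the Hermitian form, exploiting the defining modularity of each Jordan block. Since the decomposition $M = \bigoplus_{k} M_{k}$ is orthogonal and each summand satisfies $M_{i}^{\vee} = \mathfrak{p}_{E}^{-i} M_{i}$ (for $M = L$ this is the content of the Jordan splitting from \cite{Gan-Yu}*{4.3~Corollary}, and for $M = L'$ it is recorded explicitly above), I would first establish, for any $z$ in the span $M_{i} \otimes_{\mathcal{O}_{F}} F$, the equivalence
\[
z \in \mathfrak{p}_{E}^{j-i} M_{i} \iff \langle z, M_{i} \rangle \subseteq \mathfrak{p}_{E}^{j}.
\]
Indeed, rewriting $M_{i} = \mathfrak{p}_{E}^{i} M_{i}^{\vee}$ turns the membership $z \in \mathfrak{p}_{E}^{j-i} M_{i} = \mathfrak{p}_{E}^{j} M_{i}^{\vee}$ into $\varpi_{E}^{-j} z \in M_{i}^{\vee}$, that is, $\langle \varpi_{E}^{-j} z, M_{i} \rangle \subseteq \mathcal{O}_{F}$-valued in $\mathcal{O}_{E}$; since $\sigma$ preserves every ideal $\mathfrak{p}_{E}^{t}$, this is equivalent to $\langle z, M_{i} \rangle \subseteq \mathfrak{p}_{E}^{j}$, regardless of which slot carries the semilinearity.

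With this reformulation in hand, I would fix an arbitrary $y \in M_{i}$ and trace the value $\langle g(x)^{M}_{i}, y \rangle$ around the following chain. Orthogonality of the Jordan blocks gives $\langle g(x)^{M}_{i}, y \rangle = \langle g(x), y \rangle$, because $y$ pairs trivially with every component $g(x)^{M}_{k}$ for $k \neq i$. Since $g$ is an isometry, $\langle g(x), y \rangle = \langle x, g^{-1}(y) \rangle$, and as $g^{-1} \in G_{M}$ preserves $M$, the element $g^{-1}(y)$ lies in $M = \bigoplus_{k} M_{k}$. Applying orthogonality once more together with $x \in M_{j}$, only the $M_{j}$-component of $g^{-1}(y)$ survives, so
\[
\langle x, g^{-1}(y) \rangle = \langle x, (g^{-1}(y))^{M}_{j} \rangle \in \langle M_{j}, M_{j} \rangle \subseteq \mathfrak{p}_{E}^{j},
\]
where the final containment is the displayed equivalence with $i = j$ (equivalently, $\varpi_{E}^{-j} M_{j} \subseteq M_{j}^{\vee}$). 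Hence $\langle g(x)^{M}_{i}, y \rangle \in \mathfrak{p}_{E}^{j}$ for every $y \in M_{i}$, and the reformulation yields $g(x)^{M}_{i} \in \mathfrak{p}_{E}^{j-i} M_{i}$, as desired.

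The argument is uniform in $M = L$ and $M = L'$, using only orthogonality of the summands, the block-modularity $M_{i}^{\vee} = \mathfrak{p}_{E}^{-i} M_{i}$, and the isometry property of $g$; the hypothesis $i \le j$ enters exactly to make $\mathfrak{p}_{E}^{j-i}$ an honest (non-fractional) ideal in the equivalence. I do not anticipate a genuine obstacle here: the only point needing care is the semilinearity convention in the duality computation, which I would dispose of once and for all by observing that $\sigma(\varpi_{E}) = u\,\varpi_{E}$ for a unit $u$, so that $\sigma$ fixes each ideal $\mathfrak{p}_{E}^{t}$ and the scaling argument is insensitive to whether $\langle \phantom{x},\phantom{x} \rangle$ is linear in the first or the second variable.
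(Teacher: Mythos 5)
Your proof is correct and is in substance the same duality argument as the paper's: the paper notes that $g$ preserves $M^{\vee} = \bigoplus_{k} \mathfrak{p}_{E}^{-k} M_{k}$ and scales $x$ by $\varpi_{E}^{-j}$, so that $g(x) \in \mathfrak{p}_{E}^{j} M^{\vee} = \bigoplus_{k} \mathfrak{p}_{E}^{j-k} M_{k}$, whereas you unwind exactly this dual-lattice invariance into a componentwise pairing computation $\langle g(x)^{M}_{i}, y \rangle = \langle x, (g^{-1}(y))^{M}_{j} \rangle \in \mathfrak{p}_{E}^{j}$ followed by the local equivalence $\langle z, M_{i} \rangle \subseteq \mathfrak{p}_{E}^{j} \iff z \in \mathfrak{p}_{E}^{j-i} M_{i}$. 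The ingredients---the Jordan modularity $M_{k}^{\vee} = \mathfrak{p}_{E}^{-k} M_{k}$, orthogonality of the blocks, and the isometry property of $g$---are identical, so the two proofs differ only in presentation.
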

\begin{proof}
    We write $y = \varpi_{E}^{-j} x \in \mathfrak{p}_{E}^{-j} M_{j} = M_{j}^{\vee}$.
    Since $g$ fixes $M$ and the Hermitian form $\langle \phantom{x},\phantom{x} \rangle$, it also fixes the dual lattice $M^{\vee}$ of $M$.
    Thus, we have
    \[
    g(x) = g(\varpi_{E}^{j} y) \in \mathfrak{p}_{E}^{j} M^{\vee} = \bigoplus_{k \ge 0} \mathfrak{p}_{E}^{j-k} M_{k}.
    \]
    Thus, we have $g(x)^{M}_{i} \in \mathfrak{p}_{K}^{j-i} M_{i}$, as desired.
\end{proof}
\begin{prop}
\label{prop:GLasasubgroupofGL'}
    As subgroups of $G_{V}$, we have $G_{L} \subseteq G_{L'}$.
    More precisely, we have
    \[
G_{L} = \left\{
g \in G_{L'} \mid g(x)^{L}_{N} \in L_{N} \quad (x \in L) 
\right\}.
    \]
\end{prop}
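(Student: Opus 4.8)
The plan is to establish the containment $G_L \subseteq G_{L'}$ first, and then prove the reverse inclusion giving the explicit characterization. For the containment, I would take any $g \in G_L$ and verify that $g$ stabilises $L'$. Since $L' = \bigoplus_{i=0}^{N-1} L'_i$ differs from $L$ only in the $(N-2)$-th summand, where $L'_{N-2} = L_{N-2} \oplus \mathfrak{p}_E^{-1} L_N$, it suffices to check that $g$ maps each generating piece of $L'$ back into $L'$. The summands $L'_i = L_i$ for $i \neq N-2$ are already contained in $L$, so $g$ preserves them up to the filtration controlled by Lemma~\ref{lemma:modplowertriangular}; I would use that lemma to see that $g(L_i) \subseteq \bigoplus_k \mathfrak{p}_E^{\max(i-k,0)} L_k$, and then verify these images land in $L'$. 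The only genuinely new generators of $L'$ are the elements of $\mathfrak{p}_E^{-1} L_N$, so the crux is showing $g(\mathfrak{p}_E^{-1} L_N) \subseteq L'$.

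For this key point, I would take $x \in L_N$ and study $g(\varpi_E^{-1} x) = \varpi_E^{-1} g(x)$, decomposing $g(x) = \sum_i g(x)^L_i$. By Lemma~\ref{lemma:modplowertriangular} applied with $j = N$, each component satisfies $g(x)^L_i \in \mathfrak{p}_E^{N-i} L_i$. After multiplying by $\varpi_E^{-1}$, the component in $L_i$ lies in $\mathfrak{p}_E^{N-i-1} L_i$, which for $i \le N-1$ is contained in $L_i = L'_i$ (or in $\mathfrak{p}_E^{-1} L_N \subseteq L'_{N-2}$ via the top piece), while the component along $L_N$ lies in $\mathfrak{p}_E^{-1} L_N \subseteq L'_{N-2}$. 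Thus $\varpi_E^{-1} g(x) \in L'$, establishing $g \in G_{L'}$ and hence $G_L \subseteq G_{L'}$.

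For the reverse description, I would fix $g \in G_{L'}$ and observe that $g$ stabilises $L$ if and only if $g(L) \subseteq L$ (the containment $g^{-1}(L) \subseteq L$ then follows by applying the same to $g^{-1} \in G_{L'}$, or by finiteness of the index). Writing out $g(L)$ componentwise, the potential obstruction to $g(L) \subseteq L$ comes precisely from the difference between $L'_{N-2} = L_{N-2} \oplus \mathfrak{p}_E^{-1} L_N$ and the corresponding genuine summands of $L$: since $g$ respects the $L'$-filtration, the only way $g(x)$ can fall outside $L$ for $x \in L$ is through a nonzero contribution in the extra factor $\mathfrak{p}_E^{-1} L_N$, equivalently through $g(x)^L_N \notin L_N$. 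Therefore $g \in G_L$ exactly when $g(x)^L_N \in L_N$ for all $x \in L$, which is the asserted equality.

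The main obstacle I anticipate is the careful bookkeeping in the second half, namely confirming that no obstruction to $g(L) \subseteq L$ can arise from the summands other than the top one. This requires tracking, for each $x = \sum_j x^L_j \in L$, how $g$ redistributes mass across the Jordan pieces under the $L'$-decomposition and verifying that the filtration bounds from Lemma~\ref{lemma:modplowertriangular} (now applied with respect to $L'$, where the relevant index runs only to $N-1$) force every component except possibly the one detecting $g(x)^L_N$ to lie back in $L$. Once this reconciliation of the two Jordan-type filtrations is handled cleanly, the characterization follows, and one should also note $g(x)^L_N$ is $\cO_E$-linear in $x$, so the condition need only be checked on a generating set.
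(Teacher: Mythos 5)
Your proposal is correct and follows essentially the same route as the paper: the inclusion $G_{L} \subseteq G_{L'}$ via Lemma~\ref{lemma:modplowertriangular} applied with $j = N$ to handle $\mathfrak{p}_{E}^{-1}L_{N}$, and the characterization by comparing the $L$- and $L'$-Jordan decompositions of $g(x)$, where the directness of the sum $L'_{N-2} = L_{N-2} \oplus \mathfrak{p}_{E}^{-1}L_{N}$ forces $g(x)^{L}_{N-2} \in L_{N-2}$ so that only the $L_{N}$-component can obstruct $g(x) \in L$. The one small refinement: for this second half no filtration bound from Lemma~\ref{lemma:modplowertriangular} is needed, only the compatibility $x^{L}_{i} = x^{L'}_{i}$ ($i \neq N-2$) and $x^{L'}_{N-2} = x^{L}_{N-2} + x^{L}_{N}$, which is what the paper uses.
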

\begin{proof}
Let $g \in G_{L}$.
We will prove that $g(L') \subseteq L'$.
Since $g \in G_{L}$, for all $0 \le i \le N-1$ with $i \neq N-2$, we have 
\[
g(L'_{i}) = g(L_{i}) \subseteq L \subseteq L'.
\]
We also have $g(L_{N-2}) \subseteq L \subseteq L'$.
Moreover, according to Lemma~\ref{lemma:modplowertriangular}, we have
\[
g(\mathfrak{p}_{E}^{-1} L_{N}) = \mathfrak{p}_{E}^{-1} g(L_{N}) \subseteq \bigoplus_{i=0}^{N} \mathfrak{p}_{E}^{-1} \mathfrak{p}_{E}^{N-i} L_{i} \subseteq \bigoplus_{i=0}^{N-1} L_{i} \oplus \mathfrak{p}_{E}^{-1} L_{N} = L'.
\]
Thus, we obtain the first claim.
We will prove the last claim.
Let $g \in G_{L'}$. Then, for all $x \in L \subseteq L'$ and $0 \le i \le N-1$ with $i \neq N-2$, we have
\[
g(x)^{L}_{i} = g(x)^{L'}_{i} \in L'_{i} = L_{i}.
\]
We also have 
\[
g(x)^{L}_{N-2} + g(x)^{L}_{N} = g(x)^{L'}_{N-2} \in L'_{N-2} = L_{N-2} \oplus \mathfrak{p}_{E}^{-1} L_{N}.
\]
Hence, we have $g(x)^{L}_{N-2} \in L_{N-2}$.
Thus, we conclude that
\begin{align*}
G_{L} &= G_{L} \cap G_{L'} \\
&= \left\{
g \in G_{L'} \mid g(x) \in L \quad (x \in L)
\right\} \\
&= \left\{
g \in G_{L'} \mid g(x)^{L}_{N} \in L_{N} \quad (x \in L) 
\right\}.
\qedhere
\end{align*}
\end{proof}

We will compute the index $\abs{G_{L'}/G_{L}}$ below.
We define an open, normal subgroup $G_{L'}^{+}$ of $G_{L'}$ as
\[
G_{L'}^{+} \defeq \left\{
g \in G_{L'} \mid g(x^{L'}_{i})^{L'}_{i} - x^{L'}_{i} \in \mathfrak{p}_{E} L'_{i} \quad (0 \le i \le N-1, x \in L')
\right\}.
\]
Then we have
\[
\abs{G_{L'}/G_{L}} = 
\abs{G_{L'}/G_{L} G_{L'}^{+}} \abs{G_{L}G_{L'}^{+}/G_{L}}.
\]
To compute the first factor, we describe the quotient $G_{L'}/G_{L'}^{+}$.

For $0 \le i \le N$,
let 
$\sigma_{i} \colon \cO_{E}/\mathfrak{p}_{E} \rightarrow \cO_{E}/\mathfrak{p}_{E}$ denote the reduction modulo $\mathfrak{p}_{E}$ of the automorphism $x \mapsto \varpi_{E}^{-i} \sigma(x) \varpi_{E}^{i}$ on $\cO_{E}$.
We also define $\epsilon_{i} \in \{\pm 1\}$ by
\[
\epsilon_{i} \defeq \begin{cases}
\epsilon & (e = 1), \\
(-1)^{i} \epsilon & (e = 2).
\end{cases}
\]
For $0 \le i \le N-1$, we define a $(\sigma_{i}, \epsilon_{i})$-Hermitian form $\langle \phantom{x},\phantom{x} \rangle'_{i}$ on the $\cO_{E}/\mathfrak{p}_{E}$-vector space $\overline{L}'_{i} \defeq L'_{i}/\mathfrak{p}_{E} L'_{i}$ by $\langle x, y \rangle'_{i} \defeq \varpi_{E}^{-i} \langle x, y \rangle \mod \mathfrak{p}_{E}$.
Let $G'_{i}$ denote the isometry group of $\left(\overline{L}'_{i}, \langle \phantom{x},\phantom{x} \rangle'_{i}\right)$.

\begin{rmk}
\label{rmkaboutindepofreductivequot}
Here, we define the $\cO_{E}/\mathfrak{p}_{E}$-Hermitian lattice $\left(\overline{L}'_{i}, \langle \phantom{x},\phantom{x} \rangle'_{i}\right)$ using a Jordan splitting.
However, as in \cite{Gan-Yu}*{Section~6}, it can in fact be defined without fixing a Jordan splitting. In particular, the group $G'_{i}$ is independent of the choice of a Jordan splitting.
\end{rmk}

For $g \in G_{L'}$ and $0 \le i \le N-1$, we define an element $g_{i} \in G'_{i}$ by the reduction module $\mathfrak{p}_{E}$ of the automorphism $x \mapsto g(x)^{L'}_{i}$ for $x \in L'_{i}$.
Then the map $g \mapsto g_{i}$ defines a group homomorphism $G_{L'} \rightarrow G'_{i}$.
We define the group homomorphism $r \colon G_{L'} \rightarrow \prod_{i=0}^{N-1} G'_{i}$ by $r(g) = (g_{i})_{0 \le i \le N-1}$.
\begin{lemma}
\label{lemma:reductivequotientofGl'}
The homomorphism $r$ is surjective with kernel $G_{L'}^{+}$.
\end{lemma}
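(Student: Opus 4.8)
The plan is to analyze the homomorphism $r \colon G_{L'} \to \prod_{i=0}^{N-1} G'_i$ by first identifying its kernel and then establishing surjectivity onto each factor. I would begin by unwinding the definitions: an element $g \in G_{L'}$ lies in $\ker r$ precisely when each reduced automorphism $g_i$ on $\overline{L}'_i = L'_i/\mathfrak{p}_E L'_i$ is the identity, which by definition means $g(x^{L'}_i)^{L'}_i - x^{L'}_i \in \mathfrak{p}_E L'_i$ for all $i$ and all $x \in L'$. This is exactly the defining condition of $G_{L'}^+$, so the equality $\ker r = G_{L'}^+$ is essentially immediate once one checks carefully that the reduction map $x \mapsto g(x)^{L'}_i \bmod \mathfrak{p}_E$ is well-defined as a map $\overline{L}'_i \to \overline{L}'_i$ and is an isometry for the form $\langle \phantom{x},\phantom{x} \rangle'_i$. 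The latter follows because $g$ preserves $\langle \phantom{x},\phantom{x} \rangle$ on $L'$ and the form $\langle x,y\rangle'_i = \varpi_E^{-i}\langle x,y\rangle \bmod \mathfrak{p}_E$ only depends on the $L'_i$-component modulo $\mathfrak{p}_E$, using Lemma~\ref{lemma:modplowertriangular} to discard the contributions from the other Jordan blocks.

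The substantive part is surjectivity. I would aim to show that for each target element $(h_i)_{0 \le i \le N-1} \in \prod_{i=0}^{N-1} G'_i$, there exists $g \in G_{L'}$ with $r(g) = (h_i)$. The natural strategy is to build $g$ block-diagonally with respect to the Jordan splitting $L' = \bigoplus_i L'_i$: each $h_i$ is an isometry of the nondegenerate Hermitian space $(\overline{L}'_i, \langle \phantom{x},\phantom{x}\rangle'_i)$ over the residue field $\cO_E/\mathfrak{p}_E$, and one wants to lift it to an isometry $\tilde{h}_i$ of the $\cO_E$-lattice $(L'_i, \varpi_E^{-i}\langle\phantom{x},\phantom{x}\rangle)$ reducing to $h_i$. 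The orthogonal sum $g = \bigoplus_i \tilde{h}_i$ would then preserve each $L'_i$, hence preserve $L'$ and its form, placing it in $G_{L'}$, and by construction $r(g) = (h_i)$. The key lemma needed is therefore that the reduction map from the isometry group of a nondegenerate unimodular Hermitian $\cO_E$-lattice onto the isometry group of its reduction is surjective.

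The main obstacle I anticipate is precisely this lifting step, i.e.\ the smoothness/surjectivity of reduction for isometry groups of the individual unimodular blocks. Under Assumption~\ref{asump=2} (residue characteristic $\neq 2$ in the delicate ramified and symmetric-bilinear cases), the relevant integral models are smooth group schemes over $\cO_F$ with reductive special fibre — this is exactly the content of the Bruhat--Tits / Gan--Yu theory invoked from \cite{Gan-Yu}, where $G'_i$ is identified as the reductive quotient of the special fibre of the parahoric $G_{L'_i}$. Smoothness guarantees that $\cO_E$-points surject onto $\cO_E/\mathfrak{p}_E$-points by Hensel's lemma / the infinitesimal lifting criterion, which gives the desired lift $\tilde h_i$ of each $h_i$. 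I would therefore reduce surjectivity of $r$ to smoothness of these local integral models and cite \cite{Gan-Yu} (and Remark~\ref{rmkaboutindepofreductivequot} for the intrinsic description of $G'_i$) rather than re-deriving it. A minor secondary point to verify is that the block-diagonal $g$ genuinely lands in $G_{L'}$ and not merely some larger group — but since each $\tilde h_i$ preserves $L'_i$ together with its rescaled form, orthogonality of the decomposition makes this routine.
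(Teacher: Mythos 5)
Your proposal is correct, and it rests on the same external input as the paper's proof, but it organizes the surjectivity step differently. The kernel identification is exactly the paper's: it is immediate from the definitions of $G_{L'}^{+}$ and of the maps $g \mapsto g_{i}$. For surjectivity, the paper cites \cite{Gan-Yu}*{6.2.1~Proposition, 6.3.1~Proposition} applied to the whole lattice $L'$: these identify $\prod_{i=0}^{N-1} G'_{i}$ as the maximal reductive quotient of the special fibre of the smooth integral model $\underline{G}'$ of $G_{L'}$, so the surjectivity of $r$ follows in one shot from Hensel's lemma for the smooth model together with the vanishing of $H^{1}$ of the (split) unipotent radical over the finite residue field. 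You instead apply this machinery one Jordan block at a time, lifting each $h_{i}$ to an isometry of the self-dual rescaled block $\left(L'_{i}, \varpi_{E}^{-i}\langle \phantom{x},\phantom{x} \rangle\right)$ and assembling the lifts block-diagonally; this is a legitimate and somewhat more constructive route, and in the unramified, split, and symplectic cases it reduces to the classical fact that the stabilizer scheme of a unimodular lattice is smooth with special fibre the full isometry group of the reduction. Two points need care. First, in the ramified case your phrase ``smooth group schemes over $\cO_F$ with reductive special fibre'' is an overstatement: the naive stabilizer scheme is not smooth there, and the special fibre of the Gan--Yu smoothening is only an extension of $G'_{i}$ by a unipotent group, so even block-by-block you need both the smoothened model and the $H^{1}$-vanishing for the unipotent radical to pass from special-fibre points to points of $G'_{i}$ (Hensel alone only gives surjectivity onto special-fibre points). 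Second, as you note, one must check that rescaling by $\varpi_{E}^{-i}$ does not change isometry groups, so that the block-diagonal lift genuinely lies in $G_{L'}$ and maps to $(h_{i})_{i}$ under $r$; this is routine. With these points made precise, your argument is a sound variant; the paper's version is shorter mainly because the same Gan--Yu propositions are needed again anyway (for the unipotent radical in Proposition~\ref{prop:descriptionoftheunipotentradical}), so citing them for the full lattice costs nothing extra.
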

\begin{proof}
The claim that $G_{L'}^{+}$ is the kernel of $r$ follows from the definition of $G_{L'}^{+}$.
The surjectivity follows from \cite{Gan-Yu}*{6.2.1~Proposition, 6.3.1~Proposition}, which state that $\prod_{i=0}^{N-1} G'_{i}$ is the maximal reductive quotient of the special fibre of the smooth integral model $\underline{G}'$ of $G_{L'}$ constructed in \cite{Gan-Yu}*{Section~5}.
\end{proof}

According to Lemma~\ref{lemma:reductivequotientofGl'}, we have
\[
\abs{G_{L'}/G_{L} G_{L'}^{+}} =
\abs{
\left(
\prod_{i=0}^{N-1} G'_{i} 
\right)
/ r(G_{L})
}.
\]
We will determine the image $r(G_{L})$ of $G_{L}$.
We write $\overline{L}_{N} \defeq L_{N}/\mathfrak{p}_{E} L_{N}$ and let $\langle \phantom{x},\phantom{x} \rangle_{N}$ denote the $(\sigma_{N}, \epsilon_{N})$-Hermitian form of $\overline{L}_{N}$ defined by $\langle x, y \rangle_{L} \defeq \varpi_{E}^{-N} \langle x, y \rangle \mod \mathfrak{p}_{E}$.
Similarly, we define a $(\sigma_{N-2}, \epsilon_{N-2})$-Hermitian form $\langle \phantom{x},\phantom{x} \rangle_{N-2}$ on the $\cO_{E}/\mathfrak{p}_{E}$-vector space $\overline{L}_{N-2} := L_{N-2}/\mathfrak{p}_{E} L_{N-2}$.
We note that the map $L_{N-2} \oplus L_{N} \rightarrow L'_{N-2}$ defined by $(x, y) \mapsto x + \varpi_{E}^{-1} y$ induces an isomorphism of $\cO_{E}/\mathfrak{p}_{E}$-lattices
\begin{equation}
\label{isomoflattices}
\overline{L}_{N-2} \oplus \overline{L}_{N} \isoarrow \overline{L}'_{N-2}.
\end{equation}
Let $G_{N-2}$ and $G_{N}$ be the isometry groups of $\left(\overline{L}_{N-2}, \langle \phantom{x},\phantom{x} \rangle_{N-2}\right)$ and $\left(\overline{L}_{N}, \langle \phantom{x},\phantom{x} \rangle_{N}\right)$, respectively.
Then using the isomorphism in \eqref{isomoflattices}, we obtain an injection $G_{N-2} \times G_{N} \hookrightarrow G'_{N-2}$.
More precisely, we have an isomorphism
\begin{equation}
\label{isomogisometorygroups}
G_{N-2} \times G_{N} \simeq \left\{
g \in G'_{N-2} \mid g(\overline{L}_{N-2}) \subseteq \overline{L}_{N-2}, \, g(\overline{L}_{N}) \subseteq \overline{L}_{N}
\right\},
\end{equation}
where we regard $\overline{L}_{N-2}$ and $\overline{L}_{N}$ as sublattices of $\overline{L}'_{N-2}$ via \eqref{isomoflattices}.
We regard $G_{N-2} \times G_{N}$ as a subgroup of $G'_{N-2}$ via the isomorphism in \eqref{isomogisometorygroups}.
\begin{rmk}
When $e = 2$, the isomorphism in \eqref{isomoflattices} is not an isomorphism of Hermitian spaces. 
Nevertheless, we have the isomorphism of isometry groups in \eqref{isomogisometorygroups} since the Hermitian form $\langle \phantom{x},\phantom{x} \rangle_{N}$ and the restriction of $\langle \phantom{x},\phantom{x} \rangle'_{N-2}$ to $\overline{L}_{N}$ differs only by the scalar multiple of $\sigma(\varpi_{E})/\varpi_{E} \mod \mathfrak{p}_{E}$.
\end{rmk}
\begin{prop}
\label{prop:imageofGLviar}
We have
\[
r(G_{L}) = \prod_{i=0}^{N-1} G''_{i},
\]
where
\[
G''_{i} = 
\begin{cases}
G'_{i} & (i \neq N-2), \\
G_{N-2} \times G_{N} & (i = N-2).
\end{cases}
\]
\end{prop}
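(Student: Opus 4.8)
The plan is to prove the two inclusions $r(G_L) \supseteq \prod_i G''_i$ and $r(G_L) \subseteq \prod_i G''_i$ separately, using the explicit description of $G_L$ inside $G_{L'}$ from Proposition~\ref{prop:GLasasubgroupofGL'} together with the surjectivity of $r$ from Lemma~\ref{lemma:reductivequotientofGl'}. The whole content is a bookkeeping computation about how the condition ``$g(x)^L_N \in L_N$ for all $x \in L$'' interacts with the reduction maps $g \mapsto g_i$, and the key observation is that this extra condition only constrains the behaviour in the single slot $i = N-2$, where the reductive quotient $G'_{N-2}$ decomposes (via \eqref{isomoflattices} and \eqref{isomogisometorygroups}) as something containing $G_{N-2} \times G_N$ as the stabiliser of the sublattices $\overline{L}_{N-2}$ and $\overline{L}_N$.

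For the inclusion $r(G_L) \subseteq \prod_i G''_i$, I would take $g \in G_L$. For $i \neq N-2$ there is nothing to prove since $G''_i = G'_i$. For the slot $i = N-2$, I need to show $g_{N-2} \in G_{N-2} \times G_N$, i.e. that $g_{N-2}$ stabilises both $\overline{L}_{N-2}$ and $\overline{L}_N$ as sublattices of $\overline{L}'_{N-2}$. The defining property of $G_L \subseteq G_{L'}$ is exactly $g(x)^L_N \in L_N$ for all $x \in L$; combined with Proposition~\ref{prop:GLasasubgroupofGL'} (where it was shown that $g(x)^L_{N-2} \in L_{N-2}$ already holds for $g \in G_{L'}$), this says $g$ preserves the internal $L_{N-2} \oplus L_N$ splitting of $L'_{N-2}$ modulo the appropriate powers of $\mathfrak{p}_E$. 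Reducing modulo $\mathfrak{p}_E$ and translating through the isomorphism \eqref{isomoflattices} gives exactly that $g_{N-2}$ stabilises the images of $\overline{L}_{N-2}$ and $\overline{L}_N$, hence lies in $G_{N-2} \times G_N$ by \eqref{isomogisometorygroups}.

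For the reverse inclusion $r(G_L) \supseteq \prod_i G''_i$, I would argue by lifting. Given an arbitrary element $(\overline{g}_i)_i \in \prod_i G''_i$, surjectivity of $r$ (Lemma~\ref{lemma:reductivequotientofGl'}) produces some $g \in G_{L'}$ with $r(g) = (\overline{g}_i)_i$; the task is to show one can arrange $g \in G_L$, i.e. $g(x)^L_N \in L_N$ for all $x \in L$. Here the condition $\overline{g}_{N-2} \in G_{N-2} \times G_N$ is the crucial hypothesis: it guarantees that the reduction of $g$ already respects the $\overline{L}_N$-component, so that $g(x)^L_N \in L_N + \mathfrak{p}_E L_N^{\vee}$-type errors can be corrected. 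I expect the cleanest route is to adjust $g$ by an element of $G_{L'}^+$ (the kernel of $r$, which does not change $r(g)$) to kill the remaining obstruction in the top slot; one checks that $G_{L'}^+$ acts transitively enough on the possible ``off-diagonal'' discrepancies between $g(x)^L_N$ and $L_N$ because those discrepancies lie in $\mathfrak{p}_E L_N^{\vee} = \mathfrak{p}_E^{-(N-1)}L_N \subseteq L'$, within the range controlled by Lemma~\ref{lemma:modplowertriangular}.

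The main obstacle will be this lifting step in the reverse inclusion: making precise that an element of $G_{L'}^+$ can be chosen to correct the top-slot discrepancy without disturbing the prescribed reductions in the lower slots. This is where the filtration estimate of Lemma~\ref{lemma:modplowertriangular} (namely $g(x)^M_i \in \mathfrak{p}_E^{j-i} M_i$ for $x \in M_j$) does the real work, since it pins down exactly how far off-diagonal the image of a vector can be and hence bounds the correction needed. The lower-slot reductions are unaffected because $G_{L'}^+$ reduces to the identity in every $G'_i$ by Lemma~\ref{lemma:reductivequotientofGl'}, so the adjustment is ``upper-triangular'' relative to the Jordan filtration and interacts only with the $i = N-2$ and $i = N$ pieces. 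Everything else is a formal consequence of the already-established facts, so once the correction in the top slot is set up carefully the proposition follows.
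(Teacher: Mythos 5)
Your forward inclusion $r(G_{L}) \subseteq \prod_{i} G''_{i}$ is essentially fine, but note that checking $g_{N-2}(\overline{L}_{N}) \subseteq \overline{L}_{N}$ needs more than the membership $g(z)^{L}_{N-2} \in L_{N-2}$ supplied by Proposition~\ref{prop:GLasasubgroupofGL'}: for $z \in L_{N}$ one must show $g(z)^{L}_{N-2} \in \mathfrak{p}_{E}^{2} L_{N-2}$, which is Lemma~\ref{lemma:modplowertriangular} with $(i,j) = (N-2, N)$. The paper avoids this verification entirely by an orthogonality trick you missed, and that trick is the whole displayed content of its proof: it only records that $g_{N-2}$ preserves $\overline{L}_{N-2}$, and then observes that any isometry of $\left(\overline{L}'_{N-2}, \langle \phantom{x},\phantom{x} \rangle'_{N-2}\right)$ preserving the nondegenerate subspace $\overline{L}_{N-2}$ automatically preserves its orthogonal complement, which is exactly $\overline{L}_{N}$.

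The genuine gap is in your reverse inclusion. The lifting step you defer (``one checks that $G_{L'}^{+}$ acts transitively enough on the off-diagonal discrepancies'') is precisely the content to be proved, and the concrete assertions you make in support of it are incorrect. For $g \in G_{L'}$ and $x \in L$, the discrepancy is that $g(x)^{L}_{N}$ lies in $\mathfrak{p}_{E}^{-1} L_{N}$ rather than $L_{N}$ (simply because $g(x) \in L'$); it is not controlled by your set $\mathfrak{p}_{E} L_{N}^{\vee} = \mathfrak{p}_{E}^{-(N-1)} L_{N}$, and the claimed containment $\mathfrak{p}_{E}^{-(N-1)} L_{N} \subseteq L'$ is false whenever $N \ge 3$. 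Moreover, Lemma~\ref{lemma:modplowertriangular} cannot do the ``real work'' here: it bounds the components $g(x)^{M}_{i}$ with $i \le j$ for $x \in M_{j}$, i.e.\ the upper-triangular spreading, whereas the obstruction to $g \in G_{L}$ lives in the lower-triangular blocks (the images of $L_{j}$, $j < N-2$, in the $\mathfrak{p}_{E}^{-1} L_{N}$-direction), which that lemma does not constrain at all. To actually construct the correcting element you would need (i) to know which lower-triangular unipotent block maps are realised by elements of $G_{L'}^{+}$ --- that is the content of Proposition~\ref{prop:descriptionoftheunipotentradical}, i.e.\ Gan--Yu's description of the unipotent radical, which your argument never invokes --- and (ii) a genuine computation, since composing with $g$ means the correction is obtained by inverting the truncated lower-triangular part of the reduction of $g$, not by merely adding a kernel element. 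A cleaner route, and the one implicit in the paper's citation of the Gan--Yu machinery, avoids lifting through $G_{L'}$ altogether: apply \cite{Gan-Yu}*{6.2.1~Proposition, 6.3.1~Proposition} to $L$ itself, so that $G_{L}$ surjects onto $\mathbf{G}_{L}(\cO_{F}/\mathfrak{p}_{F}) = \prod_{i=0}^{N} G_{i}$, and check that under \eqref{isomogisometorygroups} this reduction map coincides with $r\restriction_{G_{L}}$ (that compatibility is exactly your forward-direction computation); surjectivity then yields $r(G_{L}) \supseteq \prod_{i} G''_{i}$ with no correction argument at all.
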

\begin{proof}
According to Proposition~\ref{prop:GLasasubgroupofGL'}, we have
\[
r(G_{L}) = \prod_{i=0}^{N-1} G'''_{i},
\]
where
\[
G'''_{i} = 
\begin{cases}
G'_{i} & (i \neq N-2), \\
\left\{
g \in G'_{N-2} \mid g(\overline{L}_{N-2}) \subseteq \overline{L}_{N-2}
\right\} & (i = N-2).
\end{cases}
\]
We will prove that $G'''_{N-2} = G_{N-2} \times G_{N}$.
Let $g \in G'''_{N-2}$. 
It suffices to show that $g(\overline{L}_{N}) \subseteq \overline{L}_{N}$.
Let $x \in \overline{L}_{N-2}$ and $y \in \overline{L}_{N}$.
Since $g \in G'''_{N-2} \subseteq G'_{N-2}$, we have 
\[
\langle g(x), g(y) \rangle'_{N-2}
= \langle x, y \rangle'_{N-2} = 0.
\]
Moreover, since $g \in G'''_{N-2}$, we have $g(x) \in \overline{L}_{N-2}$.
As $x$ ranges over all elements of $\overline{L}_{N-2}$, we obtain that $g(y)$ lies in the orthogonal complement of $\overline{L}_{N-2}$ in $\overline{L}'_{N-2}$, which is equal to $\overline{L}_{N}$.
Thus, we have $g(y) \in \overline{L}_{N}$, as desired.
\end{proof}
Now, we can compute the index $\abs{G_{L'}/G_{L} G_{L'}^{+}}$ as
\begin{equation}
\label{calculationoffirstfactor}
\abs{G_{L'}/G_{L} G_{L'}^{+}}
=
\abs{
\left(
\prod_{i=0}^{N-1} G_{i} 
\right)
/ r(G_{L})
} = \abs{
G'_{N-2}/ (
G_{N-2} \times G_{N}
)
}.
\end{equation}
For later use, we rewrite this result using smooth integral models of $G_{L}$ and $G_{L'}$.
Let $\mathbf{G}_{L}$ (resp.\ $\mathbf{G}_{L'}$) denote the maximal reductive quotient of the special fibre of the smooth integral model of $G_{L}$ (resp.\ $G_{L'}$) constructed in \cite{Gan-Yu}*{Section~5}.
\begin{prop}
\label{prop:calculationoffirstfactorreductivequot}
We have 
\[
\abs{G_{L'}/G_{L} G_{L'}^{+}}
=
\abs{
\mathbf{G}_{L'}(\cO_{F}/\mathfrak{p}_{F})
} \abs{\mathbf{G}_{L}(\cO_{F}/\mathfrak{p}_{F})}^{-1}.
\]
\end{prop}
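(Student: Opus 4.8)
The plan is to compute both quantities on the right-hand side explicitly via the description of the reductive quotients supplied by \cite{Gan-Yu}, and then match the outcome against the index formula already obtained in \eqref{calculationoffirstfactor}. The first step is to record the two reductive quotients. By Lemma~\ref{lemma:reductivequotientofGl'} (which rests on \cite{Gan-Yu}*{6.2.1~Proposition, 6.3.1~Proposition}), the maximal reductive quotient of the special fibre of $G_{L'}$ satisfies $\abs{\mathbf{G}_{L'}(\cO_F/\mathfrak{p}_F)} = \prod_{i=0}^{N-1} \abs{G'_i}$. Applying the very same result of \cite{Gan-Yu} to the Jordan splitting $L = \bigoplus_{i=0}^{N} L_i$ yields $\abs{\mathbf{G}_L(\cO_F/\mathfrak{p}_F)} = \prod_{i=0}^{N} \abs{G_i}$, where for each $0 \le i \le N$ we write $G_i$ for the isometry group of $(\overline{L}_i, \langle\,,\rangle_i)$, extending the notation already fixed for $G_{N-2}$ and $G_N$.

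Next I would compare the two products factor by factor. For $0 \le i \le N-1$ with $i \neq N-2$, the definition of $L'$ gives $L'_i = L_i$, hence $\overline{L}'_i = \overline{L}_i$ and $\langle\,,\rangle'_i = \langle\,,\rangle_i$ (both being the reduction of $\varpi_E^{-i}\langle\,,\rangle$), so the isometry groups agree: $G'_i = G_i$. Consequently every such factor cancels in the quotient of orders, and only the indices $i = N-2$ and $i = N$ survive, leaving
\[
\frac{\abs{\mathbf{G}_{L'}(\cO_F/\mathfrak{p}_F)}}{\abs{\mathbf{G}_L(\cO_F/\mathfrak{p}_F)}} = \frac{\abs{G'_{N-2}}}{\abs{G_{N-2}}\,\abs{G_N}} = \frac{\abs{G'_{N-2}}}{\abs{G_{N-2} \times G_N}}.
\]

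Finally, since $G_{N-2} \times G_N$ is realised as a subgroup of $G'_{N-2}$ through the injection \eqref{isomogisometorygroups}, Lagrange's theorem identifies the right-hand side with the index $\abs{G'_{N-2}/(G_{N-2} \times G_N)}$, and by the already-proved equation \eqref{calculationoffirstfactor} this index is exactly $\abs{G_{L'}/G_L G_{L'}^+}$, which is the desired equality. I do not anticipate any serious obstacle: the structural input—that the reductive quotient factors along the Jordan splitting as a product of the isometry groups of the graded pieces—has already been imported from \cite{Gan-Yu} and encapsulated in Lemma~\ref{lemma:reductivequotientofGl'}, so the remaining content is the bookkeeping verification that $G'_i = G_i$ for $i \neq N-2$ together with an elementary cancellation. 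The only point demanding genuine care is ensuring that the product description of $\mathbf{G}_L$ is applied to the full Jordan splitting of $L$ (indices $0$ through $N$, one more piece than for $L'$), since the mismatch in the number of graded pieces is precisely what produces the $G_N$ factor in the denominator.
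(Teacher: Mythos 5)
Your proposal is correct and takes essentially the same route as the paper's proof: both compute $\abs{\mathbf{G}_{L'}(\cO_{F}/\mathfrak{p}_{F})}$ and $\abs{\mathbf{G}_{L}(\cO_{F}/\mathfrak{p}_{F})}$ from the product description of the maximal reductive quotients in \cite{Gan-Yu}*{6.2.1~Proposition, 6.3.1~Proposition} and then conclude via \eqref{calculationoffirstfactor}. The only cosmetic difference is that the paper records $\mathbf{G}_{L}(\cO_{F}/\mathfrak{p}_{F})$ directly as $\prod_{i=0}^{N-1} G''_{i}$ in the notation of Proposition~\ref{prop:imageofGLviar}, whereas you write it as the product over the full Jordan splitting of $L$ and cancel the common factors — these agree by the definition of $G''_{i}$, so your bookkeeping is exactly the paper's argument made explicit.
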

\begin{proof}
According to \cite{Gan-Yu}*{6.2.1 Proposition, 6.3.1 Proposition}, we have
\[
\mathbf{G}_{L'}(\cO_{F}/\mathfrak{p}_{F}) = \prod_{i=0}^{N-1} G'_{i}
\qquad
\text{and}
\qquad
\mathbf{G}_{L'}(\cO_{F}/\mathfrak{p}_{F}) = \prod_{i=0}^{N-1} G''_{i},
\]
where $G''_{i}$ are defined in Proposition~\ref{prop:imageofGLviar}.
Thus, the claim follows from \eqref{calculationoffirstfactor}.
\end{proof}

Next, we will compute the index $\abs{G_{L}G_{L'}^{+}/G_{L}}$.
We define an open subgroup $G_{L'}^{++}$ of $G_{L'}$ as
\[
G_{L'}^{++} \defeq \left\{
g \in G_{L'} \mid g(x) - x \in \mathfrak{p}_{E} L', g(x^{L'}_{j})^{L'}_{i} \in \mathfrak{p}_{E}^{j-i+1} L'_{i} \quad (x \in L', 0 \le i < j \le N-1)
\right\}.
\]

\begin{lemma}
\label{lem:SUL'+subsetSUL}
    We have $G_{L'}^{++} \subseteq G_{L}$.
\end{lemma}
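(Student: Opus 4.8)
The plan is to prove the inclusion directly, by showing that every $g \in G_{L'}^{++}$ maps $L$ into itself and then invoking the description of $G_L$ from Proposition~\ref{prop:GLasasubgroupofGL'}. The entire argument rests on a single lattice containment, namely
\[
\mathfrak{p}_{E} L' \subseteq L.
\]
To verify this I would decompose $\mathfrak{p}_{E} L' = \bigoplus_{i=0}^{N-1} \mathfrak{p}_{E} L'_{i}$ and check it block by block. For $i \neq N-2$ we have $\mathfrak{p}_{E} L'_{i} = \mathfrak{p}_{E} L_{i} \subseteq L_{i}$, while for the distinguished block
\[
\mathfrak{p}_{E} L'_{N-2} = \mathfrak{p}_{E}\bigl(L_{N-2} \oplus \mathfrak{p}_{E}^{-1} L_{N}\bigr) = \mathfrak{p}_{E} L_{N-2} \oplus L_{N} \subseteq L_{N-2} \oplus L_{N}.
\]
Summing over $i$ recovers exactly $\bigoplus_{i=0}^{N} L_{i} = L$. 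This is precisely the point of introducing the factor $\mathfrak{p}_{E}^{-1} L_{N}$ into $L'_{N-2}$: multiplying by $\mathfrak{p}_{E}$ pulls that summand back down into $L_{N} \subseteq L$.

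With this containment in hand the lemma is immediate. Let $g \in G_{L'}^{++}$ and $x \in L$. Since $L \subseteq L'$, the first defining condition of $G_{L'}^{++}$ applies and gives $g(x) - x \in \mathfrak{p}_{E} L'$. Combining this with the containment above yields
\[
g(x) = x + \bigl(g(x) - x\bigr) \in L + \mathfrak{p}_{E} L' \subseteq L.
\]
In particular, writing $g(x) = \sum_{i=0}^{N} g(x)^{L}_{i}$ with respect to the Jordan splitting $L = \bigoplus_{i} L_{i}$, uniqueness of the decomposition forces $g(x)^{L}_{N} \in L_{N}$ for every $x \in L$. By the characterisation $G_{L} = \{g \in G_{L'} \mid g(x)^{L}_{N} \in L_{N} \ (x \in L)\}$ of Proposition~\ref{prop:GLasasubgroupofGL'}, we conclude $g \in G_{L}$, hence $G_{L'}^{++} \subseteq G_{L}$.

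I expect no serious obstacle here: the only genuine content is the containment $\mathfrak{p}_{E} L' \subseteq L$, and once it is recorded the first (congruence) condition defining $G_{L'}^{++}$ does all the work. It is worth noting that the second, off-diagonal condition in the definition of $G_{L'}^{++}$ plays no role in this particular inclusion; it will instead be needed when computing the index $\abs{G_{L} G_{L'}^{+} / G_{L}}$. I would therefore flag it explicitly as unused in this proof, so as not to suggest that a finer triangularity estimate is required for the containment $G_{L'}^{++} \subseteq G_{L}$.
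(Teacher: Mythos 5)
Your proof is correct and follows essentially the same route as the paper's: both rest on the single observation that the first congruence condition defining $G_{L'}^{++}$ gives $g(x)-x \in \mathfrak{p}_{E} L' \subseteq L$ for $x \in L$, with the off-diagonal condition playing no role. Your only additions --- the explicit block-by-block verification of $\mathfrak{p}_{E} L' \subseteq L$ and routing the conclusion through Proposition~\ref{prop:GLasasubgroupofGL'} (the paper simply concludes $g(x) \in L$, hence $g \in G_{L}$, directly) --- are harmless elaborations of the same argument, not a different one.
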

\begin{proof}
Let $g \in G_{L'}^{++}$ and $x \in L \subseteq L'$.
Then, we have
\[
g(x) - x \in \mathfrak{p}_{E} L' \subseteq L.
\]
Thus, we conclude that $g(x) \in L$.
\end{proof}

We define a finite set $\mathcal{F}_{L'}$ as the set consisting of the families $(u_{ij})_{0 \le i, j \le N-1}$, where $u_{ij} \in \Hom_{\cO_{E}/\mathfrak{p}_{E}}\left(
\overline{L}'_{j}, \overline{L}'_{i}
\right)$.
By fixing bases of the lattices $\overline{L}'_{i}$, we regard $u_{ij}$ as an element of $M_{n'_{i} \times n'_{j}}(\mathcal{O}_{E}/\mathfrak{p}_{E})$.
For $0 \le i \le N-1$, let $\delta_{i} \in M_{n'_{i} \times n'_{i}}(\mathcal{O}_{E}/\mathfrak{p}_{E})$ denote the matrix that represents the Hermitian form $\langle \phantom{x},\phantom{x} \rangle'_{i}$ with respect to the fixed basis of $\overline{L}'_{i}$.
Let $\mathcal{U}_{L'}$ denote the set of elements $(u_{ij})_{0 \le i, j \le N-1} \in \mathcal{F}_{L'}$ that satisfy the following conditions:
\begin{enumerate}
\item 
For each $0 \le i \le N-1$, the endomorphism $u_{ii}$ is the identity map on $\overline{L}'_{i}$.
\item 
For all $0 \le i < j \le N-1$, we have
\[
\Sigma_{i \le k \le j} \overline{\sigma}(\transpose{u_{ki}}) \delta_{k} u_{kj} = 0. 
\]
\end{enumerate}
For $g \in G_{L'}$, we define an element $g_{ij} \in \Hom_{\cO_{E}/\mathfrak{p}_{E}}\left(
\overline{L}'_{j}, \overline{L}'_{i}
\right)$ by the reduction module $\mathfrak{p}_{E}$ of the automorphism $x \mapsto \varpi_{E}^{-\max\{0, j-i\}} g(x)^{L'}_{i}$ for $x \in L'_{j}$ (see Lemma~\ref{lemma:modplowertriangular}).
\begin{prop}
\label{prop:descriptionoftheunipotentradical}
The map $g \mapsto (g_{ij})_{0 \le i, j \le N-1}$ gives a bijection 
\[
G_{L'}^{+} / G_{L'}^{++} \isoarrow \mathcal{U}_{L'}.
\]
\end{prop}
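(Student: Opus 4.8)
The plan is to study the map $\Phi\colon G_{L'}^{+}\to\mathcal{F}_{L'}$, $g\mapsto(g_{ij})_{0\le i,j\le N-1}$, in three steps: (i) its image lies in $\mathcal{U}_{L'}$; (ii) its fibres are exactly the cosets of $G_{L'}^{++}$, so that $\Phi$ descends to an injection $G_{L'}^{+}/G_{L'}^{++}\hookrightarrow\mathcal{U}_{L'}$; and (iii) this injection is surjective. Throughout, the basic tool is Lemma~\ref{lemma:modplowertriangular}, which is what makes each $g_{ij}$ well defined.

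For step (i), condition (1) in the definition of $\mathcal{U}_{L'}$ is immediate, since for $g\in G_{L'}^{+}$ the defining congruence says exactly that $g_{ii}=\id$. For condition (2) I would fix $x\in L'_{i}$ and $y\in L'_{j}$ with $i<j$ and expand the isometry relation $0=\langle x,y\rangle=\langle g(x),g(y)\rangle=\sum_{k}\langle g(x)^{L'}_{k},g(y)^{L'}_{k}\rangle$, using the orthogonality of the Jordan blocks. By Lemma~\ref{lemma:modplowertriangular} the $k$-th summand has $\varpi_{E}$-adic valuation at least $k+\max\{0,i-k\}+\max\{0,j-k\}$, and a direct check shows this equals $j$ precisely when $i\le k\le j$ and is strictly larger otherwise. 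Reading off the coefficient of $\varpi_{E}^{j}$ and reducing modulo $\mathfrak{p}_{E}$ then turns the vanishing of $\langle g(x),g(y)\rangle$ into exactly the relation $\sum_{i\le k\le j}\overline{\sigma}(\transpose{u_{ki}})\delta_{k}u_{kj}=0$, which is condition (2).

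For step (ii), I would first record the exact description $G_{L'}^{++}=\{g\in G_{L'}\mid g_{ij}=\delta_{ij}\id \text{ for all }i,j\}=\Phi^{-1}(\id)$. Indeed, unwinding the two defining congruences of $G_{L'}^{++}$ in terms of the blocks $g(x^{L'}_{j})^{L'}_{i}$ shows that $g\equiv\id\pmod{\mathfrak{p}_{E}}$ forces the diagonal blocks to be $\id$ and the below-diagonal blocks to vanish, while the condition $g(x^{L'}_{j})^{L'}_{i}\in\mathfrak{p}_{E}^{j-i+1}L'_{i}$ for $i<j$ is precisely the vanishing of the above-diagonal blocks $g_{ij}$. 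Next I would establish that $\Phi$ is multiplicative at leading order, namely $(g_{1}g_{2})_{ij}=\sum_{l}(g_{1})_{il}(g_{2})_{lj}$ with $l$ ranging between $i$ and $j$; this again follows from Lemma~\ref{lemma:modplowertriangular} by tracking which composite powers $\varpi_{E}^{\max\{0,j-l\}+\max\{0,l-i\}}$ attain the minimal exponent $\max\{0,j-i\}$. With these two facts the fibres of $\Phi$ are exactly the $G_{L'}^{++}$-cosets, so $\Phi$ descends to an injection into $\mathcal{U}_{L'}$.

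The crux is step (iii), surjectivity. Given $(u_{ij})\in\mathcal{U}_{L'}$, I would choose $\cO_{E}$-linear lifts $\hat{u}_{ij}\in\Hom_{\cO_{E}}(L'_{j},L'_{i})$ and assemble the block endomorphism $\tilde g$ of $V$ with $\tilde g(x)^{L'}_{i}=\sum_{j}\varpi_{E}^{\max\{0,j-i\}}\hat{u}_{ij}(x^{L'}_{j})$. One checks readily that $\tilde g$ preserves $L'$, is unipotent lower triangular modulo $\mathfrak{p}_{E}$ (hence an $\cO_{E}$-automorphism of $L'$) and has $\tilde g_{ij}=u_{ij}$, and that condition (2) makes $\tilde g$ an isometry to the required leading order. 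The difficulty is that $\tilde g$ need not be an exact isometry. To remedy this I would invoke the smoothness of the integral model $\underline{G}'$ of $G_{L'}$ from \cite{Gan-Yu}*{Section~5}: the approximate solution $\tilde g$ of the isometry equations can be corrected, by a Hensel/successive-approximation argument, to a genuine $g\in G_{L'}$ with unchanged reduction, so that $g\in G_{L'}^{+}$ and $g_{ij}=u_{ij}$. This lifting of an approximate isometry to an exact one is the main obstacle, and it is where the smoothness results of \cite{Gan-Yu} are essential. Alternatively, one can avoid the explicit correction by a cardinality count: using Lemma~\ref{lemma:reductivequotientofGl'} to identify $G_{L'}^{+}/G_{L'}^{++}$ with the $(\cO_{F}/\mathfrak{p}_{F})$-points of the unipotent radical of the special fibre of $\underline{G}'$ and matching its order with $\abs{\mathcal{U}_{L'}}$ (where the below-diagonal blocks are free and the above-diagonal ones are determined by condition (2)), whereupon the injection of step (ii) is forced to be a bijection.
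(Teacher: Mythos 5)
Your steps (i) and (ii) are fine: the valuation bookkeeping via Lemma~\ref{lemma:modplowertriangular} does show that $\Phi(g)=(g_{ij})$ lands in $\mathcal{U}_{L'}$, that $G_{L'}^{++}=\Phi^{-1}(\mathrm{id})$, and that $\Phi$ is multiplicative at leading order, so $\Phi$ descends to an injection $G_{L'}^{+}/G_{L'}^{++}\hookrightarrow\mathcal{U}_{L'}$. The paper does not spell these out but gets them from the same source it uses for surjectivity.

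The genuine gap is in step (iii), and it affects both of your routes in the same way. For the Hensel route: the scheme cut out by the naive isometry equations (the stabiliser of $(L',\langle \phantom{x},\phantom{x} \rangle)$ as a naive group scheme over $\cO_{F}$) is in general \emph{not} smooth once $L'$ has more than one Jordan block; the model $\underline{G}'$ of \cite{Gan-Yu}*{Section~5} is smooth precisely because it is obtained by a smoothening process, and its points over Artinian $\cO_{F}$-algebras are \emph{not} approximate isometries of $L'$. Hence "smoothness of $\underline{G}'$" does not license your successive-approximation step: an endomorphism $\tilde g$ of $L'$ satisfying the isometry equations modulo a power of $\mathfrak{p}_{E}$ need not lift to an exact isometry, and the failure of exactly this lifting is what non-smoothness of the naive model means. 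What is actually needed is Gan--Yu's explicit computation of the special fibre of the smoothened model, namely \cite{Gan-Yu}*{6.2.1~Proposition} for $e=1$ and \cite{Gan-Yu}*{6.3.7~Lemma} for $e=2$, which identifies its unipotent radical with $\mathcal{U}_{L'}$ and, combined with smoothness of $\underline{G}'$ correctly invoked (surjectivity of $\underline{G}'(\cO_{F})\rightarrow \underline{G}'(\cO_{F}/\mathfrak{p}_{F})$), gives surjectivity; this is exactly the paper's proof. Your counting alternative has the same hole: Lemma~\ref{lemma:reductivequotientofGl'} identifies $G_{L'}/G_{L'}^{+}$ with the reductive quotient $\prod_{i}G'_{i}$, not $G_{L'}^{+}/G_{L'}^{++}$ with the points of the unipotent radical; establishing the latter identification, or even just the equality of orders $\abs{G_{L'}^{+}/G_{L'}^{++}}=q_{E}^{\sum_{0\le j<i\le N-1}n'_{i}n'_{j}}$ (your computation that this is $\abs{\mathcal{U}_{L'}}$ is correct), again requires the structural results of \cite{Gan-Yu} just cited --- that is, essentially the statement being proved.
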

\begin{proof}
If $e = 1$, the proposition follows from \cite{Gan-Yu}*{6.2.1~Proposition}.
More precisely, the quotient $G_{L'}^{+} / G_{L'}^{++}$ agrees with the unipotent radical of the special fibre of the smooth integral model $\underline{G}'$ of $G_{L'}$ constructed in \cite{Gan-Yu}*{Section~5}, which is isomorphic to $\mathcal{U}_{L'}$ endowed with the appropriate group structure.
Suppose that $e = 2$.
In this case, the claim follows from \cite{Gan-Yu}*{6.3.7~Lemma}.
More precisely, the quotient $G_{L'}^{+} / G_{L'}^{++}$ agrees with the group $G^{\dagger}(\cO_{F}/\mathfrak{p}_{F})$ in the proof of \cite{Gan-Yu}*{6.3.7~Lemma}, the set $\mathcal{U}_{L'}$ agrees with the underlying set of the group $G^{\ddagger}(\mathcal{O}_{F}/\mathfrak{p}_{F})$, and the equality $G^{\dagger} = G^{\ddagger}$ is proved in the proof of \cite{Gan-Yu}*{6.3.7~Lemma}.
\end{proof}

According to the definition of $\mathcal{U}_{L'}$, we have the bijection 
\[
\mathcal{U}_{L'} \isoarrow \prod_{0 \le j < i \le N-1} \Hom_{\cO_{E}/\mathfrak{p}_{E}}\left(
\overline{L}'_{j}, \overline{L}'_{i}
\right)
\]
defined by $(u_{ij})_{0 \le i, j \le N-1} \mapsto (u_{ij})_{0 \le j < i \le N-1}$.
Combining this with Proposition~\ref{prop:descriptionoftheunipotentradical}, we obtain a bijection
\[
G_{L'}^{+} / G_{L'}^{++} \isoarrow \prod_{0 \le j < i \le N-1} \Hom_{\cO_{E}/\mathfrak{p}_{E}}\left(
\overline{L}'_{j}, \overline{L}'_{i}
\right).
\]
In particular, we have
\[
\abs{
G_{L'}^{+} / G_{L'}^{++}
} = q_{E}^{\sum_{0 \le j < i \le N-1} n'_{i} \cdot n'_{j}},
\]
where $q_{E} = \abs{\cO_{E}/\mathfrak{p}_{E}}$.
Moreover, according to Proposition~\ref{prop:GLasasubgroupofGL'}, the image of $\left(
G_{L} \cap G_{L'}^{+}\right)/G_{L'}^{++}$ in $\mathcal{U}_{L'}$ via the bijection in Proposition~\ref{prop:descriptionoftheunipotentradical} is the set
\[
\left\{
(u_{ij})_{0 \le i, j \le N-1} \in \mathcal{U}_{L'} \mid u_{N-2, j} \in \Hom_{\cO_{E}/\mathfrak{p}_{E}}\left(
\overline{L}'_{j}, \overline{L}_{N-2}
\right) \quad (0 \le j < N-2)
\right\},
\]
where we regard $\overline{L}_{N-2}$ as a sublattice of $\overline{L}'_{N-2}$ via \eqref{isomoflattices}.
By the same argument as above, we have
\[
\abs{
\left(
G_{L} \cap G_{L'}^{+}\right)/G_{L'}^{++}
} = q_{E}^{s},
\]
where
\[
s \defeq \sum_{0 \le j < i \le N-1, i \neq N-2} n'_{i} \cdot n'_{j} + \sum_{0 \le j < N-2} n_{N-2} \cdot n_{j}.
\]
Thus, we conclude that
\begin{align*}
\abs{G_{L}G_{L'}^{+}/G_{L}}
&=
\abs{G_{L'}^{+}/\left(
G_{L} \cap G_{L'}^{+}
\right)} \\
&=
\abs{
\left(
G_{L'}^{+} / G_{L'}^{++}
\right)/
\left(
\left(
G_{L} \cap G_{L'}^{+}\right)/G_{L'}^{++}
\right)
} \\
&= q_{E}^{\sum_{0 \le j < N-2} n_{j} \cdot (n'_{N-2}-n_{N-2})} \\
&= q_{E}^{\sum_{0 \le j < N-2} n_{j} \cdot n_{N}}.
\end{align*}

Combining this computation with \eqref{isomogisometorygroups}, we obtain the following result.
\begin{thm}
\label{thm:comparisonofvolume}
    We have
\[
\abs{G_{L'}/G_{L}} = 
q_{E}^{\sum_{0 \le j < N-2} n_{j} \cdot n_{N}} \abs{
\mathbf{G}_{L'}(\cO_{F}/\mathfrak{p}_{F})
} \abs{\mathbf{G}_{L}(\cO_{F}/\mathfrak{p}_{F})}^{-1}.
\]
\end{thm}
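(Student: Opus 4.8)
The plan is to assemble the index $\abs{G_{L'}/G_{L}}$ from the two factors already computed in the preceding discussion, organized around the intermediate subgroup $G_{L} G_{L'}^{+}$. Since $G_{L'}^{+}$ is a normal subgroup of $G_{L'}$, the product $G_{L} G_{L'}^{+}$ is a subgroup of $G_{L'}$ containing $G_{L}$, so I would first split the index multiplicatively as
\[
\abs{G_{L'}/G_{L}} = \abs{G_{L'}/(G_{L} G_{L'}^{+})} \cdot \abs{(G_{L} G_{L'}^{+})/G_{L}}.
\]
The proof then reduces entirely to substituting the two values established above for these factors.

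For the first factor, I would invoke Proposition~\ref{prop:calculationoffirstfactorreductivequot}, which identifies $\abs{G_{L'}/(G_{L} G_{L'}^{+})}$ with $\abs{\mathbf{G}_{L'}(\cO_{F}/\mathfrak{p}_{F})}\,\abs{\mathbf{G}_{L}(\cO_{F}/\mathfrak{p}_{F})}^{-1}$; this in turn rests on the determination of $r(G_{L})$ in Proposition~\ref{prop:imageofGLviar} (via the identification \eqref{isomogisometorygroups} of $G_{N-2}\times G_{N}$ inside $G'_{N-2}$) together with the comparison of reductive quotients through the smooth Gan--Yu integral models. For the second factor, the standard coset identity $\abs{(G_{L}G_{L'}^{+})/G_{L}} = \abs{G_{L'}^{+}/(G_{L}\cap G_{L'}^{+})}$ (valid because $G_{L'}^{+}$ is normal in $G_{L'}$) reduces the count to the unipotent quotient, and this was evaluated to be $q_{E}^{\sum_{0 \le j < N-2} n_{j} \cdot n_{N}}$ by passing to $G_{L'}^{+}/G_{L'}^{++} \cong \prod_{0 \le j < i \le N-1} \Hom_{\cO_{E}/\mathfrak{p}_{E}}(\overline{L}'_{j}, \overline{L}'_{i})$ and measuring the image of $G_{L}\cap G_{L'}^{+}$ inside it. Multiplying the two factors produces exactly the claimed formula.

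The genuine content lives in the two inputs rather than in the final assembly, which is a single multiplication. In my view the more delicate of the two is the second-factor count: it requires the explicit characterization of $G_{L}$ inside $G_{L'}$ from Proposition~\ref{prop:GLasasubgroupofGL'}, the identification of $G_{L'}^{+}/G_{L'}^{++}$ with $\mathcal{U}_{L'}$ from Proposition~\ref{prop:descriptionoftheunipotentradical}, and careful bookkeeping of which $\Hom$-components survive intersection with $G_{L}$ — the surviving constraint $u_{N-2,j}\in \Hom_{\cO_{E}/\mathfrak{p}_{E}}(\overline{L}'_{j}, \overline{L}_{N-2})$ for $j < N-2$ is precisely what cuts the exponent down to $\sum_{0 \le j < N-2} n_{j}\cdot n_{N}$. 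Once both factors are in hand, I expect the theorem itself to follow in a single line, so the obstacle is conceptual preparation rather than the concluding step.
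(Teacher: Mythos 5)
Your proposal is correct and follows exactly the paper's own argument: the paper splits the index as $\abs{G_{L'}/G_{L}} = \abs{G_{L'}/G_{L} G_{L'}^{+}}\,\abs{G_{L}G_{L'}^{+}/G_{L}}$, evaluates the first factor by Proposition~\ref{prop:calculationoffirstfactorreductivequot} and the second by the unipotent-quotient count through $G_{L'}^{+}/G_{L'}^{++} \simeq \mathcal{U}_{L'}$, arriving at $q_{E}^{\sum_{0 \le j < N-2} n_{j}\cdot n_{N}}$, and multiplies. Your identification of the second factor as the delicate input (via Propositions~\ref{prop:GLasasubgroupofGL'} and \ref{prop:descriptionoftheunipotentradical}) matches the structure of the paper's preparation precisely.
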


Now, we repeat the procedure above.
Repeating the procedure to define $L'$ from $L$, we define an $\mathcal{O}_{E}$-lattice $M_{L}$ in $V$ by
\begin{equation}
\label{defofmidpoint}
    M_{L} \defeq \bigoplus_{i = 0}^{N} \mathfrak{p}_{E}^{-\lfloor i/2 \rfloor}L_{i}.
\end{equation}
We write $G_{M_{L}}$ for the isometry groups of $(M_{L}, \langle \phantom{x},\phantom{x} \rangle)$, which we regard as a subgroup of $G_{V}$.
We will see later in Proposition~\ref{prop:MLparahoric} that the pullback of $G_{M_{L}}$ in the simply connected cover of $G_{V}$ is a parahoric subgroup.
We also write $\mathbf{G}_{M_{L}}$ for the maximal reductive quotient of the special fibre of the smooth integral model of $G_{M_{L}}$ constructed in \cite{Gan-Yu}*{Section~5}.
Using Theorem~\ref{thm:comparisonofvolume} inductively, we obtain the following claim.
\begin{thm}
\label{thm:calculationofindexnonparahoricvsparahoric}
We have
\[\abs{G_{M_{L}}/G_{L}} = 
q_{E}^{\sum_{0 \le i <j \le N} \lfloor \frac{j-i-1}{2} \rfloor n_{i} \cdot n_{j}}
\abs{
\mathbf{G}_{{M}_{L}}(\cO_{F}/\mathfrak{p}_{F})
} \abs{\mathbf{G}_{L}(\cO_{F}/\mathfrak{p}_{F})}^{-1}.
\]
\end{thm}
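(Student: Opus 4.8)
The plan is to prove the formula by induction on the top Jordan index $N$, peeling off the highest block one step at a time via the single-step comparison already established in Theorem~\ref{thm:comparisonofvolume}. When $N \le 1$ we have $\lfloor i/2 \rfloor = 0$ for every $0 \le i \le N$, so $M_{L} = L$ by \eqref{defofmidpoint}; both sides of the asserted equality then equal $1$, the exponent being an empty sum since $\lfloor (j-i-1)/2 \rfloor = 0$ whenever $j - i \le 2$. For $N \ge 2$ I would form the auxiliary lattice $L'$ exactly as at the beginning of this subsection and split the index multiplicatively,
\[
\abs{G_{M_{L}}/G_{L}} = \abs{G_{M_{L}}/G_{L'}} \cdot \abs{G_{L'}/G_{L}},
\]
handling the second factor with Theorem~\ref{thm:comparisonofvolume} and the first with the inductive hypothesis applied to $L'$, whose top Jordan index is at most $N-1$.

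The input that makes the induction close is the identity $M_{L'} = M_{L}$. First I would note that $L'_{N-2} = L_{N-2} \oplus \mathfrak{p}_{E}^{-1} L_{N}$ is again $\mathfrak{p}_{E}^{N-2}$-modular, so the given decomposition of $L'$ is a genuine Jordan splitting (well defined by Lemma~\ref{lem:rankofLJDwelldef}) with ranks $n'_{i} = n_{i}$ for $i \neq N-2$ and $n'_{N-2} = n_{N-2} + n_{N}$. A direct comparison of the two defining sums \eqref{defofmidpoint} then gives $M_{L'} = M_{L}$: using $\lfloor (N-2)/2 \rfloor = \lfloor N/2 \rfloor - 1$, the contribution $\mathfrak{p}_{E}^{-\lfloor (N-2)/2 \rfloor}(\mathfrak{p}_{E}^{-1} L_{N}) = \mathfrak{p}_{E}^{-\lfloor N/2 \rfloor} L_{N}$ reproduces precisely the level-$N$ summand of $M_{L}$, while all other summands match termwise. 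Consequently $G_{M_{L'}} = G_{M_{L}}$ and $\mathbf{G}_{M_{L'}} = \mathbf{G}_{M_{L}}$, so the reductive-quotient factors telescope: multiplying $\abs{\mathbf{G}_{M_{L'}}(\cO_{F}/\mathfrak{p}_{F})}\abs{\mathbf{G}_{L'}(\cO_{F}/\mathfrak{p}_{F})}^{-1}$ from the inductive hypothesis by $\abs{\mathbf{G}_{L'}(\cO_{F}/\mathfrak{p}_{F})}\abs{\mathbf{G}_{L}(\cO_{F}/\mathfrak{p}_{F})}^{-1}$ from Theorem~\ref{thm:comparisonofvolume} yields the desired $\abs{\mathbf{G}_{M_{L}}(\cO_{F}/\mathfrak{p}_{F})}\abs{\mathbf{G}_{L}(\cO_{F}/\mathfrak{p}_{F})}^{-1}$.

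The one genuinely computational step, and the point I expect to be the main obstacle, is verifying that the powers of $q_{E}$ add up, namely
\[
\sum_{0 \le i < j \le N-1} \left\lfloor \tfrac{j-i-1}{2} \right\rfloor n'_{i} n'_{j} + \sum_{0 \le i < N-2} n_{i} n_{N} = \sum_{0 \le i < j \le N} \left\lfloor \tfrac{j-i-1}{2} \right\rfloor n_{i} n_{j}.
\]
I would prove this by substituting $n'_{N-2} = n_{N-2} + n_{N}$, isolating all terms carrying the factor $n_{N}$, and invoking two elementary facts: $\lfloor x \rfloor + 1 = \lfloor x+1 \rfloor$, which converts $\lfloor (N-i-3)/2 \rfloor + 1$ arising from the two left-hand sums into $\lfloor (N-i-1)/2 \rfloor$; and the vanishing $\lfloor 0/2 \rfloor = \lfloor 1/2 \rfloor = 0$, which shows that the boundary indices $i = N-2$ and $i = N-1$ on the right, as well as the cross term at $(i,j) = (N-2, N-1)$ on the left, contribute nothing. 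The terms free of $n_{N}$ match on the nose, and the induction closes.
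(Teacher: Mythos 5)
Your proposal is correct and is exactly the argument the paper intends: the paper's proof consists of the single line ``Using Theorem~\ref{thm:comparisonofvolume} inductively, we obtain the following claim,'' and your induction on $N$ via $L'$ is that argument with the details supplied. In particular, your verification that $M_{L'}=M_{L}$ (so the reductive-quotient factors telescope) and the floor-function bookkeeping for the exponent of $q_{E}$ are precisely the checks the paper leaves to the reader, and both are carried out correctly.
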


We will also give a variant of Theorem~\ref{thm:calculationofindexnonparahoricvsparahoric} that involves the simply connected cover of $G_{V}$.
Let $\mathbb{G}_{V}$ be the reductive group over $F$ such that $\mathbb{G}_{V}(F) = G_{V}$.
Let $\mathbb{G}_{V, \scn}$ be the simply connected cover of the derived group of $\mathbb{G}_{V}$ and we write $G_{V, \scn} = \mathbb{G}_{V, \scn}(F)$.
We also denote by $G_{L, \scn}$ (resp.\ $G_{M_{L}, \scn}$) the inverse image of $G_{L}$ (resp.\ $G_{M_{L}}$) in $G_{V, \scn}$.
\begin{prop}
\label{prop:MLparahoric}
The group $G_{M_{L}, \scn}$ is a parahoric subgroup of $G_{V, \scn}$.
\end{prop}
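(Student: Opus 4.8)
The plan is to reduce the statement to a computation of the Jordan splitting of $M_L$ and then to appeal to the lattice-chain description of parahoric subgroups of classical groups. First I would dualise each Jordan block. Writing $M_{L, i} = \mathfrak{p}_E^{-\lfloor i/2 \rfloor} L_i$ and using $L_i^\vee = \mathfrak{p}_E^{-i} L_i$, one computes $(M_{L, i})^\vee = \mathfrak{p}_E^{\lfloor i/2 \rfloor - i} L_i = \mathfrak{p}_E^{-(\lceil i/2 \rceil - \lfloor i/2 \rfloor)} M_{L, i}$, so that $(M_{L, i})^\vee = M_{L, i}$ when $i$ is even and $(M_{L, i})^\vee = \mathfrak{p}_E^{-1} M_{L, i}$ when $i$ is odd. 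Collecting the even-index blocks into $M_L^{(0)} \defeq \bigoplus_{i \text{ even}} M_{L, i}$ and the odd-index blocks into $M_L^{(1)} \defeq \bigoplus_{i \text{ odd}} M_{L, i}$, I obtain an orthogonal decomposition $M_L = M_L^{(0)} \oplus M_L^{(1)}$ with $(M_L^{(0)})^\vee = M_L^{(0)}$ and $(M_L^{(1)})^\vee = \mathfrak{p}_E^{-1} M_L^{(1)}$. In particular $M_L$ is ``two-scale'': its Jordan splitting involves only the consecutive scales $0$ and $1$, and $M_L^\vee = M_L^{(0)} \oplus \mathfrak{p}_E^{-1} M_L^{(1)}$ satisfies $\mathfrak{p}_E M_L^\vee \subseteq M_L \subseteq M_L^\vee$.

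Next I would convert this into a self-dual periodic lattice chain. The chain $\cdots \subseteq \mathfrak{p}_E M_L^\vee \subseteq M_L \subseteq M_L^\vee \subseteq \mathfrak{p}_E^{-1} M_L \subseteq \cdots$ is stable under $\Lambda \mapsto \mathfrak{p}_E \Lambda$ and under $\Lambda \mapsto \Lambda^\vee$, hence is a self-dual lattice chain in the sense of the Bruhat--Tits theory of classical groups. Since every isometry of $(V, \langle \cdot, \cdot \rangle)$ preserving $M_L$ is $\cO_E$-linear, it automatically preserves the dual $M_L^\vee$ and commutes with multiplication by $\varpi_E$; therefore the stabiliser of $M_L$ in $G_V$ coincides with the stabiliser of this entire chain. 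Equivalently, $M_L$ determines a single facet of the Bruhat--Tits building of $G_{V, \scn}$, namely the one attached to the length-two self-dual chain $\{M_L \subseteq M_L^\vee\}$ (a vertex if $M_L^{(0)}$ or $M_L^{(1)}$ vanishes, an edge otherwise).

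Finally, I would invoke the classification of parahoric subgroups in terms of such chains. The connected stabiliser of a self-dual lattice chain is by definition a parahoric subgroup of the associated classical group; the reductive quotient of its Gan--Yu integral model is the product of the isometry groups of the graded pieces $M_L^{(0)}$ and $M_L^{(1)}$ over the residue field, which is exactly the shape of the reductive quotient of a parahoric (cf.\ \cite{Gan-Yu}*{Section~6}). It then remains to pass from $G_V$ to the simply connected cover $G_{V, \scn}$ and to replace ``connected stabiliser'' by the full preimage $G_{M_L, \scn}$. Here the key point is that for a simply connected group the Kottwitz homomorphism is trivial, so the full stabiliser of a facet equals its connected stabiliser (see \cite{KalethaPrasad}*{Section~18}); thus $G_{M_L, \scn}$ is precisely the parahoric attached to the facet. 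I expect the main obstacle to be exactly this last identification: verifying, uniformly across the four types $(E, \sigma, \epsilon)$ permitted by Assumption~\ref{asump=2}, that the preimage in $G_{V, \scn}$ of the lattice stabiliser carries no extra components and that the Gan--Yu model of $G_{M_L}$ matches the Bruhat--Tits parahoric group scheme. This is where passing to the simply connected cover is essential, since the lattice stabilisers in $G_V$ itself can be disconnected, as already happens for orthogonal groups.
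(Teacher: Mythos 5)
Your proposal is correct and follows essentially the same route as the paper: both arguments hinge on the inclusion $M_{L} \subseteq (M_{L})^{\vee} \subseteq \mathfrak{p}_{E}^{-1} M_{L}$ (which you derive explicitly block-by-block, while the paper just asserts it from the definition of $M_L$), pass to the associated self-dual periodic lattice chain whose stabiliser coincides with $G_{M_L,\scn}$, and conclude via simple connectedness of $\mathbb{G}_{V,\scn}$ that this stabiliser is parahoric (the paper cites \cite{Mae24}*{Lemma~5.2, Remark~5.1} where you invoke triviality of the Kottwitz homomorphism). The only discrepancy is your vertex-versus-edge dichotomy (the paper claims the chain always gives a vertex; in fact the facet dimension depends on the parities of the ranks of $M_L^{(0)}$ and $M_L^{(1)}$), but this is immaterial since the stabiliser of any facet of the building of a simply connected group is parahoric.
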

\begin{proof}
The claim follows from a similar argument as \cite{Mae24}*{Lemma~5.2}.
We include the sketch of the proof for completeness.
According to the definition of $M_{L}$, we have
\[
M_{L} \subseteq (M_{L})^{\vee} \subseteq \mathfrak{p}_{E}^{-1} M_{L}.
\]
Thus, the group $G_{M_{L}, \scn}$ agrees with the stabiliser in $G_{V, \scn}$ of the self-dual lattice chain
\[
\mathcal{M} = \left(
\cdots \subseteq  M_{L} \subseteq (M_{L})^{\vee} \subseteq \mathfrak{p}_{E}^{-1} M_{L} \subseteq \mathfrak{p}_{E}^{-1} (M_{L})^{\vee} \subseteq \cdots
\right),
\]
which defines a vertex of the Bruhat--Tits building of $G_{V, \scn}$ over $F$.
Since the group $\mathbb{G}_{V, \scn}$ is simply connected, this implies that $G_{M, \scn}$ is a parahoric subgroup of $G_{V, \scn}$ (see also \cite{Mae24}*{Remark~5.1}).
\end{proof}

To rewrite Theorem~\ref{thm:calculationofindexnonparahoricvsparahoric} for $G_{M_{L}, \scn}$ and $G_{L, \scn}$, we define a quantity $Q(L)$ by
\[
Q(L) =
\abs{
G_{M_{L}}/G_{L}
}^{-1}
\abs{
G_{M_{L}, \scn}/G_{L, \scn}
}.
\]
Although $Q(L)$ depends on the group $G_{L}$ and the lattice $L$ in general, in the cases of unitary groups, where we will treat in Section~\ref{section:Estimation of volumes}, we can prove that the quantity $Q(L)$ is always trivial:

\begin{prop}
\label{prop:QL=1}
    Suppose that $E$ is a quadratic extension of $F$ or $F \oplus F$.
    Then we have $Q(L) = 1$.
\end{prop}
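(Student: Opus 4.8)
The plan is to prove $Q(L) = 1$ by showing that the passage to the simply connected cover changes the two groups $G_{M_L}$ and $G_L$ in precisely the same way, so that the index is unaffected. Concretely, $Q(L)$ compares $\abs{G_{M_L}/G_L}$ with $\abs{G_{M_L,\scn}/G_{L,\scn}}$, where the latter involves the inverse images in $G_{V,\scn}$. Since $G_{L,\scn}$ and $G_{M_L,\scn}$ are pullbacks along the same isogeny $\pi \colon G_{V,\scn} \to G_V$ (restricted to the derived part), and $G_L \subseteq G_{M_L}$, the natural map $G_{M_L,\scn}/G_{L,\scn} \to G_{M_L}/G_L$ is injective, so $Q(L) = \abs{G_{M_L,\scn}/G_{L,\scn}}^{-1}\abs{G_{M_L}/G_L} \ge 1$; the content is that it is surjective, i.e.\ that every coset of $G_L$ in $G_{M_L}$ lifts to $G_{V,\scn}$.

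First I would set up the exact relationship between the two covers. Let $\mathbb{G}_{V}^{\mathrm{der}}$ be the derived group of $\mathbb{G}_V$ and let $\pi\colon \mathbb{G}_{V,\scn}\to \mathbb{G}_V^{\mathrm{der}}$ be the simply connected isogeny. The key point is to compare the image of $\pi$ on $F$-points with the full groups: by the exact sequence in Galois cohomology, the cokernel of $\pi\colon G_{V,\scn}\to G_V$ (intersected with the relevant subgroups) is controlled by $H^1(F,Z)$, where $Z$ is the center of $\mathbb{G}_{V,\scn}$. When $E$ is a quadratic field extension of $F$ or $E = F\oplus F$, the relevant group $G_V$ is a unitary group $\U(V)$ and $\mathbb{G}_{V,\scn}$ is the special unitary group $\SU(V)$; the center $Z$ is then $\ker(N_{E/F})$ acting as scalars, i.e.\ a norm-one torus (or $\mathbb{G}_m$ in the split case $E = F\oplus F$). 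I would compute $\abs{G_{M_L,\scn}/G_{L,\scn}}$ and $\abs{G_{M_L}/G_L}$ by expressing both as indices of integral points and using the determinant (or spinor-norm analogue) map $\det\colon G_{M_L}\to \U_1 = \ker(N_{E/F})$, whose image on the lattice stabilizers is the relevant unit norm-one group.

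The crux is to show that $\det$ takes the same value-group on $G_{L}$ and on $G_{M_L}$, equivalently that the map $\det(G_{M_L})/\det(G_L)$ is trivial, so that intersecting with the kernel (which is exactly passing to $\SU$) does not change the index. Here the special feature of the field (or split) case is essential: for $E$ a field extension or $E = F\oplus F$, the norm-one group $\U_1(\cO_E)$ and its reductions behave well, and one checks that $\det(G_L) = \det(G_{M_L})$ by exhibiting, for any $g\in G_{M_L}$, an element $h\in G_L$ with the same determinant — for instance by modifying $g$ by a diagonal isometry of a rank-one summand, which lies in $G_L$ and realizes any prescribed norm-one determinant. This is exactly where Assumption~\ref{asump=2} and the structure of the Jordan splitting enter, since one needs enough norm-one units available in each graded piece. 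The hard part will be verifying this surjectivity of determinants cleanly across all Jordan block types (including the ramified and the split $E = F\oplus F$ cases, where $\U_1$ degenerates to $\mathbb{G}_m$), and confirming that no parity or sign obstruction from the $\epsilon_i$ and $\sigma_i$ twists obstructs the lift; once that is in hand, $Q(L) = 1$ follows immediately from the commutative diagram relating the two exact sequences of integral points.
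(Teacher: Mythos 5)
Your overall skeleton coincides with the paper's: since $E$ is a quadratic extension of $F$ or $F\oplus F$, the cover $\mathbb{G}_{V,\scn}=\SU(V)\to\mathbb{G}_V$ is injective, so $G_{L,\scn}$ and $G_{M_{L},\scn}$ are just the determinant-one parts of $G_{L}$ and $G_{M_{L}}$; the natural map $G_{M_{L},\scn}/G_{L,\scn}\to G_{M_{L}}/G_{L}$ is injective, and everything reduces to proving $\det(G_{M_{L}})=\det(G_{L})$, which yields $G_{M_{L}}=G_{M_{L},\scn}\cdot G_{L}$ and then $Q(L)=1$ by the second isomorphism theorem. (Two cosmetic remarks: your displayed formula for $Q(L)$ is the reciprocal of the paper's definition, and the appeal to $H^1(F,Z)$ is vacuous here --- there is no isogeny cokernel to control, because $\SU(V)\to\U(V)$ is injective; the relevant sequence is the determinant sequence, which is what you in fact use.)

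The genuine gap is in your mechanism for the crux step $\det(G_{M_{L}})=\det(G_{L})$: ``modifying $g$ by a diagonal isometry of a rank-one summand, which lies in $G_{L}$ and realizes any prescribed norm-one determinant.'' This fails exactly in the hardest case, namely $E/F$ ramified with (for $\epsilon=1$) every Jordan block of $L$ of odd scale. There, every norm $\langle x,x\rangle$ lies in $F$ and hence has even valuation in $E$, so no block $L_{i}$ contains a vector whose norm generates $\mathfrak{p}_{E}^{i}$; all such blocks have even rank and $L$ has no orthogonal rank-one summand whatsoever. Worse, the premise that any norm-one determinant is realized in $G_{L}$ is false in this case: writing $E^{1}\defeq\{u\in E^{\times}\mid u\sigma(u)=1\}$, the reductive quotient of $G_{L}$ is a product of symplectic groups by the descriptions in \cite{Gan-Yu}*{Section~6}, which admits no surjection onto $E^{1}/\left(E^{1}\cap(1+\mathfrak{p}_{E})\right)\cong\{\pm 1\}$, and the congruence kernel is pro-$p$ with $p$ odd (Assumption~\ref{asump=2}); hence $\det(G_{L})\subseteq E^{1}\cap(1+\mathfrak{p}_{E})$, and in particular $-1\notin\det(G_{L})$. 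So the required equality is one between two \emph{proper} subgroups of $E^{1}$, and no ``realize everything'' argument can prove it. The paper closes precisely this point by quoting the computation of determinant groups of Hermitian lattices in \cite{MR4008068}*{Theorem~3.7}, whose answer depends only on invariants (ranks and scale parities of the Jordan blocks) that are visibly preserved by $L\mapsto M_{L}$. Your proof needs either that citation or a direct substitute --- e.g.\ checking that in the all-odd-scale ramified case both determinant groups equal $E^{1}\cap(1+\mathfrak{p}_{E})$, realized by the isometries $e\mapsto ae$, $f\mapsto\sigma(a)^{-1}f$ of hyperbolic pairs, whose determinants $a/\sigma(a)$ exhaust that subgroup. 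As written, the key step is unproven and the heuristic offered for it is false in the ramified case.
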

\begin{proof}
We note that in this setting, $G_{V, \scn}$ is the group consisting of the elements of $G_{V}$ whose determinants (as linear maps on $V$) are one.
According to \cite{MR4008068}*{Theorem~3.7} and the definition of $M_{L}$, we obtain that the image of the determinant maps on $G_{M_L}$ and $G_{L}$ agree.
Hence, we have $
G_{M_{L}} = G_{M_{L}, \scn} \cdot G_{L}$.
Thus, we obtain that
\[
\abs{
G_{M_{L}}/G_{L}
} = \abs{
G_{M_{L}, \scn} \cdot G_{L}/G_{L}
}
=
\abs{
G_{M_{L}, \scn}/G_{L, \scn}
}.
\qedhere
\]
\end{proof}

Combining Theorem~\ref{thm:calculationofindexnonparahoricvsparahoric} with Proposition~\ref{prop:QL=1}, we obtain the following corollary:

\begin{cor}
\label{corsimplyconnectedverofindex}
We have
\[\abs{G_{M_{L}, \scn}/G_{L, \scn}} = 
q_{E}^{\sum_{0 \le i <j \le N} \lfloor \frac{j-i-1}{2} \rfloor n_{i} \cdot n_{j}}
\abs{
\mathbf{G}_{{M}_{L}}(\cO_{F}/\mathfrak{p}_{F})
} \abs{\mathbf{G}_{L}(\cO_{F}/\mathfrak{p}_{F})}^{-1} Q(L).
\]
If $E$ is a quadratic exnetsion of $F$ or $F \oplus F$, we have
\[\abs{G_{M_{L}, \scn}/G_{L, \scn}} = 
q_{E}^{\sum_{0 \le i <j \le N} \lfloor \frac{j-i-1}{2} \rfloor n_{i} \cdot n_{j}}
\abs{
\mathbf{G}_{{M}_{L}}(\cO_{F}/\mathfrak{p}_{F})
} \abs{\mathbf{G}_{L}(\cO_{F}/\mathfrak{p}_{F})}^{-1}.
\]
\end{cor}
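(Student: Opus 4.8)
The plan is to assemble the corollary directly from the two ingredients already in hand: the index formula for the isometry groups in Theorem~\ref{thm:calculationofindexnonparahoricvsparahoric}, and the triviality of the correction factor $Q(L)$ established in Proposition~\ref{prop:QL=1}. All of the substance has been carried out in those statements, so the proof here is purely a bookkeeping step.

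First I would unwind the definition of $Q(L)$. By its very definition,
\[
Q(L) = \abs{G_{M_{L}}/G_{L}}^{-1} \abs{G_{M_{L}, \scn}/G_{L, \scn}},
\]
which I rearrange into
\[
\abs{G_{M_{L}, \scn}/G_{L, \scn}} = \abs{G_{M_{L}}/G_{L}} \cdot Q(L).
\]
Substituting the explicit value of $\abs{G_{M_{L}}/G_{L}}$ supplied by Theorem~\ref{thm:calculationofindexnonparahoricvsparahoric} then yields the first displayed formula of the corollary verbatim, valid for any admissible $(E, \sigma)$.

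For the second, sharpened formula I would invoke Proposition~\ref{prop:QL=1}, which asserts $Q(L) = 1$ precisely when $E$ is a quadratic extension of $F$ or $E = F \oplus F$. Setting $Q(L) = 1$ in the formula just obtained removes the last factor and gives the clean expression in terms of $q_{E}$ and the reductive quotients $\mathbf{G}_{M_{L}}$, $\mathbf{G}_{L}$ over $\cO_{F}/\mathfrak{p}_{F}$.

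The main obstacle is not in the corollary itself, which is a formal consequence, but in the results it rests upon: the inductive computation of the index between a lattice stabiliser and its parahoric ``midpoint'' $M_{L}$ in Theorem~\ref{thm:calculationofindexnonparahoricvsparahoric}, and the determinant-image comparison used in Proposition~\ref{prop:QL=1} to deduce $G_{M_{L}} = G_{M_{L}, \scn} \cdot G_{L}$. The one point I would flag is that passing from the isometry groups to their simply connected covers without a correction term genuinely requires the hypothesis on $E$, since it is exactly this hypothesis that forces the determinant images of $G_{M_{L}}$ and $G_{L}$ to coincide and hence $Q(L) = 1$; for the general $(E,\sigma)$ one must retain the factor $Q(L)$, as recorded in the first formula.
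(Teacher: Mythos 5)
Your proposal is correct and matches the paper's own (essentially unwritten) proof exactly: the paper obtains the corollary by "combining Theorem~\ref{thm:calculationofindexnonparahoricvsparahoric} with Proposition~\ref{prop:QL=1}", which is precisely your bookkeeping — rearranging the definition of $Q(L)$ to get $\abs{G_{M_{L}, \scn}/G_{L, \scn}} = \abs{G_{M_{L}}/G_{L}} \cdot Q(L)$, substituting the index formula, and then specialising via $Q(L)=1$ in the quadratic-extension and split cases. Nothing is missing.
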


\subsection{A volume formula for non-principal arithmetic subgroups}
\label{subsec:generalizationofprasad}

In this subsection, we come back to the global setting in \S\ref{subsec:reviewofprasad} and will prove the main result of this section.
Hence, $k$ is a global field.
Let $\epsilon \in \{\pm 1\}$ and let $(E, \sigma)$ be one of the following $k$-algebras with involution:
\begin{enumerate}
\item $E = k$ and $\sigma = \id_{k}$; 
\item $E$ is a quadratic extension of $k$ and $\sigma$ is the unique non-trivial automorphism of $E/k$;
\item 
$E = D$, the quaternion division algebra over $k$ and $\sigma$ is the standard involution.
\end{enumerate}
Let $L$ be a finitely generated $\cO_{E}$-lattice equipped with a $(\sigma, \epsilon)$-Hermitian form
\[
\langle \phantom{x},\phantom{x} \rangle \colon L \times L \rightarrow \mathcal{O}_{E}.
\]
We assume that $V := L \otimes_{\mathcal{O}_{k}} k$ is non-degenerate with respect to $\langle \phantom{x},\phantom{x} \rangle$.
We define an $\cO_{E_{v}}$-lattice $L_{v}$ by $L_{v} \defeq L \otimes_{\cO_{k}} \cO_{k_{v}}$ and let $\langle \phantom{x}, \phantom{x} \rangle_{v}$ denote the Hermitian form on $L_{v}$ induced from $\langle \phantom{x},\phantom{x} \rangle$.
We fix a Jordan splitting
\[
L_{v} = \bigoplus_{i \ge 0} L_{v, i}
\]
and let $n_{v, i}$ denote the rank of $L_{v, i}$.
By using this, we define the $\cO_{E_{v}}$-lattice $M_{L_{v}}$ as \eqref{defofmidpoint}.
We also define a subset $V_{\np}$ of $V_{k} \smallsetminus S$ by
\[
V_{\np} = \left\{
v \in V_{k} \smallsetminus S \mid L_{v} \neq M_{L_{v}}
\right\}.
\]

Let $G$ be the reductive group over $k$ such that $G(k)$ is the isometry group of $(V, \langle \phantom{x},\phantom{x} \rangle)$ and
let $G_{\scn}$ be the simply connected cover of the derived group of $G$.
We suppose that the group $(G_{\scn})_{S} = \prod_{v \in S} G_{\scn}(k_v)$ is non-compact as in \S\ref{subsec:reviewofprasad}.
For each $v \in V_{k} \smallsetminus S$, let $G_{L_{v}}$ (resp.\ $G_{M_{L_v}}$) denote the isometry group of $(L_{v}, \langle \phantom{x},\phantom{x} \rangle_{v})$ (resp.\ $(M_{L_{v}}, \langle \phantom{x},\phantom{x} \rangle_{v})$), and we regard them as subgroups of $G(k_{v})$.
We define a compact, open subgroup $K_{v}$ (resp.\ $K'_{v}$) of $G_{\scn}(k_{v})$ as the inverse image of $G_{L_{v}}$ (resp.\ $G_{M_{L_v}}$) in $G_{\scn}(k_{v})$.
By using them, we define compact, open subgroups $K$ and $K'$ of $G_{\scn}(\mathbb{A}_{S})$ by $K = \prod_{v \in V_{k} \smallsetminus S} K_{v}$ and $K' = \prod_{v \in V_{k} \smallsetminus S} K'_{v}$.
Let $\Gamma$ and $\Gamma'$ be the $S$-arithmetic subgroups of $G_{\scn}(k)$ associated to $K$ and $K'$, respectively.
According to Proposition~\ref{prop:MLparahoric}, the $S$-arithmetic subgroup $\Gamma'$ is principal.

For $v \in V_{k} \smallsetminus S$, we define
\[
\Ind(L_{v}) \defeq \abs{K'_{v}/K_{v}}.
\]
By combining Prasad's volume formula Theorem~\ref{Prasad'svolumeformula} with the explicit computation of $\Ind (L_v)$ in Subsection~\ref{subseccomparisonofvolumes}, we obtain the following main result of this section:

\begin{thm}
\label{thm:generalization of Prasad's volume formula}
We have
\[
\mu_{S}
\left(
(G_{\scn})_{S}/\iota_{S}(\Gamma)
\right)
= D_{k}^{\frac{1}{2} \dim G_{\scn}} \left(
\frac{D_{\ell}}
{D_{k}^{[\ell:k]}}
\right)^{\frac{1}{2} \mathfrak{s}(G_{\scn, \qs})}
\left(
\prod_{v \in V_{\infty}}
\abs{
\prod_{i=1}^{r} \frac{m_{i}!}{(2 \pi)^{m_{i} + 1}}
}_{v} 
\right)\mathscr{E}_{\Gamma'} \left(
\prod_{v \in V_{\np}} \Ind(L_{v})
\right),
\]
where the notation in the right hand side is explained in Theorem~\ref{Prasad'svolumeformula}.
If $\epsilon$ and $E_{v}/k_{v}$ satisfy Assumption~\ref{asump=2}, then we have 
\[
\Ind(L_{v}) = q_{E_v}^{\sum_{i <j} \lfloor \frac{j-i-1}{2} \rfloor n_{v, i} \cdot n_{v, j}}
\abs{
\mathbf{G}_{{M}_{L_{v}}}(\mathfrak{f}_{v})
} \abs{\mathbf{G}_{L_{v}}(\mathfrak{f}_{v})}^{-1} Q(L_{v}),
\]
where we use the notation in \S\ref{subseccomparisonofvolumes}.
Moreover, if $E$ is further a quadratic extension of $k$, we have
\[
\Ind(L_{v}) = q_{E_v}^{\sum_{i < j} \lfloor \frac{j-i-1}{2} \rfloor n_{v, i} \cdot n_{v, j}}
\abs{
\mathbf{G}_{{M}_{L_{v}}}(\mathfrak{f}_{v})
} \abs{\mathbf{G}_{L_{v}}(\mathfrak{f}_{v})}^{-1}.
\]
\end{thm}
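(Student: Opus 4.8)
The plan is to assemble the global covolume formula from two ingredients already prepared in the excerpt: Prasad's formula for the \emph{principal} $S$-arithmetic subgroup $\Gamma'$ (Theorem~\ref{Prasad'svolumeformula}, applicable because $\Gamma'$ is principal by Proposition~\ref{prop:MLparahoric}), and the local index comparison at each finite place (Corollary~\ref{corsimplyconnectedverofindex}). First I would apply Corollary~\ref{cor:comparisonofcovoleme} to the pair $(\Gamma, \Gamma')$, which are the $S$-arithmetic subgroups attached to $K = \prod_v K_v$ and $K' = \prod_v K'_v$. This yields
\[
\frac{\mu_{S}\left((G_{\scn})_{S}/\iota_{S}(\Gamma)\right)}{\mu_{S}\left((G_{\scn})_{S}/\iota_{S}(\Gamma')\right)} = \prod_{v \in V_{k} \smallsetminus S} \frac{\mu_{v}(K'_{v})}{\mu_{v}(K_{v})} = \prod_{v \in V_{k} \smallsetminus S} \abs{K'_{v}/K_{v}} = \prod_{v \in V_{k} \smallsetminus S} \Ind(L_{v}),
\]
where the middle equality uses $K_v \subseteq K'_v$ (the inverse image of $G_{L_v} \subseteq G_{M_{L_v}}$, the containment coming from Proposition~\ref{prop:GLasasubgroupofGL'} applied inductively) together with the translation-invariance of the Haar measure $\mu_v$.

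Next I would observe that $\Ind(L_v) = 1$ for every $v \notin V_{\np}$: by definition $v \notin V_{\np}$ means $L_v = M_{L_v}$, so $K_v = K'_v$ and the index is trivial. Hence the product over $V_{k} \smallsetminus S$ collapses to the finite product over $V_{\np}$, giving the factor $\prod_{v \in V_{\np}} \Ind(L_{v})$ in the statement. Substituting Prasad's explicit evaluation of $\mu_{S}\left((G_{\scn})_{S}/\iota_{S}(\Gamma')\right)$ from Theorem~\ref{Prasad'svolumeformula} then produces the displayed formula; here one must check that the invariants $\dim G_{\scn}$, $\mathfrak{s}(G_{\scn,\qs})$, $r$, and the exponents $m_i$ are those of $G_{\scn}$, which is legitimate since $\Gamma$ and $\Gamma'$ are arithmetic subgroups of the same group $G_{\scn}$ and these quantities depend only on $G_{\scn}$, not on the chosen lattice.

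The explicit formula for $\Ind(L_v)$ at a finite place $v$ is then read off directly from Corollary~\ref{corsimplyconnectedverofindex}. Indeed $\Ind(L_v) = \abs{K'_v/K_v} = \abs{G_{M_{L_v},\scn}/G_{L_v,\scn}}$, since $K_v$ and $K'_v$ are by definition the inverse images in $G_{\scn}(k_v)$ of $G_{L_v}$ and $G_{M_{L_v}}$, i.e.\ precisely $G_{L_v,\scn}$ and $G_{M_{L_v},\scn}$ in the local notation of \S\ref{subseccomparisonofvolumes} (with $F = k_v$, $E = E_v$). The first displayed expression for $\Ind(L_v)$ is the general case of Corollary~\ref{corsimplyconnectedverofindex} including the factor $Q(L_v)$, valid under Assumption~\ref{asump=2}; the second, cleaner expression is the specialisation to $E$ a quadratic extension, where Proposition~\ref{prop:QL=1} gives $Q(L_v) = 1$.

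I expect the main subtlety to be bookkeeping rather than mathematical depth: one must verify that the Haar measures $\mu_v$ used in Corollary~\ref{cor:comparisonofcovoleme} are consistent with those fixed in the hypotheses of Prasad's theorem (the normalisation at $S$ from \cite{Pra89}*{\S3.6} and the compatibility of $\mu_{\mathbb{A}}$ with the product measure), so that the ratio computation and Prasad's absolute evaluation can be combined without a spurious measure-normalisation constant. Once the measures are aligned, the argument is essentially the concatenation of Corollary~\ref{cor:comparisonofcovoleme}, the vanishing of $\Ind(L_v)$ outside $V_{\np}$, Theorem~\ref{Prasad'svolumeformula}, and Corollary~\ref{corsimplyconnectedverofindex}, with the passage from the general $Q(L_v)$-formula to the quadratic-extension case being exactly Proposition~\ref{prop:QL=1}.
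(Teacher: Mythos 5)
Your proposal is correct and follows essentially the same route as the paper's own proof: apply Corollary~\ref{cor:comparisonofcovoleme} to the pair $(\Gamma,\Gamma')$, observe that the local ratios $\mu_v(K'_v)/\mu_v(K_v)=\abs{K'_v/K_v}=\Ind(L_v)$ collapse to a product over $V_{\np}$, substitute Prasad's evaluation (Theorem~\ref{Prasad'svolumeformula}) for the principal subgroup $\Gamma'$, and read off the explicit index formula from Corollary~\ref{corsimplyconnectedverofindex} with Proposition~\ref{prop:QL=1} handling the quadratic-extension case. Your additional remarks on measure normalisation and on why the group-theoretic invariants in Prasad's formula are unaffected by the change of lattice are sound bookkeeping that the paper leaves implicit.
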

\begin{proof}
According to Corollary~\ref{cor:comparisonofcovoleme}, we have
\[
    \mu_{S}
\left(
(G_{\scn})_{S}/\iota_{S}(\Gamma)
\right)
= \mu_{S}
\left(
(G_{\scn})_{S}/\iota_{S}(\Gamma')
\right)
        \prod_{v \in V_{k} \smallsetminus S} \frac{
\mu_{v}(K'_{v})
}{
\mu_{v}(K_{v})
}
= \mu_{S}
\left(
(G_{\scn})_{S}/\iota_{S}(\Gamma')
\right)
        \prod_{v \in V_{\np}} 
        \Ind(L_{v})
        .
\]
Hence, the first claim follows from Theorem~\ref{Prasad'svolumeformula}.
The second and last claims follow from Corollary~\ref{corsimplyconnectedverofindex}.
\end{proof}

\subsection{The case of special unitary groups}
\label{subsec:thecaseofspecialunitary}
In this subsection, we will apply the main result of the previous subsection to the case where $G_{\scn}$ is a special unitary group over $\bQ$.
In this subsection, we suppose that $k= \bQ$ and $E$ is an imaginary quadratic extension of $\bQ$.
We take the finite subset $S \subset V_{k}$ as $S = V_{\infty}$. 
We suppose that the Hermitian lattice $L$ over $\cO_E$ has rank $n+1$.
The reason why we use $n+1$ instead of $n$ is that we will assume that $L$ has signature $(1,n)$ in Section~\ref{section:Estimation of volumes}.
For $v \in V_{\fin}$, we introduce the associated quantity $\lambda(L_{v})$ of $L_{v}$ as follows.
We first define $\lambda(M_{L_{v}})$ of $M_{L_{v}}$ as
\begin{equation}
\label{defoflambdaMLv}
\lambda(M_{L_{v}})
\defeq
\begin{cases}
\displaystyle
    q_v^{(\dim \mathbf{G}_{{M}_{L_{v}}}-n)/2}\cdot\abs{\mathbf{G}_{M_{L_{v}}}(\mathfrak{f}_{v})}^{-1}\cdot\prod_{i=2}^{n+1}(q_v^i-(-1)^i) &(v:\mathrm{inert}),\\
    \displaystyle
    q_v^{(\dim \mathbf{G}_{{M}_{L_{v}}}-n)/2}\cdot\abs{\mathbf{G}_{M_{L_{v}}}(\mathfrak{f}_{v})}^{-1}\cdot\prod_{i=2}^{n+1}(q_v^i-1) &(v:\mathrm{split}), \\
    \displaystyle
    q_v^{(\dim \mathbf{G}_{{M}_{L_{v}}}-\lfloor (n+1)/2 \rfloor)/2}\cdot\abs{\mathbf{G}_{M_{L_{v}}}(\mathfrak{f}_{v})}^{-1}\cdot\prod_{i=1}^{\lfloor n+1/2 \rfloor}(q_v^{2i}-1) &(v:\mathrm{ramify}).
\end{cases}
\end{equation}
We then extend this definition  to general $L_{v}$, not necessarily coinciding with $M_{L_v}$, as
\[
    \lambda(L_{v}) \defeq \lambda(M_{L_{v}}) \cdot \Ind(L_{v}).
\]

\begin{rmk}
The definition of $\lambda(L_{v})$ \emph{a priori} depends on the choice of a Jordan splitting of $L$.
However, we can check by using \cite{Omeara2000introduction}*{91.9.Theorem} that the quantity $\lambda(L_{v})$ only depends on the lattice $L_{v}$ (see also Lemma~\ref{lem:rankofLJDwelldef} and Remark~\ref{rmkaboutindepofreductivequot}).
\end{rmk}

\begin{thm}
\label{thm:prasad'svolumeformulaspecialunitarycase}
    We assume that the discriminant $-D$ of $E$ is odd.
    For any Hermitian lattice $L$ of rank $n+1$, we have
    \[    \mu_{\infty}
\left(
\SU(L \otimes_{\bZ} \bR)/\SU(L)
\right)=
{D^{\lfloor \frac{n}{2}\rfloor(\lfloor \frac{n}{2}\rfloor + \frac{3}{2})}\prod_{i=1}^n\frac{i!}{(2\pi)^{i+1}}\prod_{i=1}^{\lfloor \frac{n+1}{2} \rfloor}\zeta(2i)\prod_{i=1}^{\lfloor \frac{n}{2} \rfloor}L_{E/\bQ}(2i+1)\prod_{v\nmid\infty}\lambda (L_{v})},\]
where $\zeta$ denotes the Riemann zeta-function, and $L_{E/\bQ}$ denotes the Dirichlet $L$-function associated with the quadratic extension $E/\bQ$.
\end{thm}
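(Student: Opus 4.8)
The plan is to specialise the general volume formula of Theorem~\ref{thm:generalization of Prasad's volume formula} to $k = \bQ$, $S = V_{\infty}$, and $G_{\scn} = \SU(L)$ with $L$ of rank $n+1$ over $\cO_{E}$. Then $G_{\scn}$ is absolutely almost simple and simply connected of type ${}^{2}A_{n}$, and the field governing the $*$-action is $\ell = E$. Since $\lambda(L_{v}) = \lambda(M_{L_{v}}) \cdot \Ind(L_{v})$ and $\Ind(L_{v}) = 1$ whenever $L_{v} = M_{L_{v}}$, the local index factors $\prod_{v \in V_{\np}} \Ind(L_{v})$ appearing in Theorem~\ref{thm:generalization of Prasad's volume formula} already match $\prod_{v \nmid \infty} \Ind(L_{v})$; hence, after identifying the archimedean and discriminant factors, the whole content reduces to the Euler-product identity
\[
\mathscr{E}_{\Gamma'} = \prod_{i=1}^{\lfloor (n+1)/2 \rfloor} \zeta(2i) \prod_{i=1}^{\lfloor n/2 \rfloor} L_{E/\bQ}(2i+1) \cdot \prod_{v \nmid \infty} \lambda(M_{L_{v}}),
\]
where $\mathscr{E}_{\Gamma'}$ is Prasad's Euler product for the principal (parahoric, by Proposition~\ref{prop:MLparahoric}) arithmetic subgroup $\Gamma'$ attached to the $M_{L_{v}}$.

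First I would dispose of the elementary factors. Over $\bQ$ we have $D_{k} = 1$, so $D_{k}^{\frac{1}{2}\dim G_{\scn}} = 1$, while $\ell = E$ and $[\ell:k] = 2$ give $D_{\ell}/D_{k}^{[\ell:k]} = D$. Inserting the value of Prasad's invariant for type ${}^{2}A_{n}$ from \cite{Pra89}*{\S0.4}, namely $\mathfrak{s}(G_{\scn, \qs}) = \lfloor n/2 \rfloor (2 \lfloor n/2 \rfloor + 3)$, turns the discriminant factor into $D^{\lfloor n/2 \rfloor (\lfloor n/2 \rfloor + 3/2)}$. For the archimedean factor, $\bQ$ has a single infinite place and the compact real form of type $A_{n}$ is $\SU(n+1)$, of absolute rank $r = n$ with exponents $m_{i} = i$; hence $\prod_{v \in V_{\infty}} \abs{\prod_{i=1}^{r} m_{i}!/(2\pi)^{m_{i}+1}}_{v} = \prod_{i=1}^{n} i!/(2\pi)^{i+1}$, as required.

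The core is the Euler-product identity, which I would prove place by place by checking that Prasad's local factor at $v$ equals the local Euler factor of $\prod_{i} \zeta(2i) \prod_{i} L_{E/\bQ}(2i+1)$ times $\lambda(M_{L_{v}})$; the only input is the order of the finite classical group $\mathbf{G}_{M_{L_{v}}}(\mathfrak{f}_{v})$, combined with the definition \eqref{defoflambdaMLv}. At a place $v$ inert in $E$ with $M_{L_{v}}$ self-dual, $\mathbf{G}_{M_{L_{v}}}$ is the quasi-split $\SU_{n+1}$ over $\mathfrak{f}_{v}$, with $\abs{\SU_{n+1}(\mathfrak{f}_{v})} = q_{v}^{n(n+1)/2} \prod_{i=2}^{n+1}(q_{v}^{i} - (-1)^{i})$; substituting into the inert case of \eqref{defoflambdaMLv} gives $\lambda(M_{L_{v}}) = 1$, and the ratio $q_{v}^{\dim G_{\scn}}/\abs{\SU_{n+1}(\mathfrak{f}_{v})}$ telescopes to $\prod_{i=2}^{n+1}(1 - (-1)^{i} q_{v}^{-i})^{-1}$. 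Separating even and odd $i$ identifies this with the Euler factors at $v$ of $\zeta(2i)$ for $1 \le i \le \lfloor (n+1)/2 \rfloor$ and of $L_{E/\bQ}(2i+1)$ for $1 \le i \le \lfloor n/2 \rfloor$ (here the character of $E/\bQ$ takes the value $-1$). The split case is the same computation with $\abs{\SL_{n+1}(\mathfrak{f}_{v})}$ and character value $+1$, again giving $\lambda(M_{L_{v}}) = 1$. As $L_{v} = M_{L_{v}}$ is self-dual for all but finitely many $v$, the product $\prod_{v} \lambda(M_{L_{v}})$ is finite and the $L$-values assemble into complete Euler products.

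The main obstacle is the ramified primes $v \mid D$, where Prasad's local factor has a genuinely different shape. There $\mathbf{G}_{M_{L_{v}}}$ is of symplectic type of rank $m \defeq \lfloor (n+1)/2 \rfloor$, with $\abs{\Sp_{2m}(\mathfrak{f}_{v})} = q_{v}^{m^{2}} \prod_{i=1}^{m}(q_{v}^{2i} - 1)$, while the vanishing of the character of $E/\bQ$ kills the $L_{E/\bQ}$-factors so that only the $\zeta(2i)$-factors survive. The point that makes the ramified case consistent is the dimension identity
\[
\dim G_{\scn} - \mathfrak{s}(G_{\scn, \qs}) = n(n+2) - \lfloor n/2 \rfloor (2 \lfloor n/2 \rfloor + 3) = m(2m+1) = \dim \Sp_{2m},
\]
which shows that Prasad's normalisation extracts exactly $q_{v}^{\mathfrak{s}/2}$ from the local factor at each ramified prime into the global discriminant factor; collecting these over the squarefree $D = \prod_{v \mid D} q_{v}$ (squarefree because the discriminant is odd) reproduces $D^{\mathfrak{s}/2}$ with no double counting and leaves exactly $\lambda(M_{L_{v}})$ as in \eqref{defoflambdaMLv}. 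Verifying that the ramified reductive quotient is precisely this symplectic group and that Assumption~\ref{asump=2} applies (guaranteed since $D$ is odd) is the delicate step; assembling the four factors then yields the asserted formula.
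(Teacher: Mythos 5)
Your proposal is correct and follows essentially the same route as the paper: both specialise Theorem~\ref{thm:generalization of Prasad's volume formula} to $k=\bQ$, $\ell=E$ with the type-$^{2}A_{n}$ data ($D_{k}=1$, $D_{\ell}=D$, $\mathfrak{s}=\lfloor n/2\rfloor(2\lfloor n/2\rfloor+3)$, exponents $m_i=i$), and reduce everything to identifying $\mathscr{E}_{\Gamma'}$ with $\prod_{i}\zeta(2i)\prod_{i}L_{E/\bQ}(2i+1)\cdot\prod_{v}\lambda(M_{L_v})$. The only difference is that you verify this Euler-product identification by hand (orders of $\SU_{n+1}(\mathfrak{f}_v)$, $\SL_{n+1}(\mathfrak{f}_v)$, the ramified dimension identity $\dim G_{\scn}-\mathfrak{s}=m(2m+1)$ --- all of which check out), whereas the paper cites \cite{KalethaPrasad}*{Proposition~18.5.10} and \cite{prasad2012nonexistence}*{Subsection~2.4} for the same computation.
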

\begin{proof}
We substitute $k = \bQ$ and $\Gamma = \SU(L)$ to Theorem~\ref{thm:generalization of Prasad's volume formula}.
Then we have $\ell = E$, $D_{k} = 1$, $D_{\ell} = D$.
According to \cite{Pra89}*{\S0.4}, we have
\[
\mathfrak{s}(H_{\qs}) = \begin{cases}
n(n+3)/2 & (2 \mid n), \\
(n-1)(n+2)/2 & (2 \nmid n).
\end{cases}
\]
We also obtain from \cite{Pra89}*{\S1.5} that $r = n$ and $\{m_{1}, m_{2}, \ldots, m_{r}\} = \{1, 2, \ldots, n\}$.
Moreover, according to \cite{KalethaPrasad}*{Proposition~18.5.10, \S18.5.11} combined with the explicit calculations of maximal reductive quotients in \cite{prasad2012nonexistence}*{Subsection 2.4}, we obtain that
\[
\mathscr{E}_{\Gamma'} = 
\begin{cases}
\zeta(2)L_{E/\bQ}(3)\zeta(4)\dots L_{E/\bQ}(n+1)\prod_{v\nmid\infty}\lambda (M_{L_v}) & (2 \mid n), \\
\zeta(2)L_{E/\bQ}(3)\zeta(4)\dots \zeta(n+1)\prod_{v\nmid\infty}\lambda(M_{L_v})\cdot & (2\nmid n).
\end{cases}
\]
Thus, the claim follows from Theorem~\ref{thm:generalization of Prasad's volume formula} and the definition of $\lambda(L_{v})$.
\end{proof}

\section{Non-freeness of graded algebras of modular forms on complex balls}
\label{section:Estimation of volumes}
From now on, as in Subsection~\ref{subsec:thecaseofspecialunitary}, we assume that $k= \mathbb{Q}$, $E$ is an imaginary quadratic field with odd discriminant $-D$, and $S = V_{\infty}$.
.
In the remainder of this paper, we mainly focus on the case where $E\neq\bQ(\sqrt{-3})$ for simplicity.
Although this restriction avoids certain technical complications, analogous arguments apply in this exceptional case as well, at the cost of additional notation and effort.

Recall that $L$ is an $\cO_{E}$-lattice equipped with a Hermitian form $\langle \phantom{x},\phantom{x} \rangle$.
We assume that $L$ has signature $(1,n)$ for $n > 2$.
Let $\U(L)$ denote the unitary group attached to the Hermitian lattice $L$.
In this paper, we refer to a finite index subgroup $\Gamma$ of $\U(L)$ as an \emph{arithmetic subgroup} of $\U(L)$.
More generally, we call a subgroup $\Gamma < \U(1, n)$ an arithmetic subgroup if it is a finite index subgroup of $\U(L)$ for some Hermitian lattice $L$ of signature $(1, n)$.

In this section, we will prove the main results of this paper.
In Theorem~\ref{thm:not_wps}, we show that the graded algebra of modular forms $M_*(\Gamma)$ for any arithmetic subgroup $\Gamma$ of the unitary group $\U(L)$ is never free if $n > 99$ (except over $\mathbb{Q}(\sqrt{-3})$, where we require $n > 154$) or $D$ is sufficiently large. 
We also prove in Theorem~\ref{thm:finiteness} that up to scaling, there are only finitely many isometry classes of Hermitian lattices $L$ such that $M_*(\Gamma)$ is free for some arithmetic subgroup $\Gamma$ of $\U(L)$.

\subsection{Modular forms and ramifications}

In this subsection, we briefly recall the definition of the graded algebra of modular forms.

Let 
    \[\bB^n\defeq\{[v]\in \bP(L\otimes_{\cO_E}\bC)\mid \langle v,v\rangle>0 \}\]
be the $n$-dimensional complex ball acted on by the unitary group $\U(L)$.
For an arithmetic subgroup $\Gamma<\U(L)$, let $X_{\Gamma}\defeq \Gamma\backslash\bB^n$ be the ball quotient.
Let $\chi:\Gamma\to\bC^{\times}$ be a character of an arithmetic subgroup $\Gamma<\U(L)$.
We say that a holomorphic function $f$ on a principal $\bC^{\times}$-bundle 
\[\bB^0\defeq\{v\in L\otimes_{\cO_{E}}\bC\mid [v]\in\bB^n\}\]
is a \emph{modular form of weight $\kappa\in\bZ_{\geq 0}$ for $\Gamma$ with character $\chi$} if the following conditions  hold:
\begin{align*}
    f(tz)=t^{-\kappa}f(z),\quad f(\gamma z)=\chi(\gamma)f(z)
\end{align*}
for all $t\in\bC^{\times}$ and $\gamma\in\Gamma$.
We denote by $M_{\kappa}(\Gamma,\chi)$ the $\bC$-vector space of modular forms of weight $\kappa$ for $\Gamma$ with a character $\chi$.
We write $M_{\kappa}(\Gamma)\defeq M_{\kappa}(\Gamma,\mathrm{triv})$ for the space with trivial character and define the graded algebra of modular forms
\[M_*(\Gamma)\defeq\bigoplus_{\kappa\geq 0}M_{\kappa}(\Gamma).
\]
\begin{rmk}
\label{rmk:bb_compactification}

     Here,  we recall the relationship between the graded algebra of modular forms $M_{*}(\Gamma)$ and the geometry of the ball quotient $\Gamma\backslash\bB^n$.
     We briefly review the Baily-Borel compactification \cite{baily1966compactification}, which provides the minimal compactification.
     For simplicity, assume that $\Gamma$ is neat.
     \begin{enumerate}
         \item  
         For any rank 1 primitive isotropic sublattice $I\subset L$, let $C_I\defeq \bP(I\otimes E)$ denote the associated rational 0-dimensional cusp.
    We define the rational compactification of $\bB^n$ as 
    \[\overline{\bB^n}\defeq\bB^n\cup \left(
    \bigcup_I C_I
    \right)
    \]
    endowed with a topology defined by Siegel sets.
    By the theorem of Baily and Borel \cite{baily1966compactification}, the group $\Gamma$ acts on $\overline{\bB^n}$ discretely.
    Consequently, the Baily-Borel compactification of $\Gamma\backslash\bB^n$ is defined by  
\[\overline{\Gamma\backslash\bB^n}\defeq\Gamma\backslash\overline{\bB^n}\]
which naturally admits the structure of a projective variety over $\bC$.
\item By the above construction, there is a specific ample line bundle, called the \emph{automorphic line bundle}, whose global sections are modular forms of weight 1, on $\overline{\Gamma\backslash\bB^n}$.
This yields the following isomorphism as algebraic varieties 
\[\overline{\Gamma\backslash\bB^n}\cong\Proj M_*(\Gamma).\]
Hence, if the algebra $M_*(\Gamma)$ is freely generated by modular forms of weights $k_0,\cdots,k_{n}$, then the Baily-Borel compactification is isomorphic to the weighted projective space
\[\Proj M_*(\Gamma)\cong\bP(k_0,\cdots,k_{n}).\]     
\end{enumerate}
\end{rmk}

As explained above, the main result of this section is that the graded algebra $M_*(\Gamma)$ is not a free algebra if $n$ or $D$ is sufficiently large.
A key ingredient is the criterion Theorem~\ref{thm:criterion_wps} for the non-freeness of the graded algebras of modular forms.
We recall the notion of ramification divisors for the uniformisation map 
\[
\bB^n\to X_{\Gamma}= \Gamma\backslash\bB^n,
\]
which will be used to prove Theorem~\ref{thm:criterion_wps}.
Let $l \in L$ be a primitive vector satisfying $\langle l, l\rangle<0$. We define the \emph{reflection} $\sigma_{l}$ by 
\begin{align}
\label{def:reflection}
    \sigma_{l}: L\otimes_{\cO_{E}} E\to L\otimes_{\cO_{E}} E,\quad v\mapsto v-2\frac{\langle v,l\rangle}{\langle l,l\rangle}l.
\end{align}
By \cite{behrens2012singularities}*{Proposition~2, Corollary~3}, all ramification divisors of $\mathbb{B}\to \Gamma\backslash\bB^n$ arise as the fixed divisors by such reflections; while the case $E=\bQ(\sqrt{-3})$ involves additional considerations (triflections and hexaflections) concerning $\mathcal{O}_E^{\times}$, we omit these from the present discussion.
Let $H_{l}\subset\bB^n$ be the ramification divisor associated with $l$.
Such a divisor $H_{l}$ is called a \emph{Heegner divisor} and  admits a structure of the complex subball $\bB^{n-1}$ defined by the sublattice $L^{l}\defeq l^{\perp}\cap L$ of signature $(1,n-1)$.
In this paper, we say that a primitive vector $l\in L$ is \emph{$\Gamma$-reflective} if $z\cdot \sigma_{l}\in \Gamma$ for an element $z$ in the centre of $\U(L)$.
This condition implies that $L^{l}$ defines a ramification divisor.
We say that a reflective vector $l\in L$ is \emph{split} if the Hermitian lattice $L$ decomposes as $L=L^{l}\oplus l\cO_E$.
Otherwise, we call it \emph{non-split}.
For a $\Gamma$-reflective vector $l$, we denote by $B_{l}\subset \Gamma\backslash\bB^n$ the corresponding branch divisors.
These divisors are classified according to properties of the reflective vectors, as described in \cite{Mae24}*{Section~3}, but we omit the details here.
To handle the case of $E=\bQ(\sqrt{-3})$ collectively in what follows, we denote by $r_l$ the ramification index associated with a reflective vector $l$.
Note that $r_l \in\{2,3,6\}$ if $E = \bQ(\sqrt{-3})$ and $r_l =2$ otherwise.

We denote by $Z$ the centre of $\U(L)$, and for an arithmetic subgroup $\Gamma < \U(L)$, 
let $Z\Gamma$ denote the group generated by $\Gamma$ and $Z$.
Let $\mathcal{R}^{\Gamma}$ denote the set consisting of $Z\Gamma$-equivalence classes of $\Gamma$-reflective vectors in $L$.
Let $\mathcal{R}^{\Gamma}_{\mathrm{s}}$ be the subset of $\mathcal{R}^{\Gamma}$ consisting of split vectors.
For a $\Gamma$-reflective $l \in L$, we denote by $\Gamma_{l}$ (resp.\ $\Gamma_{l \cO_{E}}$) the stabiliser of $l$ (resp.\ $l \cO_{E}$) in $\Gamma$.
Then the elements of $\Gamma_{l \cO_{E}}$ preserve the subspace $L^{l}$, and the restriction defines a group homomorphism $\res_{l} \colon \Gamma_{l \cO_{E}} \to \U(L^l)$.
We define an arithmetic subgroup $\Gamma^{l}$ of $\U(L^{l})$ by $\Gamma^{l} \defeq \res_{l}\left(\Gamma_{l \cO_{E}}\right)$.
Since the restriction map $\res_{l}$ is injective on $\Gamma_{l}$, we also regard $\Gamma_{l}$ as an arithmetic subgroup of $\U(L^{l})$ that is contained in $\Gamma^{l}$.

\begin{rmk}
\label{rmkULi}
If $l$ is a split vector, the map $\res_{l} \colon \U(L)_{l \cO_{E}} \to \U(L^{l})$ is surjective on $\U(L)_{l}$.
Thus, we have $\U(L)_{l} = \U(L)^{l} = \U(L^{l})$ in this case.
\end{rmk}

At the end of this subsection, we record a lemma about the groups $\Gamma_{l \cO_{E}}$ and $\Gamma^{l}$, which will be used in the following subsection.
\begin{lemma}
\label{lem:descriptionofGammal}
We have $Z \cdot (Z \Gamma)_{l} = (Z \Gamma)_{l \cO_{E}} = Z \cdot \Gamma_{l \cO_{E}}$.
As subgroups of $\U(L^{l})$, we have $Z' \cdot (Z \Gamma)_{l} = Z' \cdot \Gamma^{l}$, where $Z'$ denotes the centre of $\U(L^l)$.
\end{lemma}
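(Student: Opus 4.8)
The plan is to reduce everything to two elementary observations about the centre $Z$ of $\U(L)$. First, since $E$ is imaginary quadratic, every unit of $\cO_E$ has norm one, so $Z$ consists precisely of the scalar isometries $u\cdot\id_L$ with $u\in\cO_E^\times$; in particular $Z$ fixes every $\cO_E$-line in $L$. Second, if $g\in\U(L)$ stabilises the line $l\cO_E$, then $g(l)=ul$ for some $u\in\cO_E$, and comparing $\langle g(l),g(l)\rangle$ with $\langle l,l\rangle$ forces $u\bar u=1$, so $u\in\cO_E^\times$ and $u\cdot\id_L\in Z$. These two facts are the whole engine of the proof.

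First I would prove the chain $Z\cdot(Z\Gamma)_l=(Z\Gamma)_{l\cO_E}=Z\cdot\Gamma_{l\cO_E}$. The inclusions $\supseteq$ in both equalities are immediate, since $Z$, $(Z\Gamma)_l$ and $\Gamma_{l\cO_E}$ are all contained in $(Z\Gamma)_{l\cO_E}$ (scalars stabilise the line, and stabilising $l$ is stronger than stabilising $l\cO_E$). For the first $\subseteq$, take $g\in(Z\Gamma)_{l\cO_E}$; by the second observation $g(l)=ul$ with $u\in\cO_E^\times$, so $z=u\cdot\id_L\in Z$ and $z^{-1}g\in Z\Gamma$ fixes $l$, whence $g=z\,(z^{-1}g)\in Z\cdot(Z\Gamma)_l$. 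For the second $\subseteq$, write $g=z\gamma$ with $z\in Z$ and $\gamma\in\Gamma$ (possible since $Z$ is central, so $Z\Gamma$ is the product set); because $z$ fixes $l\cO_E$ and $g$ stabilises $l\cO_E$, so does $\gamma=z^{-1}g$, giving $\gamma\in\Gamma_{l\cO_E}$ and $g\in Z\cdot\Gamma_{l\cO_E}$.

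Then I would deduce the statement in $\U(L^l)$ by transporting the first part through $\res_l$. Note that $\res_l$ extends to all of $(Z\Gamma)_{l\cO_E}$: any isometry stabilising the line $l\cO_E$ preserves $l^\perp$, hence $L^l$, and restricts to an element of $\U(L^l)$; moreover $\res_l$ is multiplicative and sends a scalar $u\cdot\id_L\in Z$ to $u\cdot\id_{L^l}\in Z'$, so $\res_l(Z)\subseteq Z'$. Writing $A\defeq\res_l((Z\Gamma)_l)$ and $B\defeq\Gamma^l=\res_l(\Gamma_{l\cO_E})$, the inclusion $(Z\Gamma)_l\subseteq(Z\Gamma)_{l\cO_E}=Z\cdot\Gamma_{l\cO_E}$ gives $A\subseteq Z'B$, while $\Gamma_{l\cO_E}\subseteq(Z\Gamma)_{l\cO_E}=Z\cdot(Z\Gamma)_l$ gives $B\subseteq Z'A$; multiplying each by $Z'$ yields $Z'A=Z'B$, which is exactly $Z'\cdot(Z\Gamma)_l=Z'\cdot\Gamma^l$.

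There is no genuine obstacle here; the only points requiring care are bookkeeping ones. The main thing to verify carefully is that the scalar $u$ produced when an isometry stabilises $l\cO_E$ really lands in $Z$ — this is where the imaginary-quadratic hypothesis (all units have norm one) is used, and it is also the point that needs the mild adjustment in the case $E=\bQ(\sqrt{-3})$, where $\cO_E^\times$ is the group of sixth roots of unity; the same argument goes through verbatim since those units still have norm one. Beyond that, one should check that $Z\Gamma$ is literally the product set $\{z\gamma\}$ (immediate from centrality of $Z$) and that $\res_l$ is defined and multiplicative on the larger group $(Z\Gamma)_{l\cO_E}$, both of which are routine.
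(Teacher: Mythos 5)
Your proof is correct and follows essentially the same route as the paper: the first chain of equalities is established directly from the definitions (your key observation—that an element stabilising the line $l\cO_{E}$ sends $l$ to a unit multiple of $l$, hence differs from an element of the point stabiliser by a central scalar—is exactly the content the paper compresses into ``follows from the definitions''), and the second claim is obtained, as in the paper, by transporting the first equality through the restriction map $\res_{l}$ using $\res_{l}(Z)\subseteq Z'$. The only cosmetic caveat is that your opening sentence justifies why unit scalars lie in $Z$ but not why $Z$ contains nothing else; that standard fact (via Zariski density) is all that is additionally needed for $Z$ to stabilise every line, and the paper likewise takes it for granted.
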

\begin{proof}
The first claim follows from the definitions of $\Gamma_{l}$ and $\Gamma_{l \cO_{E}}$.
The second claim follows by sending $Z \cdot (Z \Gamma)_{l}$ and $Z \cdot \Gamma_{l \cO_{E}}$ via the restriction map $\res_{l}$.
\end{proof}

\subsection{A criterion for non-freeness}
\label{subsec:a criterion}
In this subsection, we establish a criterion for the graded algebra $M_*(\Gamma)$ of modular forms to be non-free.
This criterion reduces the problem of determining the non-freeness of $M_*(\Gamma)$ to computing the covolumes of certain unitary groups.

For a neat arithmetic subgroup $\Gamma<\U(L)$,
we define the \emph{Hirzebruch-Mumford volume} to be
\[\volHM(\Gamma)\defeq \frac{\abs{\chi(X_{\Gamma})}}{n+1}\]
Here, $\chi(X_{\Gamma})$ denotes the Euler-Poincar\'{e} characteristic of $X_{\Gamma}$ and the factor $n+1$ comes from the one for the compact dual $\bP^n$ of $\bB^n$.
We extend this definition for any arithmetic subgroup $\Gamma$ by taking a neat  subgroup $\Gamma_{\mathrm{n}}$ and dividing $\volHM(\Gamma_{\mathrm{n}})$ by $[Z\Gamma:Z\Gamma_{\mathrm{n}}]$.
We record basic properties of the Hirzebruch-Mumford volume.
\begin{lemma}
\label{lem:compatibility of volHM and covolume}
Let $\Gamma<\U(L)$ be an arithmetic subgroup.
\begin{enumerate}
    \item  If $\Gamma'<\Gamma$ is another arithmetic subgroup, then we have 
\[\volHM(\Gamma') = [Z\Gamma:Z\Gamma']\volHM(\Gamma).\]
\item 
When $\Gamma<\SU(L)$, we have
\[
\mu_{\infty}(\SU(1,n)/\Gamma) = \frac{\volHM(\Gamma)}{\abs{\Gamma\cap Z}}, 
\]
where $Z$ denotes the centre of $\U(L)$ as introduced in the previous subsection.
\end{enumerate}
\begin{proof}
    (1) Taking a near subgroup $\Gamma_{_{\mathrm{n}}}<\Gamma'$, we have two relations
    \[\volHM(\Gamma) = \frac{\volHM(\Gamma_{\mathrm{n}})}{[Z\Gamma:Z\Gamma_{\mathrm{n}}]},\quad \volHM(\Gamma') = \frac{\volHM(\Gamma_{\mathrm{n}})}{[Z\Gamma':Z\Gamma_{\mathrm{n}}]}.\]
    The transitivity of the index shows the claim.

    (2)  
    First, while it is well-known, for completeness, we show that $\mu_{\infty}(\SU(1,n)/\Gamma_{\mathrm{n}}) = \volHM(\Gamma_{\mathrm{n}})$ for a neat arithmetic subgroup $\Gamma_{\mathrm{n}}$; see also \cite{KalethaPrasad}*{\S 18.6}.
There is the Euler-Poincar\'{e} measure $\mu^{\mathrm{EP}}$ on $\SU(1,n)$ by \cite{harder1971gauss} for the non-compact arithmetic varieties cases; see \cite{serre1971cohomology}*{Section~3}.
This measures the Euler-Poincar\'{e} characteristic of the arithmetic subgroup $\Gamma_{\mathrm{n}}$, that is, $\mu^{\mathrm{EP}}(\Gamma_{\mathrm{n}})= \abs{\chi(\Gamma_{\mathrm{n}})}$.
Then, by Hirzebruch's proportionality principle for non-compact cases \cite{mumford1977hirzebruch}, we have
\[\mu^{\mathrm{EP}}(\Gamma_{\mathrm{n}}) = \chi(\bP^n)\cdot \mu_{\infty}(\SU(1,n)/\Gamma_{\mathrm{n}}).\]
Now, since $\bB^n$ is contractible, it is the universal cover of $X_{\Gamma_{\mathrm{n}}}$, which forces
$\abs{\chi(\Gamma_{\mathrm{n}})} = \abs{\chi(X_{\Gamma_{\mathrm{n}}})}$, combined with the standard argument of the topology theory; see \cite{serre1971cohomology}*{Section 1}.
Here, we used the fact that $\Gamma_{\mathrm{n}}$ acts on $\bB^n$ freely.
Then, the definition of $\volHM(\Gamma_{\mathrm{n}})$ shows the claim.

Now, we work on $\Gamma$ in general.
Take a neat subgroup $\Gamma_{\mathrm{n}}<\Gamma$.
Then we obtain from the definition of the covolumes that
    \[
    \mu_{\infty}(\SU(1,n)/\Gamma) = \frac{\mu_{\infty}(\SU(1,n)/ \Gamma_{\mathrm{n}})}{[\Gamma:\Gamma_{\mathrm{n}}]}
    \]
    Combining this with the equality $\mu_{\infty}(\SU(1,n)/\Gamma_{\mathrm{n}}) = \volHM(\Gamma_{\mathrm{n}})$ proved above, we have
    \[
    \mu_{\infty}(\SU(1,n)/\Gamma) = \frac{\volHM(\Gamma_{\mathrm{n}})}{[\Gamma:\Gamma_{\mathrm{n}}]} = \frac{[Z\Gamma:Z\Gamma_{\mathrm{n}}]}{[\Gamma:\Gamma_{\mathrm{n}}]} \volHM(\Gamma).
    \]
    Since $\Gamma_{\mathrm{n}}$ is a neat arithmetic subgroup, it does not contain any non-trivial element in the centre of $\U(L)$.
    Thus, we have
    \[
    \frac{[Z\Gamma:Z\Gamma_{\mathrm{n}}]}{[\Gamma:\Gamma_{\mathrm{n}}]} = \frac{[\Gamma:(\Gamma \cap Z) \cdot \Gamma_{\mathrm{n}}]}{[\Gamma:\Gamma_{\mathrm{n}}]} 
    = [(\Gamma \cap Z) \cdot \Gamma_{\mathrm{n}} : \Gamma_{\mathrm{n}}]^{-1}
    = [\Gamma \cap Z : \Gamma_{\mathrm{n}} \cap Z]^{-1}
    = \abs{\Gamma \cap Z}^{-1},
    \]
    which concludes the proof.
\end{proof}

\end{lemma}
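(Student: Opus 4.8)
The plan is to reduce both statements to the case of a neat subgroup, where $\volHM$ is a genuine topological Euler characteristic that can be matched with the archimedean covolume through the proportionality principle, and then to propagate the identities by index bookkeeping.

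For (1), I would fix a neat arithmetic subgroup $\Gamma_{\mathrm{n}} < \Gamma'$; since neatness is intrinsic and $[\Gamma:\Gamma_{\mathrm{n}}] = [\Gamma:\Gamma'][\Gamma':\Gamma_{\mathrm{n}}] < \infty$, this $\Gamma_{\mathrm{n}}$ is also neat and of finite index in $\Gamma$. Applying the defining relation $\volHM(\Gamma) = \volHM(\Gamma_{\mathrm{n}})/[Z\Gamma:Z\Gamma_{\mathrm{n}}]$ to both $\Gamma$ and $\Gamma'$ and dividing, the common factor $\volHM(\Gamma_{\mathrm{n}})$ cancels, and the transitivity of the index $[Z\Gamma:Z\Gamma_{\mathrm{n}}] = [Z\Gamma:Z\Gamma'][Z\Gamma':Z\Gamma_{\mathrm{n}}]$ yields the asserted formula. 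This part is purely formal once the definition is unwound.

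For (2), the substantive input is the neat base case: for a neat $\Gamma_{\mathrm{n}} < \SU(L)$ one must show $\mu_{\infty}(\SU(1,n)/\Gamma_{\mathrm{n}}) = \volHM(\Gamma_{\mathrm{n}})$ with $\mu_{\infty}$ normalised as in \cite{Pra89}*{\S3.6}. I would invoke Harder's Gauss--Bonnet theorem to realise the Euler--Poincar\'e measure $\mu^{\mathrm{EP}}$ on $\SU(1,n)$ satisfying $\mu^{\mathrm{EP}}(\Gamma_{\mathrm{n}}) = \abs{\chi(\Gamma_{\mathrm{n}})}$, then Mumford's extension of Hirzebruch proportionality to the non-compact case to write $\mu^{\mathrm{EP}}(\Gamma_{\mathrm{n}}) = \chi(\bP^n)\,\mu_{\infty}(\SU(1,n)/\Gamma_{\mathrm{n}})$. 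Since $\bB^n$ is contractible and $\Gamma_{\mathrm{n}}$ acts freely, the group and space Euler characteristics coincide, $\abs{\chi(\Gamma_{\mathrm{n}})} = \abs{\chi(X_{\Gamma_{\mathrm{n}}})}$, and with $\chi(\bP^n) = n+1$ the definition of $\volHM(\Gamma_{\mathrm{n}})$ delivers the equality. I expect this to be the main obstacle, since it requires correctly aligning three external inputs together with the specific measure normalisation, rather than any single hard computation.

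Finally, for general $\Gamma < \SU(L)$ I would pass to a neat $\Gamma_{\mathrm{n}} < \Gamma$, use that the covolume scales inversely with the index, $\mu_{\infty}(\SU(1,n)/\Gamma) = \mu_{\infty}(\SU(1,n)/\Gamma_{\mathrm{n}})/[\Gamma:\Gamma_{\mathrm{n}}]$, and substitute the base case together with the relation from (1). The remaining work is the central bookkeeping, namely showing $[Z\Gamma:Z\Gamma_{\mathrm{n}}]/[\Gamma:\Gamma_{\mathrm{n}}] = \abs{\Gamma\cap Z}^{-1}$. For this I would first establish the coset identity $Z\Gamma_{\mathrm{n}} \cap \Gamma = (\Gamma\cap Z)\Gamma_{\mathrm{n}}$ (the nontrivial inclusion follows because $z\gamma_{\mathrm{n}} \in \Gamma$ and $\gamma_{\mathrm{n}} \in \Gamma$ force $z \in \Gamma\cap Z$), which gives $[Z\Gamma:Z\Gamma_{\mathrm{n}}] = [\Gamma:(\Gamma\cap Z)\Gamma_{\mathrm{n}}]$; then transitivity of the index and the second isomorphism theorem reduce the ratio to $[(\Gamma\cap Z)\Gamma_{\mathrm{n}}:\Gamma_{\mathrm{n}}]^{-1} = [\Gamma\cap Z : (\Gamma\cap Z)\cap\Gamma_{\mathrm{n}}]^{-1}$. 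Neatness of $\Gamma_{\mathrm{n}}$ forces $\Gamma_{\mathrm{n}}\cap Z = \{1\}$, so this equals $\abs{\Gamma\cap Z}^{-1}$, which closes the argument.
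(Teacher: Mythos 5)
Your proposal is correct and follows essentially the same route as the paper: part (1) by cancelling $\volHM(\Gamma_{\mathrm{n}})$ for a common neat subgroup and using transitivity of indices, and part (2) by combining Harder's Gauss--Bonnet with Mumford's non-compact proportionality for the neat base case, then the same central index bookkeeping $[Z\Gamma:Z\Gamma_{\mathrm{n}}]/[\Gamma:\Gamma_{\mathrm{n}}] = \abs{\Gamma\cap Z}^{-1}$. Your explicit justification of $[Z\Gamma:Z\Gamma_{\mathrm{n}}] = [\Gamma:(\Gamma\cap Z)\Gamma_{\mathrm{n}}]$ via the coset identity $Z\Gamma_{\mathrm{n}}\cap\Gamma = (\Gamma\cap Z)\Gamma_{\mathrm{n}}$ is a welcome detail that the paper states without proof.
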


We also record a corollary of Lemma~\ref{lem:compatibility of volHM and covolume}, which will be used to restate the criterion in Theorem~\ref{thm:criterion_wps} in terms of covolumes in $\SU(1, n)$, rather than the Hirzebruch–Mumford volume. 

\begin{cor}
\label{corcovolofSUvsHMvolofU}
We have
\[
\mu_{\infty}(\SU(1,n)/\SU(L)) \le \volHM(\U(L)) \le 2 \mu_{\infty}(\SU(1,n)/\SU(L))
\]
when $E \neq \bQ(\sqrt{-3})$.
When $E = \bQ(\sqrt{-3})$, we have
\[
\mu_{\infty}(\SU(1,n)/\SU(L)) \le \volHM(\U(L)) \le 6 \mu_{\infty}(\SU(1,n)/\SU(L)).
\]
\end{cor}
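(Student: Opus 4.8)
The plan is to turn the two-sided estimate into an exact identity: I would express the ratio $\volHM(\U(L))/\mu_\infty(\SU(1,n)/\SU(L))$ entirely in terms of group indices by chaining the two parts of Lemma~\ref{lem:compatibility of volHM and covolume}, and then recognise the resulting quantity as the index of $\det(\U(L))$ inside the unit group $\cO_E^\times$. First I would note that $\SU(L)$ has finite index in $\U(L)$, since $\U(L)/\SU(L)$ embeds via $\det$ into the finite group $\cO_E^\times$, so $\SU(L)$ is an arithmetic subgroup in the sense of this section and both parts of the lemma apply. Applying Lemma~\ref{lem:compatibility of volHM and covolume}(1) to the pair $\SU(L)<\U(L)$ and using $Z\subseteq\U(L)$, hence $Z\U(L)=\U(L)$, gives
\[
\volHM(\SU(L)) = [\U(L):Z\SU(L)]\cdot\volHM(\U(L)),
\]
while Lemma~\ref{lem:compatibility of volHM and covolume}(2) with $\Gamma=\SU(L)$ gives $\volHM(\SU(L))=\abs{\SU(L)\cap Z}\cdot\mu_\infty(\SU(1,n)/\SU(L))$. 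Combining these two relations yields
\[
\frac{\volHM(\U(L))}{\mu_\infty(\SU(1,n)/\SU(L))} = \frac{\abs{\SU(L)\cap Z}}{[\U(L):Z\SU(L)]}.
\]

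Next I would evaluate the right-hand index through the tower $\SU(L)\subseteq Z\SU(L)\subseteq \U(L)$. The determinant induces $\U(L)/\SU(L)\isoarrow\det(\U(L))$, so $[\U(L):\SU(L)]=\abs{\det(\U(L))}$; here $\det(\U(L))$ is a subgroup of the norm-one units of $E$, which for imaginary quadratic $E$ is all of $\cO_E^\times$, since every unit is a root of unity and hence has absolute value one. Identifying $Z$ with $\cO_E^\times$ acting by scalars, a central element $\zeta\cdot\id$ lies in $\SU(L)$ exactly when $\zeta^{n+1}=1$, so $\SU(L)\cap Z=\{\zeta\in\cO_E^\times:\zeta^{n+1}=1\}$ and the second isomorphism theorem gives $[Z\SU(L):\SU(L)]=\abs{Z}/\abs{\SU(L)\cap Z}$. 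Dividing $[\U(L):\SU(L)]$ by this, the factor $\abs{\SU(L)\cap Z}$ cancels in the displayed ratio and I obtain the clean formula
\[
\frac{\volHM(\U(L))}{\mu_\infty(\SU(1,n)/\SU(L))} = \frac{\abs{Z}}{\abs{\det(\U(L))}} = [\cO_E^\times:\det(\U(L))].
\]

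Finally, since $\det(\U(L))$ is a subgroup of $\cO_E^\times$, this index is an integer lying between $1$ and $\abs{\cO_E^\times}$. For $E$ of odd discriminant with $E\neq\bQ(\sqrt{-3})$ we have $\cO_E^\times=\{\pm1\}$ (the Gaussian case $\bQ(i)$ is excluded by oddness of the discriminant), so the ratio lies in $\{1,2\}$, giving the first pair of inequalities; for $E=\bQ(\sqrt{-3})$ we have $\abs{\cO_E^\times}=6$, so the ratio divides $6$ and is in particular at most $6$, giving the second pair. I expect no genuine analytic obstacle here, since all volume content has already been absorbed into Lemma~\ref{lem:compatibility of volHM and covolume}; the only point requiring care is the bookkeeping of the nested groups $\SU(L)\subseteq Z\SU(L)\subseteq\U(L)$ together with the correct identification of the centre $Z$ with $\cO_E^\times$ and of $\SU(L)\cap Z$ with the $(n+1)$-th roots of unity.
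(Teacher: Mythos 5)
Your proposal is correct and follows essentially the same route as the paper: both combine the two parts of Lemma~\ref{lem:compatibility of volHM and covolume} to write $\volHM(\U(L))/\mu_{\infty}(\SU(1,n)/\SU(L))$ as $\abs{\SU(L)\cap Z}/[\U(L):Z\SU(L)]$, simplify this via the tower $\SU(L)\subseteq Z\SU(L)\subseteq\U(L)$ to $\abs{Z}/[\U(L):\SU(L)]$, and bound $[\U(L):\SU(L)]$ between $1$ and $\abs{Z}$ using the determinant's image in the norm-one units of $\cO_E^{\times}$. Your reformulation of the ratio as the index $[\cO_E^{\times}:\det(\U(L))]$ is a clean but equivalent repackaging of the paper's computation.
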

\begin{proof}
According to Lemma~\ref{lem:compatibility of volHM and covolume}, we have
\[
\mu_{\infty}(\SU(1,n)/\SU(L)) = \frac{\volHM(\SU(L))}{\abs{\SU(L) \cap Z}}
= \frac{[\U(L) : Z \SU(L)]}{\abs{\SU(L) \cap Z}} \volHM(\U(L)).
\]
We can compute the factor $[\U(L) : Z \SU(L)]/\abs{\SU(L) \cap Z}$ as
\begin{align*}
\frac{[\U(L) : Z \SU(L)]}{\abs{\SU(L) \cap Z}} &= \frac{[\U(L): \SU(L)]}{
[Z \SU(L) : \SU(L)] \abs{\SU(L) \cap Z}
} \\
&= \frac{[\U(L): \SU(L)]}{
[Z : \SU(L) \cap Z] \abs{\SU(L) \cap Z} 
} \\
&= [\U(L): \SU(L)]/\abs{Z}.
\end{align*}
Since the image of the determinant map on $\U(L)$ is contained in the set
\[
\{
x \in \cO_{E}^{\times} \mid x \sigma(x) = 1
\} = \begin{cases}
\{1, -1\} & (E \neq \bQ(\sqrt{-3})), \\
\{\zeta \in \bC^{\times} \mid \zeta^6 = 1\} & (E=\bQ(\sqrt{-3})),
\end{cases}
\]
we have
\[
1 \le [\U(L): \SU(L)] \le
\begin{cases}
2 & (E \neq \bQ(\sqrt{-3})), \\
6 & (E = \bQ(\sqrt{-3})).
\end{cases}
\]
We also have
\[
\abs{Z} = \begin{cases}
2 & (E \neq \bQ(\sqrt{-3})), \\
6 & (E = \bQ(\sqrt{-3})).
\end{cases}
\]
Combining them, we obtain the claim.
\end{proof}

We now prove the criterion for the graded algebra $M_*(\Gamma)$ of modular forms to be non-free, in terms of the Hirzebruch-Mumford volume.

\begin{thm}
\label{thm:criterion_wps} 
For an arithmetic subgroup $\Gamma<\U(L)$, if the inequality 
    \[
    \sum_{[l]\in \mathcal{R}^{\Gamma}}\frac{r_l-1}{r_l}\frac{\volHM(\Gamma^{l})}{\volHM(\Gamma)}
    < 2(n+1)
    \]
    holds, then $M_*(\Gamma)$ is not free.
\end{thm}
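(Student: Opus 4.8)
The plan is to prove the contrapositive: assuming $M_*(\Gamma)$ is free, I will derive an exact volume identity whose right-hand side is forced to be at least $2(n+1)$. So suppose $M_*(\Gamma)=\bC[f_0,\dots,f_n]$ is freely generated by modular forms $f_i$ of weights $k_i\ge 1$; since $\overline{X_\Gamma}\cong\Proj M_*(\Gamma)$ has dimension $n$, there are exactly $n+1$ generators and $\overline{X_\Gamma}\cong\bP(k_0,\dots,k_n)$. I would attach to this system the Jacobian $J=\det(\partial f_i/\partial z_j)_{0\le i,j\le n}$ taken with respect to the cone coordinates on $\bB^0$. Differentiating $f_i(tz)=t^{-k_i}f_i(z)$ gives $(\partial_j f_i)(tz)=t^{-k_i-1}(\partial_j f_i)(z)$, so $J$ is homogeneous of degree $-\sum_i k_i-(n+1)$ and transforms under $\gamma\in\Gamma$ by $\det(\gamma)^{-1}$; hence $J$ is a nonzero modular form of weight $w=\sum_i k_i+(n+1)$ with a character. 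This is the special reflective modular form associated with a free algebra in the sense of \cite{maeda2023fano} (building on \cite{aoki2005simple} and \cite{WW2021}).

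Next I would compute $\div(J)$. The map $[z]\mapsto[f_0(z):\dots:f_n(z)]$ realises the uniformisation $\pi\colon\bB^n\to X_\Gamma$ followed by the isomorphism onto $\bP(k_0,\dots,k_n)$, so $J$ vanishes precisely along the ramification locus of $\pi$. By \cite{behrens2012singularities}, every interior ramification divisor is a Heegner divisor $H_l$ for a $\Gamma$-reflective vector $l$, of ramification index $r_l$, so that $\div(J)=\sum_{[l]\in\mathcal{R}^\Gamma}(r_l-1)H_l$ on $\bB^n$. Since a weight-$w$ form is a section of the $w$-th power of the automorphic line bundle $\lambda$, this divisor has class $w\,\pi^{*}\lambda$; writing $D_l=\pi(H_l)$ for the branch divisor on $X_\Gamma$ and using $\pi^{*}D_l=r_lH_l$, the relation descends to
\[
\sum_{[l]\in\mathcal{R}^\Gamma}\frac{r_l-1}{r_l}\,D_l=w\,\lambda
\]
in $\Pic(\overline{X_\Gamma})_{\bQ}$. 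Here I would identify $D_l$ with the $(n-1)$-dimensional ball quotient $\overline{X_{\Gamma^l}}$ via the definition $\Gamma^l=\res_l(\Gamma_{l\cO_E})$ together with Lemma~\ref{lem:descriptionofGammal}.

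Intersecting this identity with $\lambda^{n-1}$ and applying Hirzebruch–Mumford proportionality \cite{mumford1977hirzebruch} then yields the theorem. With the normalisation $\volHM(\Gamma)=\abs{\chi(X_\Gamma)}/(n+1)$ adopted here, proportionality gives $\lambda^n\cdot[\overline{X_\Gamma}]=\volHM(\Gamma)$, and the same constant in dimension $n-1$ gives $D_l\cdot\lambda^{n-1}=(\lambda|_{D_l})^{n-1}=\volHM(\Gamma^l)$. Because the proportionality constant is identical in both dimensions, no stray factor survives, and I obtain
\[
\sum_{[l]\in\mathcal{R}^\Gamma}\frac{r_l-1}{r_l}\frac{\volHM(\Gamma^l)}{\volHM(\Gamma)}=w=\sum_{i=0}^n k_i+(n+1)\ge 2(n+1),
\]
the final inequality holding since each $k_i\ge1$. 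This contradicts the hypothesis that the left-hand side is $<2(n+1)$, so $M_*(\Gamma)$ cannot be free. The case $E=\bQ(\sqrt{-3})$ is covered verbatim once $r_l\in\{2,3,6\}$ is allowed, as the flection orders enter only through the coefficients $\tfrac{r_l-1}{r_l}$.

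The hard part will be the middle step: justifying that $\div(J)$ is \emph{exactly} the ramification divisor — with no extraneous zeros, in particular none concealed at the $0$-dimensional cusps of the Baily–Borel compactification — and that the displayed class identity is the correct descent across $\pi$. It is precisely here that the multiplicities $r_l-1$ upstairs, the ramification indices $r_l$ downstairs, and the exact cancellation of the two proportionality constants must all be reconciled; I would extract the resulting volume identity from \cite{bruinier2004} and carry out the local ramification analysis following \cite{behrens2012singularities} and \cite{Mae24}. Treating non-neat $\Gamma$, where $X_\Gamma$ carries an orbifold structure and all intersection numbers must be read orbifold-theoretically, and handling the $\bQ(\sqrt{-3})$ flections are the remaining points demanding care.
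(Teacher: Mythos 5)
Your proposal is correct and follows essentially the same route as the paper: the Jacobian of the generators is exactly the cusp form of weight $(n+1)+\sum_i k_i$ with divisor $\sum_{[l]}(r_l-1)H_l$ that the paper obtains from \cite{WW2021}*{Theorem~3.3}, and your descent-plus-proportionality step is precisely the paper's application of Bruinier's degree formula \cite{bruinier2004} combined with Hirzebruch--Mumford proportionality and the normalisation $\Gamma^l\backslash H_l\to B_l$, yielding the same identity $\sum_{[l]}\frac{r_l-1}{r_l}\volHM(\Gamma^l)/\volHM(\Gamma)=n+1+\sum_i k_i\ge 2(n+1)$. The technical points you flag (no extraneous zeros at the zero-dimensional cusps, extension of Bruinier's formula to the unitary/non-neat orbifold setting, and the $\bQ(\sqrt{-3})$ flections) are exactly the ones the paper addresses.
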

\begin{proof}
Suppose that $M_*(\Gamma)$ is freely generated by modular forms of weights $k_1,\cdots,k_{n+1}$. 
Then, by \cite{WW2021}*{Theorem~3.3}, there exists a cusp form $F$ of weight $n+1+k_1+\cdots+k_{n+1}$ for $\Gamma$, whose divisor coincides with $\sum_{[l] \in \mathcal{R}^{\Gamma}}  (r_l-1)H_{l}$.
Passing to the quotient, we have the relation
\[\div_{X_{\Gamma}}(F) = \sum_{[l]\in\mathcal{R}^{\Gamma}} \frac{r_l-1}{r_l} B_{l}\]
in $\Pic(X_{\Gamma})\otimes\bQ$.
The coefficient $(r_l-1)/r_l$ is called the standard coefficient caused by the ramification.

Now, we recall the setting \cite{bruinier2004} for the unitary case $\U(1,n)$.
Let us take the line bundle $\cO_{\bP^n}(1)$ on the compact dual $\bP^n$ of $\bB^n$.
It defines a $\bQ$-line bundle $\mathcal{M}$ on $X_{\Gamma}$.
We denote by $\Omega \defeq c_1(\mathcal{M})$ and $\widetilde{\Omega}\defeq c_1(\cO_{\bP^n}(1))$ the first Chern classes.
Putting 
\[\deg(\div_{X_{\Gamma}}(F)) \defeq \sum_{[l]\in\mathcal{R}^{\Gamma}}\frac{r_l-1}{r_l}\int_{B_l} \Omega^{n-1},\quad \vol(X_{\Gamma}) \defeq \int_{X_{\Gamma}} \Omega^n,\]
a special case of \cite{bruinier2004}*{Theorem 1}, a non-compact analogue of an application of the Poincaré-Lelong formula, asserts that
\[\deg(\div_{X_{\Gamma}}(F)) = (n + 1 + k_1 + \cdots + k_{n+1})\vol(X_{\Gamma}).\]
It is worth noting that this equation can also be deduced by considering a neat cover and applying Bruinier’s theorem there.
Viewing $H_l$ as a sub-ball of $\bB^n$, the inclusion naturally extends to the corresponding compact duals $\bP^{n-1}\hookrightarrow \bP^n$, and the restriction of $\cO_{\bP^n}(1)$ coincides with $\cO_{\bP^{n-1}}(1)$.
Hence, using the same notation $\widetilde{\Omega}$ as $\cO_{\bP^{n-1}}$ for simplicity, Hirzebruch's proportionality principle shows that 
\[\volHM(\Gamma) = \left(\int_{X_{\Gamma}}\Omega^n\right)\left(\int_{\bP^n} \widetilde{\Omega}^{n}\right)^{-1},\quad \volHM(\Gamma^{l}) = \left(\int_{\Gamma^{l}\backslash\bB^{n-1}}\Omega^n\right)\left(\int_{\bP^{n-1}} \widetilde{\Omega}^{n-1}\right)^{-1}.\]
When considering the integral, we consider the group $\Gamma/(\Gamma\cap Z)$ in such a way that the action of the centre is ignored.
This allows us to extend Bruinier's formula to the case where the arithmetic subgroup acts non-freely.
Since we are considering the volume form defined by $\cO_{\bP^n}(1)$ and $\cO_{\bP^{n-1}}(1)$, direct computation shows 
\[\int_{\bP^n} \widetilde{\Omega}^{n} =  \int_{\bP^{n-1}} \widetilde{\Omega}^{n-1} = 1.\]
Since $\Gamma^l\backslash H_l \to B_l$ gives the normalization of $B_l$ as in \cite{ma2013finiteness}*{Subsection 3.2}, we finally obtain that
\[\sum_{[l]\in\mathcal{R}^{\Gamma}}\frac{r_l-1}{r_l}\volHM(\Gamma^{l}) = (n+1 + k_1+\cdots + k_{n+1})\volHM(\Gamma). \]
Since $k_i\geq 1$ for each $i$,, we conclude that
\[
\sum_{[l]\in\mathcal{R}^{\Gamma}} \frac{r_l-1}{r_l}\frac{\volHM(\Gamma^{l})}{\volHM(\Gamma)} = n+1+k_1+\cdots+k_{n+1} \ge 2(n+1),
\]
which contradicts the assumption.

See also \cite{stuken2017nonfreeness}*{Subsection 3.2} for the proof of the case of Hilbert modular forms.
\end{proof}
\begin{rmk}
\label{rmk:special_reflective_modular_form}
    \begin{enumerate}
    \item While Bruinier's formula is originally stated in the context of $\O^+(2,n)$, analogous computations confirm that it also applies to the unitary case $\U(1,n)$; see also \cite{stuken2022nonfreeness}*{Theorem A}.
What is required for the proof is an analysis of the growth behavior of the Peterson norm near the Baily-Borel boundary. Since the unitary group admits only zero-dimensional cusps, the argument is considerably simpler than in the orthogonal case.
Note that, under our assumption on $n$, the boundary of the Baily-Borel compactification $\overline{X_{\Gamma}} \smallsetminus X_{\Gamma}$ has codimension greater than two.
        \item     The modular form $F$ constructed in the proof of Theorem \ref{thm:criterion_wps} is a special reflective modular form in the sense of \cite{maeda2023fano}.
    It implies that the zero divisor of $F$ coincides with the ramification divisors $H_l$ with vanishing order $r_l-1$.
    The slope of $F$ encodes the birational geometry of $\overline{\Gamma\backslash\bB^n}$ as shown in \cite{maeda2023fano}*{Theorem~2.4}.
    \end{enumerate}
\end{rmk}

The following lemma reduces the problem of verifying the inequality in Theorem~\ref{thm:criterion_wps} for an arithmetic subgroup $\Gamma < \U(L)$ to the corresponding problem for $\U(L)$.

\begin{lemma}
\label{lem:inequality of the ratio between two groups}
    Let $\Gamma<\U(L)$ be an arithmetic subgroup.
    Then, we have inequalities
    \[
    \sum_{[l]\in \mathcal{R}^{\Gamma}} \frac{\volHM(\Gamma^{l})}{\volHM(\Gamma)} \le \sum_{[l]\in \mathcal{R}^{\U(L)}} \frac{\volHM(\U(L)^{l})}{\volHM(\U(L))}
    \]
    and
    \[
    \sum_{[l]\in \mathcal{R}^{\Gamma}}\frac{r_{l} - 1}{r_{l}} \frac{\volHM(\Gamma^{l})}{\volHM(\Gamma)} \le \sum_{[l]\in \mathcal{R}^{\U(L)}} \frac{r_{l} - 1}{r_{l}} \frac{\volHM(\U(L)^{l})}{\volHM(\U(L))}.
    \]
\end{lemma}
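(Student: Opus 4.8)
The plan is to compare the two sums term by term along the natural map of index sets. Every $\Gamma$-reflective vector is automatically $\U(L)$-reflective (if $z\sigma_l\in\Gamma\subseteq\U(L)$ then $l$ is $\U(L)$-reflective), and if $l'=gl$ with $g\in Z\Gamma\subseteq Z\U(L)$ then $l,l'$ lie in the same $Z\U(L)$-class; hence there is a well-defined map $\pi\colon\mathcal{R}^{\Gamma}\to\mathcal{R}^{\U(L)}$ sending a $Z\Gamma$-class to its $Z\U(L)$-class. Since all summands are nonnegative, it suffices to prove, for each class $[l_0]\in\mathcal{R}^{\U(L)}$, the fibrewise inequality $\sum_{[l]\in\pi^{-1}([l_0])}\volHM(\Gamma^{l})/\volHM(\Gamma)\le \volHM(\U(L)^{l_0})/\volHM(\U(L))$, and then to sum over $[l_0]$.

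Next I would rewrite each summand using Lemma~\ref{lem:compatibility of volHM and covolume}(1), applied once inside $\U(L^{l})$ and once inside $\U(L)$. Writing $Z'$ for the centre of $\U(L^{l})$ and noting $\Gamma^{l}\subseteq\U(L)^{l}$, this gives $\volHM(\Gamma^{l})=[Z'\U(L)^{l}:Z'\Gamma^{l}]\,\volHM(\U(L)^{l})$ and $\volHM(\Gamma)=[Z\U(L):Z\Gamma]\,\volHM(\U(L))$. For $l=gl_0$ in the $Z\U(L)$-orbit of $l_0$, the element $g$ carries $L^{l_0}$ isometrically onto $L^{l}$ and conjugates $\U(L)_{l_0\cO_E}$ to $\U(L)_{l\cO_E}$, so $\volHM(\U(L)^{l})=\volHM(\U(L)^{l_0})$. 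The fibrewise inequality therefore reduces to the purely group-theoretic statement $\sum_{[l]\in\pi^{-1}([l_0])}[Z'\U(L)^{l}:Z'\Gamma^{l}]\le[Z\U(L):Z\Gamma]$.

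To prove this index inequality I would argue by orbit counting in $G=Z\U(L)$ with the finite-index subgroup $H=Z\Gamma$. The fibre $\pi^{-1}([l_0])$ is identified with the set of $H$-orbits inside the $G$-orbit $\Omega=G\cdot l_0$ that consist of $\Gamma$-reflective vectors; reflectiveness is constant on $H$-orbits because $h\Gamma h^{-1}=\Gamma$ for $h\in Z\Gamma$ and $z\sigma_{hl}=h(z\sigma_l)h^{-1}$. For $l=gl_0$, Lemma~\ref{lem:descriptionofGammal} together with the injectivity of $\res_{l}$ on the vector-stabiliser $(Z\Gamma)_{l}$ identifies $Z'\U(L)^{l}$ with $Z'\cdot\res_l(G_l)$ and $Z'\Gamma^{l}$ with $Z'\cdot\res_l(H_l)$, where $G_l=\operatorname{Stab}_G(l)$ and $H_l=H\cap G_l$; by the second isomorphism theorem this yields $[Z'\U(L)^{l}:Z'\Gamma^{l}]\le[G_l:H_l]$. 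Finally, the standard double-coset identity gives $[G_l:H_l]=[G_{l_0}:G_{l_0}\cap g^{-1}Hg]$, and summing over all $H$-orbits in $\Omega$ (equivalently, over $H\backslash G/G_{l_0}$) yields $\sum_{\text{all orbits}}[G_l:H_l]=[G:H]$; discarding the non-reflective orbits only decreases the left-hand side, which gives the desired bound.

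For the second inequality I would add the observation that the ramification index is monotone: since $\Gamma\subseteq\U(L)$, the pointwise stabiliser in $\Gamma$ of a general point of $H_l$ is contained in that of $\U(L)$, so the value $r_l$ computed for $\Gamma$ divides (hence is at most) the value $r_{l_0}$ for $\U(L)$; for $E\neq\bQ(\sqrt{-3})$ both equal $2$. As $x\mapsto(x-1)/x$ is increasing, the weights $(r_l-1)/r_l$ are bounded fibrewise by $(r_{l_0}-1)/r_{l_0}$ and pulled outside the fibre sum, after which the first inequality finishes the argument. The main obstacle is the bookkeeping in the third step: correctly relating the centre $Z$, the line- versus vector-stabilisers in $Z\Gamma$, and the centre $Z'$ of the smaller unitary group through $\res_l$, so that the two independent sources of inequality — the $Z'$-quotient and the omission of non-reflective orbits — are cleanly separated from the exact double-coset count $\sum[G_l:H_l]=[G:H]$.
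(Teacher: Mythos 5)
Your proposal is correct and follows essentially the same route as the paper's own proof: both decompose each $\U(L)$-orbit of reflective vectors into $Z\Gamma$-orbits, convert the volume ratios into indices via Lemma~\ref{lem:compatibility of volHM and covolume}(1) and Lemma~\ref{lem:descriptionofGammal}, bound $[Z'\U(L)^{l}:Z'\Gamma^{l}]$ by $[\U(L)_{l}:(Z\Gamma)_{l}]$, sum these by the orbit-counting identity to get $[\U(L):Z\Gamma]$, and handle the weighted inequality by the monotonicity of $r\mapsto (r-1)/r$ together with $r_{l_i}\le r_l$. The only differences are presentational (you phrase the orbit count via double cosets and isolate the fibrewise inequality explicitly), not mathematical.
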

\begin{proof}
    While the argument is essentially the same as in \cite{stuken2017nonfreeness}*{Lemma 3.3}, we include a proof here for the sake of completeness.
    We denote by $Z'$ the centre of $\U(L^{l})$.
    Also, to distinguish between the $\U(L)$-equivalence class and the $Z\Gamma$-equivalence class of an element $l \in L$, we denote the latter by $[l]_{\Gamma}$. 
    Consider the projection $p:\mathcal{R}^{\Gamma} \to \mathcal{R}^{\U(L)}$ and take $l\in L$ with $[l]\in\mathcal{R}^{\U(L)}$.
    We divide the $U(L)$-orbit $[l]$ into the disjoint union of $Z \Gamma$-orbits as 
    \[
    [l] = \bigsqcup_{i=1}^{r} [l_{i}]_{\Gamma}.
    \]
    Then we have $p^{-1}([l]) \subseteq \{[l_1]_{\Gamma},\cdots,[l_{r}]_{\Gamma}\}$.
   By a standard computation, we have
    \[[\U(L):Z\Gamma] =  \sum_{i=1}^{r}[\U(L)_{l_i}:(Z\Gamma)_{l_i}].\]    
    Then according to Lemma~\ref{lem:descriptionofGammal} and Lemma~\ref{lem:compatibility of volHM and covolume}(1), we have
    \begin{align*}
    \sum_{i=1}^{r} \volHM(\Gamma^{l_i}) &= \sum_{i=1}^{r}[Z' \cdot \U(L)^{l_i} : Z' \cdot \Gamma^{l_i}]\volHM(\mathrm\U(L)^{l_i}) \\
    &= \sum_{i=1}^{r}[Z' \cdot \U(L)^{l_i} : Z' \cdot \Gamma^{l_i}]\volHM(\U(L)^{l}) \\
&= \sum_{i=1}^{r}[Z' \cdot \U(L)_{l_i} : Z' \cdot ( Z
\Gamma)_{l_i}]\volHM(\U(L)^{l}) \\
 &\le \sum_{i=1}^{r}[\U(L)_{l_i}: (Z\Gamma)_{l_i}]\volHM(\U(L)^{l}) \\
    & = [\U(L):Z\Gamma]\volHM(\U(L)^l).
    \end{align*}
    Thus, we have
    \[
\frac{\volHM(\U(L)^l)}{\volHM(\U(L))} = [\U(L):Z\Gamma]\frac{\volHM(\U(L)^l)}{\volHM(\Gamma)} \ge \sum_{i=1}^{r} \frac{\volHM(\Gamma^{l_i})}{\volHM(\Gamma)}
\ge \sum_{l_{i} \in p^{-1}([l])} \frac{\volHM(\Gamma^{l_i})}{\volHM(\Gamma)}.
    \]
    Taking the sum over $\U(L)$-reflective vectors, we obtain that
\[
\sum_{[l]\in \mathcal{R}^{\U(L)}} \frac{\volHM(\U(L)^l)}{\volHM(\U(L))}
\ge
\sum_{[l]_{\Gamma}\in \mathcal{R}^{\Gamma}}\frac{\volHM(\Gamma^{l})}{\volHM(\Gamma)}.
\]
Since $r_l \ge r_{l_i}$ for $l, l_{i} \in L$ such that $[l_{i}]_{\Gamma} \in p^{-1}([l])$, we also obtain the second claim.
\end{proof}
From Lemma \ref{lem:inequality of the ratio between two groups}, we focus on the ratio for the case of $\Gamma = \U(L)$.
Accordingly, unless otherwise specified, we will write $\mathcal{R}$ and $\mathcal{R}_{\mathrm{s}}$ in place of $\mathcal{R}^{\U(L)}$ and $\mathcal{R}_{\mathrm{s}}^{\U(L)}$ below.
In the following subsections, we will explicitly compute the left-hand side of Theorem~\ref{thm:criterion_wps} and prove the appropriate bound stated in \eqref{ineq:non-freeness main results} as follows.

We suppose $E \neq \bQ(\sqrt{-3})$ for simplicity.
Instead of considering the sum of the ratios of the covolumes $\volHM(\U(L)^{l})/\volHM(\U(L))$ for all $l \in \mathcal{R}$, we first consider the sum
\[
\sum_{[l]\in\mathcal{R}_{\mathrm{s}}} \frac{\volHM(\U(L)^{l})}{\volHM(\U(L))} = \sum_{[l]\in\mathcal{R}_{\mathrm{s}}} \frac{\volHM(\U(L^l))}{\volHM(\U(L))}
\]
over split vectors (see Remark~\ref{rmkULi}).
We obtain from Corollary~\ref{corcovolofSUvsHMvolofU} that the sum is bounded as
\[
\sum_{[l]\in \mathcal{R}_{\mathrm{s}}} \frac{\volHM(\U(L^l))}{\volHM(\U(L))}
\le
2 \sum_{[l]\in \mathcal{R}_{\mathrm{s}}} \frac{\mu_{\infty}(\SU(1,n-1)/\SU(L^{l}))}{\mu_{\infty}(\SU(1,n)/\SU(L))}.
\]
According to the formula in Theorem \ref{thm:prasad'svolumeformulaspecialunitarycase}, in the case where $n$ is even (the case of odd $n$ is similar), the sum in the right-hand side takes the following form:
\[\sum_{[l]\in \mathcal{R}_{\mathrm{s}}} \frac{\mu_{\infty}(\SU(1,n-1)/\SU(L^{l}))}{\mu_{\infty}(\SU(1,n)/\SU(L))}
    =\frac{(2\pi)^{n+1}}{D^{n+1/2}\cdot n!\cdot L_{E/\bQ}(n+1)}\cdot\sum_{[l]\in\mathcal{R}_{\mathrm{s}}} \prod_{v\nmid\infty}\frac{\lambda(L_v^{l_v})}{\lambda(L_v)}.\]
Here, for a finite place $v\in V_{\mathrm{f}}$ and a split reflective vector $l$, $l_v$ denotes the image of $l$ via the embedding $L \hookrightarrow L_v$.

The following two subsections are devoted to computing the term $\sum_{[l]\in\mathcal{R}_{\mathrm{s}}} \prod_{v\nmid\infty}\lambda(L_v^{l_v})/\lambda(L_v)$, but we give a brief overview of the strategy at this point.
For each $v \in V_{\fin}$ and a non-negative integer $m$, we say that $m$ is \emph{$L_{v}$-relevant}, if $L_{v, m} \neq \{0\}$, for some (and hence any) Jordan splitting $L_{v} = \bigoplus_{i} L_{v, i}$ (see Lemma~\ref{lem:rankofLJDwelldef}).
Note that if the split vector $l$ satisfies $\langle l_{v}, l_{v} \rangle \in \mathfrak{p}_{E_v}^{m}\setminus\mathfrak{p}_{E_v}^{m+1}$ for a non-negative integer $m$, then $m$ is $L_{v}$-relevant, since we can take a Jordan splitting such that $l_{v} \in L_{v, m}$.
We now introduce the following set
    \[
I(L_{v})_{\rel} \defeq
\left\{
m \in \bZ_{\ge 0} \mid \text{$m$ is $L_{v}$-relevant}
\right\}
\]
and write $i_{v, \rel} \defeq \abs{I(L_{v})_{\rel}}$.
We denote by $M(I(L_{v})_{\rel})$ (resp.\ $m(I(L_{v})_{\rel})$) the largest (resp.\ smallest) integer in $I(L_{v})_{\rel}$.
The following descriptions of $M(I(L_{v})_{\rel})$ and $m(I(L_{v})_{\rel})$ follow from their definitions immediately.
\begin{lemma}
\label{lem:descriptionofmandM}
The quantity $q_{v}^{M(I(L_{v})_{\rel})}$ agrees with the exponent of the discriminant group $ L_{v}^{\vee}/L_{v}$ of $L_{v}$.
The quantity $m(I(L_{v})_{\rel})$ agrees with the smallest integer $i$ such that there exists $x, y \in L_{v}$ with $\langle x, y \rangle_v \not \in \mathfrak{p}_{E_{v}}^{i+1}$.
\end{lemma}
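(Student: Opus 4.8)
The plan is to read off both quantities directly from a fixed Jordan splitting $L_{v} = \bigoplus_{i=0}^{N} L_{v,i}$ with $L_{v,i}^{\vee} = \mathfrak{p}_{E_{v}}^{-i} L_{v,i}$, using that such a splitting is orthogonal and that the set of relevant indices $I(L_{v})_{\rel}$ is independent of the choice by Lemma~\ref{lem:rankofLJDwelldef}. The first step is to compute the dual lattice. Since the decomposition is orthogonal, $L_{v}^{\vee} = \bigoplus_{i=0}^{N} L_{v,i}^{\vee} = \bigoplus_{i=0}^{N} \mathfrak{p}_{E_{v}}^{-i} L_{v,i}$, so the discriminant group splits as
\[
L_{v}^{\vee}/L_{v} \;\isoarrow\; \bigoplus_{i=0}^{N} \mathfrak{p}_{E_{v}}^{-i} L_{v,i}/L_{v,i},
\]
where only the relevant indices $i \ge 1$ contribute a nonzero summand.

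For the first claim, I would observe that the $i$-th summand $\mathfrak{p}_{E_{v}}^{-i} L_{v,i}/L_{v,i} \cong (\cO_{E_{v}}/\mathfrak{p}_{E_{v}}^{i})^{n_{v,i}}$ is annihilated by $\mathfrak{p}_{E_{v}}^{i}$ and by no smaller power; hence the annihilator of the whole direct sum, as an $\cO_{E_{v}}$-module, is governed by the largest relevant index $M = M(I(L_{v})_{\rel})$ and equals $\mathfrak{p}_{E_{v}}^{M}$. Unwinding the normalisation of the exponent in terms of the residue cardinality then produces $q_{v}^{M}$.

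For the second claim, I would compute the scale ideal $\langle L_{v}, L_{v} \rangle_{v}$. The key sub-step is that a modular component satisfies $\langle L_{v,i}, L_{v,i} \rangle_{v} = \mathfrak{p}_{E_{v}}^{i}$ exactly: the inclusion ``$\subseteq$'' is immediate from $\mathfrak{p}_{E_{v}}^{-i} L_{v,i} = L_{v,i}^{\vee}$ together with the definition of the dual, while the reverse is seen by noting that if the scale were $\mathfrak{p}_{E_{v}}^{j}$ with $j > i$, then $\mathfrak{p}_{E_{v}}^{-j} L_{v,i} \subseteq L_{v,i}^{\vee} = \mathfrak{p}_{E_{v}}^{-i} L_{v,i}$, forcing $\mathfrak{p}_{E_{v}}^{i-j} L_{v,i} \subseteq L_{v,i}$, which is impossible for $j > i$. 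By orthogonality the cross terms vanish, so $\langle L_{v}, L_{v} \rangle_{v} = \sum_{i} \langle L_{v,i}, L_{v,i} \rangle_{v} = \mathfrak{p}_{E_{v}}^{m}$ with $m = m(I(L_{v})_{\rel})$ the smallest relevant index. Finally, the smallest $i$ admitting $x, y \in L_{v}$ with $\langle x, y \rangle_{v} \notin \mathfrak{p}_{E_{v}}^{i+1}$ is precisely $\ord_{\mathfrak{p}_{E_{v}}} \langle L_{v}, L_{v} \rangle_{v} = m$: for $i = m$ a generator of the scale witnesses this, whereas for $i < m$ every inner product lies in $\mathfrak{p}_{E_{v}}^{m} \subseteq \mathfrak{p}_{E_{v}}^{i+1}$.

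The main obstacle I anticipate is bookkeeping rather than conceptual: pinning down the exponent normalisation so that the $\cO_{E_{v}}$-annihilator $\mathfrak{p}_{E_{v}}^{M}$ of the discriminant group translates uniformly to the stated $q_{v}^{M}$ across the inert, split, and ramified cases, where the relation between $\mathfrak{p}_{E_{v}}$ and the residue cardinality $q_{v}$ differs. The equality (not merely one inclusion) of the scale of a single modular component is the only genuine, if short, verification required.
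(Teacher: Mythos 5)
Your overall route --- fix a Jordan splitting, decompose the dual, and compute the scale --- is exactly the unwinding the paper intends; the paper itself supplies no argument (the lemma is declared to ``follow from their definitions immediately''), so yours is the only proof on the table. Your treatment of the second claim is correct and complete: the two-sided identification $\langle L_{v,i}, L_{v,i}\rangle_v = \mathfrak{p}_{E_v}^{i}$ for a modular component, the vanishing of cross terms, and the passage from the scale ideal $\mathfrak{p}_{E_v}^{m}$ to the stated minimality property are all sound. (At split $v$, where $\cO_{E_v} \cong \bZ_p \times \bZ_p$ is not a domain, you should add that the scale is $\sigma$-stable and hence automatically a power of $\mathfrak{p}_{E_v}$ before running your dichotomy ``scale $= \mathfrak{p}_{E_v}^{j}$, $j \ge i$''; this is a one-line patch.)

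The genuine gap is the step you defer as ``bookkeeping'' in the first claim. What you actually prove is that the annihilator of $L_{v}^{\vee}/L_{v}$ as an $\cO_{E_v}$-module equals $\mathfrak{p}_{E_v}^{M}$ with $M = M(I(L_{v})_{\rel})$. At inert and split places this does translate into the group exponent $q_v^{M}$, because there $\mathfrak{p}_{E_v} = p\cO_{E_v}$ and each summand $\cO_{E_v}/\mathfrak{p}_{E_v}^{i} \cong (\bZ/p^{i}\bZ)^{2}$. But at a ramified place no normalisation will ``produce $q_v^{M}$'': there $\mathfrak{p}_{E_v}^{2} = p\,\cO_{E_v}$, so the abelian-group exponent of $\cO_{E_v}/\mathfrak{p}_{E_v}^{M}$ is $q_v^{\lceil M/2 \rceil}$, not $q_v^{M}$. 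Concretely, $L_v = \cO_{E_v}^{\,n}\oplus\cO_{E_v}$ with Gram matrix $\mathrm{diag}(1,\dots,1,p)$ at a ramified $v$ has $M=2$, yet its discriminant group is $\cO_{E_v}/p\cO_{E_v}\cong(\bZ/p\bZ)^{2}$, of exponent $q_v$ rather than $q_v^{2}$. So the first claim, read as a statement about the group exponent, is false at ramified places, and the uniform translation you promise cannot be carried out. To be fair, this is as much a looseness in the paper's own formulation as a defect of your write-up: the paper invokes this half of the lemma only at $v=2$ (which is unramified since $D$ is odd) and in the motivational remark about $N(L)$, and the monotonicity step in the proof of Theorem~\ref{thm:not_wps} needs only the annihilator-ideal statement you did establish. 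But a correct proof must either restate the first claim ideal-theoretically (annihilator $=\mathfrak{p}_{E_v}^{M}$) or restrict the numerical form $q_v^{M}$ to unramified $v$; it cannot conclude the way you propose.
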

We define a non-negative integer $N(L_{v})$ as
\[
N(L_{v}) \defeq M(I(L_{v})_{\rel}) - m(I(L_{v})_{\rel}).
\]
Note that $N(L_{v}) = 0$ for all but finitely many $v \in V_{\fin}$.
For simplicity, we also denote $N_{v}\defeq N(L_{v})$ in what follows.
For $m \in I(L_{v})_{\rel}$, we put
\[
i^{m}_{v, \rel} \defeq \abs{I(L_{v})_{\rel} \smallsetminus \left\{M(I(L_{v})_{\rel}), m(I(L_{v})_{\rel}), m\right\}}.
\]
This definition immediately yields  
\[
i^{m}_{v, \rel} =
\begin{cases}
i_{v, \rel} - 1 & (N_{v} = 0), \\
i_{v, \rel} - 2 & (N_{v} > 0, m \in \left\{M(I(L_{v})_{\rel}), m(I(L_{v})_{\rel})\right\}), \\
i_{v, \rel} - 3 & (m(I(L_{v})_{\rel}) < m < M(I(L_{v})_{\rel})).
\end{cases}
\]
To evaluate the ratio $\lambda(L_v^{l_v})/\lambda(L_v)$, we introduce a function $\phi_{v}$ on $I(L_v)_{\rel}$.
For $v \in V_{\fin}$ and $m \in I(L_{v})_{\rel}$, we denote by $n_{v, m}$ the rank of $L_{v, m}$ for a Jordan splitting $L_{v} = \bigoplus_{i} L_{v, i}$.
According to Lemma~\ref{lem:rankofLJDwelldef}, $n_{v, m}$ does not depend on the choice of a Jordan splitting.
\begin{defn}
    For $v \in V_{\fin}$, we define the $\bR$-valued function $\phi_{v}$ on $I(L_v)_{\rel}$ by
    \[
    \phi_{v}(m) 
    \defeq
    \begin{cases}
    \displaystyle
q_{v}^{-(N_{v}+i^{m}_{v, \rel})} \frac{1 - (-q_{v})^{- n_{v, m}} }{1 - (-q_{v})^{-(n+1)}} & (v\ \mathrm{is\ inert}), \\[1pt] \\
\displaystyle
q_{v}^{-(N_{v}+i^{m}_{v, \rel})} \frac{1 - q_{v}^{- n_{v, m}} }{1 - q_{v}^{-(n+1)}} & (v\ \mathrm{splits}), \\[1pt]  \\
\displaystyle
 q_{v}^{- (N_{v} + i^{m}_{v, \rel})/2} \cdot q_{v}^{n/2} \left(
1 + q_{v}^{- \lfloor n_{v, m}/2 \rfloor}
\right) & (v\ \mathrm{ramifies\ and\ } n\ \mathrm{is\ even}), \\[1pt] \\
\displaystyle
q_{v}^{-(N_{v} + i^{m}_{v, \rel})/2} \cdot q_{v}^{-(n+1)/2} \, \frac{
1 + q_{v}^{-\lfloor n_{v, m}/2 \rfloor}
}{
1 - q_{v}^{-(n+1)}
} &  (v\ \mathrm{ramifies\ and\ } n\ \mathrm{is\ odd}).
    \end{cases}\]
\end{defn}
This function satisfies the following property (Proposition \ref{prop:estimation of the ratio of lambda for one split reflective vector}): if $\langle l_{v}, l_{v} \rangle \in \mathfrak{p}_{E_v}^{m}\setminus\mathfrak{p}_{E_v}^{m+1}$, we have the bound
\[\frac{\lambda(L_v^{l_v})}{\lambda(L_v)} \leq \phi_v(m).\]
According to \cite{Mae24}*{Proposition 4.5}, which is based on the cancellation theorem for Hermitian lattices \cite{wall1970classification}*{Theorem 10}, the map
\[
\mathcal{R}_{\mathrm{s}} \rightarrow \prod_{v \nmid \infty} I(L_{v})_{\rel},\quad l \mapsto (m_{v})_{v},
\]
 where $m_{v}$ is the non-negative integer such that $\langle l_v, l_v \rangle \in \mathfrak{p}_{E_v}^{m_v}\setminus\mathfrak{p}_{E_v}^{m_v+1}$, is injective.
Thus, it follows
\[\sum_{[l]\in\mathcal{R}_{\mathrm{s}}}\prod_{v\nmid\infty} \frac{\lambda(L_v^{l_v})}{\lambda(L_v)} \leq \prod_{v\nmid\infty}\sum_{m_v\in I(L_v)_{\rel}}\phi_v(m_v).\]
We will provide an upper bound for the sum $\sum_{m_v \in I(L_v)_{\rel}}\phi_v(m_v)$ in Lemma~\ref{lem:final estimation of local factors}, which in turn gives an upper bound for the total contribution from split vectors in Proposition~\ref{prop:sumofsplitvectorsglobal}.
Finally, combining this with \cite{Mae24}*{Lemmas~3.6, 7.2}, we derive an appropriate estimation of the full sum
\[
\sum_{[l]\in\mathcal{R}} \frac{\volHM(\U(L)^{l})}{\volHM(\U(L))}
\]
in Theorem~\ref{thm:not_wps}.

\subsection{Local computation}
\label{subsec:Estimation of local factors}
In this subsection, we fix a finite place $v$.
Recall that $l_v$ denotes the image of a split reflective vector $l \in L$ under the natural embedding $L\hookrightarrow L_v$.
We will give an estimation of the ratio $\lambda(L_v^{l_v})/\lambda(L_v)$.
Since $l$ is a split vector, we can take a Jordan splitting 
\[
L_{v} = \bigoplus_{i \in I(L_{v})_{\rel}} L_{v, i}
\]
such that $l_{v} \in L_{v, m_v}$, where $m_v$ is the non-negative integer satisfying $\langle l_v, l_v \rangle \in \mathfrak{p}_{E_v}^{m_v}\setminus\mathfrak{p}_{E_v}^{m_v+1}$.
We write $L_{v}^{l_{v}}$ (resp.\ $L_{v, m_{v}}^{l_{v}}$) for the orthogonal complement of $l_v$ in $L_{v}$ (resp.\ $L_{v, m_v}$).
Then the orthogonal decomposition
\[
L_{v}^{l_{v}} = \bigoplus_{i < m_{v}} L_{v,i} \oplus L_{v, m_{v}}^{l_{v}} \oplus \bigoplus_{i > m_{v}} L_{v, i}
\]
is a Jordan splitting of $L_{v}^{l_{v}}$.

We define $d_{v} \in \{1,2\}$ by $q_{E_{v}} = q_{v}^{d_{v}}$.
Hence, we have $d_{v} = 2$ if $v$ is inert or splits over $E$, and otherwise, we have $d_{v} = 1$.
We write
\[
s_{L_{v}} \defeq d_{v} \sum_{m(I(L_{v})_{\rel}) \le i <j \le M(I(L_{v})_{\rel})} \lfloor \frac{j-i-1}{2} \rfloor n_{v, i} \cdot n_{v, j}
\]

Recall that we defined $\lambda(L_{v}) = \lambda(M_{L_{v}}) \cdot \Ind(L_{v})$.
We write $\lambda'(L_{v}) \defeq q_{v}^{-s_{L_{v}}} \lambda(L_{v})$.
Replacing $L_{v}$ with $L_{v}^{l_{v}}$, we also define $s_{L_{v}^{l_{v}}}$, $\lambda(L_{v}^{l_{v}})$, and $\lambda'(L_{v}^{l_{v}})$.
The ratio of the second factors of $\lambda(L_{v})$ and $\lambda(L_{v}^{l_{v}})$ is calculated as follows.
\begin{lemma}
\label{lem:Ind ratio and difference of s}
\begin{enumerate}
\item     The ratio $\Ind(L_v^{l_v})/\Ind(L_v)$ is
    \[q_v^{- (s_{L_{v}} - s_{L_v^{l_v}})}\cdot \abs{\mathbf{G}_{L_{v}}(\mathfrak{f}_{v})}\abs{
\mathbf{G}_{{M}_{L^{l_v}_{v}}}(\mathfrak{f}_{v})
} \abs{\mathbf{G}_{L^{l_v}_{v}}(\mathfrak{f}_{v})}^{-1}\abs{
\mathbf{G}_{{M}_{L_{v}}}(\mathfrak{f}_{v})
}^{-1}\]
    \item The difference $s_{L_{v}} - s_{L_{v}^{l_{v}}}$ is \[d_{v} \left(
\sum_{m(I(L_{v})_{\rel}) \le i < m_{v}} \lfloor \frac{m_{v}-i-1}{2} \rfloor n_{v, i} 
+ \sum_{m_{v} < j \le M(I(L_{v})_{\rel})} \lfloor \frac{j - m_{v}-1}{2} \rfloor n_{v, j} 
\right).
    \]
\end{enumerate}

\end{lemma}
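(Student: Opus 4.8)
The plan is to reduce both parts to the explicit formula for $\Ind(L_v)$ established in Theorem~\ref{thm:generalization of Prasad's volume formula}, together with a direct comparison of the Jordan splittings of $L_v$ and $L_v^{l_v}$ recorded above.

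For part (1), I would first observe that, since $E$ is a quadratic extension of $k$, Theorem~\ref{thm:generalization of Prasad's volume formula} gives
\[
\Ind(L_{v}) = q_{E_v}^{\sum_{i < j} \lfloor \frac{j-i-1}{2} \rfloor n_{v, i} \cdot n_{v, j}}
\abs{\mathbf{G}_{{M}_{L_{v}}}(\mathfrak{f}_{v})} \abs{\mathbf{G}_{L_{v}}(\mathfrak{f}_{v})}^{-1}.
\]
Writing $q_{E_v} = q_v^{d_v}$ and noting that summands with $n_{v,i} = 0$ contribute nothing, the exponent of $q_v$ in this expression is exactly $s_{L_v}$ as defined just above the lemma. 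Hence $\Ind(L_v) = q_v^{s_{L_v}} \abs{\mathbf{G}_{M_{L_v}}(\mathfrak{f}_v)} \abs{\mathbf{G}_{L_v}(\mathfrak{f}_v)}^{-1}$, and the same identity holds with $L_v$ replaced by $L_v^{l_v}$ and $s_{L_v}$ by $s_{L_v^{l_v}}$. Taking the quotient of these two expressions and collecting the power of $q_v$ as $q_v^{-(s_{L_v} - s_{L_v^{l_v}})}$ yields the asserted formula.

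For part (2), the key input is the Jordan splitting of $L_v^{l_v}$ recorded above: its summands agree with those of $L_v$ except at index $m_v$, where the rank drops from $n_{v, m_v}$ to $n_{v, m_v} - 1$, the vector $l_v$ being removed from $L_{v, m_v}$. Denoting by $n'_{v,i}$ the ranks of $L_v^{l_v}$, so that $n'_{v,i} = n_{v,i}$ for $i \neq m_v$ and $n'_{v, m_v} = n_{v, m_v} - 1$, I would expand
\[
s_{L_v} - s_{L_v^{l_v}} = d_v \sum_{m(I(L_v)_{\rel}) \le i < j \le M(I(L_v)_{\rel})} \lfloor \tfrac{j-i-1}{2} \rfloor \left( n_{v,i} n_{v,j} - n'_{v,i} n'_{v,j} \right),
\]
where both $s_{L_v}$ and $s_{L_v^{l_v}}$ are summed over the common range (summands carrying a vanishing rank contribute nothing). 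The difference $n_{v,i} n_{v,j} - n'_{v,i} n'_{v,j}$ then vanishes unless exactly one of $i, j$ equals $m_v$, in which surviving cases it equals $n_{v,j}$ (when $i = m_v$) or $n_{v,i}$ (when $j = m_v$). Splitting the sum into the contributions from $i = m_v < j$ and $i < j = m_v$ gives precisely the claimed expression.

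I do not anticipate a serious obstacle, as both parts are bookkeeping built on Theorem~\ref{thm:generalization of Prasad's volume formula}. The one point requiring care is the matching of index ranges in part (2): one must check that summing both quantities over the common range $[m(I(L_v)_{\rel}), M(I(L_v)_{\rel})]$ introduces no spurious terms even when removing $l_v$ shrinks the set of relevant indices (as happens if $n_{v, m_v} = 1$), which holds since any such term then carries the factor $n'_{v, m_v} = 0$.
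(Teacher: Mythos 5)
Your proposal is correct and follows essentially the same route as the paper, whose proof is a one-line reduction: part (1) is read off from the formula for $\Ind(L_v)$ in Theorem~\ref{thm:generalization of Prasad's volume formula} (using $q_{E_v}=q_v^{d_v}$ to identify the exponent with $s_{L_v}$), and part (2) is the direct comparison of the two defining sums via the Jordan splitting of $L_v^{l_v}$. Your additional care about the index ranges when $n_{v,m_v}=1$ (handled by the vanishing of terms with zero rank) is exactly the bookkeeping the paper leaves implicit.
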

\begin{proof}
    The first claim follows from Theorem~\ref{thm:generalization of Prasad's volume formula}, and the second claim follows from the definitions of $s_{L_{v}}$ and $s_{L_{v}^{l_{v}}}$.
\end{proof}

We divide the problem of the estimation of $\lambda(L_v^{l_v})/\lambda(L_v)$ into the estimations of $\lambda'(L_v^{l_v})/\lambda'(L_v)$ and $s_{L_v} - s_{L_v^{e_v}}$.
The first step to estimate the former ratio is due to the following lemma.
\begin{lemma}
\label{lem:ratiooflambda'}
    The ratio $\lambda'(L_v^{l_v})/\lambda'(L_v)$ is given by $\abs{\mathbf{G}_{L_{v}}(\mathfrak{f}_{v})}\abs{\mathbf{G}_{L^{l_v}_{v}}(\mathfrak{f}_{v})}^{-1}$ times 
    \[\begin{cases}
    \displaystyle
    q_v^{(\dim \mathbf{G}_{{M}_{L_{v}^{l_{v}}}} -\dim \mathbf{G}_{{M}_{L_{v}}} + 1)/2}(q_v^{n+1}-(-1)^{n+1})^{-1} &(v:\mathrm{inert}),\\
        \displaystyle
    q_v^{(\dim \mathbf{G}_{{M}_{L_{v}^{l_{v}}}} -\dim \mathbf{G}_{{M}_{L_{v}}} + 1)/2}(q_v^{n+1}-1)^{-1} &(v:\mathrm{split}), \\
        \displaystyle
    q_v^{(\dim \mathbf{G}_{{M}_{L_{v}^{l_{v}}}} -\dim \mathbf{G}_{{M}_{L_{v}}})/2}&(v:\mathrm{ramify},\ n\mathrm{:even}),\\
        \displaystyle
    q_v^{(\dim \mathbf{G}_{{M}_{L_{v}^{l_{v}}}} -\dim \mathbf{G}_{{M}_{L_{v}}} + 1)/2}(q_v^{n+1}-1)^{-1} &(v:\mathrm{ramify},\ n\mathrm{:odd}).
\end{cases}\]
\end{lemma}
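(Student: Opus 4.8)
The plan is to reduce everything to the midpoint lattices $M_{L_v}$, for which $\lambda(M_{L_v})$ is given explicitly by \eqref{defoflambdaMLv}. First I would simplify $\lambda'(L_v)$ by cancelling the factor $q_v^{-s_{L_v}}$ against the $q_{E_v}$-power in $\Ind(L_v)$. Recall from Theorem~\ref{thm:generalization of Prasad's volume formula} and Corollary~\ref{corsimplyconnectedverofindex} (using $Q(L_v) = 1$ in the unitary setting) that
\[
\Ind(L_{v}) = q_{E_v}^{\sum_{i<j} \lfloor \frac{j-i-1}{2} \rfloor n_{v, i} n_{v, j}} \abs{\mathbf{G}_{M_{L_{v}}}(\mathfrak{f}_{v})} \abs{\mathbf{G}_{L_{v}}(\mathfrak{f}_{v})}^{-1}.
\]
Since $q_{E_v} = q_v^{d_v}$, and the summands with $i$ or $j$ non-relevant vanish because $n_{v,i} = 0$ there, the exponent equals $s_{L_v} = d_v \sum_{m(I(L_v)_{\rel}) \le i < j \le M(I(L_v)_{\rel})} \lfloor \frac{j-i-1}{2}\rfloor n_{v,i} n_{v,j}$. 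Hence $q_v^{-s_{L_v}} \Ind(L_v) = \abs{\mathbf{G}_{M_{L_v}}(\mathfrak{f}_v)} \abs{\mathbf{G}_{L_v}(\mathfrak{f}_v)}^{-1}$, so that $\lambda'(L_v) = \lambda(M_{L_v}) \abs{\mathbf{G}_{M_{L_v}}(\mathfrak{f}_v)} \abs{\mathbf{G}_{L_v}(\mathfrak{f}_v)}^{-1}$. Substituting \eqref{defoflambdaMLv}, the factor $\abs{\mathbf{G}_{M_{L_v}}(\mathfrak{f}_v)}^{-1}$ cancels, leaving (in the inert case) $\lambda'(L_v) = q_v^{(\dim \mathbf{G}_{M_{L_v}} - n)/2} \prod_{i=2}^{n+1}(q_v^i - (-1)^i) \abs{\mathbf{G}_{L_v}(\mathfrak{f}_v)}^{-1}$, with the analogous clean expressions in the split and ramified cases.

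Next I would record that $L_v^{l_v}$, being the orthogonal complement of the single anisotropic vector $l_v$, has rank $n$ and the same splitting type (inert/split/ramified) as $L_v$, since the type depends only on $E_v/k_v$. Consequently $\lambda'(L_v^{l_v})$ is computed by the identical formula with the rank $n+1$ replaced by $n$ throughout: each product $\prod_{i=2}^{n+1}$ becomes $\prod_{i=2}^{n}$, the term $n$ in the exponent $(\dim \mathbf{G}_{M_{L_v}} - n)/2$ becomes $n-1$, and $\lfloor (n+1)/2\rfloor$ becomes $\lfloor n/2\rfloor$.

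Finally I would form the ratio $\lambda'(L_v^{l_v})/\lambda'(L_v)$ case by case. The $\abs{\mathbf{G}_{L_v}(\mathfrak{f}_v)}$-factors assemble into the common prefactor $\abs{\mathbf{G}_{L_v}(\mathfrak{f}_v)} \abs{\mathbf{G}_{L_v^{l_v}}(\mathfrak{f}_v)}^{-1}$ of the claim, the powers of $q_v$ combine, and the two products telescope to a single factor. In the inert case the surviving product term is $q_v^{n+1} - (-1)^{n+1}$ in the denominator, and the $q_v$-exponent becomes $(\dim \mathbf{G}_{M_{L_v^{l_v}}} - \dim \mathbf{G}_{M_{L_v}} - (n-1) + n)/2 = (\dim \mathbf{G}_{M_{L_v^{l_v}}} - \dim \mathbf{G}_{M_{L_v}} + 1)/2$; the split case is identical with $(-1)^i$ replaced by $1$. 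The step demanding genuine care is the ramified case, where the parity of $n$ governs whether $\lfloor (n+1)/2\rfloor$ and $\lfloor n/2\rfloor$ agree: for even $n$ both equal $n/2$, the products cancel completely and no $(q_v^{n+1}-1)^{-1}$ survives, giving exponent $(\dim \mathbf{G}_{M_{L_v^{l_v}}} - \dim \mathbf{G}_{M_{L_v}})/2$, whereas for odd $n$ the denominator retains the extra factor $q_v^{n+1}-1$ and the exponent again picks up the $+1$. Tracking these floor functions and the induced $q_v$-exponents is the main bookkeeping obstacle; once it is handled, the four stated formulas follow immediately.
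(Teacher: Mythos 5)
Your proposal is correct and follows essentially the same route as the paper: the paper's own proof likewise combines the explicit formula \eqref{defoflambdaMLv} for $\lambda(M_{L_v})$ and $\lambda(M_{L_v^{l_v}})$ with the expression for $\Ind$ coming from Theorem~\ref{thm:generalization of Prasad's volume formula} (packaged there as Lemma~\ref{lem:Ind ratio and difference of s}(1)), the only difference being that you cancel the $q_v^{s_{L_v}}$-factor inside each $\lambda'$ separately while the paper cancels the difference $s_{L_v}-s_{L_v^{l_v}}$ in the ratio --- the same cancellation in a different order. Your case analysis, including the parity bookkeeping of $\lfloor (n+1)/2\rfloor$ versus $\lfloor n/2\rfloor$ in the ramified case, matches the stated formulas exactly.
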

\begin{proof}
The claim follows from the definitions of $\lambda(M_{L_{v}})$ and $\lambda(M_{L_{v}^{l_{v}}})$ given by \eqref{defoflambdaMLv} and Lemma~\ref{lem:Ind ratio and difference of s}(1).
\end{proof}

We will give explicit calculations for the factors appearing in Lemma~\ref{lem:ratiooflambda'} to obtain an estimation of $\lambda'(L_v^{l_v})/\lambda'(L_v)$.
We define sublattices $(M_{L_{v}})_{0}$ and $(M_{L_{v}})_{1}$ of $M_{L_{v}}$ by
\[
(M_{L_{v}})_{0} = \bigoplus_{i \equiv 0 \bmod 2} \mathfrak{p}_{E_{v}}^{-\lfloor i/2 \rfloor}L_{i}
\qquad 
\text{and}
\qquad
(M_{L_{v}})_{1} = \bigoplus_{i \equiv 1 \bmod 2} \mathfrak{p}_{E_{v}}^{-\lfloor i/2 \rfloor}L_{i}.
\]
Then the orthogonal decomposition
\[
M_{L_{v}} = (M_{L_{v}})_{0} \oplus (M_{L_{v}})_{1}
\]
is a Jordan splitting of $M_{L_v}$.
We define $\epsilon_{m_{v}} \in \{0, 1\}$ by $m_{v} \equiv \epsilon_{m_{v}} \bmod 2$ and let $n_{\equiv m_{v}}$ denote the rank of $(M_{L_{v}})_{\epsilon_{m_{v}}}$.
Thus, we have
\[
n_{\equiv m_{v}} = \sum_{i \equiv m_{v} \bmod 2} n_{v, i}.
\]
The description of the maximal reductive quotients by Gan and Yu gives the following estimation.
\begin{prop}
\label{prop:preciseestimationofquotoflambda}
We have
\[
 \frac{\lambda'(L_{v}^{l_{v}})}{\lambda'(L_{v})} =
\begin{cases}
    \displaystyle
    q_{v}^{-(n+1 + n_{\equiv m_{v}} - 2 n_{v, m_{v}})} \frac{1 - (-q_{v})^{- n_{v, m_{v}}} }{1 - (-q_{v})^{-(n+1)}} & (v:\mathrm{inert}), \\[2ex]
    \displaystyle
    q_{v}^{-(n+1 + n_{\equiv m_{v}} - 2 n_{v, m_{v}})} \frac{1 - q_{v}^{-n_{v, m_{v}}}}{1 - q_{v}^{-(n+1)}} & (v:\mathrm{split}).
\end{cases}
\]
If $v$ ramifies over $E$ and $n$ is even, we have
\[
 \frac{\lambda'(L_{v}^{l_{v}})}{\lambda'(L_{v})} 
\le
q_{v}^{-(n + 1 + n_{\equiv m_{v}} - 2n_{v, m_{v}})/2} \cdot q_{v}^{n/2} \left(
1 + q_{v}^{- \lfloor n_{v, m_{v}}/2 \rfloor}
\right).
\]
If $v$ ramifies over $E$ and $n$ is odd, we have
\[
 \frac{\lambda'(L_{v}^{l_{v}})}{\lambda'(L_{v})} 
\le
q_{v}^{-(n + 1 + n_{\equiv m_{v}} - 2n_{v, m_{v}})/2} \cdot q_{v}^{-(n+1)/2} \, \frac{
1 + q_{v}^{-\lfloor n_{v, m_{v}}/2 \rfloor}
}{
1 - q_{v}^{-(n+1)}
}.
\]
\end{prop}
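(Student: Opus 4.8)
The plan is to feed the two unknown quantities on the right-hand side of Lemma~\ref{lem:ratiooflambda'}---namely the order ratio $\abs{\mathbf{G}_{L_{v}}(\mathfrak{f}_{v})}\abs{\mathbf{G}_{L_{v}^{l_{v}}}(\mathfrak{f}_{v})}^{-1}$ and the dimension difference $\dim\mathbf{G}_{M_{L_{v}^{l_{v}}}}-\dim\mathbf{G}_{M_{L_{v}}}$---into the explicit description of the maximal reductive quotients supplied by \cite{Gan-Yu}. In that description $\mathbf{G}_{L_{v}}$ decomposes as a product of classical groups over $\mathfrak{f}_{v}$ indexed by the Jordan layers $L_{v,i}$, and likewise $M_{L_{v}}=(M_{L_{v}})_{0}\oplus(M_{L_{v}})_{1}$ gives just two factors. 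Since $L_{v}^{l_{v}}$ is obtained from $L_{v}$ by the single modification $L_{v,m_{v}}\rightsquigarrow L_{v,m_{v}}^{l_{v}}$, lowering the rank of exactly one layer by one, every factor except the $m_{v}$-th cancels in the order ratio, and the dimension difference is that of the single affected factor of $M_{L_{v}}$. The whole computation therefore reduces to one layer, and the four cases are dictated by the type of that factor.

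First I would treat the inert and split cases, where \cite{Gan-Yu} identifies each layer with a unitary group $U(n_{v,i})$ over $\mathfrak{f}_{E_{v}}/\mathfrak{f}_{v}$ (inert) or a general linear group $GL(n_{v,i})$ over $\mathfrak{f}_{v}$ (split), both of dimension $n_{v,i}^{2}$. Using $\abs{U(m)(\mathfrak{f}_{v})}=q_{v}^{m(m-1)/2}\prod_{j=1}^{m}(q_{v}^{j}-(-1)^{j})$ and the analogous $GL$ formula, the order ratio at the affected layer is $q_{v}^{2n_{v,m_{v}}-1}\bigl(1-(-q_{v})^{-n_{v,m_{v}}}\bigr)$ (inert), respectively $q_{v}^{2n_{v,m_{v}}-1}\bigl(1-q_{v}^{-n_{v,m_{v}}}\bigr)$ (split). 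For $M_{L_{v}}$ the two layers have ranks $n_{\equiv 0}$, $n_{\equiv 1}$ and dimensions $n_{\equiv 0}^{2}$, $n_{\equiv 1}^{2}$, so lowering $n_{\equiv m_{v}}$ by one gives $\dim\mathbf{G}_{M_{L_{v}^{l_{v}}}}-\dim\mathbf{G}_{M_{L_{v}}}=1-2n_{\equiv m_{v}}$. Substituting both inputs into Lemma~\ref{lem:ratiooflambda'} and collapsing the powers of $q_{v}$ yields the stated equalities exactly.

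Then I would turn to the ramified cases, the genuine obstacle. Here $e=2$, and by \cite{Gan-Yu} the even-index layers carry symmetric forms (orthogonal factors) while the odd-index layers carry alternating forms (symplectic factors) over $\mathfrak{f}_{v}$. Because $\langle l_{v},l_{v}\rangle\in F^{\times}$ has even $\mathfrak{p}_{E_{v}}$-valuation, the index $m_{v}$ is forced to be even, so the affected layer is orthogonal and the reduction never meets a symplectic factor. The two layers of $M_{L_{v}}$ now give an orthogonal group of rank $n_{\equiv 0}$ and a symplectic group of rank $n_{\equiv 1}$; since $\dim O(m)=\binom{m}{2}$, dropping $n_{\equiv m_{v}}=n_{\equiv 0}$ by one gives $\dim\mathbf{G}_{M_{L_{v}^{l_{v}}}}-\dim\mathbf{G}_{M_{L_{v}}}=1-n_{\equiv m_{v}}$. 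The order ratio $\abs{O(n_{v,m_{v}})(\mathfrak{f}_{v})}\abs{O(n_{v,m_{v}}-1)(\mathfrak{f}_{v})}^{-1}$ equals $q_{v}^{n_{v,m_{v}}-1}\bigl(1\mp q_{v}^{-\lfloor n_{v,m_{v}}/2\rfloor}\bigr)$, the sign depending on the Witt type of the orthogonal factor. This is precisely where equality is lost: the type cannot be controlled in general, so I would bound it by $q_{v}^{n_{v,m_{v}}-1}\bigl(1+q_{v}^{-\lfloor n_{v,m_{v}}/2\rfloor}\bigr)$. Feeding this bound and the dimension difference into Lemma~\ref{lem:ratiooflambda'}, separately for even and odd $n$, both remaining powers of $q_{v}$ reduce to $q_{v}^{n_{v,m_{v}}-1}$ and give the two displayed inequalities.

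The hard part is thus entirely local and orthogonal-group-theoretic: verifying from \cite{Gan-Yu} that in the ramified case the reductive quotient really is the claimed product of orthogonal and symplectic groups (so that a single orthogonal factor changes), confirming the parity constraint on $m_{v}$ that keeps us away from the symplectic layers, and pinning down $\abs{O(n_{v,m_{v}})}/\abs{O(n_{v,m_{v}}-1)}$ up to the unavoidable sign that produces the inequality. The inert and split cases, by contrast, are bookkeeping with finite classical group orders and produce exact identities.
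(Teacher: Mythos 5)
Your proposal is correct and follows essentially the same route as the paper: both feed the order ratio $\abs{\mathbf{G}_{L_{v}}(\mathfrak{f}_{v})}\abs{\mathbf{G}_{L_{v}^{l_{v}}}(\mathfrak{f}_{v})}^{-1}$ and the dimension difference of the $\mathbf{G}_{M}$'s into Lemma~\ref{lem:ratiooflambda'}, using the Gan--Yu descriptions of the reductive quotients (unitary/general linear factors in the inert/split cases, orthogonal and symplectic factors in the ramified case, with the Witt-type sign ambiguity producing the inequality). The only difference is one of presentation: the paper delegates the layer-by-layer bookkeeping, the finite classical group orders, and the parity argument forcing $m_{v}$ even in the ramified case to \cite{Mae24}*{Subsections~6.1, 6.2}, whereas you spell these out; your stated intermediate quantities (e.g.\ $q_{v}^{n_{v,m_{v}}-1}(1+q_{v}^{-\lfloor n_{v,m_{v}}/2\rfloor}) = q_{v}^{n_{v,m_{v}}-1}+q_{v}^{\lfloor (n_{v,m_{v}}-1)/2\rfloor}$) agree exactly with those recorded in the paper's proof.
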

\begin{proof}
The proof is essentially the same as the arguments in \cite{Mae24}*{Subsections~6.1 and 6.2}.
We record a brief sketch of the arguments there.
We obtain from the explicit descriptions of the reductive quotients $\mathbf{G}_{L_{v}}$, $\mathbf{G}_{L_{v}^{l_{v}}}$, $\mathbf{G}_{{M}_{L_{v}}}$, and $\mathbf{G}_{{M}_{L_{v}}^{l_{v}}}$ in \cite{Gan-Yu}*{6.2.3 Proposition, 6.3.9 Proposition} that
\[
\dim \mathbf{G}_{{M}_{L_{v}}} - \dim \mathbf{G}_{{M}_{L_{v}}^{l_{v}}} = 2 n_{\equiv m_{v}} - 1, \, 
\frac{
\abs{
\mathbf{G}_{L_{v}}(\mathfrak{f}_{v})
}
}
{
\abs{\mathbf{G}_{L_{v}^{l_{v}}}(\mathfrak{f}_{v})}
} = q_{v}^{n_{v, m_{v}}-1} (q_{v}^{n_{v, m_{v}}} - (-1)^{n_{v, m_{v}}})
\]
if $v$ is inert over $E$, 
\[
\dim \mathbf{G}_{{M}_{L_{v}}} - \dim \mathbf{G}_{{M}_{L_{v}}^{l_{v}}} = 2 n_{\equiv m_{v}} - 1, \, 
\frac{
\abs{
\mathbf{G}_{L_{v}}(\mathfrak{f}_{v})
}
}
{
\abs{\mathbf{G}_{L_{v}^{l_{v}}}(\mathfrak{f}_{v})}
} = q_{v}^{n_{v, m_{v}}-1} (q_{v}^{n_{v, m_{v}}} - 1)
\]
if $v$ splits over $E$, and
\[
\dim \mathbf{G}_{{M}_{L_{v}}} - \dim \mathbf{G}_{{M}_{L_{v}}^{l_{v}}} = n_{\equiv m_{v}} - 1, \, 
\frac{
\abs{
\mathbf{G}_{L_{v}}(\mathfrak{f}_{v})
}
}
{
\abs{\mathbf{G}_{L_{v}^{l_{v}}}(\mathfrak{f}_{v})}
} \le q_{v}^{n_{v, m_{v}}-1} + q_{v}^{\lfloor (n_{v, m_{v}}-1)/2 \rfloor} 
\]
if $v$ ramifies over $E$.

Suppose that $v$ is inert over $E$.
Then obtain from Lemma~\ref{lem:ratiooflambda'} that
\begin{align*}
 \frac{\lambda'(L_{v}^{l_{v}})}{\lambda'(L_{v})}
  &=
    q_{v}^{(1 - (\dim \mathbf{G}_{{M}_{L_{v}}} - \dim \mathbf{G}_{{M}_{L_{v}}^{l_{v}}}))/2}
   \frac{
\abs{
\mathbf{G}_{L_{v}}(\mathfrak{f}_{v})
}
}
{
\abs{\mathbf{G}_{L_{v}^{l_{v}}}(\mathfrak{f}_{v})}
}
(q_{v}^{n+1} - (-1)^{n+1})^{-1} \\
&=
q_{v}^{n_{v, m_{v}} - n_{\equiv m_{v}}} \frac{q_{v}^{n_{v, m_{v}}} - (-1)^{n_{v, m_{v}}}}{q_{v}^{n+1} - (-1)^{n+1}} \\
&= q_{v}^{-(n+1 + n_{\equiv m_{v}} - 2 n_{v, m_{v}})} \frac{1 - (-q_{v})^{- n_{v, m_{v}}} }{1 - (-q_{v})^{-(n+1)}}.
\end{align*}
If $v$ splits over $E$, a similar computation implies that
\[
\frac{\lambda'(L_{v}^{l_{v}})}{\lambda'(L_{v})} = q_{v}^{-(n+1 + n_{\equiv m_{v}} - 2 n_{v, m_{v}})} \frac{1 - q_{v}^{-n_{v, m_{v}}}}{1 - q_{v}^{-(n+1)}}.
\]
Finally, we consider the case that $v$ ramifies over $E$.
Suppose that $n$ is even.
In this case, by using \cite{Gan-Yu}*{6.2.3 Proposition, 6.3.9 Proposition} and Lemma~\ref{lem:ratiooflambda'} as above, we obtain that
\begin{align*}
 \frac{\lambda'(L_{v}^{l_{v}})}{\lambda'(L_{v})}
  &=
    q_{v}^{- (\dim \mathbf{G}_{{M}_{L_{v}}} - \dim \mathbf{G}_{{M}_{L_{v}}^{l_{v}}})/2}
   \frac{
\abs{
\mathbf{G}_{L_{v}}(\mathfrak{f}_{v})
}
}
{
\abs{\mathbf{G}_{L_{v}^{l_{v}}}(\mathfrak{f}_{v})}
}\\
&\le
q_{v}^{(1-n_{\equiv m_{v}})/2} \left(
q_{v}^{n_{v, m_{v}}-1} + q_{v}^{\lfloor (n_{v, m_{v}}-1)/2 \rfloor}
\right) \\
&= q_{v}^{(2 n_{v, m_{v}} -n_{\equiv m_{v}} - 1)/2} \left(
1 + q_{v}^{- \lfloor n_{v, m_{v}}/2 \rfloor}
\right) \\
&= q_{v}^{-(n + 1 + n_{\equiv m_{v}} - 2n_{v, m_{v}})/2} \cdot q_{v}^{n/2} \left(
1 + q_{v}^{- \lfloor n_{v, m_{v}}/2 \rfloor}
\right).
\end{align*}
Similarly, if $n$ is odd, we have
\begin{align*}
 \frac{\lambda'(L_{v}^{l_{v}})}{\lambda'(L_{v})}
  &=
    q_{v}^{(1 - (\dim \mathbf{G}_{{M}_{L_{v}}} - \dim \mathbf{G}_{{M}_{L_{v}}^{l_{v}}}))/2}
   \frac{
\abs{
\mathbf{G}_{L_{v}}(\mathfrak{f}_{v})
}
}
{
\abs{\mathbf{G}_{L_{v}^{l_{v}}}(\mathfrak{f}_{v})}
} (q_{v}^{n+1} - 1)^{-1}\\
&\le
q_{v}^{(2 -n_{\equiv m_{v}})/2} \, \frac{q_{v}^{n_{v, m_{v}}-1} + q_{v}^{\lfloor (n_{v, m_{v}}-1)/2 \rfloor}}{q_{v}^{n+1} - 1} \\
&= q_{v}^{-(n + 1 + n_{\equiv m_{v}} - 2n_{v, m_{v}})/2} \cdot q_{v}^{-(n+1)/2} \, \frac{
1 + q_{v}^{-\lfloor n_{v, m_{v}}/2 \rfloor}
}{
1 - q_{v}^{-(n+1)}
}.
\qedhere
\end{align*}
\end{proof}

Next, we proceed to address the second problem, which concerns the estimation of $s_{L_{v}} - s_{L_{v}^{l_{v}}}$.

\begin{lemma}
\label{lem:estimationofs}
If $m_{v} \in \left\{m(I(L_{v})_{\rel}), M(I(L_{v})_{\rel})\right\}$, we have
\[
s_{L_{v}} - s_{L_{v}^{l_{v}}} \ge d_{v} \lfloor \frac{N_{v}-1}{2} \rfloor.
\]
If $m(I(L_{v})_{\rel}) < m_{v} < M(I(L_{v})_{\rel})$, we have
\[
s_{L_{v}} - s_{L_{v}^{l_{v}}} \ge d_{v} \lfloor \frac{N_{v}-2}{2} \rfloor
\]
unless all of $M(I(L_{v})_{\rel})$, $m(I(L_{v})_{\rel})$, and $m_{v}$ have the same parity.
If all of $M(I(L_{v})_{\rel})$, $m(I(L_{v})_{\rel})$, and $m_{v}$ have the same parity, we have
\[
s_{L_{v}} - s_{L_{v}^{l_{v}}} \ge d_{v} \cdot \frac{N_{v}-4}{2}.
\]
\end{lemma}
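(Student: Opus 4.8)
The plan is to start directly from the explicit formula for the difference provided by Lemma~\ref{lem:Ind ratio and difference of s}(2),
\[
s_{L_{v}} - s_{L_{v}^{l_{v}}} = d_{v} \left(
\sum_{m(I(L_{v})_{\rel}) \le i < m_{v}} \lfloor \tfrac{m_{v}-i-1}{2} \rfloor n_{v, i}
+ \sum_{m_{v} < j \le M(I(L_{v})_{\rel})} \lfloor \tfrac{j - m_{v}-1}{2} \rfloor n_{v, j}
\right),
\]
and to lower-bound the right-hand side. Since every $i \in I(L_{v})_{\rel}$ satisfies $n_{v,i} \ge 1$ by the definition of relevance, while the non-relevant indices contribute $n_{v,i} = 0$, I would simply discard all summands except those indexed by the two extreme relevant integers $m \defeq m(I(L_{v})_{\rel})$ and $M \defeq M(I(L_{v})_{\rel})$, both of which are relevant by construction, and bound the retained $n_{v,i}$ below by $1$. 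Setting $a \defeq m_{v} - m$ and $b \defeq M - m_{v}$, so that $a + b = N_{v}$, this reduces the lemma to an elementary parity computation.

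For the first assertion, where $m_{v} \in \{m, M\}$, exactly one of the two sums is nonempty. If $m_{v} = m$, the surviving sum contains the term $j = M$, contributing $\lfloor (M - m - 1)/2 \rfloor = \lfloor (N_{v}-1)/2 \rfloor$; symmetrically, if $m_{v} = M$, the term $i = m$ contributes the same quantity. This gives $s_{L_{v}} - s_{L_{v}^{l_{v}}} \ge d_{v} \lfloor (N_{v}-1)/2 \rfloor$ at once, the degenerate case $N_{v} = 0$ being trivial since the left-hand side is nonnegative.

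For the remaining range $m < m_{v} < M$, both sums are nonempty, and retaining the terms $i = m$ and $j = M$ yields
\[
s_{L_{v}} - s_{L_{v}^{l_{v}}} \ge d_{v}\left(\lfloor \tfrac{a-1}{2} \rfloor + \lfloor \tfrac{b-1}{2} \rfloor\right)
\]
with $a, b \ge 1$. I would then observe that the condition "\,$m$, $m_{v}$, $M$ are \emph{not} all of the same parity\," is equivalent to at least one of $a, b$ being odd, and split accordingly. If both $a$ and $b$ are odd (so $N_{v}$ is even), the bracket equals $(a+b-2)/2 = \lfloor (N_{v}-2)/2 \rfloor$; if exactly one is odd (so $N_{v}$ is odd), it equals $(a+b-3)/2 = \lfloor (N_{v}-2)/2 \rfloor$; and if both are even\,—\,precisely the case where $m$, $m_{v}$, $M$ share a common parity\,—\,it equals $(a+b)/2 - 2 = (N_{v}-4)/2$. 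These three evaluations deliver exactly the three stated bounds.

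The argument is self-contained and I do not anticipate a genuine obstacle: the conceptual point is merely that discarding the intermediate relevant summands (all nonnegative) and keeping only the boundary contributions from $m$ and $M$ already suffices. The only step requiring care is the floor-function bookkeeping under the parity split, where one must match $\lfloor (a-1)/2 \rfloor + \lfloor (b-1)/2 \rfloor$ with the correct value of $\lfloor (N_{v}-2)/2 \rfloor$ or $(N_{v}-4)/2$ in each regime.
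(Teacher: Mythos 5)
Your proof is correct and follows essentially the same route as the paper: both start from the equality in Lemma~\ref{lem:Ind ratio and difference of s}(2), discard all but the boundary terms $i = m(I(L_{v})_{\rel})$ and $j = M(I(L_{v})_{\rel})$ (using $n_{v,i}\ge 1$ for relevant indices), and finish with the same floor-function parity analysis. Your explicit three-case evaluation is just an unpacked version of the ``standard calculation'' identity the paper invokes, so there is nothing to change.
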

\begin{proof}
  The first claim follows from Lemma \ref{lem:Ind ratio and difference of s}(2) and the definition of $N_{v}$.
  Suppose that $0 < m_{v} < N_{v}$.
Then we obtain from Lemma \ref{lem:Ind ratio and difference of s}(2) that
\[
s_{L_{v}} - s_{L_{v}^{l_{v}}} \ge d_{v} \left(
\left\lfloor \frac{m_{v}- m(I(L_{v})_{\rel}) - 1}{2} \right\rfloor + \left\lfloor \frac{M(I(L_{v})_{\rel}) - m_{v}-1}{2} \right\rfloor
\right).
\]
Thus, the claim follows from the standard calculation 
\[
\left\lfloor \frac{x}{2} \right\rfloor + \left\lfloor \frac{y}{2} \right\rfloor
=
\begin{cases}
\displaystyle
\frac{x+y-2}{2} & (\text{both of $x$ and $y$ are odd}), \\
\displaystyle
\left\lfloor \frac{x+y}{2} \right\rfloor & (\text{otherwise}).
\end{cases}
\]
\end{proof}

We analyze the exponential term in $q_{v}$ appearing from the presentation in Proposition \ref{prop:preciseestimationofquotoflambda}  and the contribution of $s_{L_v} - s_{L_v^{l_v}}$.
Using the bound provided by Lemma \ref{lem:estimationofs}, we derive the following technical result.

\begin{lemma}
\label{lem:estimationofs+alpha}
We have
\[
n+1 + n_{\equiv m_{v}} - 2 n_{v, m_{v}} + \frac{2}{d_{v}} \left(
s_{L_{v}} - s_{L_{v}^{l_{v}}}
\right)\ge N_{v} + i^{m_{v}}_{v, \rel}.
\]
\end{lemma}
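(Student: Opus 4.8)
The plan is to reduce the whole inequality to a clean weighted count over the relevant indices, so that the estimate becomes transparent. First I would substitute the explicit formula for $s_{L_{v}} - s_{L_{v}^{l_{v}}}$ from Lemma~\ref{lem:Ind ratio and difference of s}(2), giving
\[
\frac{2}{d_{v}}\left(s_{L_{v}} - s_{L_{v}^{l_{v}}}\right) = 2\sum_{i < m_{v}} \left\lfloor \frac{m_{v} - i - 1}{2}\right\rfloor n_{v,i} + 2\sum_{j > m_{v}} \left\lfloor \frac{j - m_{v} - 1}{2}\right\rfloor n_{v,j},
\]
where $i,j$ range over the relevant indices between $m(I(L_{v})_{\rel})$ and $M(I(L_{v})_{\rel})$. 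Then, writing $n+1 = \sum_{i} n_{v,i}$ (the rank of $L_{v}$) and $n_{\equiv m_{v}} = \sum_{i \equiv m_{v}} n_{v,i}$, I would collect the coefficient of each $n_{v,i}$ occurring on the left-hand side.

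The crux of the argument is the elementary identity
\[
[\,k \text{ even}\,] + 2\left\lfloor \frac{k-1}{2}\right\rfloor = k - 1 \qquad (k \ge 1),
\]
where $[\,\cdot\,]$ is the indicator; one checks it separately in the even and odd cases. Applying this with $k = |i - m_{v}|$, and using that $i \equiv m_{v} \bmod 2$ is equivalent to $|i - m_{v}|$ being even, the coefficient of $n_{v,i}$ collapses for every relevant $i \neq m_{v}$: the contribution $1$ from $n+1$, the indicator coming from $n_{\equiv m_{v}}$, and the floor term combine to exactly $|i - m_{v}|$. Meanwhile the coefficient of $n_{v,m_{v}}$ is $1 + 1 - 2 = 0$. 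Hence the entire left-hand side equals
\[
\sum_{i \in I(L_{v})_{\rel}} |i - m_{v}| \cdot n_{v,i}.
\]

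Finally I would bound this from below using $n_{v,i} \ge 1$ for every relevant index (since $L_{v,i} \neq \{0\}$). Because $m(I(L_{v})_{\rel}) \le m_{v} \le M(I(L_{v})_{\rel})$, the two extreme indices already contribute $|M(I(L_{v})_{\rel}) - m_{v}| + |m(I(L_{v})_{\rel}) - m_{v}| = N_{v}$, while each of the remaining $i^{m_{v}}_{v,\rel}$ indices, namely those distinct from $M(I(L_{v})_{\rel})$, $m(I(L_{v})_{\rel})$, and $m_{v}$, contributes $|i - m_{v}| \ge 1$. This yields the desired inequality $\sum_{i} |i - m_{v}| n_{v,i} \ge N_{v} + i^{m_{v}}_{v,\rel}$. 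I do not expect a genuine obstacle here: once the left-hand side is identified with $\sum_{i} |i - m_{v}| n_{v,i}$, the bound is immediate. The only point needing care is the degenerate case in which $m_{v}$ coincides with $m(I(L_{v})_{\rel})$ or $M(I(L_{v})_{\rel})$ (or both, when $N_{v} = 0$), so that the set $\{M(I(L_{v})_{\rel}), m(I(L_{v})_{\rel}), m_{v}\}$ has fewer than three elements; in that case the count defining $i^{m_{v}}_{v,\rel}$ must be matched to the distinct indices, but the same inequality follows verbatim.
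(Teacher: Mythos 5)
Your proof is correct, and it takes a genuinely different route from the paper's. The paper first isolates the coarser bound of Lemma~\ref{lem:estimationofs}, which keeps only the two extreme floor terms of the expression in Lemma~\ref{lem:Ind ratio and difference of s}(2), and then proves the present lemma by a five-case analysis (according to whether $m_{v}$ is extreme or interior in $I(L_{v})_{\rel}$ and the parities involved), pairing each case with counting inequalities such as $n_{v, m_{v}} \le n+1-1-i^{m_{v}}_{v, \rel}$ and $n_{v, m_{v}} \le n_{\equiv m_{v}}-1$. You instead keep the full (exact) expression of Lemma~\ref{lem:Ind ratio and difference of s}(2), expand $n+1=\sum_{i} n_{v,i}$ and $n_{\equiv m_{v}}=\sum_{i \equiv m_{v} \bmod 2} n_{v,i}$ over the relevant indices (non-relevant ones contribute zero, so the ranges are harmless), and collapse the coefficient of each $n_{v,i}$ via the parity identity $[\,k \text{ even}\,]+2\lfloor (k-1)/2\rfloor = k-1$, obtaining the exact closed form
\[
n+1 + n_{\equiv m_{v}} - 2 n_{v, m_{v}} + \frac{2}{d_{v}} \left(
s_{L_{v}} - s_{L_{v}^{l_{v}}}
\right) = \sum_{i \in I(L_{v})_{\rel}} \abs{i - m_{v}}\, n_{v, i},
\]
after which the inequality is immediate from $n_{v,i} \ge 1$: the extreme indices contribute exactly $N_{v}$ (this remains true verbatim in the degenerate cases $m_{v}\in\{m(I(L_{v})_{\rel}), M(I(L_{v})_{\rel})\}$ and $N_{v}=0$, as you note), and each of the remaining $i^{m_{v}}_{v,\rel}$ relevant indices contributes at least $1$. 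Your route buys uniformity and strength: it removes all case distinctions, renders Lemma~\ref{lem:estimationofs} unnecessary, and upgrades the paper's inequality to an identity from which one can read off when the bound is close to sharp. The paper's argument, by contrast, only needs the weaker extreme-term bound, but pays for it with the case analysis; nothing in the subsequent applications exploits that weaker formulation, so your streamlined proof could replace the paper's (and Lemma~\ref{lem:estimationofs}) without loss.
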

\begin{proof}
If $N_{v} = 0$, the claim is obvious.
Suppose that $N_{v} > 0$.
We will prove the lemma by dividing it into five cases:
\begin{description}
  \item[Case 1] 
  \( m_v \in \{ M(I(L_v)_{\mathrm{rel}}),\; m(I(L_v)_{\mathrm{rel}}) \} \)  
  and \( N_v \) is odd.

  \item[Case 2] 
  \( m_v \in \{ M(I(L_v)_{\mathrm{rel}}),\; m(I(L_v)_{\mathrm{rel}}) \} \)  
  and \( N_v \) is even.

  \item[Case 3] 
  \( m(I(L_v)_{\mathrm{rel}}) < m_v < M(I(L_v)_{\mathrm{rel}}) \)  
  and \( N_v \) is odd.

  \item[Case 4] 
  \( m(I(L_v)_{\mathrm{rel}}) < m_v < M(I(L_v)_{\mathrm{rel}}) \)  
  and  
  \( m_v \not\equiv m(I(L_v)_{\mathrm{rel}}) \equiv M(I(L_v)_{\mathrm{rel}}) \pmod{2} \).

  \item[Case 5] 
  \( 0 < m_v < N_v \)  
  and  
  \( m_v \equiv m(I(L_v)_{\mathrm{rel}}) \equiv M(I(L_v)_{\mathrm{rel}}) \pmod{2} \).
\end{description}

We will prove the lemma for each case below.

\begin{description}
\item[Case 1]
We obtain from the definition of $i^{m_{v}}_{v, \rel}$ that $n_{v, m_{v}} \le n+1 -1 - i^{m_{v}}_{v, \rel}$.
Thus, we have 
\[
n+1 + n_{\equiv m_{v}} - 2 n_{v, m_{v}} 
\ge n+1 - n_{v, m_{v}} \ge 1 + i^{m_{v}}_{v, \rel}.
\]
Combining this with Lemma~\ref{lem:estimationofs}, it yields
\[
n+1 + n_{\equiv m_{v}} - 2 n_{v, m_{v}} + \frac{2}{d_{v}} \left(
s_{L_{v}} - s_{L_{v}^{l_{v}}}
\right) \ge 1 + i^{m_{v}}_{v, \rel} + (N_{v} - 1) = N_{v} + i^{m_{v}}_{v, \rel}.
\]
\item[Case 2]
Since $M(I(L_{v})_{\rel})$ and $m(I(L_{v})_{\rel})$ have the same parity, we have $n_{v, m_{v}} \le n_{\equiv m_{v}} - 1$.
Moreover, the definition of $i^{m_{v}}_{v, \rel}$ implies that $n_{v, m_{v}} \le n+1 -1 - i^{m_{v}}_{v, \rel}$.
Hence, we have 
\[
n+1 + n_{\equiv m_{v}} - 2 n_{v, m_{v}} 
= (n+1 - n_{v, m_{v}}) + (n_{\equiv m_{v}} - n_{v, m_{v}})
\ge 1 + i^{m_{v}}_{v, \rel} + 1 = 2 + i^{m_{v}}_{v, \rel}.
\]
Combining this with Lemma~\ref{lem:estimationofs}, it yields
\[
n+1 + n_{\equiv m_{v}} - 2 n_{v, m_{v}} + \frac{2}{d_{v}} \left(
s_{L_{v}} - s_{L_{v}^{l_{v}}}
\right) \ge 2 + i^{m_{v}}_{v, \rel} + (N_{v} - 2) = N_{v} + i^{m_{v}}_{v, \rel}.
\]
\item[Case 3]
Since one of $M(I(L_{v})_{\rel})$ and $m(I(L_{v})_{\rel})$ have the same parity as $m_{v}$, we have $n_{v, m_{v}} \le n_{\equiv m_{v}} - 1$.
Moreover, the definition of $i^{m_{v}}_{v, \rel}$ implies that $n_{v, m_{v}} \le n + 1 - 2 -i^{m_{v}}_{v, \rel}$.
Hence, we have 
\[
n+1 + n_{\equiv m_{v}} - 2 n_{v, m_{v}} = (n+1-n_{v, m_{v}}) + (n_{\equiv m_{v}} - n_{v, m_{v}}) \ge 3 + i^{m_{v}}_{v, \rel}.
\]
Combining this with Lemma~\ref{lem:estimationofs}, it yields
\[
n+1 + n_{\equiv m_{v}} - 2 n_{v, m_{v}} + \frac{2}{d_{v}} \left(
s_{L_{v}} - s_{L_{v}^{l_{v}}}
\right) \ge 3 + i^{m_{v}}_{v, \rel} + (N_{v} - 3) = N_{v} + i^{m_{v}}_{v, \rel}.
\]
\item[Case 4]
We obtain from the definition of $i^{m_{v}}_{v, \rel}$ that $
n_{v, m_{v}} \le n + 1 - 2 - i^{m_{v}}_{v, \rel}
$.
Hence, we have
\[
n+1 + n_{\equiv m_{v}} - 2 n_{v, m_{v}} = (n+1-n_{v, m_{v}}) + (n_{\equiv m_{v}} - n_{v, m_{v}}) \ge 2 + i^{m_{v}}_{v, \rel}.
\]
Combining this with Lemma~\ref{lem:estimationofs}, it yields
\[
n+1 + n_{\equiv m_{v}} - 2 n_{v, m_{v}} + \frac{2}{d_{v}} \left(
s_{L_{v}} - s_{L_{v}^{l_{v}}}
\right) \ge 2 + i^{m_{v}}_{v, \rel} + (N_{v} - 2) = N_{v} + i^{m_{v}}_{v, \rel}.
\]
\item[Case 5]
By the assumptions, we have $n_{v, m_{v}} \le n_{\equiv m_{v}} -2$.
Moreover, the definition of $i^{m_{v}}_{v, \rel}$ implies that $n_{v, m_{v}} \le n + 1 -2 - i^{m_{v}}_{v, \rel}$.
Hence, we have
\[
n+1 + n_{\equiv m_{v}} - 2 n_{v, m_{v}} = (n+1-n_{v, m_{v}}) + (n_{\equiv m_{v}} - n_{v, m_{v}}) \ge 4 + i^{m_{v}}_{v, \rel}.
\]
Combining this with Lemma~\ref{lem:estimationofs}, it yields
\[
n+1 + n_{\equiv m_{v}} - 2 n_{v, m_{v}} + \frac{2}{d_{v}} \left(
s_{L_{v}} - s_{L_{v}^{l_{v}}}
\right) \ge 4 + i^{m_{v}}_{v, \rel} + (N_{v} - 4) = N_{v} + i^{m_{v}}_{v, \rel}.
\qedhere
\]
\end{description}
\end{proof}

As mentioned in Subsection \ref{subsec:a criterion}, we here evaluate the ratio $\lambda(L_v^{l_v})/\lambda(L_v)$ by the function $\phi_v$.

\begin{prop}
\label{prop:estimation of the ratio of lambda for one split reflective vector}
We have
\[\frac{
    \lambda(L_{v}^{l_{v}})}{\lambda(L_{v})} \le\phi_{v}(m_v).\]
\end{prop}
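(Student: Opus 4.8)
The plan is to reduce the claim to the two estimates already in hand: the computation of the intermediate ratio $\lambda'(L_v^{l_v})/\lambda'(L_v)$ in Proposition~\ref{prop:preciseestimationofquotoflambda}, and the exponent inequality in Lemma~\ref{lem:estimationofs+alpha}. First I would unwind the definitions. Since $\lambda'(L_v) = q_v^{-s_{L_v}}\lambda(L_v)$ and likewise for $L_v^{l_v}$, the target ratio factors as
\[
\frac{\lambda(L_v^{l_v})}{\lambda(L_v)} = q_v^{-(s_{L_v} - s_{L_v^{l_v}})} \cdot \frac{\lambda'(L_v^{l_v})}{\lambda'(L_v)}.
\]
This isolates the ``arithmetic'' contribution $s_{L_v} - s_{L_v^{l_v}}$ from the ``reductive-quotient'' part controlled by Proposition~\ref{prop:preciseestimationofquotoflambda}, so the whole proof becomes a comparison of these two contributions against the factors in the definition of $\phi_v(m_v)$.

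Next, in each of the four cases (inert, split, ramified with $n$ even, ramified with $n$ odd) I would substitute the expression for $\lambda'(L_v^{l_v})/\lambda'(L_v)$ from Proposition~\ref{prop:preciseestimationofquotoflambda}. The key observation, to be checked case by case, is that the non-power-of-$q_v$ factors already coincide exactly with those appearing in $\phi_v(m_v)$: the rational factor $\tfrac{1-(-q_v)^{-n_{v,m_v}}}{1-(-q_v)^{-(n+1)}}$ in the inert case, its split analogue, and the factors $q_v^{n/2}(1+q_v^{-\lfloor n_{v,m_v}/2\rfloor})$ (resp.\ $q_v^{-(n+1)/2}\tfrac{1+q_v^{-\lfloor n_{v,m_v}/2\rfloor}}{1-q_v^{-(n+1)}}$) in the even (resp.\ odd) ramified case. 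After these common factors cancel, the inequality $\lambda(L_v^{l_v})/\lambda(L_v)\le\phi_v(m_v)$ reduces to a single comparison of powers of $q_v$, namely that the exponent on the left does not exceed the exponent $-(N_v+i^{m_v}_{v,\rel})$ (for $d_v=2$) or $-(N_v+i^{m_v}_{v,\rel})/2$ (for $d_v=1$) of $\phi_v(m_v)$.

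Finally, I would discharge this power-of-$q_v$ comparison using Lemma~\ref{lem:estimationofs+alpha}. Collecting the exponents in all cases, the required inequality is precisely
\[
n+1 + n_{\equiv m_v} - 2 n_{v, m_v} + \frac{2}{d_v}\left(s_{L_v} - s_{L_v^{l_v}}\right) \ge N_v + i^{m_v}_{v, \rel},
\]
which is verbatim Lemma~\ref{lem:estimationofs+alpha}, so the proof concludes. I expect the main (and essentially only) delicate point to be the bookkeeping of the factor $2/d_v$: one must track carefully that the exponents in Proposition~\ref{prop:preciseestimationofquotoflambda} are written with base $q_v$ while $s_{L_v}$ already carries the factor $d_v$, and that $\phi_v$ uses $q_v^{-(\cdots)}$ when $d_v=2$ but $q_v^{-(\cdots)/2}$ when $d_v=1$. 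Dividing by $d_v$ lands one exactly on the normalisation of Lemma~\ref{lem:estimationofs+alpha}; once the cases are aligned in this way, no further estimation is needed.
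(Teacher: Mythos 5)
Your proposal is correct and follows essentially the same route as the paper: the paper's proof likewise factors $\lambda(L_v^{l_v})/\lambda(L_v) = q_v^{-(s_{L_v}-s_{L_v^{l_v}})}\,\lambda'(L_v^{l_v})/\lambda'(L_v)$ and then cites Proposition~\ref{prop:preciseestimationofquotoflambda} together with Lemma~\ref{lem:estimationofs+alpha}. Your case-by-case cancellation of the common factors and the bookkeeping of the $2/d_v$ normalisation is exactly the verification the paper leaves implicit.
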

\begin{proof}
Recall that we have
\[
\frac{
    \lambda(L_{v}^{l_{v}})}{\lambda(L_{v})} =  q_{v}^{-(s_{L_{v}} - s_{L_{v}^{l_{v}}})}\frac{\lambda'(L_{v}^{l_{v}})}{\lambda'(L_{v})}
\]
by their definitions.
Thus, the claim follows by combining Proposition~\ref{prop:preciseestimationofquotoflambda} with Lemma~\ref{lem:estimationofs+alpha}.
\end{proof}

At the end of this subsection, we derive an upper bound for the sum of $\phi_{v}(m_{v})$, which will subsequently be used to bound 
\[
\sum_{[l]\in\mathcal{R}_{\mathrm{s}}}\frac{\mu_{\infty}(\SU(1,n-1)/\SU(L^{l}))}{\mu_{\infty}(\SU(1,n)/\SU(L))}
\]
in Subsection \ref{subsecGlobalcomputation}.

\begin{lemma}
\label{lem:final estimation of local factors}
    If $v$ is inert or splits over $E$, we have
    \[
    \sum_{m_v \in I(L_v)_{\rel}} \phi_v(m_v)
    \le
    \begin{cases}
    \displaystyle
     1 & (N_{v} = 0), \\ \displaystyle
     \frac{16}{5} q_{v}^{-N_{v}} & (N_{v} > 0).
    \end{cases}
    \]
    If $v$ ramifies over $E$ and $n$ is even, we have
    \[
   \sum_{m_v \in I(L_v)_{\rel}} \phi_v(m_v)
    \le 
    \begin{cases}
    2 q_{v}^{n/2} & (N_{v} = 0), \\
    5 q_{v}^{-N_{v}/2} q_{v}^{n/2} & (N_{v} > 0).
    \end{cases}
    \]
     If $v$ ramifies over $E$ and $n$ is odd, we have
    \[
\sum_{m_v \in I(L_v)_{\rel}} \phi_v(m_v)
    \le
    \begin{cases}
    2 q_{v}^{-(n+1)/2} & (N_{v} = 0), \\
    5 q_{v}^{-N_{v}/2} q_{v}^{-(n+1)/2} & (N_{v} > 0).
    \end{cases}
    \]
\end{lemma}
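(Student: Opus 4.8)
The plan is to treat the three regimes (inert/split; ramified with $n$ even; ramified with $n$ odd) in parallel, exploiting the common shape of $\phi_v$. In each case $\phi_v(m)$ factors as a product $P_v\cdot q_v^{-c_v(N_v+i^m_{v,\rel})}\cdot R_v(m)$, where $P_v$ is a prefactor depending only on $v$ and $n$ (namely $1$, $q_v^{n/2}$, or $q_v^{-(n+1)/2}$), the exponent has $c_v=1$ in the inert/split case and $c_v=\tfrac12$ in the ramified cases, and $R_v(m)$ is the ``rank factor'' $\frac{1-(\mp q_v)^{-n_{v,m}}}{1-(\mp q_v)^{-(n+1)}}$, $1+q_v^{-\lfloor n_{v,m}/2\rfloor}$, or $\frac{1+q_v^{-\lfloor n_{v,m}/2\rfloor}}{1-q_v^{-(n+1)}}$ respectively. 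Pulling $P_v\cdot q_v^{-c_v N_v}$ out of the sum, the claim reduces in every case to a uniform estimate of the form
\[
\sum_{m\in I(L_v)_\rel} q_v^{-c_v\, i^m_{v,\rel}}\,R_v(m)\le C,
\]
with $C=\tfrac{16}{5}$, $5$, and $5$ respectively, since the right-hand sides in the statement are exactly $C\cdot P_v\cdot q_v^{-c_v N_v}$.

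First I would dispose of $N_v=0$. Here $M(I(L_v)_\rel)=m(I(L_v)_\rel)$, so $I(L_v)_\rel$ is a singleton, the whole lattice $L_v$ is a single Jordan block of rank $n_{v,m}=n+1$, and $i^m_{v,\rel}=i_{v,\rel}-1=0$. Substituting $n_{v,m}=n+1$ into $\phi_v$ and simplifying (using $1-q_v^{-(n+1)}=(1-q_v^{-(n+1)/2})(1+q_v^{-(n+1)/2})$ in the ramified odd case) gives the single-term values $1$, $q_v^{n/2}+1$, and $q_v^{-(n+1)/2}/(1-q_v^{-(n+1)/2})$; each is bounded by the claimed constant using $q_v\ge2$ (inert/split), $q_v\ge3$ (ramified, since $D$ odd forces ramified primes to be odd), and $n+1\ge4$.

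For $N_v>0$, write $k=i_{v,\rel}\ge2$ and split $I(L_v)_\rel$ into the two endpoints $M(I(L_v)_\rel),m(I(L_v)_\rel)$, where $i^m_{v,\rel}=k-2$, and the $k-2$ interior indices, where $i^m_{v,\rel}=k-3$. The first step is to bound the rank factor uniformly in $m$: in the inert/split case $R_v(m)\le\tfrac85$ (the worst case being $n_{v,m}$ odd and $n+1$ even, with $1\le n_{v,m}$, $n+1\ge4$, $q_v\ge2$), while in the ramified cases $1+q_v^{-\lfloor n_{v,m}/2\rfloor}\le2$ and $(1-q_v^{-(n+1)})^{-1}\le(1-3^{-4})^{-1}$. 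It then remains to bound the purely combinatorial sums
\[
S(k)=2q_v^{-(k-2)}+(k-2)q_v^{-(k-3)},\qquad T(k)=2q_v^{-(k-2)/2}+(k-2)q_v^{-(k-3)/2}.
\]
Both are unimodal in $k$ with maximum at $k=2$ or $k=3$ (the factor $(k-2)q_v^{-(k-3)/2}$ decays once $k\ge3$), so I would check those two values directly: $S(k)\le2$ for $q_v\ge2$ and $T(k)\le T(3)=1+2q_v^{-1/2}\le1+2/\sqrt3$ for $q_v\ge3$. Combining, the inert/split bound is $\tfrac85\cdot2=\tfrac{16}{5}$, the ramified-even bound is $2\cdot(1+2/\sqrt3)<5$, and the ramified-odd bound is $\tfrac{2}{1-3^{-4}}\cdot(1+2/\sqrt3)<5$.

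The main obstacle is the ramified odd case, where the target constant $5$ is nearly saturated: here one cannot afford the crude bound $T(k)\le\tfrac52$, because multiplying it by the correction $(1-q_v^{-(n+1)})^{-1}$ would exceed $5$. Instead one must use the sharp value $T(k)\le 1+2/\sqrt3$ at the worst parameters $q_v=3$, $n+1=4$, $k=3$, and verify the unimodality of $T(k)$ carefully rather than bounding term-by-term. This is the only place where the precise inputs $q_v\ge3$ and $n+1\ge4$, together with the endpoint/interior split of $I(L_v)_\rel$, are genuinely needed.
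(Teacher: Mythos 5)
Your proposal is correct and follows essentially the same route as the paper's proof: the same splitting of $I(L_v)_{\rel}$ into the two endpoints (where $i^m_{v,\rel}=i_{v,\rel}-2$) and the interior indices (where $i^m_{v,\rel}=i_{v,\rel}-3$), uniform bounds on the rank factors, and a check of the resulting function of $i_{v,\rel}$ at its small values, with only cosmetic differences in where the factor $\left(1-q_v^{-(n+1)}\right)^{-1}$ is absorbed (you fold it into the bound $R_v\le\tfrac85$, the paper keeps it separate and bounds the polynomial expression by $3$). Your treatment of the $N_v=0$ case via the singleton observation is also just a fleshed-out version of the paper's one-line remark, and the only blemish --- claiming $T(k)$ is maximised at $k=3$, which fails for $q_v\ge 5$ where $T(2)=2>T(3)$ --- is harmless since $\max(2,\,1+2q_v^{-1/2})\le 1+2/\sqrt3$ still gives your stated bound.
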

\begin{proof}
The claims follows from the definition of $\phi_{v}$ when $N_{v} = 0$.
Suppose that $0 < N_{v}$.
In this case, we have $i_{v, \rel} \ge 2$ since $m(I(L_{v})_{\rel}), M(I(L_{v})_{\rel}) \in I(L_{v})_{\rel}$.

First, we suppose that $v$ is inert or splits over $E$.
    Then the definition of $\phi_{v}$ implies that
    \[
    \sum_{m_v \in I(L_v)_{\rel}} \phi_v(m_v) \le \sum_{m_v \in I(L_v)_{\rel}} q_{v}^{-(N_{v}+i^{m_v}_{v, \rel})} \frac{1 + q_{v}^{- n_{v, m_v}} }{1 - q_{v}^{-(n+1)}}
    \le \frac{q_{v}^{-N_{v}}}{1 - q_{v}^{-(n+1)}} \sum_{m_v \in I(L_v)_{\rel}} q_{v}^{-i^{m_v}_{v, \rel}} \left(
    1 + \frac{1}{q_{v}}
    \right).
\]
Since
\[
i^{m}_{v, \rel} =
\begin{cases}
i_{v, \rel} - 2 & (m \in \left\{M(I(L_{v})_{\rel}), m(I(L_{v})_{\rel})\right\}), \\
i_{v, \rel} - 3 & (m(I(L_{v})_{\rel}) < m < M(I(L_{v})_{\rel})),
\end{cases}
\]
we have 
\begin{align*}
& \quad \frac{q_{v}^{-N_{v}}}{1 - q_{v}^{-(n+1)}} \sum_{m_v \in I(L_v)_{\rel}} q_{v}^{-i^{m_v}_{v, \rel}} 
\left(1 + \frac{1}{q_{v}}\right) \\
&= \frac{q_{v}^{-N_{v}}}{1 - q_{v}^{-(n+1)}} \bigg[
2 q_{v}^{-(i_{v, \rel} - 2)} \left(1 + \frac{1}{q_{v}}\right)
+ q_{v}^{-(i_{v, \rel} - 3)} (i_{v, \rel} - 2) 
\left(1 + \frac{1}{q_{v}}\right)
\bigg] \\
&= \frac{q_{v}^{-N_{v}}}{1 - q_{v}^{-(n+1)}} q_{v}^{-(i_{v, \rel} - 2)} 
\left((i_{v, \rel} - 2)q_{v} + i_{v, \rel} + \frac{2}{q_{v}}\right).
\end{align*}
One can check easily that 
\[
q_{v}^{-(i_{v, \rel} - 2)} \left(
(i_{v, \rel} - 2)q_{v} + i_{v, \rel} + \frac{2}{q_{v}}
    \right) \le 3
\]
for all $2 \le i_{v, \rel}$.
Thus, it conclude that
\[
\sum_{m_v \in I(L_v)_{\rel}} \phi_v(m_v) \le \frac{3 q_{v}^{-N_{v}}}{1 - q_{v}^{-(n+1)}} 
\le \frac{3 q_{v}^{-N_{v}}}{1 - 2^{-(3+1)}} = \frac{16}{5} q_{v}^{-N_{v}}.
\]

Next, we suppose that $v$ ramified over $E$ and $n$ is even.
Note that in this case, we have $3 \le q_{v}$.
By a similar calculation as above, we obtain that
\begin{align*}
\sum_{m_v \in I(L_v)_{\rel}} \phi_v(m_v) &\le \sum_{m_v \in I(L_v)_{\rel}} q_{v}^{- (N_{v} + i^{m_v}_{v, \rel})/2} \cdot q_{v}^{n/2} \left(
1 + q_{v}^{- \lfloor n_{v, m_v}/2 \rfloor}
\right) \\
&\le q_{v}^{-N_{v}/2} q_{v}^{n/2} \left(
2 q_{v}^{-(i_{v, \rel} - 2)/2} \left(
1 + q_{v}^{0}
\right)
+ q_{v}^{-(i_{v, \rel} - 3)/2} (i_{v, \rel} - 2) \left(
1 + q_{v}^{0}
\right)
\right) \\
&=  q_{v}^{-N_{v}/2} q_{v}^{n/2} 2 q_{v}^{-(i_{v, \rel} - 2)/2}\left(
(i_{v, \rel} - 2) q_{v}^{1/2} + 2
\right) \\
&\le 5 q_{v}^{-N_{v}/2} q_{v}^{n/2}.
\end{align*}

Finally, we suppose that $v$ ramified over $E$ and $n$ is odd.
A similar calculation shows that 
\begin{align*}
\sum_{m_v \in I(L_v)_{\rel}} \phi_v(m_v) &\le \sum_{l_{m}} q_{v}^{-(N_{v} + i^{m_v}_{v, \rel})/2} \cdot q_{v}^{-(n+1)/2} \, \frac{
1 + q_{v}^{-\lfloor n_{v, m_v}/2 \rfloor}
}{
1 - q_{v}^{-(n+1)}
} \\
&\le 
\frac{
q_{v}^{-N_{v}/2} q_{v}^{-(n+1)/2}
}{1 - q_{v}^{-(n+1)}}
\left(
2 q_{v}^{-(i_{v, \rel} - 2)/2} (1 + q_{v}^0)
+ q_{v}^{-(i_{v, \rel} - 3)/2} (i_{v, \rel} - 2) (1 + q_{v}^{0})
\right) \\
&= \frac{
q_{v}^{-N_{v}/2} q_{v}^{-(n+1)/2}
}{1 - q_{v}^{-(n+1)}} 2 q_{v}^{-(i_{v, \rel} - 2)/2} \left(
(i_{v, \rel} - 2) q_{v}^{1/2} + 2
\right) \\
&\le
\frac{
q_{v}^{-N_{v}/2} q_{v}^{-(n+1)/2}
}{1 - 3^{-(3+1)}} 2 q_{v}^{-(i_{v, \rel} - 2)/2} \left(
(i_{v, \rel} - 2) q_{v}^{1/2} + 2
\right) \\
&= \frac{81}{40} q_{v}^{-N_{v}/2} q_{v}^{-(n+1)/2} q_{v}^{-(i_{v, \rel} - 2)/2} \left(
(i_{v, \rel} - 2) q_{v}^{1/2} + 2
\right) \\
&\le 5 q_{v}^{-N_{v}/2} q_{v}^{-(n+1)/2}.
\qedhere
\end{align*}
\end{proof}

\subsection{Global computation}
\label{subsecGlobalcomputation}

Based on the local computations in the previous subsection, we will give an estimation of the sum
\[
\sum_{[l]\in\mathcal{R}_{\mathrm{s}}}\prod_{v\nmid\infty}\frac{\lambda(L_{v}^{l_{v}})}{\lambda(L_{v})}.
\]
We define
\[
N(L) \defeq \prod_{v \nmid \infty} q_{v}^{N_{v}}.
\]
Note that if $0 \in I(L_{v})_{\rel}$ for all $v \in V_{\fin}$, the quantity $N(L)$ agrees with the exponent of the discriminant group $L^{\vee}/L$ of $L$ with $L^{\vee}$ denoting the dual lattice
\[
L^{\vee} \defeq \{
x \in L \otimes_{\bZ} \bQ \mid \langle x, L \rangle \subseteq \mathcal{O}_{E}
\};
\]
see Lemma~\ref{lem:descriptionofmandM}.

\begin{lemma}
\label{lem:global summation for reflective vectors}
There exists a constant $\epsilon > 0$, independent of $L$, $n$, and $E$, such that
    \[\sum_{[l]\in\mathcal{R}_{\mathrm{s}}}\prod_{v\nmid\infty}\frac{\lambda(L_{v}^{l_{v}})}{\lambda(L_{v})} \le 2 D^sN(L)^{-\epsilon}\]
\end{lemma}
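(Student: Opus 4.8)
The plan is to convert the global sum into an Euler product over the finite places and then estimate the factors one place at a time. First I would combine the injectivity of the map $\mathcal{R}_{\mathrm{s}} \to \prod_{v \nmid \infty} I(L_v)_{\rel}$, $l \mapsto (m_v)_v$, from \cite{Mae24}*{Proposition~4.5}, with the pointwise bound of Proposition~\ref{prop:estimation of the ratio of lambda for one split reflective vector} to obtain
\[
\sum_{[l]\in\mathcal{R}_{\mathrm{s}}}\prod_{v\nmid\infty}\frac{\lambda(L_{v}^{l_{v}})}{\lambda(L_{v})}
\le
\prod_{v \nmid \infty} \sum_{m_v \in I(L_v)_{\rel}} \phi_v(m_v).
\]
By Lemma~\ref{lem:final estimation of local factors} each factor is $\le 1$ at every inert or split place with $N_v = 0$; discarding those factors only enlarges the product, so the estimate reduces to the \emph{finite} product over the set $T$ consisting of the ramified places (those dividing $D$) together with the inert or split places having $N_v > 0$. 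I will treat the representative case of even $n$, the odd case being entirely analogous.

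Next I would read the two remaining types of contribution off Lemma~\ref{lem:final estimation of local factors}. At a ramified place the factor is at most $2\,q_v^{n/2}$ if $N_v = 0$ and at most $5\,q_v^{-N_v/2}\,q_v^{n/2}$ if $N_v > 0$; since the ramified primes are exactly the prime divisors of $D$, with $q_v = p$ and $\prod_{v \mid D} q_v = D$, the product of the powers $q_v^{n/2}$ collapses to $D^{n/2}$, which is the leading source of the factor $D^{s}$. At an inert or split place with $N_v > 0$ the factor is at most $\tfrac{16}{5}\,q_v^{-N_v}$. Recalling $N(L) = \prod_{v \nmid \infty} q_v^{N_v}$, the decays $q_v^{-N_v}$ at inert/split places and $q_v^{-N_v/2}$ at ramified places are what I would convert into $N(L)^{-\epsilon}$, using any $\epsilon \le \tfrac{1}{2}$ so that $q_v^{-N_v/2} \le q_v^{-\epsilon N_v}$.

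The main obstacle is that every place in $T$ carries a multiplicative constant ($\tfrac{16}{5}$, or $2$ and $5$ at ramified places), while $T$ can be arbitrarily large, so these constants threaten to overwhelm the single absolute constant $2$ permitted by the statement. The decisive device is to peel off a small power of the decay: writing $\tfrac{16}{5}\,q_v^{-N_v} = \bigl(\tfrac{16}{5}\,q_v^{-(1-\epsilon)N_v}\bigr)\,q_v^{-\epsilon N_v}$, one checks that the bracketed factor is $\le 1$ unless $q_v^{(1-\epsilon)N_v} < \tfrac{16}{5}$, which for small $\epsilon$ happens only at $(q_v, N_v) \in \{(2,1),(3,1)\}$. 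As there is at most one place above each rational prime, the finitely many exceptional factors multiply to an absolute constant that a direct check keeps below $2$; this produces the leading $2$ and bounds the inert/split part by $2\,N(L)^{-\epsilon}$. For the ramified constants I would absorb $\prod_{v \mid D} 2$ and $\prod_{v \mid D} 5$ into the exponent of $D$ through the crude estimates $2^{\omega(D)},\,5^{\omega(D)} \le D^{O(1)}$, which hold because every ramified prime is $\ge 3$; these merge into $s$. Combining the three pieces yields the asserted bound $2\,D^s N(L)^{-\epsilon}$ with $\epsilon$ independent of $L$, $n$, and $E$, and fixing the precise value of $\epsilon$ together with the verification of the exceptional-constant bound is the only routine computation left.
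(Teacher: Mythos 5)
Your opening moves are exactly the paper's: the reduction of the sum to the Euler product $\prod_{v\nmid\infty}\sum_{m_v\in I(L_v)_{\rel}}\phi_v(m_v)$ via \cite{Mae24}*{Proposition 4.5} and Proposition~\ref{prop:estimation of the ratio of lambda for one split reflective vector}, the discarding of inert/split factors with $N_v=0$, and the $\epsilon$-peeling at inert/split places with the exceptional pairs $(q_v,N_v)\in\{(2,1),(3,1)\}$ all coincide with the paper's argument (its constants $\epsilon_1,\epsilon_2,\epsilon_3$ implement your "exceptional factor" check).

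The gap is your treatment of the ramified places. The exponent $s$ is not at your disposal: it is fixed as $s=(n+1)/2$ for even $n$ (and $-n/2$ for odd $n$), and this exact value is what is consumed downstream in Proposition~\ref{prop:sumofsplitvectorsglobal} and in the numerical thresholds of Theorem~\ref{thm:not_wps}. Since $\prod_{v\mid D}q_v^{n/2}=D^{n/2}$, the entire headroom between the local bounds and $2D^sN(L)^{-\epsilon}$ is $D^{1/2}$, i.e.\ a single factor $q_v^{1/2}$ per ramified prime. Your absorption $2^{\omega(D)},5^{\omega(D)}\le D^{O(1)}$ blows straight past this: $5^{\omega(D)}$ can be as large as $D^{\log 5/\log 3}\approx D^{1.46}$, so your route only yields a bound of the shape $2D^{n/2+c}N(L)^{-\epsilon}$ with $c>1/2$, which is a strictly weaker statement than the lemma and would degrade the explicit bounds in Theorem~\ref{thm:not_wps}. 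The constants must instead be paid prime by prime against the budget $q_v^{1/2}$: the inequality $2\le q_v^{1/2}$ fails at $q_v=3$, and $5q_v^{-N_v/2}$ against $q_v^{1/2}$ also leaves leftovers $\ge 1$ at $q_v=3,5$ when $N_v=1$. This is precisely how the paper organizes the endgame: the prime $2$ (necessarily unramified since $D$ is odd) is allotted a budget $<\sqrt{3}$, the prime $3$ — in whatever splitting behaviour, ramified included — a budget $\le 2/\sqrt{3}$, every prime $\ge 5$ a budget $<1$, and the leading constant arises as $\sqrt{3}\cdot(2/\sqrt{3})=2$. Your version also never verifies the combined budget: the $\approx 1.71$ coming from your two inert/split exceptional pairs, multiplied by any ramified leftover exceeding $1.17$ (e.g.\ the prime $3$ ramified), already exceeds $2$. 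So the ramified case genuinely requires the paper's per-prime bookkeeping against the $D^{1/2}$ headroom — arguably the most delicate point of the whole proof — and cannot be dispatched by a crude $D^{O(1)}$ absorption.
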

where 
\[s=\begin{cases}
    (n+1)/2 & (2\mid n), \\
    -n/2 & (2\nmid n).
\end{cases}\]
\begin{proof}
    As explained in Subsection~\ref{subsec:a criterion}, we can bound the left-hand side of the claim as 
    \[\sum_{[l]\in\mathcal{R}_{\mathrm{s}}}\prod_{v\nmid\infty} \frac{\lambda(L_v^{l_v})}{\lambda(L_v)} \leq \prod_{v\nmid\infty}\sum_{m_v\in I(L_v)_{\rel}}\phi_v(m_v)\]
    by using the cancellation theorem for Hermitian lattices \cite{wall1970classification}*{Theorem 10} and Proposition~\ref{prop:estimation of the ratio of lambda for one split reflective vector}.
   To derive an upper bound for the right-hand side, we temporarily define the following sets consisting of finite places, which will be used only in this proof.
\begin{align*}
    A_{\mathrm{ur}}&\defeq \{v\mid v\ \mathrm{is\ unramified\ at}\ E\ \mathrm{and\ } N_{v} > 0\}, \\
    A_{\mathrm{rm}}^1&\defeq \{v\mid v\ \mathrm{ramifies\ over}\ E\ \mathrm{and\ }N_{v} > 0\}, \\
    A_{\mathrm{rm}}^2&\defeq \{v\mid v\ \mathrm{ramifies\ over}\ E\ \mathrm{and\ }v\not\in A_{\mathrm{rm}}^1\}.
\end{align*}
First, we assume that $n$ is even.
We introduce a constant $0<\epsilon <<1$, which will later be defined precisely, but is useful at this stage for the sake of formal manipulations.
    It follows from the estimations in Lemma~\ref{lem:final estimation of local factors} that 
    \begin{align*}
&\prod_{v\nmid\infty}\sum_{m_v \in I(L_v)_{\rel}} \phi_v(m_v)\\
&\le \left(\prod_{v\in A_{\mathrm{ur}}}\frac{16}{5}q_v^{-N_v}\right)\left(\prod_{v\in A_{\mathrm{rm}}^1}5q_v^{-N_v/2}q_v^{n/2}\right)\left(\prod_{v\in A_{\mathrm{rm}}^2}2q_v^{n/2}\right)\\
&= N(L)^{-\epsilon}\left(\prod_{v\in A_{\mathrm{ur}}}\frac{16}{5}q_v^{-(1-\epsilon)N_v}\right)\left(\prod_{v\in A_{\mathrm{rm}}^1}5q_v^{-(3/2-\epsilon)N_v}q_v^{(n+1)/2}\right)\left(\prod_{v\in A_{\mathrm{rm}}^2}2q_v^{-1/2}q_v^{(n+1)/2}\right)\\
&\le N(L)^{-\epsilon}\left(\prod_{v\in A_{\mathrm{ur}}}\frac{16}{5}q_v^{-(1-\epsilon)}\right)\left(\prod_{v\in A_{\mathrm{rm}}^1}5q_v^{-(3/2-\epsilon)}q_v^{(n+1)/2}\right)\left(\prod_{v\in A_{\mathrm{rm}}^2}2q_v^{-1/2}q_v^{(n+1)/2}\right)\\
&= D^{(n+1)/2}N(L)^{-\epsilon}\left(\prod_{v\in A_{\mathrm{ur}}}\frac{16}{5}q_v^{-(1-\epsilon)}\right)\left(\prod_{v\in A_{\mathrm{rm}}^1}5q_v^{-(3/2-\epsilon)}\right)\left(\prod_{v\in A_{\mathrm{rm}}^2}2q_v^{-1/2}\right)
    \end{align*}

Now, noting that $2\in A_{\mathrm{ur}}$, take a constant $\epsilon_1>0$ so that we have $16/5 \cdot 2^{-(1-\epsilon_1)} < \sqrt{3}$.
We also take $\epsilon_{2}>0$ so that
\[
\max\left\{
\frac{16}{5} \cdot 3^{-(1 - \epsilon_{2})}, 5 \cdot 3^{-(3/2-\epsilon_{2})}, \frac{2}{\sqrt{3}}
\right\} = \frac{2}{\sqrt{3}}.
\]
Finally, we take $\epsilon_{3} > 0$ so that we have
\[
\max\left\{
\frac{16}{5} \cdot 5^{-(1 - \epsilon_{3})}, 5 \cdot 5^{-(3/2-\epsilon_{3})}
\right\} < 1.
\]
Then taking $\epsilon \defeq \min\{\epsilon_{1}, \epsilon_{2}, \epsilon_{3}\}$, we conclude that
\[
\prod_{v\nmid\infty}\sum_{m_v \in I(L_v)_{\rel}} \phi_v(m_v)  < \sqrt{3} \cdot \frac{2}{\sqrt{3}} D^{(n+1)/2}N(L)^{-\epsilon} = 2 D^{(n+1)/2}N(L)^{-\epsilon}.
\]

The case where $n$ is odd is similar.

\end{proof}

Now, we obtain the upper bound of the sum
\[
\sum_{[l]\in\mathcal{R}_{\mathrm{s}}}\frac{\mu_{\infty}(\SU(1,n-1)/\SU(L^{l}))}{\mu_{\infty}(\SU(1,n)/\SU(L))}.
\]

\begin{prop}
\label{prop:sumofsplitvectorsglobal}
    The ratio of the covolumes is bounded as
    \[\sum_{[l]\in\mathcal{R}_{\mathrm{s}}}\frac{\mu_{\infty}(\SU(1,n-1)/\SU(L^{l}))}{\mu_{\infty}(\SU(1,n)/\SU(L))} < \frac{2^2\cdot(2\pi)^{n+1}}{n!\cdot D^{n/2}\cdot N(L)^{\epsilon}}.\]
\end{prop}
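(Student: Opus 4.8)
The plan is to feed the global bound of Lemma~\ref{lem:global summation for reflective vectors} into the explicit decomposition of the covolume ratio recorded in Subsection~\ref{subsec:a criterion}, and then to dispose of a single surviving Euler factor by an elementary estimate. The whole argument is essentially bookkeeping of the cancellations in Theorem~\ref{thm:prasad'svolumeformulaspecialunitarycase}, the only non-formal input being a lower bound on an $L$-value; throughout I keep the running assumption $E \neq \bQ(\sqrt{-3})$, the exceptional field being treated separately.

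First I would record the exact shape of the ratio $\mu_{\infty}(\SU(1,n-1)/\SU(L^{l}))/\mu_{\infty}(\SU(1,n)/\SU(L))$ obtained by substituting the rank $n$ lattice $L^{l}$ (signature $(1,n-1)$) and the rank $n+1$ lattice $L$ into Theorem~\ref{thm:prasad'svolumeformulaspecialunitarycase} and dividing. In the quotient the archimedean factor contributes $(2\pi)^{n+1}/n!$, almost all zeta- and $L$-factors cancel, and the power of $D$ collapses. For even $n$ this gives the decomposition already displayed in Subsection~\ref{subsec:a criterion}, with prefactor $(2\pi)^{n+1}/(D^{\,n+1/2}\, n!\, L_{E/\bQ}(n+1))$; for odd $n$ the powers of $D$ cancel entirely, the surviving factor is $1/\zeta(n+1)$ rather than $1/L_{E/\bQ}(n+1)$, and the prefactor is $(2\pi)^{n+1}/(n!\,\zeta(n+1))$. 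In both parities the prefactor multiplies the global sum $\sum_{[l]\in\mathcal{R}_{\mathrm{s}}}\prod_{v\nmid\infty}\lambda(L_v^{l_v})/\lambda(L_v)$.

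Next I would insert the bound $\sum_{[l]\in\mathcal{R}_{\mathrm{s}}}\prod_{v\nmid\infty}\lambda(L_v^{l_v})/\lambda(L_v)\le 2\,D^{s}N(L)^{-\epsilon}$ from Lemma~\ref{lem:global summation for reflective vectors}, where $s=(n+1)/2$ for even $n$ and $s=-n/2$ for odd $n$, and combine the powers of $D$. For even $n$ one has $D^{-n-1/2}\cdot D^{(n+1)/2}=D^{-n/2}$, while for odd $n$ the factor $D^{-n/2}$ is supplied directly by the Lemma. Hence in either case the sum is at most $2(2\pi)^{n+1}/(n!\,D^{n/2}\,N(L)^{\epsilon})$ times $1/L_{E/\bQ}(n+1)$ (even $n$) or $1/\zeta(n+1)$ (odd $n$).

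It remains to absorb this last Euler factor into the constant $2^2$. For odd $n$ this is immediate since $\zeta(n+1)>1$. For even $n$ the only genuinely analytic step is the lower bound $L_{E/\bQ}(n+1)>1/2$, which I would obtain by bounding each Euler factor of $L_{E/\bQ}(s)$ below by $(1+p^{-s})^{-1}$ — valid at split primes (factor $(1-p^{-s})^{-1}>1$), at inert primes (factor $(1+p^{-s})^{-1}$), and at the ramified prime (factor $1$) alike — so that
\[
L_{E/\bQ}(s)\ \ge\ \prod_{p}\bigl(1+p^{-s}\bigr)^{-1}\ =\ \frac{\zeta(2s)}{\zeta(s)}\ >\ \frac{1}{\zeta(s)}\ \ge\ \frac{1}{\zeta(4)}\ >\ \frac12
\]
for $s=n+1\ge 4$ (the hypothesis $n>2$ forces $n+1\ge 4$ in all cases). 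Thus $2/L_{E/\bQ}(n+1)<4=2^2$, and the claimed strict inequality follows in both parities. The step I expect to require the most care is precisely this $L$-value bound, together with the verification that the shifts in the indices of the zeta/$L$-products and the powers of $D$ cancel exactly as asserted in the two parity cases; everything else is routine.
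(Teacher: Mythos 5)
Your proposal is correct and follows essentially the same route as the paper: the same parity-split decomposition of the covolume ratio via Theorem~\ref{thm:prasad'svolumeformulaspecialunitarycase}, the same insertion of Lemma~\ref{lem:global summation for reflective vectors} with the cancellation of $D$-powers, and the same absorption of the residual Euler factor into the constant $2^2$. The only (immaterial) difference is the elementary bound on the $L$-value: you use the Euler-product comparison $L_{E/\bQ}(s)\ge \zeta(2s)/\zeta(s)>1/\zeta(4)>1/2$, while the paper uses the series bound $1/L_{E/\bQ}(n+1)\le 1/(2-\zeta(n+1))<1/(2-\zeta(3))<2$; both yield the claimed inequality.
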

\begin{proof}
Let us work on the case of even $n$.
According to Theorem~\ref{thm:prasad'svolumeformulaspecialunitarycase} and
Lemma \ref{lem:global summation for reflective vectors}, we have
\begin{align*}
\sum_{[l]\in\mathcal{R}_{\mathrm{s}}} \frac{\mu_{\infty}(\SU(1,n-1)/\SU(L^{l}))}{\mu_{\infty}(\SU(1,n)/\SU(L))}
    &=\frac{(2\pi)^{n+1}}{D^{n+1/2}\cdot n!\cdot L_{E/\bQ}(n+1)}\cdot\sum_{[l]\in\mathcal{R}_{\mathrm{s}}} \prod_{v\nmid\infty}\frac{\lambda(L_v^{l_v})}{\lambda(L_v)}\\
    &\leq 2D^{(n+1)/2}N(L)^{-\epsilon}\cdot \frac{(2\pi)^{n+1}}{D^{n+1/2}\cdot n!\cdot L_{E/\bQ}(n+1)}\\
        &= \frac{2\cdot(2\pi)^{n+1}}{D^{n/2}\cdot n!\cdot L_{E/\bQ}(n+1)\cdot N(L)^{\epsilon}}.
\end{align*}
Since,
\[\frac{1}{L_{E/\bQ}(n+1)}\leq \frac{1}{2-\zeta(n+1)}<\frac{1}{2-\zeta(3)}<2.\]
This concludes the proof for even $n$.
The proof for odd $n$ proceeds in the same way by substituting Lemma \ref{lem:global summation for reflective vectors} into the expression for 
\[\sum_{[l]\in\mathcal{R}_{\mathrm{s}}} \frac{\mu_{\infty}(\SU(1,n-1)/\SU(L^{l}))}{\mu_{\infty}(\SU(1,n)/\SU(L))}
    =\frac{(2\pi)^{n+1}}{n!\cdot \zeta(n+1)}\cdot\sum_{[l]\in\mathcal{R}_{\mathrm{s}}} \prod_{v\nmid\infty}\frac{\lambda(L_v^{l_v})}{\lambda(L_v)}.
    \qedhere
    \]
\end{proof}

\subsection{Proof of Main results}
In this subsection, we give a quantitative estimation when the inequality in Theorem~\ref{thm:criterion_wps} holds.
\begin{thm}
\label{thm:not_wps}
Let $E$ be an imaginary quadratic field of discriminant $-D$ with $D > 3$ odd.
Then we have
\[
\sum_{[l]\in\mathcal{R}}\frac{\volHM(\U(L)^{l})}{\volHM(\U(L))} < (1+2\cdot 2^{2n+1}+4^{2n+1})\cdot \frac{2^3 \cdot (2\pi)^{n+1}}{n!\cdot D^{n/2}\cdot \max\left\{1, \left(N(L)/4\right)^{\epsilon}\right\}}.
\]
Thus, for any arithmetic subgroup $\Gamma < \U(L)$, the graded algebra $M_*(\Gamma)$ is never free if we have
\[
(1+2\cdot 2^{2n+1}+4^{2n+1})\cdot\frac{2 \cdot  (2\pi)^{n+1}}{(n+1)!\cdot D^{n/2}\cdot \max\left\{1, \left(N(L)/4\right)^{\epsilon}\right\}}< 1.
\]
    In particular, if $n>99$ or $D$ is sufficiently large, then $M_*(\Gamma)$ is not free.
    In the case $E=\bQ(\sqrt{-3})$, the range is replaced with
    \[
    (5+ 4 \cdot 3^{2n+1}+ 3 \cdot 4^{2n+1})\cdot \frac{2 \cdot (2\pi)^{n+1}}{(n+1)!\cdot 3^{n/2}\cdot \max\left\{1, \left(N(L)/9\right)^{\epsilon}\right\}} < 1,
    \]
    which always follows when $n>154$.
\end{thm}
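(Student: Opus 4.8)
The plan is to feed the non-freeness criterion of Theorem~\ref{thm:criterion_wps} through the comparison Lemma~\ref{lem:inequality of the ratio between two groups}, which reduces the whole problem to estimating the single sum $\sum_{[l]\in\mathcal{R}}\frac{r_l-1}{r_l}\frac{\volHM(\U(L)^l)}{\volHM(\U(L))}$ for the full unitary group $\U(L)$. For $E\neq\bQ(\sqrt{-3})$ one has $r_l=2$ throughout, so $(r_l-1)/r_l=\tfrac12$ and it suffices to bound $\sum_{[l]\in\mathcal{R}}\volHM(\U(L)^l)/\volHM(\U(L))$. I would split $\mathcal{R}=\mathcal{R}_{\mathrm{s}}\sqcup(\mathcal{R}\smallsetminus\mathcal{R}_{\mathrm{s}})$ into split and non-split classes and treat the two contributions separately, since only the split part is directly accessible to the local--global estimates of the previous subsections.

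For the split contribution I would invoke Remark~\ref{rmkULi} to identify $\U(L)^l=\U(L^l)$, and then pass from Hirzebruch--Mumford volumes to covolumes of special unitary groups through Corollary~\ref{corcovolofSUvsHMvolofU}, which yields
\[
\sum_{[l]\in\mathcal{R}_{\mathrm{s}}}\frac{\volHM(\U(L^l))}{\volHM(\U(L))}
\le 2\sum_{[l]\in\mathcal{R}_{\mathrm{s}}}\frac{\mu_{\infty}(\SU(1,n-1)/\SU(L^l))}{\mu_{\infty}(\SU(1,n)/\SU(L))}.
\]
The right-hand side is exactly what Proposition~\ref{prop:sumofsplitvectorsglobal} bounds by $2^2(2\pi)^{n+1}/(n!\,D^{n/2}N(L)^{\epsilon})$, so the split classes contribute the base term $2^3(2\pi)^{n+1}/(n!\,D^{n/2}N(L)^{\epsilon})$. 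Since $N(L)\ge 1$ and $4^{\epsilon}>1$, replacing $N(L)^{\epsilon}$ by $\max\{1,(N(L)/4)^{\epsilon}\}$ only weakens the bound, and this accounts for the leading summand $1$ in the factor $1+2\cdot2^{2n+1}+4^{2n+1}=(1+2^{2n+1})^2$.

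For the non-split contribution I would appeal to \cite{Mae24}*{Lemmas~3.6, 7.2}: each non-split $\U(L)$-reflective class is matched with split data in a rescaled Hermitian lattice, and the associated volume ratios are controlled by the two remaining summands $2\cdot 2^{2n+1}$ and $4^{2n+1}$, at the cost of dividing $N(L)$ by the square $q^2$ of the smallest relevant prime (whence the constant $4$, and $9$ in the ramified case $D=3$). Assembling the split and non-split estimates gives the displayed upper bound for $\sum_{[l]\in\mathcal{R}}\volHM(\U(L)^l)/\volHM(\U(L))$. Substituting into Theorem~\ref{thm:criterion_wps} via Lemma~\ref{lem:inequality of the ratio between two groups}, and using the arithmetic simplification $\tfrac12\cdot\frac{2^3}{2(n+1)\,n!}=\frac{2}{(n+1)!}$, the sufficient condition for non-freeness collapses to exactly $f(n,D)/\max\{1,(N(L)/4)^{\epsilon}\}<1$, which in particular holds whenever $f(n,D)<1$.

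Finally I would verify the numerical thresholds. The factor $(1+2^{2n+1})^2$ grows like $4^{2n+1}=4\cdot16^n$ while $(2\pi)^{n+1}/(n+1)!\to0$ superexponentially, so the dominant term of $f(n,D)$ behaves like a constant times $\bigl(32\pi/\sqrt{D}\bigr)^{n}/(n+1)!$. Taking the worst admissible case $D=7$ and comparing $(n+1)!$ against $\approx 50\cdot 38^{\,n}$ by Stirling shows $f(n,7)<1$ precisely beyond $n=99$, while $f(n,D)\to0$ as $D\to\infty$ for fixed $n$, giving the stated range. In the case $E=\bQ(\sqrt{-3})$ one has $r_l\in\{2,3,6\}$, hence $(r_l-1)/r_l\le\tfrac56$; the coefficients sharpen to $5+4\cdot3^{2n+1}+3\cdot4^{2n+1}$, the discriminant is $D=3$, and $N(L)$ is rescaled by $9$, and the same Stirling comparison against $\approx150\cdot58^{\,n}$ pushes the threshold to $n>154$. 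I expect the genuine obstacle to be the non-split estimate, namely extracting the exponential factors $2^{2n+1}$, $4^{2n+1}$ and the rescaling constant from \cite{Mae24}*{Lemmas~3.6, 7.2} in a form that cleanly aligns with the split bound; by contrast the threshold computation, although delicate near $n=99$ (resp.\ $n=154$), is a routine factorial-versus-exponential estimate once those constants are pinned down.
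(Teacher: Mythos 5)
Your proposal is correct and follows essentially the same route as the paper's proof: the paper implements your split/non-split dichotomy as a sum over the sublattice family $T_{L} = \{L\} \amalg T_{L,2} \amalg T_{L,4}$ (each non-split vector becoming a split vector of $L' = L^{l} \oplus l\cO_{E}$), bounds the index factors and cardinalities via \cite{Mae24}*{Lemma~3.6} and \cite{Mae24}*{(7.1)}, applies Corollary~\ref{corcovolofSUvsHMvolofU} and Proposition~\ref{prop:sumofsplitvectorsglobal} to each $L'$, shows $N(L')\ge N(L)/4$ (resp.\ $N(L)/9$), and concludes with Theorem~\ref{thm:criterion_wps} and Lemma~\ref{lem:inequality of the ratio between two groups} together with a computer check of the thresholds $n>99$ and $n>154$, exactly as you outline. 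The one point to tighten is the $\bQ(\sqrt{-3})$ case: the coefficients $5+4\cdot 3^{2n+1}+3\cdot 4^{2n+1}$ arise from the type-specific ramification indices ($r_{l}=6$ for split vectors, $r_{l}=3$ for $T_{L,3}$, $r_{l}=2$ for $T_{L,4}$) combined with the factor $6$ from Corollary~\ref{corcovolofSUvsHMvolofU}, not from the uniform bound $(r_{l}-1)/r_{l}\le 5/6$, which would yield the weaker factor $5+5\cdot 3^{2n+1}+5\cdot 4^{2n+1}$.
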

\begin{proof}
    First, let us assume $E\neq\bQ(\sqrt{-3})$.
    We follow the argument in \cite{Mae24}*{Section~7} to reduce the problem of estimating the sum to that of estimating the sum over split vectors. 
    Let $T_{L}$ denote the set of sublattices $L' \subseteq L$ that can be obtained as $L' = L^{l} \oplus l \cO_{E}$ for a reflective vector $l \in L$.
    When $D \equiv 7 \bmod{8}$, let $2 \cO_{E} = \mathfrak{p}_{1} \mathfrak{p}_{2}$ denote the decomposition of $2$ in $\cO_{E}$. 
    Then by a classification of sublattices in $T_{L}$ given in \cite{Mae24}*{Section~3}, which is based on \cite{ma2018kodaira}*{Section~4.1}, we have
    \[
    T_{L} = \{L\} \amalg T_{L, 2} \amalg T_{L, 4},
    \]
    where 
    \[
    T_{L, 2} = 
    \begin{cases}
    \left\{
L' \in T_{L} \mid L/L' \simeq \cO_{E} / \mathfrak{p}_{i}, (i = 1,2)
    \right\} & (D \equiv 7 \bmod{8}), \\
    \emptyset & (\text{otherwise}),
    \end{cases}
    \]
    and
    \[
    T_{L, 4} = 
    \left\{
L' \in T_{L} \mid L/L' \simeq \cO_{E} / 2 \cO_{E}
\right\}.
\]
We note that using the notation in \cite{Mae24}, we have $T_{L, 2} = T_{L}(F, 2)_{IV} \amalg T_{L}(F, 2)_{V}$ and $T_{L, 4} = T_{L}(F, 2)_{II}$.
    For $L' \in T_{L}$. let $\mathcal{R}(L')_{\mathrm{s}}$ denote the set of $\U(L')$-equivalence classes of split $\U(L')$-reflective vectors in $L'$.
    By a similar computation as in \cite{Mae24}*{Lemma~7.2}, we have
    \begin{equation}
    \label{eq:similartoMae24lem7.2}
    \sum_{[l]\in\mathcal{R}}\frac{\volHM(\U(L)^{l})}{\volHM(\U(L))} \le 
    \sum_{L' \in T_{L}} \sum_{[l'] \in \mathcal{R}(L')_{\mathrm{s}}} 
    [\U((L')^{l'}) : \U(L)^{l'}] \frac{\volHM(\U((L')^{l'}))}{\volHM(\U(L'))}.
    \end{equation}
    Regarding the first factor in the sum above, according to \cite{Mae24}*{Lemma~3.6}, we have
    \[
    [\U((L')^{l'}) : \U(L)^{l'}] \le
    [\U((L')^{l'}) : \U(L)_{l'}] \le
    \begin{cases}
    1 & (L' = L), \\
    2^{n} & (L' \in T_{L, 2}), \\
    4^{n} & (L' \in T_{L, 4}).
    \end{cases}
    \]
    We will give an estimation of the sum of second factors.
    According to Corollary~\ref{corcovolofSUvsHMvolofU}, we have
\[
\sum_{[l']\in \mathcal{R}(L')_{\mathrm{s}}} \frac{\volHM(\U((L')^{l'}))}{\volHM(\U(L'))}
\le
2 \sum_{[l']\in \mathcal{R}(L')_{\mathrm{s}}} \frac{\mu_{\infty}(\SU(1,n-1)/\SU((L')^{l'}))}{\mu_{\infty}(\SU(1,n)/\SU(L'))}.
\]
For each $L' \in T_{L}$, we obtain from Proposition~\ref{prop:sumofsplitvectorsglobal} that
    \[
    \sum_{l' \in \mathcal{R}(L')_{\mathrm{s}}} \frac{\mu_{\infty}(\SU(1,n-1)/\SU((L')^{l'}))}{\mu_{\infty}(\SU(1,n)/\SU(L'))} \le \frac{2^2 \cdot (2\pi)^{n+1}}{n!\cdot D^{n/2}\cdot N(L')^{\epsilon}}.
    \]
    We claim that $N(L') \ge N(L)/4$.
    Indeed, for a finite place $v \neq 2$, we obtain from the definitions of $T_{L, 2}$ and $T_{L, 4}$ that $L_{v} = L'_{v}$. 
    In particular, we have $N(L_v) = N(L'_{v})$.
    Now, we consider the case $v = 2$.
    According to the definitions of $T_{L, 2}$ and $T_{L, 4}$, we have $2 L \subseteq L' \subseteq L$.
    Hence, noting that $\mathfrak{p}_{E_{2}} = (2)$, we obtain from Lemma~\ref{lem:descriptionofmandM} that 
    \[
    m(I(L'_2)_{\rel}) \le m(I(2 L_2)_{\rel}) = m(I(L_2)_{\rel}) + 2
    \qquad
    \text{and}
    \qquad
    M(I(L'_2)_{\rel}) \ge M(I(L_2)_{\rel}).
    \]
    Thus, we have $N(L'_{2}) \ge N(L_{2}) - 2$.
    Combining the arguments above, we conclude that $N(L') \ge q_{2}^{-2} \cdot N(L) = N(L)/4$.

    Moreover, we obtain from \cite{Mae24}*{(7.1)} that $\abs{T_{L, 2}} < 2 \cdot 2^{n+1}$ and $\abs{T_{L, 4}} < 4^{n+1}$.
    Substituting them into \eqref{eq:similartoMae24lem7.2}, we obtain that
    \[
    \sum_{[l]\in\mathcal{R}}\frac{\volHM(\U(L)^{l})}{\volHM(\U(L))} < (1+2 \cdot 2^{2n+1}+4^{2n+1})\cdot \frac{2^3 \cdot (2\pi)^{n+1}}{n!\cdot D^{n/2}\cdot \max\left\{1, \left(N(L)/4\right)^{\epsilon}\right\}}.
    \]
Hence, we obtain the first claim.
The second claim follows by combing this with Theorem \ref{thm:criterion_wps} and Lemma~\ref{lem:inequality of the ratio between two groups}, noting that $r_{l} = 2$ for all $l \in \mathcal{R}$.

By computer-based calculations, we obtain that
\begin{align*}
& \qquad (1+2 \cdot 2^{2n+1}+4^{2n+1})\cdot \frac{2 \cdot (2\pi)^{n+1}}{(n+1)!\cdot D^{n/2}\cdot \max\left\{1, \left(N(L)/4\right)^{\epsilon}\right\}} \\
&\le (1+2 \cdot 2^{2n+1}+4^{2n+1})\cdot\frac{2 \cdot (2\pi)^{n+1}}{(n+1)!\cdot 7^{n/2}}< 1
\end{align*}
for $n > 99$.
Thus, we obtain the third claim.

            Let us treat the remaining case $E=\bQ(\sqrt{-3})$.
As in the argument above, by using \cite{Mae24}*{Lemma 3.2}, we divide 
    \[
    T_{L} = \{L\} \amalg T_{L, 3} \amalg T_{L, 4},
    \]
where if $L'\in T_{L,3}$ (resp.\ $T_{L,4}$), then $L/L'$ is isomorphic to $\cO_{E}/ \sqrt{-3} \cO_{E}$ (resp.\ $\cO_{E}/2 \cO_{E}$).
Unlike the cases $E\neq \bQ(\sqrt{-3})$, the ramification index $r_{l}$ depends on the reflective vector $l$.
More precisely, we have $r_{l} = 6$ if $l$ is a split vector and $r_{l} = 3$ (resp.\ $r_{l} = 2$) if we have $L^{l} \oplus l \cO_{E} \in T_{L, 3}$ (resp.\ $L^{l} \oplus l \cO_{E} \in T_{L, 4}$).
According to \cite{Mae24}*{Lemma 3.6}, for $l' \in \mathcal{R}(L')_{\mathrm{s}}$, we have
    \[
    [\U((L')^{l'}) : \U(L)^{l'}] \le
    \begin{cases}
    1 & (L' = L), \\
    3^{n} & (L' \in T_{L, 3}),\\
    4^{n} & (L' \in T_{L, 4}).
    \end{cases}
    \]
Combined this with Corollary~\ref{corcovolofSUvsHMvolofU} and the inequalities $\abs{T_{L,3}} < 3^{n+1}$ and $\abs{T_{L,4}} < 4^{n+1}$, which follow from \cite{Mae24}*{(7.1)}, similar computation as above shows that
\[
\sum_{[l]\in\mathcal{R}}\frac{r_{l}-1}{r_{l}}\frac{\volHM(\U(L)^{l})}{\volHM(\U(L))}
< (5+ 4 \cdot 3^{2n+1}+ 3 \cdot 4^{2n+1})\cdot \frac{2^2 \cdot (2\pi)^{n+1}}{n!\cdot 3^{n/2}\cdot \max\left\{1, \left(N(L)/9 \right)^{\epsilon}\right\}}.
\]
The right-hand side can be bounded by $2(n+1)$ when $n>154$.
It concludes the proof.
\end{proof}

If the lattice $L$ is unimodular, we obtain a stronger result.
In the following theorem only, we allow $E$ to be any imaginary quadratic field other than $\bQ(\sqrt{-1})$ or $\bQ(\sqrt{-3})$.

\begin{thm}
\label{thm:unimodular_evaluation}
Let $E$ be an imaginary quadratic field other than $\bQ(\sqrt{-1})$ or $\bQ(\sqrt{-3})$ and $L$ be a unimodular Hermitian lattice over $\cO_{E}$ of signature $(1, n)$ for $n > 2$.
Then the algebra $M_*(\Gamma)$ of modular forms for any arithmetic subgroup $\Gamma < \U(L)$ is never free.
\end{thm}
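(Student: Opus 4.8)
The plan is to apply the non-freeness criterion of Theorem~\ref{thm:criterion_wps}, reduced to the full group $\Gamma=\U(L)$ by Lemma~\ref{lem:inequality of the ratio between two groups}, and to exploit that unimodularity forces $N(L)=1$ and $I(L_v)_{\rel}=\{0\}$ at every finite place $v$. First I would determine $\mathcal{R}=\mathcal{R}^{\U(L)}$: since $z\sigma_l\in\U(L)$ is equivalent to $2/\langle l,l\rangle\in\cO_E$, unimodularity restricts the reflective norms to $\langle l,l\rangle\in\{-1,-2\}$, the $-1$ vectors being split (with $L^l$ again unimodular of signature $(1,n-1)$) and the $-2$ vectors non-split (with a single level-$1$ Jordan block at the primes over $2$, so $N(L^l)=q_2$). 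The injection $\mathcal{R}_{\mathrm{s}}\hookrightarrow\prod_vI(L_v)_{\rel}$ of \cite{Mae24} collapses under $I(L_v)_{\rel}=\{0\}$, so—together with Witt's theorem for the non-split type—each admissible norm yields exactly one $\U(L)$-orbit; hence $\mathcal{R}=\{[l_1],[l_2]\}$ has precisely two classes.

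I would then read off the two ratios from Theorem~\ref{thm:prasad'svolumeformulaspecialunitarycase}. For the split class all local factors are trivial, $\lambda(L_v^{l_v})/\lambda(L_v)=1$ at every inert and split place (Proposition~\ref{prop:preciseestimationofquotoflambda} with $n_{v,0}=n+1$), so the $L_{E/\bQ}$-values cancel and
\[
\frac{\volHM(\U(L^{l_1}))}{\volHM(\U(L))}=
\begin{cases}
\dfrac{(2\pi)^{n+1}}{n!\,\zeta(n+1)}=\dfrac{2(n+1)}{|B_{n+1}|} & (n\ \text{odd}),\\[1.4ex]
\dfrac{(2\pi)^{n+1}}{D^{\,n+\frac12}\,n!\,L_{E/\bQ}(n+1)} & (n\ \text{even}),
\end{cases}
\]
the non-split ratio $R^{(2)}$ differing from $R^{(1)}$ only by one explicit $2$-local density (here $R^{(j)}:=\volHM(\U(L^{l_j}))/\volHM(\U(L))$). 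For even $n$, inserting $L_{E/\bQ}(n+1)\in\bQ\cdot\pi^{n+1}/\sqrt D$ shows both ratios are $O(D^{-n})$, whence $\sum_{[l]}\frac{r_l-1}{r_l}\volHM(\U(L)^l)/\volHM(\U(L))<2(n+1)$ and Theorem~\ref{thm:criterion_wps} gives non-freeness; the identical size bound settles every odd $n$ for which $|B_{n+1}|$ is large, concretely $n\ge 13$.

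The genuinely hard part is the finite list $n\in\{3,5,7,9,11\}$, where $2(n+1)/|B_{n+1}|$ already exceeds $4(n+1)$ so the size criterion is vacuous. Here I would instead invoke the equality established inside the proof of Theorem~\ref{thm:criterion_wps}: a free $M_*(\Gamma)$ would produce $k_1,\dots,k_{n+1}\in\bZ_{>0}$ with $\sum_ik_i=\tfrac12\big(R^{(1)}+R^{(2)}\big)-(n+1)$ and $\prod_ik_i=\volHM(\Gamma)^{-1}$. Since the $\pi$- and $\sqrt D$-contributions cancel (the combined $D$-exponent $\lfloor n/2\rfloor(\lfloor n/2\rfloor+1)$ is an integer and the total $\pi$-power is $0$), both sides are explicit rationals, and one must show that this symmetric-function system has no positive-integer solution. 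The main obstacle is exactly this Diophantine step: the prime factorisation of $\volHM(\U(L))^{-1}$—governed by the numerator of $B_{n+1}$ and by the local densities at the primes dividing $D$—must be shown incompatible with the prescribed value of $\sum_ik_i$, uniformly in $D$ and, via $\volHM(\Gamma)=[Z\U(L):Z\Gamma]\,\volHM(\U(L))$, across all finite-index subgroups $\Gamma<\U(L)$.
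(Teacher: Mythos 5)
Your proposal diverges from the paper at the very first step, and the divergence is fatal. The paper's proof is two lines long: by the classification of reflections of unimodular Hermitian lattices (\cite{maeda2023fano}*{Theorem~3.25} or \cite{WW2021}*{Lemma~2.2~(3)}), when $E\neq\bQ(\sqrt{-1}),\bQ(\sqrt{-3})$ and $n>2$ the uniformisation map $\bB^n\to X_\Gamma$ has \emph{no} branch divisors at all, i.e.\ $\mathcal{R}^{\Gamma}=\emptyset$; hence the left-hand side of the inequality in Theorem~\ref{thm:criterion_wps} is an empty sum, equal to $0<2(n+1)$, and non-freeness is immediate. Your assertion that $\mathcal{R}=\{[l_1],[l_2]\}$ (norms $-1$ and $-2$) is precisely what that classification rules out. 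The discrepancy lies in the reflectivity test: $z\sigma_l\in\U(L)$ requires $\langle l,l\rangle \mid 2\langle L,l\rangle$. You took $\langle L,l\rangle=\cO_E$; but in the normalisation under which the cited classification (and hence the theorem) is formulated --- unimodular meaning self-dual with respect to the $\mathfrak{d}^{-1}$-valued (trace) pairing, as in \cite{WW2021} --- one has $\langle L,l\rangle=\mathfrak{d}^{-1}$, so the condition becomes $\sqrt{-D}\,\langle l,l\rangle \mid 2$, which is impossible once $D>4$. Thus there are no reflective vectors and no Heegner/branch divisors, and your entire volume computation estimates contributions of divisors that do not exist. (Separately, your claim that ``all local factors are trivial'' for the split class is false at ramified places: for unimodular $L_v$ with $v\mid D$ the ratio $\lambda(L_v^{l_v})/\lambda(L_v)$ is of size $q_v^{n/2}$ when $n$ is even, not $1$, so even your quantitative bounds are off.)

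Independently of this, the proposal is incomplete on its own terms, which is a genuine gap. For $n\in\{3,5,7,9,11\}$ you concede that the size criterion is vacuous and fall back on a Diophantine analysis of the system $\sum_i k_i=\tfrac12\bigl(R^{(1)}+R^{(2)}\bigr)-(n+1)$, $\prod_i k_i=\volHM(\Gamma)^{-1}$. The product identity is established nowhere in the paper (it would require identifying $\Proj M_*(\Gamma)$ with $\bP(k_1,\dots,k_{n+1})$ and computing the top self-intersection of the automorphic bundle there), and the asserted incompatibility ``uniformly in $D$ and across all finite-index subgroups $\Gamma<\U(L)$'' is exactly the step you do not carry out. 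An argument that ends with ``one must show this system has no positive-integer solution'' is a plan, not a proof; those cases remain open in your write-up, so even granting your description of $\mathcal{R}$ the theorem would not be proved.
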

\begin{proof}
    By \cite{maeda2023fano}*{Theorem~3.25} or \cite{WW2021}*{Lemma~2.2 (3)}, there are no branch divisors if $E\neq\bQ(\sqrt{-1}), \bQ(\sqrt{-3})$ and $n>2$.
    Then, the left-hand side of the inequality in Theorem \ref{thm:criterion_wps} is zero, which proves the claim.
\end{proof}

\begin{rmk}
For any unimodular lattice $L$, it is known from \cite{james1992orbits}*{Theorem~1.1} that the set consisting of representing numbers for a fixed element in $\mathcal{O}_E$ has only one $\U(L)$-orbit.
From this point of view, also for the case of $E=\bQ(\sqrt{-1}), \bQ(\sqrt{-3})$, we can count the number of ramification divisors and prove stronger estimation than the one in Theorem \ref{thm:not_wps}.
More directly, one can also apply the classification results \cite{WW2021}*{Lemma 2.2 (1), (2)}.
\end{rmk}

We conclude this section with a finiteness statement regarding Hermitian lattices.
\begin{thm}
\label{thm:finiteness}
    Up to scaling, there are only finitely many isometry classes of  Hermitian lattices $L$ of signature
$(1, n)$ over $\cO_E$, where $n > 2$ and $E$ is an imaginary quadratic field with odd discriminant, such that $M_*(\Gamma)$ is a free algebra for some arithmetic subgroup $\Gamma < \U(L)$.
\end{thm}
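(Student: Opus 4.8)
The plan is to combine the quantitative bound of Theorem~\ref{thm:not_wps} with the observation that the invariant $N(L)$ depends only on the \emph{scaling-class} of $L$, and then to invoke the classical finiteness of isometry classes of Hermitian lattices of bounded discriminant. First I would record the reduction already assembled in the earlier results: by Lemma~\ref{lem:inequality of the ratio between two groups} together with Theorem~\ref{thm:criterion_wps}, if $M_*(\Gamma)$ is free for \emph{some} $\Gamma<\U(L)$, then
\[
\sum_{[l]\in\mathcal{R}}\frac{r_l-1}{r_l}\frac{\volHM(\U(L)^{l})}{\volHM(\U(L))}\ \ge\ 2(n+1),
\]
so the non-freeness inequality \eqref{ineq:non-freeness main results} established in Theorem~\ref{thm:not_wps} must \emph{fail}. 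Hence freeness forces $f(n,D)\ge \max\{1,(N(L)/4)^{\epsilon}\}$ (and the analogous inequality over $\bQ(\sqrt{-3})$).

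Since $\max\{1,(N(L)/4)^{\epsilon}\}\ge 1$, freeness already implies $f(n,D)\ge 1$; because the factorial in the denominator dominates the exponential numerator $4^{2n+1}(2\pi)^{n+1}$, and because $D^{n/2}\to\infty$, this holds for only finitely many pairs $(n,D)$ (in particular $n\le 99$, resp. $n\le 154$ for $\bQ(\sqrt{-3})$, and $D$ bounded). Fixing one of these finitely many pairs, the same inequality reads $(N(L)/4)^{\epsilon}\le f(n,D)$, whence $N(L)\le 4\,f(n,D)^{1/\epsilon}$. Thus, uniformly over all admissible $(n,D)$, the quantity $N(L)=\prod_{v\nmid\infty}q_v^{N_v}$ is bounded by an absolute constant. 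The structural point making ``up to scaling'' the correct framing is that $N_v=M(I(L_v)_{\rel})-m(I(L_v)_{\rel})$ is unchanged when the Hermitian form is rescaled by a positive rational, since rescaling shifts the largest and smallest relevant indices by the same amount at each place; hence the bound on $N(L)$ is a bound on the entire scaling-class.

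Finally I would convert the bound on the scaling invariant $N(L)$ into a bound on the discriminant and conclude by finiteness. Within each scaling-class I choose an integral, primitive representative $L$ (scale ideal not contained in $p\cO_E$ for any rational prime $p$); then $m(I(L_v)_{\rel})=0$ at every place except possibly the finitely many ramified primes dividing $D$, where it is at most $1$. By Lemma~\ref{lem:descriptionofmandM} the exponent of $L^{\vee}/L$ equals $\prod_{v}q_v^{M(I(L_v)_{\rel})}=\prod_{v}q_v^{m(I(L_v)_{\rel})+N_v}$, which is therefore bounded in terms of $N(L)$ and $D$ alone; as $L^{\vee}/L$ is generated by at most $n+1$ elements, $\abs{L^{\vee}/L}$ is bounded as well. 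The proof then finishes by the classical finiteness of isometry classes of $\cO_E$-Hermitian lattices of fixed rank $n+1$ and signature $(1,n)$ with bounded discriminant: only finitely many Hermitian spaces over $E$ of this rank and signature carry a bounded discriminant, and within each resulting genus there are finitely many isometry classes. I expect this last step to be the main obstacle: one must treat the inert, split, and ramified places separately to pass cleanly from the scaling-invariant $N(L)$ to a genuine discriminant bound, and must invoke the correct finiteness statement for \emph{indefinite} Hermitian lattices, where what is needed is the finiteness of classes within a genus rather than naive reduction theory as in the definite case.
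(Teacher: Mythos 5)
Your proposal is correct and follows essentially the same route as the paper's own proof: use Theorem~\ref{thm:not_wps} (via Theorem~\ref{thm:criterion_wps} and Lemma~\ref{lem:inequality of the ratio between two groups}) to show freeness forces $f(n,D)\ge\max\{1,(N(L)/4)^{\epsilon}\}$, hence only finitely many triples $(n,D,N(L))$ are admissible; then normalise the scaling so that the minimal Jordan index is $0$ (or at most $1$ at ramified places), bound the discriminant of the normalised representative, and conclude by the classical finiteness of isometry classes of Hermitian lattices of fixed rank and bounded determinant. The only difference is bibliographical rather than mathematical: the paper invokes reduction theory (Kitaoka) for the final finiteness step, whereas you invoke finiteness of genera of bounded discriminant together with finiteness of classes within a genus, and your explicit remarks on the scaling-invariance of $N(L)$ and the exponent-to-order bound for $L^{\vee}/L$ simply spell out details the paper leaves implicit.
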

\begin{proof}
    Assume that $E\neq\bQ(\sqrt{-3})$; the remaining cases follow from a similar argument.
According to Theorem~\ref{thm:not_wps}, $M_*(\Gamma)$ is not a free algebra for any $\Gamma < \U(L)$ if we have
    \[(1+2\cdot 2^{2n+1}+4^{2n+1})\cdot\frac{2 \cdot  (2\pi)^{n+1}}{(n+1)!\cdot D^{n/2}}<  \max\left\{1, \left(N(L)/4\right)^{\epsilon}\right\}.\]
    Noting that the left-hand side of the inequality tends to zero as either $n$ and $D$ tends to infinity, while the right-hand side remains bounded below by $1$, we conclude that only finitely many triples $(n, D, N(L)) \in \mathbb{Z}^3$ can violate the inequality.
    Thus, to complete the proof, it suffices to show that, up to scaling, there are only finitely many isometry classes of Hermitian lattices $L$ for fixed $n$, $E$, and $N(L)$.
    After scaling, we may suppose that $0 \in I(L_{v})_{\rel}$ for any $v \in V_{\fin}$ that is split or unramified over $E$, and
    \[
    \{0, 1\} \cap I(L_{v})_{\rel} \neq \emptyset
    \]
    for any $v \in V_{\fin}$ that ramifies over $E$.
Under this assumption, once $N(L)$ is bounded by a fixed constant, the determinant of the lattice $L$ is also bounded above by some constant.
Moreover, it is known that for a fixed matrix size $n$ over an imaginary quadratic field $E$, there are only finitely many unitary conjugacy classes of Hermitian matrices with coefficients in $\mathcal{O}_E$ whose determinant bounded by a fixed constant from the reduction theory \cite{kitaoka1993arithmetic}.
Thus, we obtain the claim.
\end{proof}

\section{Applications}
\label{sec:concrete examples}

In this section, we give two applications of our volume computation.

\subsection{Finiteness of reflective modular forms}
\label{subsec:Finiteness of reflective modular forms}
Gritsenko and Nikulin \cite{gritsenko1998automorphic}*{Conjecture 2.5.5} conjectured that up to scaling, there are only finitely many quadratic lattices admitting reflective modular forms with fixed slope and certain conditions, which was later proved by Ma \cite{ma2018kodaira}*{Corollaries 1.9, 1.10}.
Using the techniques introduced in this paper, we can prove an analogous result for unitary groups (see Subsection \ref{subsec:reflective modular forms} for the relevant definition).

Recall that for a reflective modular form $f$ of weight $\kappa$ with respect to an arithmetic subgroup $\Gamma<\U(L)$, its slope $\rho(f)$ is defined as $\rho(f) \defeq \max\{a_l/\kappa\}$ putting $\div(f) = \sum_{[l]\in\mathcal{R}^{\Gamma}} a_lH_l$.
Let $g(n,D)$ be the function on $n$ and $D$ defined by the \emph{inverse} of 
\[\frac{2^2  \cdot  (2\pi)^{n+1}}{n!\cdot D^{n/2}}\cdot
\begin{cases}
2\cdot (1+2\cdot 2^{2n+1}+4^{2n+1})&(E\neq \bQ(\sqrt{-3})),\\
6 \cdot (1+ 3^{2n+1}+  4^{2n+1})&(E= \bQ(\sqrt{-3})).
\end{cases}\]
\begin{thm}
\label{thm:finiteness of reflective modular forms}
Let $E$ be an imaginary quadratic field with odd discriminant $-D$.
\begin{enumerate}
    \item There exist no reflective modular forms $f$ such that $\rho(f) \le g(n,D)$.
\item Let $r>0$ be a fixed rational number.
Up to scaling, there are only finitely many isometry classes of Hermitian lattices $L$ of signature $(1,n)$ over $\cO_E$, where $n>2$ and $E$ is an imaginary quadratic field with odd discriminant, such that there exists a reflective modular form $f$ for some arithmetic subgroup $\Gamma < \U(L)$ with $\rho(f) \le r$.
\end{enumerate}
\end{thm}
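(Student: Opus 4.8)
The plan is to deduce the theorem from a single \emph{lower} bound on the slope of an arbitrary reflective modular form, which is then combined with the volume estimates of Section~\ref{section:Estimation of volumes}. Let $f$ be a reflective modular form of weight $\kappa$ for an arithmetic subgroup $\Gamma<\U(L)$, and write $\div(f)=\sum_{[l]\in\mathcal{R}^{\Gamma}}a_{l}H_{l}$. The computation in the proof of Theorem~\ref{thm:criterion_wps}, carried out there for the distinguished form $F$ with $a_{l}=r_{l}-1$, applies verbatim to $f$: Bruinier's formula \cite{bruinier2004}*{Theorem~1} together with Hirzebruch proportionality and the normalization $\Gamma^{l}\backslash H_{l}\to B_{l}$ yields
\[
\sum_{[l]\in\mathcal{R}^{\Gamma}}\frac{a_{l}}{r_{l}}\volHM(\Gamma^{l})=\kappa\cdot\volHM(\Gamma).
\]
Since $\rho(f)=\max_{l}\{a_{l}/\kappa\}$, we have $a_{l}\le\rho(f)\kappa$ for every $l$, and substituting this into the identity and dividing by $\kappa\cdot\volHM(\Gamma)$ gives
\[
\rho(f)\ge\left(\sum_{[l]\in\mathcal{R}^{\Gamma}}\frac{1}{r_{l}}\frac{\volHM(\Gamma^{l})}{\volHM(\Gamma)}\right)^{-1}.
\]
Every reflective form is therefore pinned below by the reciprocal of this weighted volume sum, and the whole theorem becomes a matter of bounding that sum from above.

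For part (1), I would bound the sum uniformly in $L$. Using $1/r_{l}\le 1$ and Lemma~\ref{lem:inequality of the ratio between two groups} to pass from $\Gamma$ to $\U(L)$, the sum is at most $\sum_{[l]\in\mathcal{R}}\volHM(\U(L)^{l})/\volHM(\U(L))$, which is estimated by Theorem~\ref{thm:not_wps}; discarding the factor $\max\{1,(N(L)/4)^{\epsilon}\}\ge1$ produces a bound depending only on $n$ and $D$, equal to $1/g(n,D)$ by the definition of $g$ preceding the statement. The strict inequality of Theorem~\ref{thm:not_wps} then forces $\rho(f)>g(n,D)$, so no reflective form can satisfy $\rho(f)\le g(n,D)$. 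For $E=\bQ(\sqrt{-3})$ the ramification indices $r_{l}\in\{2,3,6\}$ vary, but the same scheme applies after rerunning the proof of Theorem~\ref{thm:not_wps} with $1/r_{l}$ in place of $(r_{l}-1)/r_{l}$ and with the decomposition $T_{L}=\{L\}\amalg T_{L,3}\amalg T_{L,4}$, which is exactly what yields the $\bQ(\sqrt{-3})$-version of $g(n,D)$.

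For part (2), I would follow the proof of Theorem~\ref{thm:finiteness}. If a reflective $f$ with $\rho(f)\le r$ exists for some $\Gamma<\U(L)$, then the slope bound gives $\sum_{[l]}\tfrac{1}{r_{l}}\volHM(\Gamma^{l})/\volHM(\Gamma)\ge 1/r$, while Theorem~\ref{thm:not_wps} bounds the same quantity above by $C(n,D)/\max\{1,(N(L)/4)^{\epsilon}\}$ for an explicit $C(n,D)$ that tends to $0$ as $n\to\infty$ or $D\to\infty$. Hence $\max\{1,(N(L)/4)^{\epsilon}\}\le r\,C(n,D)$, and this can hold for only finitely many triples $(n,D,N(L))$: the condition $r\,C(n,D)\ge1$ bounds $n$ and $D$, after which $(N(L)/4)^{\epsilon}\le r\,C(n,D)$ bounds $N(L)$. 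For each fixed $n$, $E$, and $N(L)$, reduction theory \cite{kitaoka1993arithmetic} leaves only finitely many isometry classes of $L$ up to scaling, as in Theorem~\ref{thm:finiteness}, completing the argument.

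The step I expect to require the most care is the first one, namely checking that the volume identity of Theorem~\ref{thm:criterion_wps} is genuinely available for an arbitrary reflective form rather than only for the special form $F$: one must confirm that the Petersson-norm growth at the zero-dimensional cusps contributes nothing to the degree computation, that the passage to a neat cover respects the identity, and that the coefficient $a_{l}/r_{l}$ is the correct image of the upstairs multiplicity $a_{l}$ under the ramified quotient map. Once this identity and the resulting slope bound are in hand, both parts reduce to the volume estimates already proved in Section~\ref{section:Estimation of volumes}, and only routine bookkeeping remains.
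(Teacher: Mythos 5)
Your proposal is correct and follows essentially the same route as the paper's proof: derive the Bruinier-type volume identity for an arbitrary reflective form exactly as in the proof of Theorem~\ref{thm:criterion_wps}, convert it into a lower bound on $\rho(f)$, and combine this with Lemma~\ref{lem:inequality of the ratio between two groups} and Theorem~\ref{thm:not_wps} for part (1), retaining the $\max\left\{1,\left(N(L)/4\right)^{\epsilon}\right\}$ factor and then invoking the $(n,D,N(L))$-finiteness and reduction-theory argument of Theorem~\ref{thm:finiteness} for part (2). The only (cosmetic) divergence is that the paper writes the identity with coefficients $(a_l-1)$ while you use $a_l/r_l$ — your normalization is the one consistent with the descent $\div_{X_\Gamma}(f)=\sum_{[l]}\tfrac{a_l}{r_l}B_l$ used in Theorem~\ref{thm:criterion_wps} — and since both coefficients are bounded by $\rho(f)\kappa$, either version yields the same conclusion.
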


\begin{proof}
    (1) Suppose that $f$ is a reflective modular form of weight $\kappa$.
    Denote by $\div(f) = \sum_{[l]\in\mathcal{R}^{\Gamma}}a_l H_l$ its divisor.
    In a similar way as the proof of Theorem \ref{thm:criterion_wps}, the existence of $f$ leads the volume relation
    \[\sum_{[l]\in\mathcal{R}^{\Gamma}}(a_l-1)\frac{\volHM(\Gamma^l)}{\volHM(\Gamma)} = \kappa.\]

     Hence, combined with Lemma~\ref{lem:inequality of the ratio between two groups} and Theorem \ref{thm:not_wps}, we must have
     \[
         1 = \sum_{[l]\in\mathcal{R}}\frac{a_l-1}{\kappa}\cdot\frac{\volHM(\Gamma^l)}{\volHM(\Gamma)} <   \rho(f) \sum_{[l]\in\mathcal{R}}\frac{\volHM(\Gamma^l)}{\volHM(\Gamma)} < \frac{\rho(f)}{g(n,D) \max\left\{1, \left(N(L)/4\right)^{\epsilon}\right\}} \le \frac{\rho(f)}{g(n, D)}.
     \]
     Thus, we conclude that $g(n, D) < \rho(f)$.

Item (2) can be proved similarly to Theorem \ref{thm:finiteness}.
\end{proof}
Item (1) can be rephrased as follows: for any given rational number $r>0$, if either $n$ or $D$ is sufficiently large, which depends on $r$, there exists no reflective modular form with slope $\rho(f)\le r$.
Certain reflective modular forms on $\O^+(2,n)$ whose vanishing order at each ramification divisor is equal to $1$ are historically called \emph{Lie reflective modular forms} \cite{gritsenko1998automorphic}*{Definition~2.5.4}.
Considering that each ramification index $r_l$ is $2$ for $\O^+(2,n)$, it is natural to interpret this quantity $1$ as $r_l-1$.
In general, reflective modular forms on $\O^+(2,n)$ or $\U(1,n)$, which vanish along $H_l$ with multiplicity $r_l-1$, are referred to as \textit{special reflective modular forms} \cite{maeda2023fano}*{Assumption~2.1}, and can be regarded as a certain generalisation of Lie reflective modular forms.
A fundamentally algebraic geometrical reason to consider such modular forms is that they encode birational properties of $\overline{\Gamma\backslash\mathcal{D}}$.
If there exists a special reflective modular form with $\rho(f) < 1/(n+1)$, then $\overline{X_{\Gamma}}$ is Fano \cite{maeda2023fano}*{Theorem~2.4}.
Here, the quantity $n+1$ is referred to as the \emph{canonical weight} of the unitary group $\U(1,n)$.
The pluricanonical forms on the unique toroidal compactification of $X_{\Gamma}$ must come from modular forms whose weight is a multiple of $n+1$.
The modular forms appearing in this paper (Theorem \ref{thm:criterion_wps}) are one of the typical examples; see the proof of Theorem \ref{thm:criterion_wps}.
By computing the range of $g(n,D) \geq 1$ for which such forms may exist, one sees reflective modular forms with slope $\rho(f) < 1/(n+1)$ do not exist when $n>100$ for $E\neq \bQ(\sqrt{-3})$ (resp.\ $n> 155$ for $E=\bQ(\sqrt{-3})$).

\subsection{Moduli of cubic threefolds}

We apply our method to show that the graded algebra of modular forms associated with the moduli space of cubic threefolds is not freely generated.
We first review two simpler cases: cubic curves and cubic surfaces \cite{casalaina2023cohomology}*{Appendix~C1, C2}. It is well known that the GIT moduli space of cubic curves is isomorphic to $\bP^1$, which coincides with the Baily–Borel compactification of $\SL_2(\bZ)\backslash\bH$. 
This geometric property reflects the classical result that $M_*(\SL_2(\bZ))$ is isomorphic to $\bC[E_4, E_6]$, which is generated by two Eisenstein series $E_4$ and $E_6$ of weight 4 and 6, respectively.
Next, let us focus on the moduli space of (unmarked) cubic surfaces.
It admits a 4-dimensional complex ball quotient realisation \cite{allcock2002complex}, whose Baily–Borel compactification is identified with the weighted projective space
$\bP(1,2,3,4,5)$
by classical invariant theory \cite{dolgachev2005complex}.
Moreover, the work by Wang and Williams \cite{WW2021}*{Theorem~5.19} shows that the associated graded algebra is free, and there exists a special reflective modular form of weight $95$ on this moduli space \cite{allcock2002cubic}*{Theorem~4.7}.
 In fact, one can check that the inequality~(\ref{ineq:non-freeness main results}) fails in this case.

The moduli space of cubic threefolds is also constructed as a GIT quotient, namely $H^0(\bP^4,\cO(3))/\!/\SL_5(\bC)$, and is thus unirational.
By the work of \cite{allcock2011moduli},  it is also known that there is a period map from this moduli space to a 10-dimensional ball quotient. 
It is therefore natural to ask whether phenomena similar to the cases of cubic curves and surfaces, specifically the freeness of the algebra of modular forms, also occur in such a higher-dimensional case.
However, this expectation does not hold.
Weighted projective spaces must have the same cohomology as ordinary projective spaces, which fails for the moduli space of cubic threefolds \cite{casalaina2023cohomology}*{Theorem~1.1}.
Nevertheless, since computing the cohomology of arithmetic varieties is quite difficult in general, it is reasonable to pursue an alternative view, as we shall demonstrate below.

Let us recall the setting and use the notation in \cite{casalaina2023cohomology}*{Chapter~7}.
Let $L_{\mathrm{cub}}\defeq \mathcal{E}_1\oplus \mathcal{E}_4^{\oplus 2}\oplus \mathcal{H}$ be a Hermitian lattice of signature $(1,10)$ over the ring of Eisenstein integers; see \cite{casalaina2023cohomology}*{Subsection~7.1.1} for the definition of the concrete form of the Hermitian lattices.
Note that the associated quadratic form over $\bZ$ is $A_2(-1)\oplus E_8^{\oplus 2}(-1) \oplus U^{\oplus 2}$.
According to \cite{allcock2011moduli}, the associated ball quotient $\U(L_{\mathrm{cub}})\backslash\bB^{10}$ has an interpretation as a moduli space of cubic threefolds.
From now on, we shall work on the Baily-Borel compactification $\overline{X}_{\mathrm{cub}}\defeq \overline{\U(L_{\mathrm{cub}})\backslash\bB^{10}}$.

By lattice-theoretic computation, there are two reflective vectors $l_{\mathrm{n}}$ (split) and $l_{\mathrm{h}}$ (non-split).
Corresponding to these vectors, there are two Heegner divisors $\overline{H}_{\mathrm{h}}$ and  $\overline{H}_{\mathrm{n}}$ on $\overline{X}_{\mathrm{cub}}$.
It is known that these divisors have natural geometrical meanings, parametrising specific cubic threefolds; see \cite{casalaina2023cohomology}*{Subsection~8.1}.
In our context, these two divisors are branch divisors with indices 3 and 6.
Let us define 
\[L_{\mathrm{n}}\defeq \mathcal{E}_4^{\oplus 2} \oplus \mathcal{G},\ L_{\mathrm{h}}\defeq \mathcal{E}_1\oplus \mathcal{E}_3\oplus \mathcal{E}_4\oplus \mathcal{G}\] 
the corresponding Hermitian lattices of signature $(1,9)$.
Here we put 
\[\mathcal{G}\defeq\begin{pmatrix}
    0 & 3 \\
3 & 0 \\
\end{pmatrix}.\]
Note that the corresponding quadratic forms of $L_{\mathrm{n}}$ and $L_{\mathrm{h}}$ are $A_2(-1)\oplus E_8(-1)\oplus E_6(-1) \oplus U^{\oplus 2} \cong E_8(-1)^{\oplus 2} \oplus U \oplus U(3)$ and $A_2(-1)\oplus E_8(-1)\oplus E_6(-1) \oplus U\oplus U(3)$.

Now, we shall compute the volume of these lattices.
Clearly, possible non-trivial quantities of the ratio $\lambda((L_{\mathrm{n}})_v)/\lambda((L_{\mathrm{cub}})_v)$ and $\lambda((L_{\mathrm{h}})_v)/\lambda((L_{\mathrm{cub}})_v)$ appear in the finite place $v=3$.
For such a $v$, the orthogonal decompositions are
\[
    (L_{\mathrm{cub}})_v = (L_{\mathrm{cub}})_{v,0}\oplus (L_{\mathrm{cub}})_{v,2},\quad 
    (L_{\mathrm{n}})_v = (L_{\mathrm{n}})_{v,0}\oplus (L_{\mathrm{n}})_{v,2
},\quad 
    (L_{\mathrm{h}})_v= (L_{\mathrm{h}})_{v,0}\oplus (L_{\mathrm{h}})_{v,2}\]
where $ \mathrm{rk}((L_{\mathrm{cub}})_{v,0}) = 10$, 
$(L_{\mathrm{cub}})_{v,2}=(L_{\mathrm{h}})_{v,2}$ has rank 1, 
$\mathrm{rk}((L_{\mathrm{n}})_{v,0}) = 9$,  
$\mathrm{rk}((L_{\mathrm{h}})_{v,0}) = 8$, and  
$\mathrm{rk}((L_{\mathrm{h}})_{v,2})=2$.
By definition, all three associated intermediate lattices $M_{(L_{\mathrm{cub}})_v}, M_{(L_{\mathrm{n}})_v}$ and $M_{(L_{\mathrm{h}})_v}$ are unimodular of rank 11, 10 and 10.
Then since $v$ ramifies $\bQ(\sqrt{-3})$, considering the case of $(\mu_p,n_{p,\mu_p})=(\mathrm{even},\mathrm{odd})$ in  \cite{Mae24}*{Subsection~7.2}, a straightforward computation implies that 
\[\lambda((L_{\mathrm{n}})_3)/\lambda((L_{\mathrm{cub}})_3)\leq \lambda(M_{(L_{\mathrm{n}})_3})/\lambda((M_{L_{\mathrm{cub}})_3})=1\] and \[\lambda((L_{\mathrm{h}})_3)/\lambda((L_{\mathrm{cub}})_3)\leq \lambda(M_{(L_{\mathrm{h}})_3}))/\lambda(M_{(L_{\mathrm{cub}})_3})=1.\]
Hence, the inequality 
\[\frac{(2\pi)^{11}}{3^{10+1/2}\cdot 10!\cdot L(11)}\left(\frac{5}{6}+\frac{2}{3}\right)<22\]
implies the following.
\begin{prop}
\label{prop:cubic threefolds wps}
The graded algebra of modular forms $M_*(\U(L_{\mathrm{cub}}))$ is not free.
\end{prop}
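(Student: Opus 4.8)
The plan is to verify the non-freeness criterion of Theorem~\ref{thm:criterion_wps} for $\Gamma = \U(L_{\mathrm{cub}})$, where $n = 10$ and $E = \bQ(\sqrt{-3})$, so that it suffices to show the weighted sum of volume ratios is strictly below $2(n+1) = 22$. The first step is the lattice-theoretic input recorded above: $\mathcal{R}$ consists of exactly the two $\U(L_{\mathrm{cub}})$-orbits of $l_{\mathrm{n}}$ and $l_{\mathrm{h}}$, with ramification indices $r_{l_{\mathrm{n}}} = 6$ (the split case) and $r_{l_{\mathrm{h}}} = 3$, and with orthogonal complements isometric to $L_{\mathrm{n}}$ and $L_{\mathrm{h}}$. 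The criterion sum thus collapses to two terms, carrying the coefficients $\tfrac{5}{6}$ and $\tfrac{2}{3}$.

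Next I would bound each volume ratio by a covolume ratio of special unitary groups. For the split vector $l_{\mathrm{n}}$, Remark~\ref{rmkULi} identifies $\U(L_{\mathrm{cub}})^{l_{\mathrm{n}}}$ with $\U(L_{\mathrm{n}})$; for the non-split vector $l_{\mathrm{h}}$ I would use the restriction description $\U(L_{\mathrm{cub}})^{l_{\mathrm{h}}} = \res_{l_{\mathrm{h}}}\!\left(\U(L_{\mathrm{cub}})_{l_{\mathrm{h}}\cO_E}\right) \subseteq \U(L_{\mathrm{h}})$, checking that the relevant index is trivial so that the volume is that of $\U(L_{\mathrm{h}})$. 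Applying Corollary~\ref{corcovolofSUvsHMvolofU} in the $\bQ(\sqrt{-3})$ case together with the covolume formula of Theorem~\ref{thm:prasad'svolumeformulaspecialunitarycase} for $D = 3$ and $n = 10$ even, each ratio is then controlled by the common archimedean factor $(2\pi)^{11}/\bigl(3^{10+1/2}\cdot 10!\cdot L_{E/\bQ}(11)\bigr)$ times the finite product $\prod_{v\nmid\infty}\lambda(L^{l}_v)/\lambda(L_v)$.

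The crux is the evaluation of that finite product. Since $L_{\mathrm{n}}$ and $L_{\mathrm{h}}$ differ from $L_{\mathrm{cub}}$ only at the prime $3$, which ramifies in $\bQ(\sqrt{-3})$, the product is nontrivial only at $v = 3$. Reading off the Jordan splittings of $(L_{\mathrm{cub}})_3$, $(L_{\mathrm{n}})_3$, $(L_{\mathrm{h}})_3$ and invoking the explicit Gan--Yu reductive quotients exactly as in \cite{Mae24}*{Subsection~7.2}, I would verify that the three parahoric-completion lattices $M_{(\cdot)_3}$ are unimodular of the stated ranks, which forces $\lambda((L_{\mathrm{n}})_3)/\lambda((L_{\mathrm{cub}})_3) \le 1$ and $\lambda((L_{\mathrm{h}})_3)/\lambda((L_{\mathrm{cub}})_3) \le 1$. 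Using also $L_{E/\bQ}(11) \ge 2 - \zeta(11) > 1$, the criterion sum is bounded, up to the bounded comparison constant of Corollary~\ref{corcovolofSUvsHMvolofU}, by
\[
\frac{(2\pi)^{11}}{3^{10+1/2}\cdot 10!\cdot L_{E/\bQ}(11)}\left(\frac{5}{6}+\frac{2}{3}\right),
\]
whose numerical value is of order $10^{-3}$, far below $22$; Theorem~\ref{thm:criterion_wps} then gives the result.

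The main obstacle is precisely the local analysis at $v = 3$: one must pin down the correct Jordan decompositions so that passing to the lattices $M_{(\cdot)_3}$ yields unimodular lattices and the local index ratios collapse to at most $1$, and one must confirm that the restriction map for the non-split vector $l_{\mathrm{h}}$ contributes no extra index. Everything else is a mechanical substitution into the covolume formula followed by an elementary numerical estimate, and the enormous margin means the precise constant in the $\U$-versus-$\SU$ comparison of Corollary~\ref{corcovolofSUvsHMvolofU} is immaterial.
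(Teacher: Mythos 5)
Your proposal is correct and follows essentially the same route as the paper: it invokes the non-freeness criterion of Theorem~\ref{thm:criterion_wps} with $n=10$, $E=\bQ(\sqrt{-3})$, reduces the sum to the two reflective orbits $l_{\mathrm{n}}$ (split, $r=6$, coefficient $\tfrac{5}{6}$) and $l_{\mathrm{h}}$ (non-split, $r=3$, coefficient $\tfrac{2}{3}$), shows via the Jordan splittings at the ramified place $v=3$ that the intermediate lattices $M_{(\cdot)_3}$ are unimodular so the local ratios $\lambda(L^{l}_3)/\lambda((L_{\mathrm{cub}})_3)$ are at most $1$, and concludes with the same numerical bound $\frac{(2\pi)^{11}}{3^{10+1/2}\cdot 10!\cdot L_{E/\bQ}(11)}\left(\frac{5}{6}+\frac{2}{3}\right)<22$. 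The only difference is that you track the $\U$-versus-$\SU$ comparison constant of Corollary~\ref{corcovolofSUvsHMvolofU} explicitly, which, as you correctly note, is immaterial given the enormous margin.
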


Note that, as far as the authors' knowledge, the concrete description of the structure of $\overline{X}_{\mathrm{cub}}$ is not known in terms of invariant ring theory.

\bibliographystyle{amsalpha}
\bibliography{main}

\end{document}